\documentclass[10pt, leqno]{article}
\usepackage{amsmath,amsfonts,amsthm,amscd,amssymb}
\numberwithin{equation}{section}
\theoremstyle{plain} 
\newtheorem{theo}{Theorem}[section]
\newtheorem{prop}[theo]{Proposition}
\newtheorem{coro}[theo]{Corollary} 
\newtheorem{lemm}[theo]{Lemma}
\theoremstyle{definition}
\newtheorem{defi}[theo]{Definition}
\newtheorem{rema}[theo]{Remark} 
\newtheorem{theo-defi}[theo]{Theorem-Definition}
\newtheorem{prop-defi}[theo]{Proposition-Definition}
\newtheorem{rema-defi}[theo]{Remark-Definition}

\newtheorem{exem}[theo]{Example}
\newtheorem{conj}[theo]{Conjecture}
\newtheorem{prob}[theo]{Problem}

\def \bet{\beta}
\def \bul{\bullet}
\def \col{\colon}
\def \Del{\Delta}

\def \Gam{\Gamma}

\def \kap{\kappa}
\def \Lam{\Lambda}
\def \lam{\lambda}

\def \Lo{\Longrightarrow}
\def \lo{\longrightarrow}
\def \lom{\longmapsto}
\def \mab{\mathbb}

\def \Om{\Omega}
\def \om{\omega}
\def \ol{\overline}
\def \os{\overset}
\def \parno{\par\noindent}

\def \sus{\subset}

\def \us{\underset}

\def \vp{\varpi}

\def \vpl{\varprojlim}

\def \wt{\widetilde}
\bigskip

\setlength{\textwidth}{130mm}
\setlength{\textheight}{215mm}
\newcommand{\getsfrom}
{\ensuremath{\longleftarrow\kern-.
52em\lower-.1ex\hbox%
{$\shortmid\,$}}}

\begin{document}

\title{Log ordinarity and log plurigenera of 
a proper SNCL scheme over a log point  in characterisitic $p>0$} 
\author{Yukiyoshi Nakkajima
\date{}\thanks{2010 Mathematics subject 
classification number: 14F30, 14F40, 14J32. 
The author is supported from two JSPS
Grant-in-Aid's for Scientific Research (C)
(Grant No.~80287440, 18K03224).\endgraf}}
\maketitle

$${\bf Abstract}$$
In this article we give two applications of 
the spectral sequence of 
the log Hodge-Witt cohomology of 
a proper SNCL scheme over the log point of a perfect field of  
characterisitic $p>0$.

\section{Introduction} 
This article consists of two parts. One is a criterion of  
the log ordinarity of a proper SNCL(=simple normal crossing log) 
scheme over a family of log points in characteristic $p>0$ 
and the other is the lower semi-continuity of the log geometric plurigenera and 
the log Iitaka-Kodaira dimension of a proper SNCL
scheme over a log point of any characteristic. 
Though arithmetic geometers have not yet shown their interest in  
log geometric plurigenera and log Iitaka-Kodaira dimensions, 
it seems that they are important notions as in the theory of 
log geometric plurigenera and log Iitaka-Kodaira dimensions
for the case of the ``horizontal'' log structure developed in \cite{ii2} 
and \cite{ii}. 
\par 
Let $g\col Y\lo T$ be a morphism of fine log schemes 
in the sense of Fontaine-Illusie-Kato (\cite{klog1}). 
Let $\Om^{\bul}_{Y/T}$ be the log de Rham complex of $Y/T$
(this was denoted by $\om^{\bul}_{Y/T}$ in \cite{klog1} and \cite{hk}). 
Let $\os{\circ}{Y}$ be the underlying scheme of $Y$. 
Set $B\Om^i_{Y/T}:={\rm Im}(d\col \Om^{i-1}_{Y/T}\lo \Om^i_{Y/T})$
($B\Om^i_{Y/T}$ is only an abelian sheaf on $\os{\circ}{Y}$).  
Assume that $T$ is of characteristic $p>0$ and that 
$g$ is proper and log smooth. 
In this article, we say that $g$ (or simply $Y/T$) is 
{\it log ordinary} ({\it in the sense of Bloch-Kato-Illusie}) 
if $R^jg_*(B\Om^i_{Y/T})=0$ for any $i,j\in {\mab Z}_{\geq 0}$. 
This is the variation of the log ordinarity defined in \cite{hp}, \cite{io}, 
\cite{nil} and \cite{lodw}. (The log ordinarity in [loc.~cit.] is the log 
version of the ordinarity defined in \cite{bk} and \cite{ir}.) 
\par 
Let $\kap$ be a perfect field of characteristic $p>0$. 
Let $B$ be the spectrum of 
a (complete) discrete valuation ring of 
mixed characteristics with perfect residue field $\kap$. 
Let ${\cal X}/B$ be a proper strict semistable family. 
Let $X/s$ be the log special fiber of ${\cal X}/B$ 
with canonical log structure. 
Let ${\cal W}\Om^{\bul}_X$ be the log de Rham-Witt complex of $X/s$. 
Let $k$ be a nonnegative integer. 
Let $\os{\circ}{X}{}^{(k)}$ be the disjoint union of the $(k+1)$-fold 
intersections of the different irreducible components of $\os{\circ}{X}$. 
In \cite{hp} Hyodo has claimed the following theorem without the proof of it
in the case where $B$ is of mixed characteristics: 

\begin{theo}[{\bf \cite{hp}, \cite{io}}]\label{theo:oal}
If $\os{\circ}{X}{}^{(k)}/\kap$ is ordinary 
for all $k\in {\mab Z}_{\geq 0}$, then $X/s$ is log ordinary.  
\end{theo}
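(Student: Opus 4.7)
The plan is to reduce the log ordinarity of $X/s$ to the ordinarity of the strata $\os{\circ}{X}{}^{(k)}/\kap$ via the weight spectral sequence for the log de Rham complex $\Om^{\bul}_{X/s}$, whose log Hodge--Witt refinement is the ``spectral sequence of log Hodge--Witt cohomology'' advertised in the abstract. The starting point is the weight filtration $P$ on $\Om^i_{X/s}$, defined locally by the number of $d\log$-factors coming from equations of irreducible components of $\os{\circ}{X}$. The Poincar\'e residue yields isomorphisms
$$\mathrm{gr}_k^P\Om^i_{X/s} \;\simeq\; a^{(k-1)}_*\Om^{i-k}_{\os{\circ}{X}{}^{(k-1)}/\kap}$$
(with suitable sign and indexing convention), where $a^{(k-1)}\col\os{\circ}{X}{}^{(k-1)}\lo\os{\circ}{X}$ is the natural map; the differential $d$ preserves $P$ and, up to sign, induces the usual de Rham differential on each graded piece.

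Next I would propagate this filtration to the image sheaves. Because $d$ preserves $P$, the sheaf $B\Om^i_{X/s}=\mathrm{Im}(d\col\Om^{i-1}_{X/s}\lo\Om^i_{X/s})$ inherits a filtration whose graded pieces can be identified via Poincar\'e residue with $a^{(k-1)}_*B\Om^{i-k}_{\os{\circ}{X}{}^{(k-1)}/\kap}$. Applying $Rg_*$ then produces a spectral sequence converging to $R^{\bul}g_*(B\Om^i_{X/s})$ whose $E_1$-terms are
$$R^{\bul} f^{(k-1)}_*\bigl(B\Om^{i-k}_{\os{\circ}{X}{}^{(k-1)}/\kap}\bigr), \qquad f^{(k-1)}\col\os{\circ}{X}{}^{(k-1)}\lo \mathrm{Spec}\,\kap.$$
By hypothesis each $\os{\circ}{X}{}^{(j)}/\kap$ is ordinary in the Bloch--Kato--Illusie sense, so by \cite{bk}, \cite{ir} every one of these $R^{\bul} f^{(k-1)}_*(B\Om^{i-k})$ vanishes. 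Hence all $E_1$-terms vanish, the abutment $R^h g_*(B\Om^i_{X/s})$ vanishes for every $h,i\geq 0$, and $X/s$ is log ordinary.

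The main obstacle is the identification of $\mathrm{gr}_k^P B\Om^i_{X/s}$ with the $B\Om$-sheaf on the corresponding stratum. The analogue for $\Om^i$ (rather than $B\Om^i$) is the classical Poincar\'e residue isomorphism, but passage to images of $d$ requires a strictness check: one must verify that an element of $P_k\Om^i_{X/s}$ whose class modulo $P_{k-1}$ lies in the image of $d$ on the stratum actually comes from $d\col P_k\Om^{i-1}_{X/s}\lo P_k\Om^i_{X/s}$. If one instead formulates the spectral sequence at the log Hodge--Witt level, which is closer to the version promised in the abstract, one needs in addition to descend the vanishing from $R^jg_*(BW\Om^i_X)$ to $R^jg_*(B\Om^i_{X/s})$ via the standard short exact sequences relating $W_n\Om$ and $W_{n-1}\Om$; this last step is routine once the $W$-level statement is in hand.
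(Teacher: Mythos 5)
Your proposal follows the architecture of Illusie's original argument (the route taken in \S\ref{sec:hpc} and \S\ref{sec:gir} of the paper) rather than the quick proof via the $F$-compatible spectral sequence (\ref{ali:infhwpwt}), and as written it has two genuine gaps. First, the residue isomorphism you state is not correct for the \emph{relative} complex $\Om^{\bul}_{X/s}$: the graded pieces ${\rm gr}^P_k(\Om^{\bul}_{X/s})$ are not the full pushforwards $a^{(k-1)}_*\Om^{\bul-k}_{\os{\circ}{X}{}^{(k-1)}/\kap}$ but the kernels $K^{\bul-k}_k={\rm Ker}(\Om^{\bul-k}_k\lo\Om^{\bul-k}_{k+1})$ of the Mayer--Vietoris maps --- this is exactly Illusie's ``nonstandard'' residue isomorphism (\ref{ali:cpsips}), and conflating it with the standard one (\ref{eqn:prvin}), which holds only for $\Om^{\bul}_{X/\os{\circ}{S}}$, breaks the identification of your $E_1$-terms. (If you instead work with $\Om^{\bul}_{X/\os{\circ}{s}}$, where the standard residue isomorphism does hold, you then need the Key Lemma (\ref{lemm:kel}) to descend the vanishing from $B\Om^i_{X/\os{\circ}{s}}$ to $B\Om^i_{X/s}$; that step is absent from your argument.)

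Second, and more seriously, the ``strictness check'' you flag as the main obstacle is not a routine verification --- it is the heart of the proof, and you do not carry it out. The exactness of the sequences obtained by applying $B^i$ (and $Z^i$) to $0\lo P_{k-1}\lo P_k\lo{\rm gr}^P_k\lo 0$ and to the Mayer--Vietoris resolution (\ref{eqn:cptle}) is established in (\ref{prop:iga}) and (\ref{prop:ris}) only by invoking the (inverse) log Cartier isomorphism, i.e.\ the conditions (I)--(IV), together with a descending induction on $k$; without this input there is no induced filtration on $B\Om^i$ with the graded pieces you claim, hence no spectral sequence for $R^{\bul}g_*(B\Om^i)$. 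That this exactness is not formal is illustrated by (\ref{rema:ci}) (1), where the analogous sequence fails for a connection not generated by horizontal sections. Once these two points are supplied, your dévissage does give the theorem, essentially as in (\ref{prop:ahc}); alternatively, the statement follows much more quickly from (\ref{ali:infhwpwt}) and the Illusie--Raynaud/Lorenzon criterion that log ordinarity is equivalent to bijectivity of $F$ on $H^j(X,{\cal W}\Om^i_X)$, as in (\ref{prop:oh}).
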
 

This theorem has an important role in several articles, e.~g., in \cite{rx} 
(of course in \cite{io})
by using Illusie's result in \cite{io}: 
the blow up of an ordinary proper smooth subscheme 
in an ordinary proper smooth scheme is ordinary. 
Following \cite{io}, 
we call this theorem {\it Hyodo's criterion}. 
In \cite{io} Illusie has given a proof of this fact 
including the case where $B$ is of equal characteristic. 
He has stated another very simple method to prove Hyodo's criterion  
under the assumption of the existence of the ``$p$-adic weight spectral sequence'' of 
$H^h(X,{\cal W}\Om^i_X)$ $(h,i\in {\mab Z}_{\geq 0})$ which is 
compatible with the operator $F$, 
which was not constructed at that time. 
In the special case $i=0$, this is classically well-known (cf.~\cite[\S2]{rzs}): 
for a proper SNC(=simple normal crossing) scheme $Z/\kap$, 
there exists the following spectral sequence: 
\begin{align*} 
E_1^{-k, h+k}=
H^{h+k}(Z^{(-k+1)},{\cal W}({\cal O}_{Z^{(-k+1)}}))
\Lo
H^h(Z,{\cal W}({\cal O}_{Z})) \quad (-k,h\in {\mab Z}_{\geq 0}),  
\tag{1.1.1}\label{ali:wrzn}
\end{align*}  
In the general $i$, 
the spectral sequence has been already constructed in 
\cite{ndw}  (cf.~\cite{msemi}):

\begin{theo}[{\bf \cite[(4.1.1)]{ndw}}]\label{theo:hwwt}
Let $s$ be the log point of $\kap$, that is, 
$s=({\rm Spec}(\kap),{\mab N}\oplus \kap^*\lo \kap)$. 
Here $1\in {\mab N}$ goes to $0 \in \kap$ by the morphism 
${\mab N}\oplus \kap^*\lo \kap$. 
Let $X/s$ be a proper SNCL scheme 
defined in {\rm \cite[(1.1.15)]{nb}} 
{\rm (}{\it cf}.~{\rm \cite[(1.1.10.2)]{nlk3})}. 
Let $i$ be a nonnegative fixed integer. 
Then there exists the following spectral sequence$:$ 
\begin{align*}
E_1^{-k, h+k}=&
\us{j\geq {\rm max}\{-k,0\}}{\bigoplus}
H^{h-i-j}(\os{\circ}{X}{}^{(2j+k)},
{\cal W}\Om^{i-j-k}_{\os{\circ}{X}{}^{(2j+k)}})
(-j-k)
\tag{1.2.1}\label{ali:infhwpwt}\\
&  
\Lo H^{h-i}(X,{\cal W}\Om_X^i) \quad (q\in {\mab N}).
\end{align*}
Here $(-j-k)$ means the usual Tate twist with respect to 
the Frobenius endomorphism of $X$. 
\end{theo}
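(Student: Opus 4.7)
The plan is to produce a weight filtration $P_\bullet$ on the log de Rham--Witt sheaf $\mathcal{W}\Omega^i_X$ whose graded pieces decompose, via Poincar\'e residue isomorphisms, into direct sums of ordinary de Rham--Witt sheaves on the strata $\overset{\circ}{X}{}^{(\ell)}$, and then to pass to the associated spectral sequence of the resulting filtered object. This is the Witt analogue of Deligne--Steenbrink's weight filtration on the log de Rham complex of an SNC scheme, and it specializes to \eqref{ali:wrzn} when $i=0$.

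I would start locally. On an \'etale neighbourhood where the SNCL structure is modelled on $\mathrm{Spec}(\kappa[t_0,\dots,t_d]/(t_0\cdots t_r))$ with log structure generated by $(t_0,\dots,t_r)$ and with $1\in\mathbb{N}$ sent to $t_0\cdots t_r$, an explicit local description of $\mathcal{W}\Omega^i_X$ expresses sections as Witt forms decorated by $\mathrm{dlog}(t_j)$-factors modulo the SNCL relation. I would define $P_k\mathcal{W}\Omega^i_X$ to be the subsheaf generated locally by those sections whose number of $\mathrm{dlog}$-factors coming from the boundary parameters is at most $k$, and then check that this subsheaf is independent of the chart and stable under the operators $F$, $V$ and $d$.

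Next I would establish a residue-type isomorphism
\[
\mathrm{gr}^P_k\,\mathcal{W}\Omega^i_X \;\simeq\; \bigoplus_{j\geq\max\{-k,0\}} (a_{2j+k})_*\,\mathcal{W}\Omega^{i-j-k}_{\overset{\circ}{X}{}^{(2j+k)}}(-j-k),
\]
where $a_\ell\colon \overset{\circ}{X}{}^{(\ell)}\hookrightarrow \overset{\circ}{X}$ is the canonical closed immersion. The $j=0$ summand (for $k\geq 0$) is the usual Poincar\'e residue in $k$ of the boundary directions, landing on the $k$-fold intersections $\overset{\circ}{X}{}^{(k)}$; the summands with $j>0$ record the ``excess'' $\mathrm{dlog}$-pieces that, because of the SNCL relation $t_0\cdots t_r=0$, must be re-interpreted on deeper strata, picking up the Tate twist $(-j-k)$ via the Frobenius normalization of the residue. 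The combinatorics of the indexing set here mirror those of Mokrane's weight spectral sequence for the de Rham--Witt complex of a strictly semistable family. Taking the hypercohomology spectral sequence of the filtered sheaf $(\mathcal{W}\Omega^i_X, P_\bullet)$ on $X$ and inserting the residue identification then yields \eqref{ali:infhwpwt}, with the total degree shift accounted for by the fact that $\mathcal{W}\Omega^i_X$ is placed in a single cohomological degree.

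The main obstacle will be the construction of the residue isomorphism at the Witt level: one must check, by induction on the Witt length and using compatibility with $F$, $V$, $d$ and multiplication by $p$, that the classical Poincar\'e residue on log de Rham forms lifts coherently to $\mathcal{W}_n\Omega^\bullet$ and passes through the inverse limit, with the correct Frobenius equivariance producing the stated Tate twists. A secondary subtlety is the combinatorial bookkeeping of the direct sum $\bigoplus_{j\geq\max\{-k,0\}}$: one must identify exactly which stratum $\overset{\circ}{X}{}^{(2j+k)}$ contributes to each graded piece, which reduces to a careful index calculation tracking how the SNCL relation distributes logarithmic weight across the intersection complex.
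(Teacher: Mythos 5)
There is a genuine gap, and it is located exactly where you place the filtration. The theorem is imported from \cite[(4.1.1)]{ndw} and is not reproved in this paper, but the construction it relies on (see the proof of (\ref{prop:oh})) filters not the sheaf ${\cal W}\Om^i_X$ itself but the $i$-th row ${\cal W}A_X^{i\bul}$ of a Steenbrink--Mokrane type double complex, together with an identification $\theta\wedge\col {\cal W}\Om^i_X\os{\sim}{\lo}{\cal W}A_X^{i\bul}$; the Poincar\'e residue isomorphism identifying ${\rm gr}^P_k{\cal W}A_X^{i\bul}$ with $\bigoplus_{j\geq\max\{-k,0\}}{\cal W}\Om^{i-j-k}_{\os{\circ}{X}{}^{(2j+k)}}\{-j\}$ lives on that double complex. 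Your plan to define $P_k{\cal W}\Om^i_X$ directly by counting ${\rm dlog}$-factors cannot produce the stated $E_1$-term for a purely formal reason: a filtration on a sheaf concentrated in a single cohomological degree yields $E_1$-terms in which every summand is computed in the same degree $h-i$, whereas the theorem requires the $j$-th summand to contribute $H^{h-i-j}$. This $j$-dependent shift forces the filtered object to be a genuine complex in which the $j$-th residue summand sits in internal degree $j$ --- which is precisely what the column index of ${\cal W}A_X^{\bul\bul}$ provides.

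There is also a substantive obstruction to the residue isomorphism you assert. On the relative forms $\Om^{\bul}_{X/s}$ (and their Witt analogues) the boundary ${\rm dlog}$-factors satisfy the relation $\theta=\sum_j d\log t_j\mapsto 0$, so ``number of ${\rm dlog}$-factors'' is not well defined modulo this relation and the naive residue does not descend. The graded pieces of the filtration induced on the relative complex are not direct sums of sheaves on strata but the kernels $K^{\bul-k}_{k}$ of the \v{C}ech-type maps between strata; this is Illusie's nonstandard residue isomorphism, generalized in (\ref{ali:cpsips}) of the appendix of this paper. The direct sum over $j\geq\max\{-k,0\}$ with strata $\os{\circ}{X}{}^{(2j+k)}$ and twists $(-j-k)$ is the signature of the weight filtration on the double complex, obtained by applying the standard residue for $\Om^{\bul}_{X/\os{\circ}{s}}$ (as in (\ref{eqn:prvin})) column by column, not of any filtration on ${\cal W}\Om^i_X$ alone. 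Your secondary concerns --- lifting the residue through the Witt levels compatibly with $F$, $V$, $d$ and passing to the limit --- are the right ones, but they only become meaningful once the correct filtered complex is in place.
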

\parno 
(In \cite[(4.7)]{ndw} we have proved 
that this spectral sequence degenerates at $E_2$.)
Because this spectral sequence is compatible with the operator $F$, 
we can prove Hyodo's criterion in a very simple way as follows 
by following Illusie's idea. 
\par 
By a fundamental theorem in \cite{ir}, 
a proper smooth scheme $Y/\kap$  is ordinary if and only if 
the operator $F$ on $H^h(Y,{\cal W}\Om_Y^i)$ is bijective.   
Hence $F$ is bijective on $H^{h-i}(X,{\cal W}\Om_X^i)$ by (\ref{ali:infhwpwt}). 
Furthermore, by the log version of this fundamental theorem due to 
Lorenzon (\cite{lodw}), 
this implies that $X/s$ is log ordinary.   
This is the first proof of Hyodo's criterion in this article. 
We also give the proof of Hyodo's criterion 
by following Illusie's original proof in \cite{io} as possible. 
This is the second proof of Hyodo's criterion in this article.   
In this article we simplify his proof 
and generalize it.  
We give the three proofs of the following theorem  
(the variation of Hyodo's criterion): 

\begin{theo}\label{theo:twols}
Let $S$ be a family of log points in characteristic $p>0$ defined in 
{\rm \cite[(1.1)]{nb}}. Let $f\col X\lo S$ 
be a proper SNCL scheme 
defined in {\rm \cite[(1.1.15)]{nb}}. 
Let $T\lo S$ be a morphism of fine log schemes. 
Set $X_T:=X\times_ST$ and 
let $f_T\col X_T\lo T$ be the base change of $f$. 
Let $\os{\circ}{X}{}^{(k)}$ $(k\in {\mab Z}_{\geq 0})$ 
be a scheme over $\os{\circ}{S}$ 
defined in {\rm [loc.~cit.]}. 
If $\os{\circ}{X}{}^{(k)}/\os{\circ}{S}$ is ordinary for all $k$, 
then $f_T$ is log ordinary. 
\end{theo}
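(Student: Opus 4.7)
The plan is to reduce Theorem~\ref{theo:twols} to its absolute version, Theorem~\ref{theo:oal}, whose proof is sketched in the introduction via the spectral sequence (\ref{ali:infhwpwt}). First I would observe that the assertion ``$f_T$ is log ordinary'' is the vanishing of the sheaves $R^jf_{T*}(B\Om^i_{X_T/T})$ on $\os{\circ}{X}{}_T$, and by the usual stalkwise criteria together with the flat base-change properties of the sheaves $B\Om^i$ under log smooth morphisms, it suffices to verify this at each geometric point of $\os{\circ}{T}$. Pulling back to such a point, both the morphism $f$ and the composite $X_T\lo T\lo S$ restrict to the case where the target is the log point of a perfect field, so the problem reduces to the absolute Hyodo criterion applied fiberwise.

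With this reduction in hand I would run the argument outlined in the excerpt. By the Bloch-Kato-Illusie-Raynaud criterion (\cite{ir}), ordinarity of each proper smooth scheme $\os{\circ}{X}{}^{(k)}$ over a perfect field is equivalent to bijectivity of $F$ on all Hodge-Witt cohomology groups $H^{h}(\os{\circ}{X}{}^{(k)},{\cal W}\Om^{i}_{\os{\circ}{X}{}^{(k)}})$. Since the spectral sequence (\ref{ali:infhwpwt}) is $F$-equivariant and the Tate twists act bijectively on Hodge-Witt cohomology, bijectivity of $F$ on the $E_1$-page transfers to the abutment $H^{h-i}(X,{\cal W}\Om^i_X)$. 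Invoking Lorenzon's log analogue of the Bloch-Kato-Illusie criterion (\cite{lodw}), this bijectivity of $F$ on log Hodge-Witt cohomology is equivalent to log ordinarity of the absolute fiber $X/s$, giving the desired conclusion after the reduction.

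The main obstacle, I expect, is not the cohomological argument itself (which is essentially formal once the spectral sequence and the two $F$-bijectivity criteria are at hand) but rather the base-change step: one must verify that for the class of morphisms $T\lo S$ of families of log points allowed in the statement, the formation of $R^jf_*(B\Om^i_{X/S})$ genuinely commutes with base change. This requires care because $B\Om^i$ is only an abelian sheaf on the underlying scheme and its coherence properties must be accessed through the $F$-module structure. One safeguard against complications here would be to bypass the fibrewise reduction altogether and instead construct a relative version of (\ref{ali:infhwpwt}) directly over the family $S$ (and over $T$ by pullback), then run the $F$-bijectivity argument in that relative setting. Either route should succeed, and I would lean toward the fiberwise reduction first because it is more elementary and isolates the base-change difficulty into a single, well-posed lemma.
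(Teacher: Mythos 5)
Your proposal follows essentially the same route as the paper's first proof (\S\ref{sec:hc}): reduction to exact closed points of $T$ via (\ref{prop:bc}) --- where, exactly as you anticipate, the coherence of $B\Omega^i$ is accessed through its structure as a locally free module over the Frobenius twist $Y'$, the log K\"unneth formula, and perfectness of the direct image --- followed by the $F$-equivariance of (\ref{ali:infhwpwt}) combined with the Illusie--Raynaud criterion on the strata and Lorenzon's log analogue on $X/s$. The one caveat is your remark that ``the Tate twists act bijectively'' on Hodge--Witt cohomology: a twist $(-m)$ multiplies $F$ by $p^m$, which is injective but not bijective on a nonzero torsion-free ${\cal W}$-module, so one must instead simply discard the twists, as the paper does, because the $F$-compatibility of the spectral sequence is established for the untwisted Frobenius operators on the $\os{\circ}{X}{}^{(2j+k)}$.
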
 

\parno
In the case $S=s$, the $\os{\circ}{X}{}^{(k)}$ in (\ref{theo:twols}) 
is equal to the $\os{\circ}{X}{}^{(k)}$ in (\ref{theo:oal}). 
Hence (\ref{theo:twols})  is a generalization of (\ref{theo:oal}). 
In \cite{io} Illusie has defined a nonstandard Poincar\'{e} residue isomorphism 
for $\Om^{\bul}_{X/S}$ in the case $S=s$
and he has used this isomorphism for the proof of (\ref{theo:oal}) 
(see (\ref{ali:cpsips}) below for the generalization of his isomorphism).
In this article we prefer to use only 
a standard Poincar\'{e} residue isomorphism 
for $\Om^{\bul}_{X/\os{\circ}{S}}$ in \cite[(1.3.14)]{nb} 
when we give the second proof of Hyodo's criterion.  
The key lemma for the second and the third proofs of 
(\ref{theo:twols}) is the following simple one:

\begin{lemm}[{\rm {\bf Key lemma}}]\label{lemm:kel}
Let $\theta$ be a global section of $\Om^1_{X/\os{\circ}{S}}$ defined in 
{\rm \cite[(1.3)]{nb}}. 
Then the following sequence 
\begin{align*} 
0\lo B\Om^{i-1}_{X/S}\os{\theta \wedge }{\lo} 
B\Om^i_{X/\os{\circ}{S}}\lo B\Om^{i}_{X/S}\lo 0 
\quad (i\in {\mab N}) 
\end{align*} 
of $f^{-1}({\cal O}_S)$-modules is exact.  
\end{lemm}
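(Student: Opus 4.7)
My plan is to exploit the exact sequence of $f^{-1}(\mathcal{O}_S)$-modules
\begin{align*}
0 \to \Omega^{i-1}_{X/S} \overset{\theta \wedge}{\to} \Omega^i_{X/\overset{\circ}{S}} \to \Omega^i_{X/S} \to 0
\end{align*}
from \cite[(1.3.14)]{nb}, together with $d\theta = 0$, which holds because $\theta$ is the pullback to $X$ of the log differential of the generator of the log structure on $S$. This closedness yields the anti-commutation $d(\theta\wedge\tilde\alpha)=-\theta\wedge d\tilde\alpha$, so $\theta\wedge$ restricts to a well-defined map $B\Omega^{i-1}_{X/S}\to B\Omega^i_{X/\overset{\circ}{S}}$, and its injectivity is inherited from the injectivity of $\theta\wedge$ on $\Omega^{i-1}_{X/S}$. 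Surjectivity of $B\Omega^i_{X/\overset{\circ}{S}}\to B\Omega^i_{X/S}$ is immediate: given $d\omega\in B\Omega^i_{X/S}$, lift $\omega\in\Omega^{i-1}_{X/S}$ to $\tilde\omega\in\Omega^{i-1}_{X/\overset{\circ}{S}}$ using the surjection in the $\Omega$-sequence, and take $d\tilde\omega$ as a preimage of $d\omega$.

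The heart of the lemma is middle exactness. The composition vanishes by the $\Omega$-exactness. For the converse, take $\eta=d\mu\in B\Omega^i_{X/\overset{\circ}{S}}$ whose image in $\Omega^i_{X/S}$ is zero. By the $\Omega$-sequence, write $\eta=\theta\wedge\nu$ uniquely with $\nu\in\Omega^{i-1}_{X/S}$. Applying $d$ and using $d\theta=0$ gives $\theta\wedge d\nu=0$, hence $d\nu=0$ by injectivity of $\theta\wedge$. The remaining task---upgrading ``$\nu$ closed'' to ``$\nu$ exact'' in $\Omega^{\bullet}_{X/S}$---is the main obstacle.

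I would clear this obstacle by working locally on an SNCL chart where $\theta=d\log x_1+\cdots+d\log x_r$. Choose a splitting $s\colon\Omega^{\bullet}_{X/S}\to\Omega^{\bullet}_{X/\overset{\circ}{S}}$ by eliminating $d\log x_r$ in $\Omega^1_{X/S}$, and decompose $\mu=s(\bar\mu)+\theta\wedge\mu_1$ with $\bar\mu\in\Omega^{i-1}_{X/S}$ the image of $\mu$ and $\mu_1\in\Omega^{i-2}_{X/S}$. A direct computation in the chart yields the key identity
\begin{align*}
d(s(\bar\mu)) = s(d\bar\mu) + \theta \wedge L_\xi(\bar\mu),
\end{align*}
where $\xi=x_r\partial_{x_r}$ is the logarithmic derivation dual to $d\log x_r$ and $L_\xi$ is its Lie derivative; one checks by inspection that $L_\xi$ acts as $\sum_I (x_r\partial_{x_r}f_I)\omega_I$ on a monomial expression $\sum_I f_I\omega_I$ written in the basis not involving $d\log x_r$. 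Since $d\bar\mu=0$ (because $\eta$ maps to $0$ in $\Omega^i_{X/S}$), this gives $\nu=L_\xi(\bar\mu)-d\mu_1$. Cartan's magic formula $L_\xi=d\iota_\xi+\iota_\xi d$ applied to the closed form $\bar\mu$ shows $L_\xi\bar\mu=d(\iota_\xi\bar\mu)\in B\Omega^{i-1}_{X/S}$; hence $\nu\in B\Omega^{i-1}_{X/S}$ and $\eta\in\theta\wedge B\Omega^{i-1}_{X/S}$, completing the argument.
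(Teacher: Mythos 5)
Everything up to your last sentence is correct, and your reduction is the right one: the injectivity, surjectivity and well-definedness arguments are fine, and middle exactness is indeed equivalent to the assertion that $L_\xi\bar\mu$ is exact in $\Omega^{\bullet}_{X/S}$ whenever $\bar\mu$ is closed. Your identity $d(s(\bar\mu))=s(d\bar\mu)+\theta\wedge L_\xi(\bar\mu)$ correctly records the off-diagonal entry of $d$ with respect to the splitting --- an entry which, incidentally, the paper's own first proof (the direct-sum decomposition $\Omega^i_{X/\overset{\circ}{S}}=\iota(\Omega^i_{X/S})\oplus\theta\wedge\Omega^{i-1}_{X/S}$, claimed to be preserved by $d$) does not display, so your analysis is the more careful one. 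The genuine gap is the final appeal to Cartan's magic formula.

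The derivation $\xi=x_r\partial_{x_r}$ satisfies $\langle\xi,\theta\rangle=1$: it is a log derivation of $X$ over $\overset{\circ}{S}$ but not over $S$, so the contraction $\iota_\xi$ does not descend to $\Omega^{\bullet}_{X/S}=\Omega^{\bullet}_{X/\overset{\circ}{S}}/\theta\wedge\Omega^{\bullet-1}_{X/\overset{\circ}{S}}$ and ``$\iota_\xi\bar\mu$'' is not defined. In the only reading consistent with your setup ($\iota_\xi$ annihilates every basis form not involving $d\log x_r$, hence $\iota_\xi(s(\bar\mu))=0$), Cartan's formula would give $L_\xi\bar\mu=\iota_\xi(d\bar\mu)=0$ for closed $\bar\mu$, which is false: at a double point with $\mathcal{O}_X=\kappa[x_1,x_2]/(x_1x_2)$, $\theta=d\log x_1+d\log x_2$ and $\xi=x_2\partial_{x_2}$, the form $\bar\mu=x_2\,d\log x_1\in\Omega^1_{X/s}$ is closed and $\iota_\xi\bar\mu=0$, yet $L_\xi\bar\mu=x_2\,d\log x_1\neq 0$. (This class is exact, equal to $d(-x_2)$ in $\Omega^1_{X/s}$, but no contraction of $\bar\mu$ produces the primitive $-x_2$.) What must actually be proved is that $L_\xi$ induces the zero map on the cohomology sheaves $\mathcal{H}^{\bullet}(\Omega^{\bullet}_{X/S})$, i.e.\ that the connecting homomorphism of the long exact sequence of $\mathcal{H}$'s attached to the short exact sequence of complexes vanishes; in characteristic $p$ this is not a formal homotopy identity but a consequence of the log inverse Cartier isomorphism, which shows that $\mathcal{H}^i(\Omega^{\bullet}_{X/S})$ is generated by classes such as $[a^p\,d\log m_1\wedge\cdots\wedge d\log m_i]$ and $[a^{p-1}da\wedge\cdots]$, on which $L_\xi$ is exact by a one-line computation. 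This is exactly how the paper's Cartier-isomorphism argument closes the proof (comparing the sequence of $\mathcal{H}^i$'s with the degreewise short exact sequence of $\Omega^i_{X'}$'s), and some such characteristic-$p$ input is unavoidable at this step.
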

\parno
The proof of this lemma is very easy. 
The second proof of (\ref{theo:twols}) consists of the following two steps. 
By using the assumption in (\ref{theo:twols}), 
the standard Poincar\'{e} residue isomorphism for $\Om^{\bul}_{X/\os{\circ}{S}}$
and a variant of the log Cartier isomorphism,   
we first prove that $R^jf_*(B\Om^i_{X/\os{\circ}{S}})=0$ 
for $\forall i, j\in {\mab N}$.  
Using induction on $i$, we next see that $R^jf_*(B\Om^i_{X/S})=0$ 
(and $R^jf_{T*}(B\Om^i_{X_T/T})=0$) 
very immediately.  
\par 
Let ${\cal W}\wt{\Om}{}^{\bul}_X$ be the log de Rham-Witt complex of $X/\os{\circ}{s}$ 
in the case $S=s$ defined in \cite{msemi} and \cite{ndw}. 
The third proof of (\ref{theo:twols}) uses (\ref{lemm:kel}) for the case $S=s$ 
and uses a spectral sequence of 
$H^j(X,{\cal W}\wt{\Om}{}^i_X)$ instead of (\ref{ali:infhwpwt}). 
(This spectral sequence has been constructed in \cite{nb} and it is compatible 
with the operator $F$.)
The third proof is the shortest one and makes us feel that 
the proof of (\ref{theo:twols}) is obvious if we notice (\ref{lemm:kel}). 

\par 
Set $X_{\os{\circ}{T}}:=X\times_{\os{\circ}{S}}\os{\circ}{T}$. 
By generalizing the second proof of (\ref{theo:twols}), 
we can generalize (\ref{theo:twols}) for a certain integrable connection 
\begin{align*} 
\nabla \col {\cal E}\lo {\cal E}\otimes_{{\cal O}_{X_{\os{\circ}{T}}}}
\Om^1_{X_{\os{\circ}{T}}/\os{\circ}{T}}
\end{align*}
where ${\cal E}$ is a flat quasi-coherent 
${\cal O}_{X_{\os{\circ}{T}}}$-module such that 
${\cal E}$ is ``locally generated by horizontal sections of $\nabla$'', 
((\ref{theo:twols}) is a special case of the generalized theorem for the case 
where $({\cal E},\nabla)$ is the usual derivative 
$d \col {\cal O}_{X_{\os{\circ}{T}}}\lo \Om^1_{X_{\os{\circ}{T}}/\os{\circ}{T}}$.)
See (\ref{defi:tr}) and (\ref{theo:otf}) below for this generalized statement. 
An example of our ${\cal E}$'s is given in the following way. 
For a log scheme $Y$ of characteristic $p>0$, 
let $F_Y\col Y\lo Y$ be the Frobenius endomorphism of $Y$. 
Set $X^{[p]}_{\os{\circ}{T}}:=X\times_{\os{\circ}{T},F_{\os{\circ}{T}}}\os{\circ}{T}$ 
and let $F\col X_{\os{\circ}{T}}\lo X^{[p]}_{\os{\circ}{T}}$ be 
the induced morphism by $F_{X_{\os{\circ}{T}}}$.   
Let ${\cal E}'$ be a quasi-coherent ${\cal O}_{X^{[p]}_{\os{\circ}{T}}}$-module. 
Set ${\cal E}:=F^*({\cal E}')
(=F^{-1}({\cal E}')\otimes_{{\cal O}_{X^{[p]}_{\os{\circ}{T}}}}
{\cal O}_{X_{\os{\circ}{T}}})$ and 
consider a morphism 
${\rm id}_{F^{-1}({\cal E}')}\otimes d \col 
{\cal E}\lo {\cal E}\otimes_{{\cal O}_{X_{\os{\circ}{T}}}}
\Om^1_{X_{\os{\circ}{T}}/\os{\circ}{T}}$ 
of $f^{-1}({\cal O}_T)$-modules. 
It is easy to check that ${\rm id}_{F^{-1}({\cal E}')}\otimes d$ is 
an integrable connection on  ${\cal E}$.
The connction $({\cal E},{\rm id}_{F^{-1}({\cal E}')}
\otimes d_{X_{\os{\circ}{T}}/\os{\circ}{T}})$ 
is an example of the integrable connection in our mind.

\par 
In the second part of this article, we introduce new invariants of 
a proper SNCL scheme $X$ over the log point $s$ of $\kap$ 
of pure dimension $d$: 
the log (geometric) plurigenera of $X/s$ and the log Iitaka-Kodaira dimension of it. 
\par 
Set $p_g(X/s,n,1):={\rm length}_{{\cal W}_n}H^0(X,{\cal W}_n\Om^d_X)$. 
We call this the {\it log geometric genus} of $X/s$ of {\it level} $n$. 
By using the finite length version of (\ref{ali:infhwpwt}), we prove the following: 

\begin{theo}\label{theo:pgx}
$p_g(\os{\circ}{X}{}^{(1)}/\kap,n,1)\leq p_g(X/s,n,1)$. 
\end{theo}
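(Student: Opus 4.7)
The plan is to apply the finite-length version of the spectral sequence (\ref{ali:infhwpwt}) with $h = i = d$ (where $d$ is the pure dimension of $X$), for which the abutment is $H^0(X, {\cal W}_n \Om^d_X)$ whose ${\cal W}_n$-length is by definition $p_g(X/s, n, 1)$. A degree count shows that on the antidiagonal $p + q = d$ only the $j = 0$ summand of each $E_1^{-k, d+k}$ survives (the remaining summands would involve $H^{-j}$ for some $j > 0$), leaving
\[
E_1^{-k, d+k} = H^0\bigl(\os{\circ}{X}{}^{(k)}, {\cal W}_n \Om^{d-k}_{\os{\circ}{X}{}^{(k)}}\bigr)(-k)
\]
for $0 \leq k \leq d$. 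In particular the ${\cal W}_n$-length of $E_1^{-1, d+1}$ is exactly $p_g(\os{\circ}{X}{}^{(1)}/\kap, n, 1)$, the left-hand side of the inequality.

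Next I would analyze the $d_1$-differentials near this term. The source $E_1^{-2, d+1}$ of the incoming differential vanishes by a similar degree count, and the target of the outgoing differential simplifies: one summand of $E_1^{0, d+1}$ is $H^1(\os{\circ}{X}{}^{(0)}, {\cal W}_n \Om^d)$, while the other summand $H^0(\os{\circ}{X}{}^{(2)}, {\cal W}_n \Om^{d-1})$ vanishes because $\dim \os{\circ}{X}{}^{(2)} = d - 2 < d - 1$. The relevant portion of the $E_1$-page at $q = d+1$ is therefore the short complex $0 \to E_1^{-1, d+1} \os{d_1}{\to} E_1^{0, d+1} \to 0$. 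Using the degeneration at $E_2$ recorded after (\ref{ali:infhwpwt}), together with the fact that $E_\infty^{0, d} = E_1^{0, d} = p_g(\os{\circ}{X}{}^{(0)}/\kap, n, 1)$ (there are no differentials at $q = d$), I would combine the identity
\[
p_g(\os{\circ}{X}{}^{(1)}/\kap, n, 1) \;=\; \text{length}_{{\cal W}_n}(E_\infty^{-1, d+1}) \;+\; \text{length}_{{\cal W}_n}(\operatorname{im} d_1)
\]
with the filtration identity $p_g(X/s, n, 1) = \sum_{k=0}^{d} \text{length}_{{\cal W}_n}(E_\infty^{-k, d+k})$ to read off the desired bound.

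The main obstacle will be verifying that the ``lost'' quantity $\text{length}(\operatorname{im} d_1)$ is absorbed into other nonnegative contributions to $p_g(X/s, n, 1)$ — principally $E_\infty^{0, d} = p_g(\os{\circ}{X}{}^{(0)}/\kap, n, 1)$ and the higher graded pieces $E_\infty^{-k, d+k}$ for $k \geq 2$ — so that the full length $p_g(\os{\circ}{X}{}^{(1)}/\kap, n, 1)$, and not merely the subquotient $\text{length}(E_\infty^{-1, d+1})$, is dominated by $p_g(X/s, n, 1)$. This compensation reflects features of the SNCL structure, notably the compatibility of the Gysin and Poincar\'{e}-residue maps with the Frobenius (encoded in the Tate twist $(-k)$) and the self-intersection (Friedman-type) constraints built into the definition of an SNCL scheme; making this compensation precise, rather than any spectral-sequence manipulation, is where the real content of the proof should lie.
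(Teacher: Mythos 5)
Your argument targets the wrong $E_1$-term, and this is not a cosmetic slip: it changes the statement you are proving into a false one. In Theorem \ref{theo:pgx} the scheme $\os{\circ}{X}{}^{(1)}$ carries the indexing of the Introduction and of (\ref{ali:wrzn}), under which it is the disjoint union of the \emph{irreducible components} of $\os{\circ}{X}$; this is forced both by the sentence following the theorem (the inequality ``bounds the number of the irreducible components $\os{\circ}{X}_{\lam}$ with $p_g(\os{\circ}{X}_{\lam},n)\geq 1$'') and by the morphism (\ref{ali:aolmd}), whose source involves ${\cal W}_n\Om^d_{\os{\circ}{X}{}^{(1)}}$ in the same top degree $d$ as the target. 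In the indexing of \S3--\S5 this scheme is $\os{\circ}{X}{}^{(0)}$, so the left-hand side of the inequality is ${\rm length}_{{\cal W}_n}(E_1^{0,d})$ with $E_1^{0,d}=H^0(\os{\circ}{X}{}^{(0)},{\cal W}_n\Om^d_{\os{\circ}{X}{}^{(0)}})$, not ${\rm length}_{{\cal W}_n}(E_1^{-1,d+1})$. With the correct term the paper's proof (the first inequality of (\ref{ali:pgx0n})) is a pure edge-map argument: after the degree count eliminating $j>0$ (which you carry out correctly), $E_1^{-k,h+k}=0$ whenever $k<0$ or $h<d$, so $E_1^{0,d}$ neither receives nor emits any differential on any page, $E_\infty^{0,d}=E_1^{0,d}$, and its length is bounded by that of the abutment $H^0(X,{\cal W}_n\Om^d_X)$. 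No compensation between graded pieces, and no appeal to $E_2$-degeneration (which is only recorded for the infinite-level spectral sequence in any case), is needed; you in fact write down the required identity $E_\infty^{0,d}=E_1^{0,d}$ without recognizing it as the whole proof.

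By contrast, the inequality you actually set out to prove, namely ${\rm length}_{{\cal W}_n}H^0(\os{\circ}{X}{}^{(1)},{\cal W}_n\Om^{d-1}_{\os{\circ}{X}{}^{(1)}})\leq p_g(X/s,n,1)$ with $\os{\circ}{X}{}^{(1)}$ the double locus, is false in general, so the ``absorption'' of ${\rm length}({\rm Im}(d_1))$ that you correctly isolate as the crux cannot be established. Take a Type II semistable degeneration of $K3$ surfaces whose special fiber is a chain $X_0\cup\cdots\cup X_N$ of rational and elliptic ruled surfaces glued along $N$ elliptic curves: then $d=2$, each component has $p_g=0$, $E_1^{-1,3}=H^0(\os{\circ}{X}{}^{(1)},\Om^1_{\os{\circ}{X}{}^{(1)}})(-1)\simeq \kap^N$, while $p_g(X/s,1,1)=h^0(X,\Om^2_{X/s})=1$ because the log canonical sheaf is trivial; for $N\geq 2$ your inequality fails. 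What survives of $E_1^{-1,d+1}$ in the abutment is only $E_\infty^{-1,d+1}={\rm Ker}(d_1)$, and that kernel is exactly how this term enters the paper's \emph{upper} bound for $p_g(X/s,n,1)$ in (\ref{ali:pgx0n}), not the lower one.
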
 

\parno 
We call this inequality the {\it lower semi-continuity of the log geometric genus} 
of $X/s$. This inequality bounds the number of the irreducible components $\os{\circ}{X}_{\lam}$'s 
of $\os{\circ}{X}$ such that $p_g(\os{\circ}{X}_{\lam}, n)\geq 1$. 
\par 
More generally, set 
$p_g(X/s,n,r):={\rm length}_{{\cal W}_n}H^0(X,({\cal W}_n\Om^d_X)^{\otimes r})$ 
$(r\in {\mab Z}_{\geq 1})$. 
We call this the {\it log} ({\it geometric}) {\it plurigenus} of $X/s$ of {\it level} $n$. 
(In the following, we omit to say the adjective ``geometric'' in the terminology 
``log geometric plurigenus''.) 
In the case where the characteristic of the base field of $\kap$ is $0$, we set 
$p_g(X/s,r):={\rm dim}_{\kap}H^0(X,(\Om^d_{X/s})^{\otimes r})$ 
and we also call this the {\it log plurigenus} of $X/s$. 
We would like to prove the following lower semi-continuity of the log plurigenus of $X/s$ of {\it level} $n$: 

\begin{conj}
\begin{equation*} 
\begin{cases} 
p_g(\os{\circ}{X}{}^{(1)}/\kap,n,r)\leq p_g(X/s,n,r) & {\rm if}~{\rm ch}(k)=p>0, \\
p_g(\os{\circ}{X}{}^{(1)}/\kap,r)\leq p_g(X/s,r) & {\rm if}~{\rm ch}(k)=0. 
\end{cases}
\end{equation*}  
\end{conj}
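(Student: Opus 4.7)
The plan is to construct an injective map of sheaves
\begin{equation*}
\bigoplus_{\lambda} i_{\lambda\ast}\bigl((\Om^d_{\os{\circ}{X}_\lambda/\kap})^{\otimes r}\bigr)\hookrightarrow (\Om^d_{X/s})^{\otimes r}
\end{equation*}
in the characteristic $0$ case, together with its Witt analogue $\bigoplus_\lambda i_{\lambda\ast}(({\cal W}_n\Om^d_{\os{\circ}{X}_\lambda})^{\otimes r})\hookrightarrow ({\cal W}_n\Om^d_X)^{\otimes r}$ in characteristic $p$; here $i_\lambda\col \os{\circ}{X}_\lambda\hookrightarrow \os{\circ}{X}$ runs over the inclusions of irreducible components of $\os{\circ}{X}$ and $D_\lambda:=\bigcup_{\mu\neq \lambda}(\os{\circ}{X}_\lambda\cap \os{\circ}{X}_\mu)$. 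Once such an injection is in hand, applying $H^0$ and summing over $\lambda$ yields the conjectural inequalities, since $H^0(\os{\circ}{X}{}^{(1)},(\Om^d_{\os{\circ}{X}{}^{(1)}/\kap})^{\otimes r})$ splits as $\bigoplus_\lambda H^0(\os{\circ}{X}_\lambda,(\Om^d_{\os{\circ}{X}_\lambda/\kap})^{\otimes r})$, and analogously at the Witt level for lengths.

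For the characteristic $0$ half, I would first record that $\Om^d_{X/s}$ is a line bundle on $\os{\circ}{X}$ whose restriction to a component is $\Om^d_{X/s}|_{\os{\circ}{X}_\lambda}=\Om^d_{\os{\circ}{X}_\lambda/\kap}(\log D_\lambda)=\Om^d_{\os{\circ}{X}_\lambda/\kap}(D_\lambda)$, so that $(\Om^d_{\os{\circ}{X}_\lambda/\kap})^{\otimes r}$ embeds into $(\Om^d_{X/s})^{\otimes r}|_{\os{\circ}{X}_\lambda}$ as the subsheaf of $r$-fold tensors vanishing to order $r$ along $D_\lambda$. Since $(\Om^d_{X/s})^{\otimes r}$ is locally free on $\os{\circ}{X}$, the standard Mayer--Vietoris/\v{C}ech resolution attached to the decomposition $\os{\circ}{X}=\bigcup_\lambda \os{\circ}{X}_\lambda$ identifies $(\Om^d_{X/s})^{\otimes r}$ with the kernel of
\begin{equation*}
\bigoplus_\lambda i_{\lambda\ast}\bigl((\Om^d_{X/s})^{\otimes r}|_{\os{\circ}{X}_\lambda}\bigr)\lo \bigoplus_{\lambda<\mu}i_{\lambda\mu\ast}\bigl((\Om^d_{X/s})^{\otimes r}|_{\os{\circ}{X}_\lambda\cap \os{\circ}{X}_\mu}\bigr).
\end{equation*}
For any tuple $(\omega_\lambda)$ with $\omega_\lambda\in (\Om^d_{\os{\circ}{X}_\lambda/\kap})^{\otimes r}$ one has $\omega_\lambda|_{\os{\circ}{X}_\lambda\cap \os{\circ}{X}_\mu}=0$ (since $\os{\circ}{X}_\lambda\cap \os{\circ}{X}_\mu\sus D_\lambda$), and likewise for $\omega_\mu$, so the tuple lies in this kernel; the same vanishing on higher-fold intersections handles the remaining terms of the resolution, and the resulting ``extension-by-zero'' is well defined and injective.

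In characteristic $p$ the strategy should be parallel, with $\Om^d_{X/s}$ replaced by ${\cal W}_n\Om^d_X$ and the residue identification replaced by its Witt analogue ${\cal W}_n\Om^d_X|_{\os{\circ}{X}_\lambda}={\cal W}_n\Om^d_{\os{\circ}{X}_\lambda}(\log D_\lambda)$, which I would extract from the local structure of ${\cal W}_n\Om^d_X$ on an SNCL scheme developed in \cite{ndw} (in the same spirit as the Poincar\'e residue morphisms underlying the spectral sequence (\ref{ali:infhwpwt}) at $i=d$ and total degree $0$). The main obstacle is that $({\cal W}_n\Om^d_X)^{\otimes r}$ is not itself a term of the log Hodge--Witt complex, so the Mayer--Vietoris resolution and the ``$r$-fold vanishing along $D_\lambda$'' subsheaf inside $({\cal W}_n\Om^d_X)^{\otimes r}|_{\os{\circ}{X}_\lambda}$ have to be constructed by hand at the Witt level. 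Checking that the local extension-by-zero recipe is compatible with the projections ${\cal W}_{n+1}\Om^d_X\twoheadrightarrow {\cal W}_n\Om^d_X$ and with the ${\cal W}_n({\cal O}_X)$-module structure on the tensor power (rather than merely with the $f^{-1}({\cal W}_n({\cal O}_s))$-module structure) is the decisive technical step that has to be supplied beyond the formal skeleton above.
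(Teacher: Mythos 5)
Be aware first that the statement you are proving is stated in the paper as a conjecture: the paper itself establishes it only in characteristic $0$ and for $n=1$ in characteristic $p$ (see (\ref{theo:ls}) and (\ref{theo:temfc})), and explicitly records the injectivity of $\Psi_{X,n,r}$ for $n\geq 2$ as an open problem in (\ref{prob:pi}). For the cases that are actually provable, your route is genuinely different from the paper's. The paper builds $\Psi_{X,n,r}$ by composing the inverse of the isomorphism $\theta_n\wedge\col {\cal W}_n\Om^d_X\os{\sim}{\lo}{\cal W}_n\wt{\Om}{}^{d+1}_X/P_0{\cal W}_n\wt{\Om}{}^{d+1}_X$, the inclusion ${\rm gr}_1^P{\cal W}_n\wt{\Om}{}^{d+1}_X\subset {\cal W}_n\wt{\Om}{}^{d+1}_X/P_0{\cal W}_n\wt{\Om}{}^{d+1}_X$, and the Poincar\'e residue isomorphism ${\rm gr}_1^P{\cal W}_n\wt{\Om}{}^{d+1}_X\cong a_*({\cal W}_n\Om^d_{\os{\circ}{X}{}^{(0)}})$, and then takes $r$-fold tensor powers over ${\cal W}_n({\cal O}_X)$; injectivity for $n=1$ is obtained by checking the map is the identity over the smooth locus and invoking that ${\cal W}_1\Om^d_{\os{\circ}{X}_i}$ is an invertible module (via the Cartier isomorphism) on an integral scheme, so that a generic isomorphism of such sheaves is injective. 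Your extension-by-zero through the Mayer--Vietoris resolution of the line bundle $(\Om^d_{X/s})^{\otimes r}$, using $\Om^d_{X/s}\vert_{\os{\circ}{X}_\lambda}=\Om^d_{\os{\circ}{X}_\lambda/\kap}(D_\lambda)$ and order-$r$ vanishing along $D_\lambda$, avoids $\wt{\Om}$ and the residue map altogether; it correctly yields the characteristic-$0$ inequality and, read with ${\cal W}_1\Om^d$ in place of $\Om^d$, the $n=1$ inequality, and the resulting map agrees with the paper's since both restrict to the identity over the smooth locus. What you lose relative to the paper is the compatibility with $R$, $F$, $V$ and the functoriality that the residue construction carries along for all $n$.

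The genuine gap --- which you only partially acknowledge --- is the characteristic-$p$ statement for $n\geq 2$, and your proposed ``parallel strategy'' meets a concrete obstruction there: for $n\geq 2$ the sheaf ${\cal W}_n\Om^d_Y$ of a smooth $d$-fold $Y$ is not an invertible (nor even locally free) ${\cal W}_n({\cal O}_Y)$-module, so neither the twist ``${\cal W}_n\Om^d_{\os{\circ}{X}_\lambda}(\log D_\lambda)$'' nor the subsheaf of sections ``vanishing to order $r$ along $D_\lambda$'' is defined, and the Mayer--Vietoris resolution $0\lo{\cal W}_n({\cal O}_X)\lo\bigoplus_\lambda{\cal W}_n({\cal O}_{\os{\circ}{X}_\lambda})\lo\cdots$ need not remain exact after tensoring with the non-flat module $({\cal W}_n\Om^d_X)^{\otimes r}$. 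This is exactly the point at which the paper also stops: $\Psi_{X,n,r}$ is constructed for all $n$, but its injectivity (hence the conjectured inequality) is proved only for $n=1$ and is otherwise left open. So your skeleton reproves the known cases by a cleaner route but does not close the conjecture, and the step you defer as ``decisive'' is precisely the one for which no argument is currently known.
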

It is clear that  the finite length version of (\ref{ali:infhwpwt}) itself is not useful 
for the proof of this lower semi-continuity in the case $r\geq 2$. 
To prove this, we need a new idea:  
we construct the following canonical morphism of 
${\cal W}_n({\cal O}_X)$-modules: 
\begin{align*} 
\Psi_{X,n,r}\col a_*(({\cal W}_n\Om^d_{\os{\circ}{X}{}^{(1)}})^{\otimes r})
\lo ({\cal W}_n\Om^d_{X})^{\otimes r} \quad (n,r \in {\mab Z}_{\geq 1}). 
\tag{1.6.1}\label{ali:aolmd} 
\end{align*} 
Here $a\col \os{\circ}{X}{}^{(1)}\lo \os{\circ}{X}$ is the natural morphism. 
(One may think that this morphism is a strange morphism at first glance.)
We prove that $\Psi_{X,1,r}$ is injective. 
However we do not know whether $\Psi_{X,n,r}$ is injective 
for any $n\in {\mab Z}_{\geq 2}$. 
In the case ${\rm ch}(\kap)=0$, 
we also construct the following canonical morphism of 
${\cal O}_X$-modules: 
\begin{align*} 
\Psi_{X,r}\col a_*((\Om^d_{\os{\circ}{X}{}^{(1)}})^{\otimes r})
\lo (\Om^d_{X/s})^{\otimes r} \quad (r \in {\mab Z}_{\geq 1}) 
\tag{1.6.2}\label{ali:aomd} 
\end{align*} 
and we prove that this morphism is injective. 
Consequently we obtain the following: 

\begin{theo}\label{theo:ls}
Let $\kap$ be a field of any characteristic. 
Let $s$ be the log point of $\kap$.  
Let $X/s$ be a proper SNCL scheme of pure dimension $d$. 
Let $r$ be a positive integer. 
Then 
\begin{align*} 
p_g(\os{\circ}{X}{}^{(1)}/\kap,1,r)\leq p_g(X/s,1,r)
\quad {\rm if}~{\rm ch}(\kap)=p>0 
\tag{1.7.1}\label{ali:pgr}
\end{align*}  
and 
\begin{align*} 
p_g(\os{\circ}{X}{}^{(1)}/\kap,r)\leq p_g(X/s,r)
\quad {\rm if}~{\rm ch}(\kap)=0. 
\tag{1.7.2}\label{ali:pgpr}
\end{align*}  
\end{theo}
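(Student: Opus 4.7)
The plan is to prove both (\ref{ali:pgr}) and (\ref{ali:pgpr}) via the injectivity of the canonical morphisms $\Psi_{X,1,r}$ and $\Psi_{X,r}$ announced in (\ref{ali:aolmd}) and (\ref{ali:aomd}). Once injectivity is in hand, the left-exactness of $H^0(X,-)$ together with the identification ${\cal W}_1\Om^d_X=\Om^d_{X/s}$ yields
\begin{align*}
p_g(\os{\circ}{X}{}^{(1)}/\kap,1,r)=\dim_\kap H^0(\os{\circ}{X},a_*((\Om^d_{\os{\circ}{X}{}^{(1)}})^{\otimes r}))\leq \dim_\kap H^0(X,(\Om^d_{X/s})^{\otimes r})=p_g(X/s,1,r),
\end{align*}
and similarly in characteristic zero, which are precisely the desired inequalities.

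To construct $\Psi_{X,r}$, I work \'etale locally. Near a point of $X$ lying on $k+1$ components, a standard SNCL chart provides coordinates $(x_0,\dots,x_k,y_1,\dots,y_{d-k})$ with $\os{\circ}{X}=\{x_0 x_1\cdots x_k=0\}$, with the log structure generated by the $x_i$'s, and with $\os{\circ}{X}_\lambda=\{x_\lambda=0\}$. Using the relation $\sum_{i=0}^k d\log x_i=0$ in $\Om^1_{X/s}$, the line bundle $\Om^d_{X/s}$ is locally generated by
\begin{align*}
\zeta_\lambda:=d\log x_0\wedge\cdots\widehat{d\log x_\lambda}\cdots\wedge d\log x_k\wedge dy_1\wedge\cdots\wedge dy_{d-k}.
\end{align*}
On the summand $(a_\lambda)_*((\Om^d_{\os{\circ}{X}_\lambda})^{\otimes r})$ of the source I would define $\Psi_{X,r}$ by sending the local generator $(dx_0\wedge\cdots\widehat{dx_\lambda}\cdots\wedge dx_k\wedge dy_1\wedge\cdots\wedge dy_{d-k})^{\otimes r}$ to $\bigl(\prod_{i\neq\lambda}x_i\bigr)^r\zeta_\lambda^{\otimes r}$. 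The identity $dx_i=x_i\,d\log x_i$ in $\Om^1_{X/s}$ realises the right-hand side as the natural image of any lift of the source element from $\os{\circ}{X}_\lambda$ to $X$; since two such lifts differ by a multiple of $x_\lambda$ and $x_\lambda\cdot\prod_{i\neq\lambda}x_i=x_0\cdots x_k=0$ in ${\cal O}_X$, the definition is independent of the chosen lift.

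For injectivity, I will check the stalk at each generic point $\eta_\lambda$ of $\os{\circ}{X}_\lambda$ (not lying on any other component). There the stalks of the summands indexed by $\mu\neq\lambda$ vanish, so $\Psi_{X,r}$ reduces at $\eta_\lambda$ to multiplication by $\bigl(\prod_{i\neq\lambda}x_i\bigr)^r$, a nonzero element of $k(\eta_\lambda)$ since no $x_i$ with $i\neq\lambda$ vanishes at $\eta_\lambda$. Because each $(\Om^d_{\os{\circ}{X}_\lambda})^{\otimes r}$ is a line bundle on the integral smooth scheme $\os{\circ}{X}_\lambda$, injectivity at every generic point forces $\Psi_{X,r}$ to be globally injective. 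The Witt case $\Psi_{X,1,r}$ is constructed and treated by the very same procedure, using ${\cal W}_1\Om^d_X=\Om^d_{X/s}$. The main obstacle I foresee is verifying coordinate-independence rigorously: under a change of SNCL chart $x_i\mapsto u_i x_i$ (with $\prod u_i$ a unit), both the factor $\prod_{i\neq\lambda}x_i$ and the generator $\zeta_\lambda$ transform, and one must check that their product changes by a matching unit on $\os{\circ}{X}_\lambda$, so that $\Psi_{X,r}$ glues globally. The paper's explicit caveat that injectivity of $\Psi_{X,n,r}$ is unknown for $n\geq 2$ suggests that the substantive input is only the structure of the log canonical sheaf itself, which is tractable for $n=1$ and in characteristic zero.
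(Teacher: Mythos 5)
Your overall strategy is the paper's: construct the canonical morphism $\Psi$ from $a_*\bigl((\Om^d_{\os{\circ}{X}{}^{(1)}})^{\otimes r}\bigr)$ to $(\Om^d_{X/s})^{\otimes r}$, prove it injective by noting that the source is a line bundle on the disjoint union of the (integral, smooth) components and that the map is an isomorphism at the generic points, i.e.\ on the smooth locus of $\os{\circ}{X}$, and conclude by left exactness of $H^0$. Your local formula sending $(dx_0\wedge\cdots\widehat{dx_\lam}\cdots\wedge dx_k\wedge dy_1\wedge\cdots\wedge dy_{d-k})^{\otimes r}$ to $\bigl(\prod_{i\neq\lam}x_i\bigr)^r\zeta_\lam^{\otimes r}$ is (up to sign) exactly what the paper's map computes to in a chart, your treatment of the lift via $dx_\lam=x_\lam\,d\log x_\lam$ is fine, and your generic-point argument is the same torsion-freeness argument as in the paper's (\ref{theo:temfc}).

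The genuine gap is the one you name yourself and then leave open: you never show that the chartwise definition glues, so $\Psi_{X,r}$ is not established as a global morphism of sheaves, and without that nothing downstream exists. This is not a cosmetic omission --- the generator of $\Om^d_{\os{\circ}{X}_\lam}$, the correction factor $\prod_{i\neq\lam}x_i$ and the generator $\zeta_\lam$ of $\Om^d_{X/s}$ all transform under a change of SNCL chart, and the Jacobians on the two sides have to be matched. The paper avoids the issue entirely by defining the map intrinsically: writing $\Om^{\bul}_{X/\os{\circ}{s}}$ for the log differentials relative to the underlying point with its preweight filtration $P$, one has the two canonical isomorphisms
\begin{align*}
a_*(\Om^d_{\os{\circ}{X}{}^{(1)}})
\os{{\rm Res},\ \sim}{\longleftarrow}
{\rm gr}_1^P\Om^{d+1}_{X/\os{\circ}{s}}
\os{\sus}{\lo}
\Om^{d+1}_{X/\os{\circ}{s}}/P_0\Om^{d+1}_{X/\os{\circ}{s}}
\os{\theta\wedge,\ \sim}{\longleftarrow}\Om^{d}_{X/s},
\end{align*}
namely the Poincar\'e residue isomorphism on ${\rm gr}_1^P$ and the top-degree isomorphism $\theta\wedge$ of (\ref{ali:dp0}); inverting them and composing with the inclusion gives $\Psi_{X,1}$ with no gluing to check, and the identical composite for ${\cal W}_n\wt{\Om}{}^{d+1}_X$ produces $\Psi_{X,n,r}$ for every $n$ at once. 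Your local expression is precisely what this composite evaluates to (the residue inverse is $\om\lom\om\wedge d\log x_\lam$, which contributes the factor $\prod_{i\neq\lam}x_i$ and the wedge $\theta\wedge\zeta_\lam$), so the repair is either to adopt the intrinsic definition or to actually carry out the coordinate-change computation you postponed. Once that is done, the rest of your argument goes through and matches the paper's proof.
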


\parno 
We can also construct the obvious analogue of the morphism (\ref{ali:aomd}) for the analytic case
and can prove the obvious analogue of the inequality (\ref{ali:pgr}) in this case. 

This should be compared with the works of Noboru Nakayama (\cite{nn}), 
Siu (\cite{s}) and Hajime Tsuji (\cite{t}) using non-algebraic methods.   
They have conjectured and H.~Tsuji has claimed that the following holds, 
though he has not given the detail of the proof of the following theorem:

\begin{theo}[{\rm {\bf \cite{t}}}]\label{theo:t}
Let $\Del$ be the unit disk. 
Let ${\cal X}\lo \Del$ be the analytification of a projective strict semistable family. 
Let $t$ be an element of $\Del \setminus \{O\}$. 
Let $X$ and ${\cal X}_t$ be the special fiber and the generic fiber 
of this family, respectively. 
Then 
\begin{align*} 
p_g(\os{\circ}{X}{}^{(1)}/{\mab C},r)\leq p_g({\cal X}_{\rm t}/{\mab C},r)
\quad (r\in {\mab Z}_{\geq 1}).
\tag{1.8.1}\label{ali:pgcr}
\end{align*}   
\end{theo}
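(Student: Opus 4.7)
The plan is to chain together two inequalities: the author's (\ref{ali:pgpr}) in its complex-analytic form, and an extension statement for pluricanonical sections in a semistable degeneration.

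\emph{Step 1} (algebraic): By the complex-analytic analogue of the injective morphism $\Psi_{X,r}$ from (\ref{ali:aomd}), which the author already notes can be constructed in the same way as in the algebraic setting, one obtains
$$p_g(\os{\circ}{X}{}^{(1)}/{\mab C}, r) \leq p_g(X/s, r),$$
where $s$ is the log point attached to $O \in \Delta$ and $X/s$ is the central fiber of ${\cal X}/\Delta$ with its canonical SNCL structure. This step is sheaf-theoretic and carries over verbatim from the proof of (\ref{theo:ls}).

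\emph{Step 2} (extension): It remains to compare $p_g(X/s, r)$ with $p_g({\cal X}_{\rm t}/{\mab C}, r)$. The relative log top differential sheaf $\Omega^d_{{\cal X}/\Delta}$ is invertible on ${\cal X}$: it restricts to $\Omega^d_{X/s}$ on the central fiber $X$ and to the ordinary canonical sheaf $\omega_{{\cal X}_t}$ on each smooth fiber ${\cal X}_t$, $t \neq O$. Upper semi-continuity of cohomology for $(\Omega^d_{{\cal X}/\Delta})^{\otimes r}$ only yields the \emph{reverse} inequality $p_g(X/s, r) \geq p_g({\cal X}_t, r)$, so the substance of Step 2 is to construct, for any linearly independent global sections $\sig_1, \ldots, \sig_N$ of $(\Omega^d_{X/s})^{\otimes r}$, extensions $\ti{\sig}_j$ to global sections of $(\Omega^d_{{\cal X}/\Delta})^{\otimes r}$ on a neighborhood of $X$ in ${\cal X}$. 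The restrictions $\ti{\sig}_j|_{{\cal X}_t}$ are then pluricanonical forms on ${\cal X}_t$, and for generic $t$ they remain linearly independent (the locus in $\Delta$ where a nontrivial ${\mab C}$-linear combination vanishes is a proper analytic subset).

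The main obstacle is precisely Step 2: the wrong direction of naive cohomological semi-continuity forces one to leave the algebraic/log-cohomological framework of this paper and invoke genuinely analytic tools, e.g.~the Ohsawa--Takegoshi $L^2$ extension theorem and its subsequent refinements by Siu, Paun, and Takayama that handle semistable degenerations. This is precisely why Tsuji's proof in \cite{t} is transcendental in nature and does not fit into the log-Witt framework developed in the first part of this paper.
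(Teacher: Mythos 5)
You should first be aware that the paper itself contains no proof of (\ref{theo:t}): the statement is recorded as a claim of Tsuji \cite{t}, and the author explicitly remarks both that Tsuji ``has not given the detail of the proof'' and that ``an algebraic proof of (\ref{theo:t}) has not been known.'' So there is no argument in the paper against which to measure yours; the only question is whether your proposal is itself a proof, and it is not.

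Your Step 1 is sound and is exactly the analytic analogue of (\ref{theo:ls}) that the author asserts via the injectivity of the analytic version of $\Psi_{X,r}$ in (\ref{ali:aomd}); note only that the superscript should index the disjoint union of the irreducible components, written $\os{\circ}{X}{}^{(0)}$ in the body of the paper (see (\ref{eqn:kfdintd})), despite the notation $\os{\circ}{X}{}^{(1)}$ used in the Introduction. The genuine gap is Step 2. The inequality $p_g(X/s,r)\leq p_g({\cal X}_t/{\mab C},r)$, i.e.\ the extendability of every global section of $(\Om^d_{X/s})^{\otimes r}$ on the singular SNC central fiber to a section of $(\Om^d_{{\cal X}/\Del})^{\otimes r}$ on a neighborhood of $X$, is not a lemma you can quote: it \emph{is} the theorem. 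The $L^2$ extension theorem of Ohsawa--Takegoshi and the invariance-of-plurigenera arguments of Siu \cite{s} and Paun extend pluricanonical sections from a \emph{smooth} fiber, with metric and positivity inputs that are not available on the reduced normal crossing divisor $X$, whose sections live in the log pluricanonical sheaf $(\Om^d_{X/s})^{\otimes r}$ rather than in $\om_X^{\otimes r}$; the semistable refinements you allude to address invariance along smooth fibers or extension \emph{to} the central fiber, not lower semi-continuity \emph{from} it. Reducing (\ref{theo:t}) to ``extend from the central fiber by Ohsawa--Takegoshi--Siu--Paun--Takayama'' is therefore circular at precisely the point where all the difficulty lies, which is why the author records the statement as an unproved claim. (Your observation that upper semi-continuity gives only the reverse inequality is correct, and your generic-linear-independence argument would be fine \emph{if} the extensions $\ti{\sig}_j$ existed; their existence is what must be established.)
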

An algebraic proof of (\ref{theo:t}) has not been known.
\par 
Because the log plurigenera is upper semi-continuous, this  
implies the smooth deformation invariance of the log plurigenera 
of a proper smooth algebraic family in the case ${\rm ch}(\kap)=0$. 
On the other hand, 
the smooth deformation invariance of the log plurigenera of  
a proper smooth analytic case does not necessarily hold by \cite{in}, 
\cite{ni} and \cite{ueno} and our result for the analytic case.

In the case where the proper or projective SNCL scheme is the log special fiber of 
a proper or projective strict semifamily over a (complete) discrete valuation ring, 
we would like to raise the following problem: 

\begin{prob}\label{prob:gpc}
Let ${\cal V}$ be a (complete) discrete valuation ring of mixed characteristics or 
equal characteristic $p>0$. 
Let $K$ be the fraction field of ${\cal V}$. 
Let ${\cal X}/{\cal V}$ be a proper or projective strict semistable family. 
Let $X/s$ and ${\cal X}_K$ be the log special fiber of ${\cal X}/B$ 
and the generic fiber of ${\cal X}/B$, respectively. 
When does the following equality hold?
\begin{align*} 
p_g(X/s,n,r)=p_g({\cal X}_K/K,n,r) 
\quad (n,r\in {\mab Z}_{\geq 1}). 
\tag{1.9.1}\label{ali:pjxs}
\end{align*}  
\end{prob}
We think that nothing nontrivial about (\ref{prob:gpc}) has been known. 
In the analytic case, the analogous equality to (\ref{ali:pjxs}) does not hold in general by 
\cite{in},

\par 
The contents of this article are as follows. 
\par 
In \S\ref{sec:hc} we prove (\ref{theo:twols}) quickly by using (\ref{ali:infhwpwt}). 
We also prove the analogue of (\ref{theo:twols}) for an open log scheme. 
\par 
In \S\ref{sec:hpc} we prove (\ref{theo:twols}) in a similar way to that in \cite{io}. 
However we do not use Illusie's Poincar\'{e} residue isomorphism. 
We simplify his proof by using the Key lemma. 
In addition, we prove the Hyodo's criterion for 
the integrable connection $({\cal E},\nabla)$ as already stated. 
\par 
In \S\ref{sec:htc} we prove (\ref{theo:twols}) without using (\ref{ali:infhwpwt}) nor 
Illusie's Poincar\'{e} residue isomorphism. Instead we use the sheaf 
${\cal W}\wt{\Om}^i_X$ in \cite{msemi} and \cite{ndw} and a 
spectral sequence constructed in \cite{nb}. 
\par 
In \S\ref{sec:llss} we prove the lower semi-continuity of the log genus 
by using the finite length version of (\ref{ali:infhwpwt}). 
\par 
In \S\ref{sec:llgpss} we prove the lower semi-continuity of 
the log plurigenus of level 1. It is a future problem to show  
whether the lower semi-continuity of the log plurigenera of level $n$ holds for any $n$. 
\par 
In \S\ref{sec:gir} we generalize Illusie's Poincar\'{e} residue isomorphism. 
\par 
In \S\ref{sec:hfc} we give an easy criterion for the quasi-$F$-splitness for 
an SNCL scheme. 
\par 
In \S\ref{sec:em} we give an easy remark about the ordinarity at $0$ 
defined in \cite{nlfc}. 

\par
\bigskip
\parno
{\bf Acknowledgment.} 
I would like to express my gratitude to L.~Illusie for 
asking me an interesting question in \S\ref{sec:hfc} of this article   
at a workshop ``Arithmetic geometry 2018'' at Hakodate organized by 
T.~Yamazaki, Y.~Goto and M.~Aoki. 
His question has turned my eye to Hyodo's criterion 
in his article \cite{io}. 
I would also like to express my gratitude to T.~Fujisawa for having removed 
my misunderstanding about the definition of 
Illusie's Poincar\'{e} residue isomorphism in \cite{io}.

\bigskip
\par\noindent
{\bf Notations.} 
(1) For a complex $({\cal F}^{\bul},d^{\bul})$ of sheaves on a topological space, 
we denote ${\rm Ker}(d^i \col {\cal F}^i\lo {\cal F}^{i+1})$ 
(resp.~${\rm Im}(d^{i-1} \col {\cal F}^{i-1}\lo {\cal F}^i)$) 
mainly by $Z^i({\cal F}^{\bul})$ and sometimes by 
$Z{\cal F}^i$ (resp.~mainly by $B^i({\cal F}^{\bul})$ and sometimes by 
$B{\cal F}^i$ ) depending on the length of the letter ${\cal F}$. 
\par 
(2) For a log scheme $Z$, we denote by $\os{\circ}{Z}$ 
the underlying scheme of $Z$. 
  

\section{The first proof of (\ref{theo:twols})}\label{sec:hc}
In this section we give the first proof of (\ref{theo:twols}) 
by using the $p$-adic weight spectral (\ref{ali:infhwpwt}) 
(\cite{ndw}, cf.~\cite{msemi}). 
The idea of the proof in this section is Illusie's one (\cite[Appendice (2.8)]{io}).

\par 
Let $\kap$ be a perfect field of characteristic $p>0$. 
Let $s$ be a fine log smooth scheme such that 
$\os{\circ}{s}={\rm Spec}(\kap)$. 
Let $Y/s$ be a log smooth scheme of Cartier type. 
Let ${\cal W}_{\star}\Om^{\bul}_{Y}$ $(\star=n\in {\mab Z}_{\geq 1}$ or nothing) 
be the log de Rham-Witt complex defined in \cite[(4.1)]{hk}. 
Let ${\cal W}_n(Y)$ be the canonical lift of $Y$ over ${\cal W}_n(\kap)$. 
Identify ${\cal O}_{{\cal W}_n(Y)}={\cal W}_n({\cal O}_Y)$ 
with ${\cal W}_n\Om^0_{Y}$ via the canonical isomorphism 
$s_n\col {\cal W}_n({\cal O}_Y)\os{\sim}{\lo} {\cal W}_n\Om^0_{Y}$ 
(\cite[(7.5)]{ndw}). 
First let us recall the following result. 
This is the log version of \cite[I (3.11)]{idw} 
if one forgets the structure of ${\cal W}_{n+1}({\cal O}_Y)$-modules 
of sheaves in (\ref{ali:mfee}) below. 
A variant of the following will be used in \S\ref{sec:htc}.

\begin{prop}[{\bf \cite[(3.10)]{nlfc}}]\label{prop:fzg}
Let $F_{{\cal W}_n(Y)} \col {\cal W}_n(Y) \lo {\cal W}_n(Y)$ be 
the Frobenius endomorphism of ${\cal W}_n(Y)$. 
Let $i$ be a nonnegative integer. 
Let 
$C^{-1}\col \Om^i_{Y/s}\os{\sim}{\lo} F_{Y*}({\cal W}_1\Om^i_Y)
=F_{Y*}({\cal H}^i(\Om^{\bul}_{Y/s}))$ 
be the log inverse Cartier isomorphism due to Kato {\rm (\cite{klog1})}. 
Let $R\col {\cal W}_{n+1}\Om^i_{Y}\lo {\cal W}_n\Om^i_{Y}$ 
be the projection. 
Then the following hold$:$ 
\par 
$(1)$ The following sequence  
\begin{align*} 
{\cal W}_{n+1}\Om^i_{Y}
\os{F}{\lo} F_{{\cal W}_n(Y)*}({\cal W}_n\Om^i_{Y})
\os{F_{{\cal W}_n(Y)*}(F^{n-1}d)}{\lo}  
B_n{\cal W}_1\Om^{i+1}_Y
\lo 0
\tag{2.1.1}\label{ali:mfee}
\end{align*}
is an exact sequence of ${\cal W}_{n+1}({\cal O}_Y)$-modules. 
Here $B_n{\cal W}_1\Om^{i+1}_Y=C^{-1}(B_n\Om^{i+1}_{Y/s})$, 
where $B_n\Om^{i+1}_{Y/s}$ is an $F^n_{Y*}({\cal O}_Y)$-module 
defined inductively by the following isomorphism 
$C^{-1}\col B_n\Om^{i+1}_{Y/s}\os{\sim}{\lo} 
B_{n+1}\Om^{i+1}_{Y/s}/F_{Y*}^n(B_1\Om^{i+1}_{Y/s})$ 
and $B_1\Om^{i+1}_{Y/s}:=F_{Y*}(d\Om^{i}_{Y/s})$. 
\par 
$(2)$ Let $C\col B_{n+1}{\cal W}_1\Om^{i+1}_Y\lo B_n{\cal W}_1\Om^{i+1}_Y$ 
be the following composite morphism$:$ 
\begin{align*} 
B_{n+1}{\cal W}_1\Om^{i+1}_Y&=C^{-1}(B_{n+1}\Om^{i+1}_{Y/s})
\os{C^{-1}({\rm proj})}{\lo} C^{-1}(B_{n+1}\Om^{i+1}_{Y/s}/B_1\Om^{i+1}_{Y/s})\\
& \os{C^{-1}(C^{-1}),\sim}{\longleftarrow} C^{-1}(B_n\Om^{i+1}_{Y/s})
=B_n{\cal W}_1\Om^{i+1}_Y. 
\end{align*}
Then the following diagram is commutative$:$
\begin{equation*} 
\begin{CD} 
F_{{\cal W}_{n+1}(Y)*}({\cal W}_{n+1}\Om^i_Y)
@>{F^nd}>>B_{n+1}{\cal W}_1\Om^{i+1}_Y\\
@V{R}VV @VV{C}V\\
F_{{\cal W}_n(Y)*}({\cal W}_n\Om^i_Y)
@>{F^{n-1}d}>>B_n{\cal W}_1\Om^{i+1}_Y. 
\end{CD} 
\tag{2.1.2}\label{cd:nmwee}
\end{equation*} 
\end{prop}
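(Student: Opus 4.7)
The plan is to adapt Illusie's proof of \cite[I (3.11)]{idw} to the logarithmic setting, using Kato's log inverse Cartier isomorphism in place of its classical counterpart. The argument proceeds by induction on $n$, with the base case reducing essentially to the log Cartier isomorphism.

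For part (1), I would first verify that the displayed sequence is a complex. Using the de Rham--Witt identity $dF=pFd$ on ${\cal W}_{\bullet}\Om^{\bullet}_Y$, the composite $F^{n-1}d\circ F$ applied to $\omega\in {\cal W}_{n+1}\Om^i_Y$ yields $pF^nd\omega$, which vanishes in the target ${\cal W}_1\Om^{i+1}_Y$ because that sheaf is killed by $p$. Surjectivity of the second map is then tautological: the subsheaf $B_n{\cal W}_1\Om^{i+1}_Y$ is defined inductively as the image of $F^{n-1}d$ via $C^{-1}$.

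The substantive content is exactness at the middle term. For $n=1$, the sequence reads
\[
{\cal W}_2\Om^i_Y\os{F}{\lo}F_{Y*}({\cal W}_1\Om^i_Y)\os{d}{\lo}B_1{\cal W}_1\Om^{i+1}_Y\lo 0,
\]
and $\ker(d)=\mathrm{image}(F)$ is exactly the log Cartier isomorphism together with the fact that $F\col{\cal W}_2\Om^i_Y\lo{\cal W}_1\Om^i_Y$ surjects onto $Z\Om^i_{Y/s}$. For the inductive step, I would embed (\ref{ali:mfee}) in a morphism of short exact sequences built from the $V$-filtration on ${\cal W}_{n+1}\Om^i_Y$ and then chase kernels and cokernels via the snake lemma. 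The essential input is the log analogue of \cite[I (3.8)]{idw}, which identifies the graded quotients $B_{n+1}\Om^{i+1}_{Y/s}/B_n\Om^{i+1}_{Y/s}$ through $C^{-1}$; this again follows from Kato's log Cartier isomorphism.

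For part (2), the commutativity of (\ref{cd:nmwee}) reduces to a direct comparison: the compositions $F^{n-1}d\circ R$ and $F^n d$, viewed through the inductive description of $B_n$, differ by exactly one application of the inverse Cartier isomorphism, and this is precisely how $C\col B_{n+1}{\cal W}_1\Om^{i+1}_Y\lo B_n{\cal W}_1\Om^{i+1}_Y$ is constructed in the statement. The main obstacle I anticipate is not conceptual but bookkeeping: one has to keep careful track of the operators $F$, $V$, $d$, $R$ on ${\cal W}_{\bullet}\Om^{\bullet}_Y$ as defined in \cite{hk}, ensure the target of $F^{n-1}d$ is indeed $B_n{\cal W}_1\Om^{i+1}_Y$ rather than a larger subsheaf, and match this with the conventions used in \cite{nlfc}, where the result has already been proved.
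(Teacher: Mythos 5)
The paper itself gives no proof of this proposition: it is recalled verbatim from \cite[(3.10)]{nlfc} and used as a black box, so there is no in-paper argument to compare with. That said, your route --- induction on $n$ via the canonical filtration of ${\cal W}_{n+1}\Om^i_Y$, with Kato's log inverse Cartier isomorphism replacing the classical one in \cite[I (3.11)]{idw} --- is exactly the route the cited source takes, and the individual steps are sound for $Y/s$ log smooth of Cartier type: the vanishing $F^{n-1}d\circ F=pF^nd=0$ in ${\cal W}_1\Om^{i+1}_Y$, the base case via $F({\cal W}_2\Om^i_Y)=Z\Om^i_{Y/s}$ together with the log Cartier isomorphism, and the snake-lemma induction on the graded pieces of the filtration.

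Two points are glossed over and should be made explicit. First, with the definition of $B_n\Om^{i+1}_{Y/s}$ adopted in the statement (iterated inverse Cartier), the surjectivity of $F^{n-1}d$ onto $B_n{\cal W}_1\Om^{i+1}_Y$ is \emph{not} tautological; it is part of what the induction must establish, and the natural way to do it is to prove the commutativity of (\ref{cd:nmwee}) first and combine it with the surjectivity of $R$ and the identification of $\ker(C)$ with $C^{-1}(F^n_{Y*}(B_1\Om^{i+1}_{Y/s}))$ to pass from level $n$ to level $n+1$. So part $(2)$ is an input to part $(1)$ rather than an afterthought. Second, the paper explicitly remarks that the only content of (\ref{ali:mfee}) beyond ``the log version of \cite[I (3.11)]{idw}'' is that it is an exact sequence of ${\cal W}_{n+1}({\cal O}_Y)$-modules, and your proposal relegates precisely this to ``bookkeeping''. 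It deserves a line: $F(a\om)=F(a)F(\om)$ makes $F$ linear into the pushforward $F_{{\cal W}_n(Y)*}({\cal W}_n\Om^i_Y)$, and the computation $F^{n-1}d(F(a)\om)=F^{n-1}(dF(a))\we F^{n-1}(\om)+F^n(a)F^{n-1}d(\om)=F^n(a)F^{n-1}d(\om)$ --- the first term dying because $dF=pFd$ and ${\cal W}_1\Om^{\bul}_Y$ is killed by $p$ --- makes $F_{{\cal W}_n(Y)*}(F^{n-1}d)$ linear onto the $F^n_{Y*}({\cal O}_Y)$-module $B_n{\cal W}_1\Om^{i+1}_Y$. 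With these two additions your outline matches the intended proof.
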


\begin{prop}\label{prop:oh}
Let $\kap$ be a perfect field of characteristic $p>0$.   
Then {\rm (\ref{theo:twols})} holds in the case 
where $S$ is the log point $s$ of $\kap$. 
\end{prop}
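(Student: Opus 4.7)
The plan is to follow the strategy sketched in the introduction and to split the argument into two steps: first establish log ordinarity of $X/s$ itself (Hyodo's original criterion (\ref{theo:oal})), and then propagate the vanishing to the base change $f_T\col X_T\lo T$ using flatness over the field $\kap$.

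For the absolute statement, I would exploit the Frobenius-equivariant weight spectral sequence (\ref{ali:infhwpwt}) of \cite{ndw}. By hypothesis each $\os{\circ}{X}{}^{(m)}/\kap$ is proper smooth and ordinary, so the Bloch--Kato--Illusie--Raynaud criterion (\cite{ir}) ensures that the Frobenius $F$ acts bijectively on every group $H^h(\os{\circ}{X}{}^{(m)},{\cal W}\Om^i_{\os{\circ}{X}{}^{(m)}})$, hence on every (Tate-twisted) $E_1$-term of (\ref{ali:infhwpwt}). Compatibility of the spectral sequence with $F$ propagates this bijectivity to the abutment $H^{h-i}(X,{\cal W}\Om^i_X)$, and Lorenzon's log version of the Illusie--Raynaud criterion (\cite{lodw}) then yields $R^jf_*(B\Om^i_{X/s})=0$ for all $i,j\in {\mab N}$, which is log ordinarity of $X/s$.

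For the base-change step, I would use that $\os{\circ}{s}=\mathrm{Spec}(\kap)$ is the spectrum of a field, so the projection $\pi_X\col \os{\circ}{X}_T\lo \os{\circ}{X}$ is automatically flat. Because $X/s$ is log smooth of Cartier type, the canonical map $\pi_X^*\Om^{\bul}_{X/s}\lo \Om^{\bul}_{X_T/T}$ is an isomorphism of complexes of locally free ${\cal O}_{X_T}$-modules. Exactness of $\pi_X^*$ then gives $B\Om^i_{X_T/T}\cong \pi_X^*B\Om^i_{X/s}$, and Grothendieck's flat base change along the proper morphism $f$ yields $R^jf_{T*}(B\Om^i_{X_T/T})\cong \pi_T^*R^jf_*(B\Om^i_{X/s})=0$, which is the required log ordinarity of $f_T$.

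The main obstacle is the first step, specifically the Frobenius compatibility of (\ref{ali:infhwpwt}) in the presence of the Tate twists $(-j-k)$: one has to check that the $F$ appearing on the abutment really equals the de~Rham--Witt Frobenius on $H^{h-i}(X,{\cal W}\Om^i_X)$, so that bijectivity of $F$ on each $H^{*}(\os{\circ}{X}{}^{(m)},{\cal W}\Om^{*}_{\os{\circ}{X}{}^{(m)}})$ is genuinely transferred to the abutment. This compatibility is built into the construction in \cite{ndw}; once granted, the remainder is essentially formal, relying only on Lorenzon's criterion and flat base change over the field $\kap$.
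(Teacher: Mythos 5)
Your proposal is correct and follows essentially the same route as the paper's proof: ordinarity of each $\os{\circ}{X}{}^{(k)}/\kap$ gives bijectivity of $F$ on the $E_1$-terms of (\ref{ali:infhwpwt}) by the Illusie--Raynaud criterion, the $F$-compatibility of that spectral sequence (which, as you note, must be extracted from its construction in \cite{ndw}, with the Tate twists cancelling against the divided Frobenius) transfers this to $H^{h-i}(X,{\cal W}\Om^i_X)$, and Lorenzon's log version of the criterion converts it into $R^jf_*(B\Om^i_{X/s})=0$. The only difference is the treatment of the base change $T\lo s$: you handle it directly by flatness over the field $\kap$ and exactness of pullback on $B\Om^i$, whereas the paper postpones this step to its proof of (\ref{theo:twols}), using the base-change compatibility of $B\Om^i$ from \cite[(1.13)]{lodw} together with the fibrewise criterion (\ref{prop:bc}); both arguments are valid.
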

\begin{proof}
By using (\ref{ali:mfee}) and noting that 
$R\col {\cal W}_{n+1}\Om^i_Y\lo {\cal W}_n\Om^i_Y$ is surjective, 
we have the following exact sequence: 
\begin{align*} 
0\lo {\cal W}\Om^i_Y\lo F_{{\cal W}({\cal O}_Y)*}({\cal W}\Om^i_Y)
\lo \vpl_nB_n{\cal W}_1\Om^{i+1}_Y\lo 0. 
\end{align*} 
Hence, by the log version of \cite[IV (4.11.6), (4.13)]{ir}, 
$X/s$ is log ordinary if and only if  
the operator $F\col H^j(X,{\cal W}\Om^i_{X})\lo H^j(X,{\cal W}\Om^i_{X})$ 
is bijective for any $i$ and $j$. 
(This has also been obtained in \cite[(4.1)]{lodw}.)
Let $\{E_{1}^{ij}\}_{i,j\in {\mab Z}}$ be the $E_1$-terms of 
the spectral sequence (\ref{ali:infhwpwt}). 
Here we ignore the Tate twist $(-j-k)$. 
Let $({\cal W}_nA_X^{\bul \bul},P)$ $(n\in {\mab Z}_{\geq 1})$ 
be the filtterd double complex defined in \cite[\S2]{ndw} and denoted by 
$(W_nA_X^{\bul \bul},P)$ (cf.~\cite{msemi}). 
By the definition of the operator $F$ and 
by considering a local log smooth lift $X/s$ over ${\cal W}_n(s)$, 
the following two diagrams are commutative: 
\begin{equation*} 
\begin{CD} 
{\cal W}_{n+1}\Om_X^i@>{\theta \wedge,~\simeq}>> {\cal W}_{n+1}A_X^{i \bul}\\
@V{F}VV @VV{F}V \\
{\cal W}_{n}\Om_X^i@>{\theta \wedge,~\simeq}>> {\cal W}_nA_X^{i \bul}, 
\end{CD}
\end{equation*} 
\begin{equation*} 
\begin{CD} 
{\rm gr}_k^P{\cal W}_{n+1}A_X^{i \bul}@>{{\rm Res},~\simeq}>> 
\us{j\geq {\rm max}\{-k,0\}}
{\bigoplus}
{\cal W}_{n+1}\Om_{\os{\circ}{X}{}^{(2j+k)}}^{i-j-k}
\{-j\}\\
@V{F}VV @VV{F}V \\
{\rm gr}_k^P{\cal W}_{n}A_X^{i \bul}@>{{\rm Res},~\simeq}>> 
\us{j\geq {\rm max}\{-k,0\}}
{\bigoplus}
{\cal W}_{n}\Om_{\os{\circ}{X}{}^{(2j+k)}}^{i-j-k}
\{-j\}.  
\end{CD}
\end{equation*} 
Because the spectral sequence (\ref{ali:infhwpwt}) 
is obtained by these commutative diagrams, 
it is compatible with $F$'s.
By the assumption, 
the operator $F\col E_{1}^{-k, h+k}\lo 
E_{1}^{-k, h+k}$ is bijective. 
Hence the operator
$F\col H^j(X,{\cal W}\Om^i_{X})\lo H^j(X,{\cal W}\Om^i_{X})$ 
is bijective. We complete the proof of (\ref{prop:oh}). 
\end{proof}

\begin{prop}[{\bf \cite[Appendice (1.2), (1.9)]{io}}]\label{prop:bc} 
Let $g\col Y\lo T$ be a proper log smooth scheme 
of fine log schemes. Let $i$ be an integer. 
Then the following are equivalent$:$
\par 
$(1)$ $R^jg_*(B\Om^i_{Y/T})=0$ for $\forall j\geq 0$.  
\par 
$(2)$ For any exact closed point $t\in T$ 
$(t$ is a closed point of $\os{\circ}{T}$ endowed with 
the inverse image of the log structure of $T)$, 
$H^j(Y_t,B\Om^i_{Y_t/t})=0$ for $\forall j\geq 0$. 
Here $Y_t:=Y\times_Tt$. 
\end{prop}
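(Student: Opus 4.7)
The plan is to adapt Illusie's strategy for the classical smooth case (\cite[Appendice (1.2), (1.9)]{io}) to the log setting. The essential point is that $B\Om^i_{Y/T}$, while only an abelian sheaf on $\os{\circ}{Y}$ a priori (more precisely an $F_{Y*}({\cal O}_Y)$-module), becomes a coherent ${\cal O}_{\os{\circ}{Y}}$-module after applying the Frobenius push-forward $F_{Y*}$. This puts the standard cohomology-and-base-change machinery at our disposal.

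First I would exploit the two fundamental exact sequences
\begin{align*}
0 &\lo Z\Om^{i-1}_{Y/T}\lo \Om^{i-1}_{Y/T}\lo B\Om^i_{Y/T}\lo 0,\\
0 &\lo B\Om^i_{Y/T}\lo Z\Om^i_{Y/T}\lo \Om^i_{Y/T}\lo 0,
\end{align*}
the second arising (up to a Frobenius twist) from the log Cartier isomorphism already recalled in (\ref{prop:fzg})(1) for $n=1$; the log-smooth-of-Cartier-type hypothesis, implicit in the context, is what makes this available. Since $\Om^j_{Y/T}$ is locally free over ${\cal O}_{\os{\circ}{Y}}$ and the underlying morphism $\os{\circ}{g}$ is flat (by Kato's theorem for fine log smooth morphisms), $\Om^j_{Y/T}$ is ${\cal O}_T$-flat. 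By induction on $i$, applying the snake lemma to the two sequences after base change $T'\lo T$, I would show that the formation of both $Z\Om^i_{Y/T}$ and $B\Om^i_{Y/T}$ commutes with base change along any morphism of fine log schemes $T'\lo T$.

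Once base-change compatibility is in hand, the proposition follows by applying standard cohomology-and-base-change to the coherent ${\cal O}_{\os{\circ}{Y}}$-module $F_{Y*}(B\Om^i_{Y/T})$, using the properness of $\os{\circ}{g}$. The implication $(1)\Rightarrow(2)$ is the routine observation that if $R^jg_*(B\Om^i_{Y/T})$ vanishes then so do its fibers, combined with the identification of these fibers with $H^j(Y_t,B\Om^i_{Y_t/t})$ provided by the base-change step. For $(2)\Rightarrow(1)$, the hypothesis forces $R^jg_*F_{Y*}(B\Om^i_{Y/T})\otimes \kappa(t)=0$ at every closed point $t\in \os{\circ}{T}$; Nakayama then kills the coherent sheaf $R^jg_*F_{Y*}(B\Om^i_{Y/T})$, and the conservativity of $F_{\os{\circ}{T}*}$, combined with the identity $gF_Y=F_Tg$, translates the vanishing back to $R^jg_*(B\Om^i_{Y/T})=0$.

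The main obstacle I foresee is precisely the base-change step for $B\Om^i_{Y/T}$: surjectivity of the comparison map $B\Om^i_{Y/T}\otimes_{{\cal O}_T}{\cal O}_{T'}\lo B\Om^i_{Y_{T'}/T'}$ is formal (since $d$ commutes with base change), but injectivity requires careful use of the ${\cal O}_T$-flatness of $\Om^{\bullet}_{Y/T}$ and the vanishing of the relevant $\mathrm{Tor}$ terms, and it is at this point that the log-smooth-of-Cartier-type hypothesis earns its keep. Everything else is formal manipulation of the two short exact sequences together with standard cohomology-and-base-change for coherent sheaves.
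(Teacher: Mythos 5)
Your proposal is correct and follows essentially the same route as the paper: both arguments reduce the statement to cohomology-and-base-change plus Nakayama's lemma applied to the Frobenius pushforward of $B\Om^i_{Y/T}$, the whole weight resting on the fact that this pushforward is a coherent (in fact locally free) module commuting with base change. The only difference is one of packaging: the paper works with the relative Frobenius $F_{Y/T}\col Y\lo Y'$ and simply cites Lorenzon \cite[(1.13)]{lodw} for the local freeness of $B_1\Om^i_{Y/T}=F_{Y/T*}(B\Om^i_{Y/T})$ over ${\cal O}_{Y'}$, then invokes the log K\"unneth formula to get the fiber identification, whereas you propose to rederive the base-change compatibility by induction on $i$ from the two exact sequences and the log Cartier isomorphism --- which is precisely the content of the cited lemma.
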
 
\begin{proof} 
Set $Y':=Y\times_{T,F_T}T$ and let $F_{Y/T}\col Y\lo Y'$ 
be the relative Frobenius morphism of $Y/T$. 
Let $g'\col Y'\lo T$ be the structural morphism. 
For an exact closed point $t\in T'$, let $g'_t\col Y'_t\lo t$ 
be the base change morphism of $g'$. 
Set $B_1\Om^i_{Y/T}:=F_{Y/T*}(B\Om^i_{Y/T})$. 
Then $B_1\Om^i_{Y/T}$ is an ${\cal O}_{Y'}$-module. 
\par 
Because $\os{\circ}{F}_{Y/T}$ is a homeomorphism of topological spaces 
(\cite[XV Proposition 2 a)]{sga5-2}), 
it suffices to prove that the following are equivalent$:$
\par 
$(1)'$ $R^jg'_*(B_1\Om^i_{Y/T})=0$ for $\forall j\geq 0$.  
\par 
$(2)'$ For any exact closed point $t\in T$, 
$H^j(Y'_t,B_1\Om^i_{Y_t/t})=0$ for $\forall j\geq 0$.  
By \cite[(1.13)]{lodw} 
the sheaf $B_1\Om^i_{Y/T}$ $(m,i\in {\mab N})$ 
is a locally free sheaf of ${\cal O}_{Y'}$-modules of finite rank. 
By the log K\"{u}nneth formula (\cite[(6.12)]{klog1}), 
we have the following isomorphism
\begin{align*} 
Rg'_*(B_1\Om^i_{Y/T})\otimes^L_{{\cal O}_T}\kap_t
\os{\sim}{\lo} Rg'_{t*}(B_1\Om^i_{Y_t/t}). 
\end{align*} 
The rest of the proof is the same as that of 
\cite[Appendice (1.2), (1.9)]{io}. 
Indeed, the implication $(2)'\Lo (1)'$ follows from the fact that 
$Rg'_*(B_1\Om^i_{Y/T})$ is a perfect complex of ${\cal O}_T$-modules 
(since $g'$ is proper). 
The implication $(1)'\Lo (2)'$ follows from this perfectness and Nakayama's lemma. 
\end{proof} 

\par 
We prove (\ref{theo:twols}).
Let the notations be as in (\ref{theo:twols}).  
Set $S_{\os{\circ}{T}}:=S\times_{\os{\circ}{S}}{\os{\circ}{T}}$ and 
$X_{\os{\circ}{T}}:=X\times_SS_{\os{\circ}{T}}$. 
Let $f_{\os{\circ}{T}}\col X_{\os{\circ}{T}}\lo S_{\os{\circ}{T}}$ 
be the structural morphism. 
Because $B\Om^i_{X_{\os{\circ}{T}}/S_{\os{\circ}{T}}}$ commutes with 
the base change morphism $T\lo S_{\os{\circ}{T}}$ (\cite[(1.13)]{lodw}), 
\begin{align*} 
R^jf_{T*}(B\Om^i_{X_T/T})=
R^jf_{T*}
(B\Om^i_{X_{\os{\circ}{T}}/S_{\os{\circ}{T}}}\otimes_{{\cal O}_T}{\cal O}_T)
=R^jf_{\os{\circ}{T}*}
(B\Om^i_{X_{\os{\circ}{T}}/S_{\os{\circ}{T}}}).
\tag{2.3.1}\label{ali:ftbx}
\end{align*} 
(To obtain (\ref{ali:ftbx}), 
one may use a fact 
that $B\Om^i_{X_{\os{\circ}{T}}/S_{\os{\circ}{T}}}
=B\Om^i_{X_T/T}$ 
(since $\Om^j_{X_{\os{\circ}{T}}/S_{\os{\circ}{T}}}=\Om^j_{X_T/T}$ 
for $j=i,i+1$).) 
Hence we may assume that $T=S_{\os{\circ}{T}}$ and in fact, $T=S$. 
By (\ref{prop:bc}) we can assume that $S$ is the log point. 
Consider the perfection $\kap^{\rm pf}$ of $\kap$ 
and let $s^{\rm pf}$ be the log point 
whose underlying scheme is ${\rm Spec}(\kap^{\rm pf})$ 
and whose log structure is the inverse image of the log structure 
of $s$ by the natural morphism ${\rm Spec}(\kap^{\rm pf})
\lo {\rm Spec}(\kap)$. 
Set $X_{s^{\rm pf}}:=X\times_ss^{\rm pf}$. 
By (\ref{prop:oh}),  
$H^j(X_{s^{\rm pf}},
B\Om^i_{X_{s^{\rm pf}}/{s^{\rm pf}}})=0$ for $\forall i$ and $\forall j$. 
Because $H^j(X_{s^{\rm pf}},
B\Om^i_{X_{s^{\rm pf}}/{s^{\rm pf}}})=
H^j(X,B\Om^i_{X/s})\otimes_{\kap}\kap^{\rm pf}$, 
$H^j(X,B\Om^i_{X/s})=0$. 
By (\ref{prop:bc}), $R^jf_*(B\Om^i_{X/S})=0$ for $\forall i$ and $\forall j$. 
We complete the proof of (\ref{theo:twols}). 
\par 
We can also obtain the Hyodo's criterion for an open smooth scheme as follows.

\begin{theo}\label{theo:hco} 
Let $S$ be a scheme of characteristic $p>0$. 
Let $f\col (X,D)\lo S$ be a proper smooth scheme with relative SNCD 
{\rm (\cite[(2.1.7)]{nh2})}. Let $D^{(i)}$ $(i\in {\mab Z}_{\geq 0})$ 
be a scheme defined in {\rm [loc.~cit., (2.2.13.2)]}. 
Then if $D^{(i)}$ is ordinary for all $i$, then 
$f$ is log ordinary. 
\end{theo}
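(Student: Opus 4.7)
The plan is to imitate the first proof of (\ref{theo:twols}) given earlier in this section; the roles of the SNCL scheme $X/s$ and its closed strata $\os{\circ}{X}{}^{(k)}$ are now played by the pair $(X,D)/S$ and the boundary strata $D^{(k)}$.

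First I would establish the open analogue of (\ref{prop:bc}). Writing $X':=X\times_{S,F_S}S$ and $f'\col X'\lo S$, the sheaf $B_1\Om^i_{(X,D)/S}:=F_{X/S*}(B\Om^i_{(X,D)/S})$ should be a locally free ${\cal O}_{X'}$-module of finite rank (the open-smooth analogue of Lorenzon \cite{lodw} used in (\ref{prop:bc}), proved by the same log Cartier-isomorphism argument for the relative log de Rham complex of $(X,D)/S$). The log K\"{u}nneth formula then gives the base change
\begin{align*}
Rf'_*(B_1\Om^i_{(X,D)/S})\otimes^L_{{\cal O}_S}\kap_t \os{\sim}{\lo} Rf'_{t*}(B_1\Om^i_{(X_t,D_t)/t}),
\end{align*}
and the perfectness-plus-Nakayama argument in (\ref{prop:bc}) reduces the vanishing of $R^jf_*(B\Om^i_{(X,D)/S})$ to the case of the fibers over exact closed points. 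Passing to the perfection of the residue field, we may further assume $S=\mathrm{Spec}(\kap)$ with $\kap$ perfect.

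Next I would invoke a Frobenius-compatible weight spectral sequence for the log de Rham-Witt cohomology of $(X,D)/\kap$ whose $E_1$-terms are built out of $H^{*}(D^{(k)},{\cal W}\Om^{*}_{D^{(k)}})$ with appropriate shifts and Tate twists and which converges to $H^{*}(X,{\cal W}\Om^{i}_{(X,D)})$; such a spectral sequence arises from the weight filtration on ${\cal W}\Om^{\bul}_{(X,D)}$ via the Poincar\'{e} residue in complete analogy with (\ref{ali:infhwpwt}) (cf.~\cite{nh2}), and the same local-lifting computation used in (\ref{prop:oh}) shows its compatibility with $F$. The hypothesis that each $D^{(k)}/\kap$ is ordinary together with the theorem of Bloch-Kato-Illusie \cite{ir} then makes $F$ bijective on every $E_1$-term, hence on the abutment $H^{*}(X,{\cal W}\Om^{i}_{(X,D)})$. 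Finally, the open-smooth version of Lorenzon's criterion, proved by the same exact sequence (\ref{ali:mfee}) (which transcribes verbatim to the pair $(X,D)$), yields that $(X,D)/\kap$ is log ordinary.

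The main obstacle I expect is the second step: supplying or cleanly citing the Frobenius-compatible weight spectral sequence for ${\cal W}\Om^{\bul}_{(X,D)}$ with the correct indexing and Tate twists on the $E_1$-terms, so that the ordinarity of the strata $D^{(k)}$ translates directly into Frobenius bijectivity on the abutment. Once that ingredient is in place, the rest of the proof is formally identical to the SNCL case treated earlier in this section.
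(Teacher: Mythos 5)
Your proposal follows essentially the same route as the paper: the paper proves (\ref{theo:hco}) by running the first proof of (\ref{theo:twols}) verbatim with the spectral sequence (\ref{eqn:iuonss}) from \cite[(5.7.1)]{ndw} (exactly the Frobenius-compatible weight spectral sequence you flag as the "main obstacle") in place of (\ref{ali:infhwpwt}). The only cosmetic difference is that you propose to establish an "open analogue" of (\ref{prop:bc}), whereas that proposition is already stated for an arbitrary proper log smooth morphism of fine log schemes and so applies to $(X,D)\lo S$ as is.
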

\par 
To prove this theorem, we have only to use the following spectral sequence 
(\cite[(5.7.1)]{ndw}): 
\begin{equation*}
E_1^{-k,h+k}=H^{h-i}(D^{(k)},
W\Om_{D^{(k)}}^{i-k})(-k) \Lo 
H^{h-i}(X,W\Om_X^i(\log D)).
\tag{2.4.1}\label{eqn:iuonss}
\end{equation*}
instead of the spectral sequence (\ref{ali:infhwpwt}) 
in the case $S={\rm Spec}(\kap)$. 
\par 
We say that $f$ is {\it log ordinary with compact suppport} 
if $R^jf_*(d\Om^i(-\log D))=0$ for $\forall i,j$.  
We can also obtain the following theorem:  

\begin{theo}\label{theo:ocss} 
Let the notations be as in {\rm (\ref{theo:hco})}. 
If $D^{(i)}$ is ordinary for all $i$, then 
$f$ is log ordinary with compact suppport. 
\end{theo}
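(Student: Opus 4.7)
The plan is to mimic the proof of (\ref{theo:hco}) step by step, replacing $W\Om_X^{\bul}(\log D)$ everywhere by its compact support analogue $W\Om_X^{\bul}(-\log D)$. First I would reduce, exactly as in the proof of (\ref{theo:twols}) via the argument at the end of \S\ref{sec:hc}, to the case where $S$ is the spectrum of a perfect field $\kap$ of characteristic $p>0$. This reduction rests on a compact support variant of (\ref{prop:bc}): the sheaves $B_1\Om^i_{X/S}(-\log D):=F_{X/S*}(B\Om^i_{X/S}(-\log D))$ are still locally free of finite rank on the Frobenius pullback of $X$, so Nakayama's lemma plus the log K\"{u}nneth formula apply, combined with the compatibility of $B\Om^i_X(-\log D)$ with any base change $T\lo S$.

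Next, I would invoke the companion weight spectral sequence to (\ref{eqn:iuonss}), which is the natural analogue for $W\Om_X^{\bul}(-\log D)$ built from the standard Poincar\'{e} residue filtration restricted to this subcomplex. It should take the form
\begin{equation*}
E_1^{-k,h+k}=H^{h-i}(D^{(k)},W\Om_{D^{(k)}}^{i-k})(-k)\Lo H^{h-i}(X,W\Om_X^i(-\log D)),
\end{equation*}
and, as in (\ref{eqn:iuonss}), it is compatible with the Frobenius operator $F$. By the ordinarity of each $D^{(k)}$ and the Bloch-Kato-Illusie-Raynaud criterion in \cite{ir}, $F$ is bijective on every $E_1$-term, and compatibility of the spectral sequence with $F$ then forces $F$ to be bijective on the abutment $H^{h-i}(X,W\Om_X^i(-\log D))$ for all $i$ and $h$.

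Finally, I would apply the compact support analogue of the Lorenzon-type criterion used in the proof of (\ref{prop:oh}). Namely, the exact sequence (\ref{ali:mfee}) admits a strict version with $W_n\Om_Y^i$ replaced by $W_n\Om_X^i(-\log D)$ and $B_nW_1\Om_Y^{i+1}$ by $B_nW_1\Om_X^{i+1}(-\log D)$. Passing to the projective limit in $n$ and taking cohomology, one obtains the equivalence of the $F$-bijectivity on each $H^j(X,W\Om_X^i(-\log D))$ with the vanishing $R^jf_*(d\Om^i(-\log D))=0$ for all $i,j$, which is precisely the definition of the log ordinarity of $f$ with compact support.

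The main obstacle, in my view, is verifying the compact support analogue of (\ref{ali:mfee}). Concretely, one needs to check that the log inverse Cartier isomorphism $C^{-1}$ of (\ref{prop:fzg}) restricts to an isomorphism on the subcomplex $\Om_X^{\bul}(-\log D)\sus \Om_X^{\bul}(\log D)$, and that the resulting subsheaves $B_n\Om_X^{i+1}(-\log D)$ fit into a commutative diagram of the form (\ref{cd:nmwee}) compatibly with $F$, $R$ and $V$. Both are local computations in a standard \'{e}tale chart for an SNCD, so no essentially new input is required; once they are in hand, the theorem follows by assembling the three steps above.
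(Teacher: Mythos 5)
Your overall strategy is the paper's: reduce to $S={\rm Spec}(\kap)$ with $\kap$ perfect via (\ref{prop:bc}), feed the ordinarity of the $D^{(k)}$ into an $F$-compatible weight spectral sequence for $H^{h-i}(X,W\Om_X^i(-\log D))$, and translate $F$-bijectivity on the abutment back into $R^jf_*(d\Om^i(-\log D))=0$. But the central object you invoke is mis-identified, and as you construct it, it does not exist. You claim a spectral sequence with $E_1^{-k,h+k}=H^{h-i}(D^{(k)},W\Om_{D^{(k)}}^{i-k})(-k)$ converging to $H^{h-i}(X,W\Om_X^i(-\log D))$, "built from the standard Poincar\'{e} residue filtration restricted to this subcomplex." This cannot work: $\Om_X^{\bul}(-\log D)=\Om_X^{\bul}(\log D)(-D)$ is contained in $P_0\Om_X^{\bul}(\log D)$, so the weight filtration restricts to it trivially and produces no residue isomorphisms onto the $\Om^{\bul-k}_{D^{(k)}}$. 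The correct spectral sequence — the one the paper cites from \cite[(5.7.2)]{ndw} and displays as (\ref{eqn:icons}) — lives in the opposite quadrant and has different $E_1$-terms, namely
\begin{equation*}
E_1^{k,h-k}=H^{h-i-k}(D^{(k)},W\Om_{D^{(k)}}^{i})\Lo H^{h-i}(X,W\Om_X^i(-\log D)),
\end{equation*}
with the form degree $i$ unshifted and the cohomological degree dropping with $k$. It arises not from a filtration with residues but from the resolution $0\lo W\Om^i_X(-\log D)\lo a^{(0)}_*W\Om^i_{D^{(0)}}\lo a^{(1)}_*W\Om^i_{D^{(1)}}\lo \cdots$ (the Hodge--Witt analogue of the exact sequence (\ref{eqn:povin}) used in \S\ref{sec:hpc}), i.e.\ from restriction maps rather than residue maps.

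The good news is that this error is repairable without changing your architecture: the corrected $E_1$-terms are still cohomology groups of Hodge--Witt sheaves of the $D^{(k)}$, so the ordinarity hypothesis and \cite{ir} still give bijectivity of $F$ on every $E_1$-term, and the rest of your argument (including the reduction via a compact-support variant of (\ref{prop:bc}) and the translation via a $(-\log D)$-analogue of (\ref{ali:mfee}), where the Cartier isomorphism does indeed restrict to the subcomplex $\Om_X^{\bul}(-\log D)$) goes through. But as written, the proof hinges on a spectral sequence whose stated construction fails and whose stated $E_1$-page is wrong, so you should replace it by (\ref{eqn:icons}) and justify that one by the resolution above.
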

To prove this theorem, we have only to use the following spectral sequence 
(\cite[(5.7.2)]{ndw}): 
\begin{equation*}
E_1^{k,q-k}=H^{h-i-k}(D^{(k)},
W\Om_{D^{(k)}}^i) \Lo 
H^{h-i}(X,W\Om_X^i(-\log D)). 
\tag{2.5.1}\label{eqn:icons}
\end{equation*}
in the case  $S={\rm Spec}(\kap)$.

\section{The second proof of (\ref{theo:twols})}\label{sec:hpc}
In this section we prove (\ref{theo:twols}), (\ref{theo:hco}) and (\ref{theo:ocss})  
without using 
the $p$-adic weight spectral sequences (\ref{ali:infhwpwt}), 
(\ref{eqn:iuonss}) and (\ref{eqn:icons}), respectively. 
The proof of (\ref{theo:twols}) in this section 
includes a simplification of the proof of Hyodo's criterion in \cite[Appendice]{io} 
as already stated in the Introduction: 
the filtration $P$ on $\Om^{\bul}_{X/S}$ 
induced by the filtration $P$ on $\Om^{\bul}_{X/\os{\circ}{S}}$ 
defined in \cite{nb} nor a generalization of the residue map in 
[loc.~cit., (2.1.3)] is unnecessary; 
we use only the filtration $P$ on $\Om^{\bul}_{X/\os{\circ}{S}}$.  
The filtration $P$ on $\Om^{\bul}_{X/\os{\circ}{S}}$ 
is more directly related with the complex 
$\Om^{\bul}_{\os{\circ}{X}{}^{(j)}/\os{\circ}{S}}$ 
$(j\in {\mab N})$ than the filtration $P$ on $\Om^{\bul}_{X/S}$.  
From the earlier part of this section,  
we consider a certain integrable connection without log poles including 
the derivative $d\col {\cal O}_X\lo \Om^1_{X/\os{\circ}{S}}$.
The main result in this section is (\ref{theo:otf}) below. 
This includes  (\ref{theo:twols}). 

\par 
Let $S$ be a family of log points (\cite[(1.1)]{nb}) 
and let $Y/S$ be a log smooth scheme. 
Let $g\col Y\lo S$ be the structural morphism. 
Let $e$ be a local section of $M_S$ such that 
the image of $e$ in $M_S/{\cal O}_S^*$ is the local generator. 
Set $\theta_S:= d\log e\in \Om^1_{S/\os{\circ}{S}}$. 
Then this section is independent of the choice of $e$ 
and it is globalized. 
Set $\theta:=g^*(d\log e) \in \Om^1_{Y/\os{\circ}{S}}$.
Let $\pi^{\bul} \col \Om^{\bul}_{Y/\os{\circ}{S}}\lo \Om^{\bul}_{Y/S}$ 
be the natural projection.  
Because $Y/S$ is log smooth, 
the following sequence
\begin{align*} 
0\lo g^*(\Om^1_{S/\os{\circ}{S}})
\os{\theta \wedge}{\lo} 
\Om^1_{Y/\os{\circ}{S}}
\os{\pi^1}{\lo} 
\Om^1_{Y/S}\lo 0
\tag{3.0.1}\label{ali:eg} 
\end{align*} 
is locally split (\cite[(3.12)]{klog1}). 
By a very special case of \cite[(1.7.20.1)]{nb}, 
the following sequence is exact: 
\begin{align*} 
0\lo \Om^{\bul}_{Y/S}[-1]
\os{\theta \wedge}{\lo} 
\Om^{\bul}_{Y/\os{\circ}{S}}\os{\pi^{\bul}}{\lo} 
\Om^{\bul}_{Y/S}\lo 0. 
\tag{3.0.2}\label{ali:cfos} 
\end{align*} 

\par 
The following lemma is a key lemma  
which enables us to simplify Illusie's proof for Hyodo's criterion in \cite{io}. 

\begin{lemm}\label{lemm:pete}
For each $i$, the resulting sequences of 
{\rm (\ref{ali:cfos})} by the operations $B^i$, $Z^i$ and ${\cal H}^i$ 
$(i\in {\mab Z}_{\geq 0})$ are exact. 
\end{lemm}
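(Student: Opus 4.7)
Since exactness of sequences of sheaves can be checked stalk-wise, my plan is to work locally on $Y$. Locally I will choose a fine chart of $g\col Y\lo S$ giving smooth coordinates $y_1,\ldots,y_t$ and log coordinates $z_1,\ldots,z_r$ on $Y/S$, together with the local generator $e$ of the log structure of $S$, so that $\Om^1_{Y/S}$ is a free $\mathcal{O}_Y$-module on $\{dy_j,\,d\log z_i\}$ and $\Om^1_{Y/\os{\circ}{S}}$ is free on the same symbols together with $\theta=g^*(d\log e)$. I then define an $\mathcal{O}_Y$-linear section $\sigma^1\col \Om^1_{Y/S}\lo \Om^1_{Y/\os{\circ}{S}}$ of $\pi^1$ sending each basis element to itself, and extend multiplicatively to a graded section $\sigma\col \Om^{\bul}_{Y/S}\lo \Om^{\bul}_{Y/\os{\circ}{S}}$ of $\pi^{\bul}$.

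The crux of the proof, and the one step that really requires the hypothesis that $S$ is a family of log points, is to check that $\sigma$ is a chain map. By the Leibniz rule together with the closedness of $dy_j$ and $d\log z_i$, it suffices to verify that for every local section $f\in \mathcal{O}_Y$ the differential $d_{Y/\os{\circ}{S}}(f)\in \Om^1_{Y/\os{\circ}{S}}$ has no $\theta$-component. Expanding $f=\sum c_{I,J}\,y^Iz^J$ with $c_{I,J}\in \mathcal{O}_S$, the contribution from $d_{Y/\os{\circ}{S}}(y^Iz^J)$ lies entirely in the $\mathcal{O}_Y$-span of $\{dy_j,d\log z_i\}$, while for the coefficients one has
\[
d_{Y/\os{\circ}{S}}(c_{I,J})=g^*(d_{S/\os{\circ}{S}}(c_{I,J}))=0,
\]
because by the defining relations of relative log differentials the map $d_{S/\os{\circ}{S}}$ kills every section of $\mathcal{O}_S=\mathcal{O}_{\os{\circ}{S}}$. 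Hence $\sigma$ is indeed a morphism of complexes.

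Once $\sigma$ is known to be a chain map, one gets locally a direct sum decomposition of complexes
\[
\Om^{\bul}_{Y/\os{\circ}{S}}\;=\;\sigma(\Om^{\bul}_{Y/S})\;\oplus\;(\theta\we)\,\sigma(\Om^{\bul}_{Y/S}[-1]),
\]
in which $(\theta\we)\,\sigma$ is an isomorphism from $\Om^{\bul}_{Y/S}[-1]$ onto the second summand (this uses only $d\theta=0$ and $\theta\we\theta=0$). Because the functors $B^i$, $Z^i$ and $\mathcal{H}^i$ commute with finite direct sums of complexes, applying them summand-by-summand to this decomposition produces, for each $i$, the three short exact sequences asserted as the image of (3.0.2) under $B^i$, $Z^i$ and $\mathcal{H}^i$; the conclusion then globalizes by the local nature of exactness. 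Thus the only nontrivial work is the verification that $\sigma$ is a chain map, after which the lemma becomes a formal consequence of the splitting.
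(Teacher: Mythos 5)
Your proposal follows the same strategy as the paper's own first argument for this lemma (choose an ${\cal O}_Y$-linear splitting of $\pi^{\bul}$ and decompose the complex), but the step you yourself single out as the crux --- that the section $\sigma$ is a chain map, i.e.\ that $d_{Y/\os{\circ}{S}}(f)$ has no $\theta$-component for every $f\in {\cal O}_Y$ --- is false, and your justification of it hides the problem. Take $S=s$ the log point of $\kap$ and $Y={\rm Spec}(\kap[x,y]/(xy))$ with its standard SNCL structure, so that $\Om^1_{Y/\os{\circ}{s}}$ is free on $d\log x,\,d\log y$ and $\theta=d\log x+d\log y$. Any basis of the form $\{\theta,\om\}$ with $\om=a\,d\log x+b\,d\log y$ requires $a-b\in {\cal O}_Y^*$; but $dx=x\,d\log x\in {\cal O}_Y\om$ forces $b$ to be a non-unit at the origin, and $dy=y\,d\log y\in {\cal O}_Y\om$ forces $a$ to be a non-unit there --- a contradiction. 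So no complement to $\theta$ is stable under $d$: if you keep $z_1=x$ in your basis, the remaining coordinate satisfies $dy=y\theta-y\,d\log x$, with a genuine $\theta$-component. Your expansion $f=\sum c_{I,J}y^Iz^J$ tacitly assumes that the functions $z_i$ whose $d\log$'s occur in your basis generate ${\cal O}_Y$ over ${\cal O}_S$; when $\os{\circ}{Y}\lo\os{\circ}{S}$ is not smooth these two demands are incompatible (in the standard chart ${\cal O}_Y$ needs \emph{all} of $z_1,\dots,z_r$, but then $\theta=\sum_ic_i\,d\log z_i$ already lies in their span, so $\{dy_j,d\log z_i,\theta\}$ is not a basis). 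Consequently your direct sum decomposition is one of ${\cal O}_Y$-modules only, not of complexes, and the conclusion fails concretely: $\sigma(d_{Y/s}(y))=-y\,d\log x$ lies in $\sigma(B\Om^1_{Y/s})$ but is not an exact form in $\Om^{\bul}_{Y/\os{\circ}{s}}$, so $B\Om^1_{Y/\os{\circ}{s}}\neq\sigma(B\Om^1_{Y/s})\oplus\theta\we(B\Om^0_{Y/s})$.

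You should be aware that the commutativity of the paper's diagram (3.1.3) rests on exactly the same unproved compatibility, so agreement with the printed text is not a defence; the lemma itself is nevertheless true. A watertight route is the Cartier argument indicated in (3.6.2) and (3.15.2): the inverse Cartier isomorphism carries the exact sequence (3.0.2) for $X^{[p]}$ isomorphically onto the sequence of ${\cal H}^i$'s of (3.0.2) (using $C^{-1}(\theta)=\theta$ and that $\os{\circ}{F}$ is a homeomorphism), which gives exactness after ${\cal H}^i$; exactness after $Z^i$ and $B^i$ then follows by induction on $i$ from $Z^0={\cal H}^0$ and the short exact sequences $0\lo B^i\lo Z^i\lo {\cal H}^i\lo 0$ and $0\lo Z^i\lo \Om^i\lo B^{i+1}\lo 0$. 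If you insist on an elementary proof, the point that must be established directly is the inclusion $d\Om^{i-1}_{Y/\os{\circ}{S}}\cap\bigl(\theta\we\Om^{i-1}_{Y/\os{\circ}{S}}\bigr)\subset\theta\we\bigl(d\Om^{i-2}_{Y/\os{\circ}{S}}\bigr)$ together with the analogous statement for cocycles; deducing these from a splitting of complexes is not an option, because no such splitting exists along the double locus.
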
 
\begin{proof}
We have only to prove that the following sequences are exact$:$ 
\begin{align*} 
0\lo B\Om^{i-1}_{Y/S}\os{\theta \wedge}{\lo} B\Om^i_{Y/\os{\circ}{S}}
\lo B\Om^i_{Y/S}\lo 0
\tag{3.1.1}\label{ali:cftois} 
\end{align*} 
and 
\begin{align*} 
0\lo Z\Om^{i-1}_{Y/S}
\os{\theta \wedge}{\lo} Z\Om^i_{Y/\os{\circ}{S}}\lo Z\Om^i_{Y/S}\lo 0. 
\tag{3.1.2}\label{ali:cftoeis} 
\end{align*} 
Because the problem is local, 
we may assume that there exists 
a basis $\{\theta,\{\om_i\}_{i=1}^d\}$ of  $\Om^1_{Y/\os{\circ}{S}}$.  
Then $\{\pi^1(\om_i)_{i=1}^d\}$ is a basis of $\Om^1_{Y/S}$. 
Let $\iota \col \Om^1_{Y/S}\lo \Om^1_{Y/\os{\circ}{S}}$ 
$(\pi^1(\om_i)\lom \om_i)$ be the splitting 
of the projection 
$\pi^1\col \Om^1_{Y/\os{\circ}{S}}\lo \Om^1_{Y/S}$.  
Let $\Om^i_{Y/S}\lo \Om^i_{Y/\os{\circ}{S}}$ $(i\in {\mab Z}_{\geq 1})$ 
be the induced splitting 
by $\iota$ above and 
denote it by $\iota \col \Om^1_{Y/S}\lo \Om^1_{Y/\os{\circ}{S}}$ again. 
Then $\Om^i_{Y/\os{\circ}{S}}=
\iota(\Om^i_{Y/S})\oplus \theta \wedge(\Om^{i-1}_{Y/S})$.
Let $d\col \Om^i_{Y/\os{\circ}{S}}\lo \Om^{i+1}_{Y/\os{\circ}{S}}$ 
and $d_{Y/S}\col \Om^i_{Y/S}\lo \Om^{i+1}_{Y/S}$ 
be the standard differentials. 
By expressing a local section of $\Om^i_{Y/\os{\circ}{S}}$ by the basis 
$\{\theta,\{\om_i\}_{i=1}^d\}$ of $\Om^i_{Y/\os{\circ}{S}}$ 
(cf.~the first proof of (\ref{lemm:pte}) below), 
it is obvious that the following diagram is commutative:   
\begin{equation*} 
\begin{CD}
\Om^i_{Y/\os{\circ}{S}}
@=\iota(\Om^i_{Y/S})
\oplus 
\theta \wedge (\Om^{i-1}_{Y/S})\\
@V{d}VV @VV{\iota(d_{Y/S})\oplus (-\theta \wedge d_{Y/S})}V\\
\Om^{i+1}_{Y/\os{\circ}{S}}
@=\iota(\Om^{i+1}_{Y/S})\oplus \theta \wedge (\Om^i_{Y/S}). 
\end{CD}
\tag{3.1.3}\label{cd:kttc}
\end{equation*}
Hence 
\begin{align*} 
B\Om^i_{Y/\os{\circ}{S}}=
\iota(B\Om^i_{Y/S})\oplus \theta \wedge (B\Om^{i-1}_{Y/S}) 
\end{align*}   
and 
\begin{align*} 
Z\Om^i_{Y/\os{\circ}{S}}=\iota(Z\Om^i_{Y/S})\oplus \theta \wedge (Z\Om^{i-1}_{Y/S}).
\end{align*}  
This implies that the sequences (\ref{ali:cftois}) and (\ref{ali:cftoeis})  
are exact. 
We complete the proof of (\ref{lemm:pete}). 
\end{proof}
\par 
More generally, 
let ${\cal E}$ be a quasi-coherent ${\cal O}_Y$-module 
and let 
\begin{align*} 
\nabla \col {\cal E}\lo {\cal E}\otimes_{{\cal O}_Y}\Om^1_{Y/\os{\circ}{S}}
\tag{3.1.4}\label{ali:cfes} 
\end{align*} 
be an integrable connection.   
Let 
\begin{align*} 
\nabla_{Y/S} \col {\cal E}\lo {\cal E}\otimes_{{\cal O}_Y}\Om^1_{Y/S}
\tag{3.1.5}\label{ali:cfees} 
\end{align*} 
be the induced integrable connection by $\nabla$.   
Then we have the following morphism of complexes of $g^{-1}({\cal O}_S)$-modules: 
\begin{align*} 
\theta  \wedge \col {\cal E}\otimes_{{\cal O}_Y}\Om^{\bul}_{Y/\os{\circ}{S}}[-1]\owns 
e\otimes \om \lom e\otimes(\theta \wedge \om )\in   
{\cal E}\otimes_{{\cal O}_Y}\Om^{\bul}_{Y/\os{\circ}{S}}
\tag{3.1.6}\label{ali:cfs} 
\end{align*} 
which induces the following morphism of complexes of $g^{-1}({\cal O}_S)$-modules:
\begin{align*} 
\theta  \wedge \col {\cal E}\otimes_{{\cal O}_Y}\Om^{\bul}_{Y/S}[-1]\lo 
{\cal E}\otimes_{{\cal O}_Y}\Om^{\bul}_{Y/\os{\circ}{S}}. 
\tag{3.1.7}\label{ali:cfaos} 
\end{align*} 
Indeed, because we have the following equalities
\begin{align*} 
(\nabla \theta \wedge -\theta \wedge (-\nabla))(e\otimes \om)
&=(\nabla \theta \wedge+\theta \wedge \nabla)(e\otimes \om)
\tag{3.1.8}\label{ali:eom} \\
&= 
\{\nabla(e)\wedge \theta \wedge \om+e\otimes d(\theta \wedge \om)\}
+\{\theta \wedge \nabla(e) \wedge \om+e\otimes (\theta \wedge d\om)\}\\
&=\nabla(e)\wedge \theta \wedge \om+\theta \wedge \nabla(e) \wedge \om =0\\ 
&\quad (e\in {\cal E},  \om \in \Om^i_{Y/S}~(i\in {\mab N})), 
\end{align*} 
$\nabla \theta \wedge=\theta \wedge(-\nabla)$. 
Because the exact sequence (\ref{ali:cfos})  
is locally split,  
the following sequence of complexes of 
$g^{-1}({\cal O}_S)$-modules is exact$:$
\begin{align*} 
0\lo {\cal E}\otimes_{{\cal O}_Y}\Om^{\bul-1}_{Y/S}
\os{\theta \wedge }{\lo} 
{\cal E}\otimes_{{\cal O}_Y}\Om^{\bul}_{Y/\os{\circ}{S}}\lo 
{\cal E}\otimes_{{\cal O}_Y}\Om^{\bul}_{Y/S}\lo 0. 
\tag{3.1.9}\label{ali:cfeos} 
\end{align*}


\begin{defi}\label{defi:coe}
We say that ${\cal E}$ is {\it locally generated by horizontal sections} 
of $\nabla$ if 
there exist local sections $e_0,\ldots,e_m$ $(m\in {\mab N})$ 
which generate ${\cal E}$ locally as an ${\cal O}_Y$-module 
such that $\nabla(e_j)=0$ for $0\leq \forall j\leq m$. 
We say that the set $\{e_j\}_{j=0}^m$ a set of {\it local horizontal generators} of ${\cal E}$. 
\end{defi} 

\par 
We can generalize (\ref{lemm:pete}) as follows: 

\begin{lemm}\label{lemm:pte}
Assume that ${\cal E}$ is locally generated by horizontal sections. Then,  
for each $i$, the resulting sequences of 
{\rm (\ref{ali:cfeos})} by the operations $B^i$, $Z^i$ and ${\cal H}^i$ 
$(i\in {\mab Z}_{\geq 0})$ are exact. 
\end{lemm}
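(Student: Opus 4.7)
The plan is to mimic the first proof of (\ref{lemm:pete}), with the role of the exterior derivative $d$ there played by the connection $\nabla$; the hypothesis that ${\cal E}$ is locally generated by horizontal sections is exactly what makes this substitution work. Since the statement is local on $Y$, I work on an open $U \sus Y$ on which ${\cal E}$ admits horizontal generators $e_0,\dots,e_m$ (so $\nabla(e_j)=0$) and on which $\Om^1_{Y/\os{\circ}{S}}$ admits a basis $\{\theta,\om_1,\dots,\om_d\}$ extending $\theta$, as in the first proof of (\ref{lemm:pete}). Let $\iota \col \Om^\bul_{Y/S}|_U \lo \Om^\bul_{Y/\os{\circ}{S}}|_U$ be the resulting splitting of $\pi^\bul$. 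Tensoring the local decomposition $\Om^i_{Y/\os{\circ}{S}} = \iota(\Om^i_{Y/S}) \oplus \theta \we \iota(\Om^{i-1}_{Y/S})$ on the left with ${\cal E}$ over ${\cal O}_Y$ gives
\begin{align*}
{\cal E} \otimes_{{\cal O}_Y} \Om^i_{Y/\os{\circ}{S}} = \iota({\cal E} \otimes_{{\cal O}_Y} \Om^i_{Y/S}) \oplus \theta \we \iota({\cal E} \otimes_{{\cal O}_Y} \Om^{i-1}_{Y/S}).
\end{align*}

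Next I would compute the action of $\nabla$ in this decomposition. Any local section of ${\cal E} \otimes \Om^i_{Y/\os{\circ}{S}}$ can be written as $\sum_j e_j \otimes \eta_j$ for some $\eta_j \in \Om^i_{Y/\os{\circ}{S}}$, and $\nabla(e_j)=0$ forces $\nabla(\sum_j e_j \otimes \eta_j) = \sum_j e_j \otimes d\eta_j$. Writing $\eta_j = \iota(\eta'_j) + \theta \we \iota(\eta''_j)$ with $\eta'_j \in \Om^i_{Y/S}$ and $\eta''_j \in \Om^{i-1}_{Y/S}$, and applying (\ref{cd:kttc}) termwise, this becomes $\sum_j e_j \otimes \iota(d_{Y/S}\eta'_j) - \theta \we \sum_j e_j \otimes \iota(d_{Y/S}\eta''_j)$, which, invoking horizontality of the $e_j$ once more, equals $\iota(\nabla_{Y/S}\al) - \theta \we \iota(\nabla_{Y/S}\bet)$ for $\al := \sum_j e_j \otimes \eta'_j \in {\cal E} \otimes \Om^i_{Y/S}$ and $\bet := \sum_j e_j \otimes \eta''_j \in {\cal E} \otimes \Om^{i-1}_{Y/S}$. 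Hence $\nabla$ is block diagonal in the decomposition above, with diagonal blocks $\nabla_{Y/S}$ and $-\nabla_{Y/S}$, in exact analogy with (\ref{cd:kttc}).

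From this block-diagonal form one reads off, as in the first proof of (\ref{lemm:pete}), the decompositions $B^i({\cal E} \otimes \Om^\bul_{Y/\os{\circ}{S}}) = \iota(B^i({\cal E} \otimes \Om^\bul_{Y/S})) \oplus \theta \we B^{i-1}({\cal E} \otimes \Om^\bul_{Y/S})$ and $Z^i({\cal E} \otimes \Om^\bul_{Y/\os{\circ}{S}}) = \iota(Z^i({\cal E} \otimes \Om^\bul_{Y/S})) \oplus \theta \we Z^{i-1}({\cal E} \otimes \Om^\bul_{Y/S})$. These imply the exactness of the $B^i$- and $Z^i$-sequences derived from (\ref{ali:cfeos}); the exactness of the ${\cal H}^i$-sequence then follows by the snake lemma.

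The main technical point is the well-definedness of the formula $\nabla(\sum_j e_j \otimes \eta_j) = \sum_j e_j \otimes d\eta_j$, since the presentation of a section through the generators $\{e_j\}$ is not unique when the $e_j$ have ${\cal O}_Y$-linear relations. But any such relation $\sum_j a_j e_j = 0$ in ${\cal E}$ yields, upon applying $\nabla$ and using $\nabla(e_j)=0$, the relation $\sum_j (da_j)\otimes e_j = 0$ in ${\cal E} \otimes \Om^1_{Y/\os{\circ}{S}}$, and hence $\sum_j e_j \otimes (da_j \we \eta) = 0$ in ${\cal E} \otimes \Om^{i+1}_{Y/\os{\circ}{S}}$ for any $\eta$. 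This compatibility is precisely the role played by the hypothesis that ${\cal E}$ be locally generated by horizontal sections.
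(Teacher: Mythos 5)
Your argument is correct and coincides with the paper's first proof of this lemma: both fix local horizontal generators $e_j$ and a basis $\{\theta,\om_1,\dots,\om_d\}$ of $\Om^1_{Y/\os{\circ}{S}}$, observe that horizontality forces $\nabla$ to act as $\sum_j e_j\otimes\eta_j\mapsto\sum_j e_j\otimes d\eta_j$ and hence to be block-diagonal (up to sign) in the splitting ${\cal E}\otimes\Om^i_{Y/\os{\circ}{S}}=\iota({\cal E}\otimes\Om^i_{Y/S})\oplus\theta\wedge({\cal E}\otimes\Om^{i-1}_{Y/S})$, and read off the decompositions of $B^i$ and $Z^i$, the ${\cal H}^i$-case following formally. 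Your closing well-definedness check is harmless but superfluous, since $\nabla$ on ${\cal E}\otimes\Om^{\bul}_{Y/\os{\circ}{S}}$ is already globally defined by the Leibniz rule and the displayed formula is merely its evaluation on a chosen presentation.
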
 
\begin{proof}
First we give the elementary and direct two proof of this lemma. 
Later we give another proof it when $Y/S$ is of Cartier type 
and $({\cal E},\nabla)$ is a certain connection (see (\ref{rema:ci}) below). 
\par 
We claim that the following sequences are exact$:$ 
\begin{align*} 
0\lo B^{i-1}({\cal E}\otimes_{{\cal O}_Y}\Om^{\bul}_{Y/S})
\os{\theta \wedge }{\lo} 
B^i({\cal E}\otimes_{{\cal O}_Y}\Om^{\bul}_{Y/\os{\circ}{S}})
\lo B^i({\cal E}\otimes_{{\cal O}_Y}\Om^{\bul}_{Y/S})\lo 0, 
\tag{3.3.1}\label{ali:cfois} 
\end{align*} 
\begin{align*} 
0\lo Z^{i-1}({\cal E}\otimes_{{\cal O}_Y}\Om^{\bul}_{Y/S})
\os{\theta \wedge }{\lo} 
Z^i({\cal E}\otimes_{{\cal O}_Y}\Om^{\bul}_{Y/\os{\circ}{S}})
\lo Z^i({\cal E}\otimes_{{\cal O}_Y}\Om^{\bul}_{Y/S})\lo 0. 
\tag{3.3.2}\label{ali:cfdeis} 
\end{align*} 
The problem is local. We give the two proofs of the exactness of (\ref{ali:cfois}). 
\par  
The first proof: 
\par 
Let $M_Y$ be the log structure of $Y$. 
Because the problem, 
we can fix a set $\{e_j\}_{j=0}^d$ of horizontal generators  
of ${\cal E}$ and fix a base 
$\{\theta,\om_1,\ldots, \om_d\}$ of  $\Om^1_{Y/\os{\circ}{S}}$ 
$(\om_i=d\log m_i$ for some $m_i\in M_Y$).  
For a nonnegative integer $k$, 
set ${\cal P}_k:=\{I\subset \{1,\ldots, d\}~\vert~\sharp I=k\}$. 
For a nonnegative integer $k$ and 
an element $I:=\{i_1,\ldots, i_k\} \in {\cal P}_k$, 
set $\om_{I}:=\om_{i_1}\wedge \cdots \wedge \om_{i_k}$, 
where $i_1<\cdots <i_k$. 
For an empty set $\phi$, set $\om_{\phi}=1\in {\cal O}_Y$.  
Any local section $e$ of ${\cal E}\otimes_{{\cal O}_Y}\Om^{i-1}_{Y/\os{\circ}{S}}$ 
can be expressed as follows: 
\begin{align*} 
e=\sum_{j=0}^m\{e_j\otimes 
(\sum_{I\in {\cal P}_{i-1}}f_I\om_{I}+
\sum_{J\in {\cal P}_{i-2}}g_J \theta \wedge \om_J)\} \quad (f_I,g_J\in {\cal O}_Y). 
\end{align*} 
Then 
\begin{align*} 
\nabla(e)=
\sum_{j=0}^m\{e_j\otimes 
(\sum_{I\in {\cal P}_{i-1}}df_I\wedge \om_{I}+
\sum_{J\in {\cal P}_{i-2}}dg_J \wedge \theta\wedge \om_{J})\}. 
\end{align*} 
Hence the following diagram is commutative:   
\begin{equation*} 
\begin{CD}
{\cal E}\otimes_{{\cal O}_Y}\Om^i_{Y/\os{\circ}{S}}
@=
\iota({\cal E}\otimes_{{\cal O}_Y}\Om^i_{Y/S})
\oplus 
\{\wedge \theta  ({\cal E}\otimes_{{\cal O}_Y}\Om^{i-1}_{Y/S})\}\\
@V{\nabla}VV @VV{\iota(\nabla_{Y/S})\oplus (\theta \wedge \nabla_{Y/S}  )}V\\
{\cal E}\otimes_{{\cal O}_Y}\Om^{i+1}_{Y/\os{\circ}{S}}
@=\{\iota({\cal E}\otimes_{{\cal O}_Y}\Om^{i+1}_{Y/S})\}
\oplus 
\{\theta \wedge  ({\cal E}\otimes_{{\cal O}_Y}\Om^i_{Y/S})\}. 
\end{CD}
\tag{3.3.3}\label{cd:kdsc}
\end{equation*}
Consequently 
\begin{align*} 
B^i({\cal E}\otimes_{{\cal O}_Y}\Om^{\bul}_{Y/\os{\circ}{S}})=
\iota(B^i({\cal E}\otimes_{{\cal O}_Y}\Om^{\bul}_{Y/S}))
\oplus \theta \wedge  (B^{i-1}({\cal E}\otimes_{{\cal O}_Y}\Om^{\bul}_{Y/S})), 
\end{align*}   
\begin{align*} 
Z^i({\cal E}\otimes_{{\cal O}_Y}\Om^{\bul}_{Y/\os{\circ}{S}})
=\iota(Z^i({\cal E}\otimes_{{\cal O}_Y}\Om^{\bul}_{Y/S}))
\oplus \theta \wedge  (Z^{i-1}({\cal E}\otimes_{{\cal O}_Y}\Om^{\bul}_{Y/S})).
\end{align*}  
This implies that the sequences (\ref{ali:cfois}) and 
(\ref{ali:cfdeis}) are exact. 
\par 
The second proof in the case where ${\cal E}$ is a flat 
quasi-coherent ${\cal O}_Y$-module:  
\par 
Here we give the exactness of only (\ref{ali:cfois}). 
We can prove the exactness of (\ref{ali:cfdeis}) similarly. 
It suffices to prove that 
\begin{align*} 
{\rm Ker}(B^i({\cal E}\otimes_{{\cal O}_Y}\Om^{\bul}_{Y/\os{\circ}{S}})\lo 
B^i({\cal E}\otimes_{{\cal O}_Y}\Om^{\bul}_{Y/S}))
\subset (\theta \wedge )(B^{i-1}({\cal E}\otimes_{{\cal O}_Y}\Om^{\bul}_{Y/S})). 
\end{align*} 
This is equivalent to the following inclusion 
\begin{align*} 
\nabla ({\cal E}\otimes_{{\cal O}_Y}\Om^{i-1}_{Y/\os{\circ}{S}}) 
\cap 
(\theta \wedge ) 
({\cal E}\otimes_{{\cal O}_Y}
\Om^{i-1}_{Y/\os{\circ}{S}})) \subset
(\theta \wedge )\nabla
({\cal E}\otimes_{{\cal O}_Y}\Om^{i-2}_{Y/\os{\circ}{S}}).
\tag{3.3.4}\label{al:hoyw}\\   
\end{align*}
To prove the inclusion (\ref{al:hoyw}), by the assumption
we have only to prove that the following inclusion holds:  
\begin{align*} 
({\cal E}\otimes_{{\cal O}_Y}d\Om^{i-1}_{Y/\os{\circ}{S}}) 
\cap 
({\cal E}\otimes_{{\cal O}_Y}
(\theta \wedge  )(\Om^{i-1}_{Y/\os{\circ}{S}}))) \subset
{\cal E}\otimes_{{\cal O}_Y}(\theta \wedge )(d\Om^{i-2}_{Y/\os{\circ}{S}}).\tag{3.3.5}\label{al:heoiyw}\\   
\end{align*}
Because ${\cal E}$ is a flat ${\cal O}_Y$-module, 
it suffices to prove that 
\begin{align*} 
d\Om^{i-1}_{Y/\os{\circ}{S}} 
\cap (\theta \wedge (\Om^{i-1}_{Y/\os{\circ}{S}})) \subset
\theta \wedge  (d\Om^{i-2}_{Y/\os{\circ}{S}}).
\tag{3.3.6}\label{al:hmyw}\\   
\end{align*}
By taking a local basis 
$\{\om_1,\ldots, \om_d,\theta\}$ of  $\Om^1_{Y/\os{\circ}{S}}$, 
by describing a local section of $\Om^{i-1}_{Y/\os{\circ}{S}}$ with the use of this basis 
and by noting that $d\theta =0$, 
(\ref{al:hmyw}) is an obvious inclusion. 
We complete the first proof of (\ref{lemm:pte}). 
\end{proof}

\begin{defi}\label{defi:tr} 
Let $h\col Z\lo T$ be a proper log smooth morphism of fine log schemes. 
Let $\nabla \col {\cal M}\lo {\cal M}\otimes_{{\cal O}_Y}\Om^1_{Z/T}$ 
be an integrable connection. 
We say that the connection $\nabla$ is {\it log ordinary} 
if $R^jh_*(B^i({\cal M}\otimes_{{\cal O}_Z}\Om^{\bul}_{Z/T}))=0$ 
for $\forall i,j\in {\mab N}$. If $\nabla$ is the derivative 
$d\col {\cal O}_Z\lo \Om^1_{Z/T}$ and if $d$ is log ordinary in the sense above, 
then we say that  $h$ or $Z/T$ is {\it log ordinary}.
\end{defi} 

The following is a first step of the proof of Hyodo's criterion for the case of 
an integrable connection.  

\begin{coro}\label{lemm:b}
Assume that ${\cal E}$ is locally generated by horizontal sections. 
The connection {\rm (\ref{ali:cfees})} 
is log ordinary if and only if 
$R^jg_*(B^i({\cal E}\otimes_{{\cal O}_Y}\Om^{\bul}_{Y/\os{\circ}{S}}))=0$ 
for $\forall i$ and $\forall j$. 
\end{coro}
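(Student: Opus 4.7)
The plan is to derive \ref{lemm:b} as a direct consequence of the Key lemma \ref{lemm:pte}. Specifically, under the hypothesis that $\mathcal{E}$ is locally generated by horizontal sections, Lemma \ref{lemm:pte} supplies the short exact sequence
\begin{equation*}
0\lo B^{i-1}({\cal E}\otimes_{{\cal O}_Y}\Om^{\bul}_{Y/S})
\os{\theta \wedge }{\lo}
B^i({\cal E}\otimes_{{\cal O}_Y}\Om^{\bul}_{Y/\os{\circ}{S}})
\lo B^i({\cal E}\otimes_{{\cal O}_Y}\Om^{\bul}_{Y/S})\lo 0
\end{equation*}
of $g^{-1}(\mathcal{O}_S)$-modules for every $i\in {\mab N}$. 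Applying $Rg_*$ yields a long exact sequence relating the three families of higher direct images, and the corollary will be extracted from it directly.

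For the ``only if'' implication, assume $R^jg_*(B^i({\cal E}\otimes_{{\cal O}_Y}\Om^{\bul}_{Y/S}))=0$ for all $i,j$. Then in the long exact sequence both outer terms $R^jg_*(B^{i-1}(\cdots_{Y/S}))$ and $R^jg_*(B^i(\cdots_{Y/S}))$ vanish for every $j$, and the middle term $R^jg_*(B^i({\cal E}\otimes_{{\cal O}_Y}\Om^{\bul}_{Y/\os{\circ}{S}}))$ is sandwiched between zeros and hence vanishes.

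For the ``if'' implication, assume $R^jg_*(B^i({\cal E}\otimes_{{\cal O}_Y}\Om^{\bul}_{Y/\os{\circ}{S}}))=0$ for all $i,j$. Then the long exact sequence collapses into connecting isomorphisms
\begin{equation*}
R^jg_*(B^i({\cal E}\otimes_{{\cal O}_Y}\Om^{\bul}_{Y/S}))
\os{\sim}{\lo}
R^{j+1}g_*(B^{i-1}({\cal E}\otimes_{{\cal O}_Y}\Om^{\bul}_{Y/S}))
\end{equation*}
for every $i\geq 1$ and every $j\geq 0$. Iterating this isomorphism $i$ times gives
\begin{equation*}
R^jg_*(B^i({\cal E}\otimes_{{\cal O}_Y}\Om^{\bul}_{Y/S}))
\os{\sim}{\lo}
R^{j+i}g_*(B^0({\cal E}\otimes_{{\cal O}_Y}\Om^{\bul}_{Y/S})),
\end{equation*}
and the right hand side is zero because $B^0({\cal E}\otimes_{{\cal O}_Y}\Om^{\bul}_{Y/S})=0$ (there is no differential coming from degree $-1$). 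Hence $\nabla_{Y/S}$ is log ordinary.

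There is essentially no obstacle; the content of the statement is already packaged into Lemma \ref{lemm:pte}, and the only bookkeeping is the iteration of the connecting isomorphism together with the trivial observation $B^0=0$.
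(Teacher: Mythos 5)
Your proof is correct and follows essentially the same route as the paper: both directions are read off from the long exact sequence of higher direct images attached to the short exact sequence (\ref{ali:cfois}) supplied by (\ref{lemm:pte}), and your iteration of the connecting isomorphism down to $B^0=0$ is exactly the paper's ``induction on $i$'' phrased as a telescoping of degree shifts. Nothing is missing.
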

\begin{proof} 
The implication $\Lo$ follows from (\ref{lemm:pte}). 
The converse implication 
also follows from the exact sequence (\ref{ali:cfois}) 
and induction on $i$. 
\end{proof}

\begin{rema}\label{rema:ci}
(1) If ${\cal E}$ is not locally generated by horizontal sections of $\nabla$, 
(\ref{ali:cfois}) is not necessarily exact. 
Indeed, consider the case $i=0$ and 
consider a log scheme $Y$ whose underlying scheme is 
${\rm Spec}(\kap[x,y]/(xy))$ and whose log structure is associated to a morphism 
${\mab N}^2\lo \kap[x,y]/(xy)$ defined by $(1,0)\lom x$ and $(0,1)\lom y$. 
The diagonal immersion ${\mab N}\lo {\mab N}^2$ induces a morphism 
$Y\lo s$ of log schemes. 
Let ${\cal E}$ be a free ${\cal O}_Y$-module of rank $1$ with basis $e$. 
Let $\nabla \col {\cal E}\lo {\cal E}\otimes_{{\cal O}_Y}\Om^1_{Y/\os{\circ}{S}}$ be an integrable connection defined by the following equalities: 
$\nabla(e)=e\otimes \theta$, 
$\nabla(fe)=f\nabla(e)+e\otimes df$ $(f\in {\cal O}_Y)$.  
Then $\nabla_{Y/S}(e)=0$. 
Hence the sequence (\ref{ali:cfois}) for $i=1$ is not exact. 
\par 
(2) We give the third proof of (\ref{lemm:pte}) in the following case. 
\par 
Assume that $Y/S$ is of Cartier type. 
Set $Y':=Y\times_{S,F_S}S$. 
Let $F\col Y\lo Y'$ be the relative Frobenius morphism over $S$. 
Let ${\cal E}'$ be a quasi-coherent flat ${\cal O}_{Y'}$-module.  
Set $({\cal E},\nabla):=(F^{*}({\cal E}'),{\rm id}_{{\cal E}'}\otimes d)$. 
Here $d\col {\cal O}_Y\lo \Om^1_{Y/\os{\circ}{S}}$ is the usual derivative. 
It is obvious that ${\cal E}$ is locally generated by horizontal sections of $\nabla$. 
By a special case of \cite[V (4.1.1)]{ob} and by \cite[(1.2.5)]{ofc}, 
there exists a morphism 
\begin{align*} 
C^{-1}\col {\cal E}'\otimes_{{\cal O}_Y'}\Om^i_{Y'/\os{\circ}{S}}
\lo 
F_*{\cal H}^i({\cal E}\otimes_{{\cal O}_Y}
\Om^{\bul}_{Y/\os{\circ}{S}})\quad (i\in {\mab N})
\tag{3.6.1}\label{ali:hfox}
\end{align*} 
of ${\cal O}_{Y'}$-modules fitting into the following commutative diagram:
\begin{equation*}
\begin{CD}
0 @>>> {\cal E}'\otimes_{{\cal O}_{Y'}}\Om^{i-1}_{Y'/S}
@>{\theta \wedge }>> 
{\cal E}'\otimes_{{\cal O}_{Y'}}\Om^i_{Y'/\os{\circ}{S}}@>>> 
{\cal E}'\otimes_{{\cal O}_{Y'}}\Om^i_{Y'/S}@>>> 0\\
@. @V{C^{-1}}V{\simeq}V @V{C^{-1}}VV @V{C^{-1}}V{\simeq}V \\
0 @>>> F_*{\cal H}^{i-1}({\cal E}\otimes_{{\cal O}_Y}\Om^{\bul}_{Y/S})
@>{\theta \wedge }>> 
F_*{\cal H}^i({\cal E}\otimes_{{\cal O}_Y}\Om^{\bul}_{Y/\os{\circ}{S}})@>>> 
F_*{\cal H}^i({\cal E}\otimes_{{\cal O}_Y}\Om^{\bul}_{Y/S})@>>> 0. 
\end{CD}
\tag{3.6.2}\label{ali:hfoox}
\end{equation*}   
Note that $C^{-1}(\theta)=\theta$. 
Hence the morphism (\ref{ali:hfox}) is an isomorphism. 
By \cite[XV Proposition 2 a)]{sga5-2},  
$\os{\circ}{F}$ is a homeomorphism. 
Hence the following sequence 
\begin{align*} 
0 \lo {\cal H}^{i-1}({\cal E}\otimes_{{\cal O}_Y}\Om^{\bul}_{Y/S})
\os{\theta \wedge }{\lo} 
{\cal H}^i({\cal E}\otimes_{{\cal O}_Y}\Om^{\bul}_{Y/\os{\circ}{S}})
\lo {\cal H}^i({\cal E}\otimes_{{\cal O}_Y}\Om^{\bul}_{Y/S}) \lo 0
\end{align*} 
is exact. 
Using induction on $i$, we see that 
the following sequences are exact: 
\begin{align*} 
0 \lo Z^{i-1}({\cal E}\otimes_{{\cal O}_Y}\Om^{\bul}_{Y/S})
\os{\theta \wedge }{\lo} 
Z^i({\cal E}\otimes_{{\cal O}_Y}\Om^{\bul}_{Y/\os{\circ}{S}})
\lo Z^i({\cal E}\otimes_{{\cal O}_Y}\Om^{\bul}_{Y/S}) \lo 0
\end{align*} 
and 
\begin{align*} 
0 \lo B^{i-1}({\cal E}\otimes_{{\cal O}_Y}\Om^{\bul}_{Y/S})
\os{\theta \wedge }{\lo} 
B^i({\cal E}\otimes_{{\cal O}_Y}\Om^{\bul}_{Y/\os{\circ}{S}})
\lo B^i({\cal E}\otimes_{{\cal O}_Y}\Om^{\bul}_{Y/S}) \lo 0. 
\end{align*} 
We complete the second proof of (\ref{lemm:pte}) in the case above. 
\end{rema} 

\par 
Let $Z$ be a fine log 
(formal) scheme over a fine log (formal) scheme $T$. 
Let $g\col Z\lo \os{\circ}{T}$ be the structural morphism. 
Let us recall the increasing filtration 
$P$ on the sheaf ${\Om}^i_{Z/\os{\circ}{T}}$ $(i\in {\mab N})$ 
of log differential forms on $Z_{\rm zar}$ (\cite[(1.3.0.1)]{nb}, cf.~\cite[(4.0.2)]{nh3}): 
\begin{equation*} 
P_k{\Om}^i_{Z/\os{\circ}{T}} =
\begin{cases} 
0 & (k<0), \\
{\rm Im}({\Om}^k_{Z/\os{\circ}{T}}{\otimes}_{{\cal O}_Z}
\Om^{i-k}_{\os{\circ}{Z}/\os{\circ}{T}}
\lo {\Om}^i_{Z/\os{\circ}{T}}) & (0\leq k\leq i), \\
{\Om}^i_{Z/\os{\circ}{T}} & (k > i).
\end{cases}
\tag{3.6.3}\label{eqn:pkdefpw}
\end{equation*}  
In \cite{nb} we have called this filtration $P$ 
the preweight filtration on $\Om^{\bul}_{Z/\os{\circ}{T}}$.  
For a flat quasi-coherent ${\cal O}_Z$-module ${\cal F}$, 
set 
\begin{align*} 
P_k({\cal F}\otimes_{{\cal O}_Z}\Om^i_{Z/\os{\circ}{T}})
:={\cal F}\otimes_{{\cal O}_Z}P_k\Om^i_{Z/\os{\circ}{T}} 
\quad (i\in {\mab N}, k\in {\mab Z}). 
\end{align*} 
\par 
Let ${\cal F}$ be a flat quasi-coherent ${\cal O}_Z$-module 
and let 
\begin{align*} 
\nabla \col {\cal F}\lo {\cal F}\otimes_{{\cal O}_Z}P_0\Om^1_{Z/\os{\circ}{T}}
\tag{3.6.4}\label{eqn:peefpw}
\end{align*} 
be an integrable connection. 
That is, $\nabla$ is a morphism of $g^{-1}({\cal O}_T)$-modules,  
$\nabla(a\om)=\om \otimes da+a\nabla(\om)$ $(a\in {\cal O}_Z, \om \in {\cal F})$ and  
the iteration of $\nabla$:  
$\nabla^2 \col {\cal F}\lo {\cal F}\otimes_{{\cal O}_Z}P_0\Om^2_{Z/\os{\circ}{T}}$
is zero. By abuse of notation, we denote the induced morphism  
${\cal F}\lo {\cal F}\otimes_{{\cal O}_Z}\Om^1_{Z/\os{\circ}{T}}$ 
by (\ref{eqn:peefpw}) also by 
\begin{align*} 
\nabla \col {\cal F}\lo {\cal F}\otimes_{{\cal O}_Z}\Om^1_{Z/\os{\circ}{T}}. 
\tag{3.6.5}\label{eqn:peefow}
\end{align*} 
Let 
\begin{align*} 
\nabla_{Z/T} \col {\cal F}\lo {\cal F}\otimes_{{\cal O}_Z}\Om^1_{Z/T}. 
\tag{3.6.6}\label{eqn:peefsow}
\end{align*} 
be the induced integrable connection by (\ref{eqn:peefow}). 
Then we have complexes  
${\cal F}\otimes_{{\cal O}_Z}\Om^{\bul}_{Z/\os{\circ}{T}}$ 
and 
${\cal F}\otimes_{{\cal O}_Z}\Om^{\bul}_{Z/T}$. 
Set 
\begin{align*} 
P_k({\cal F}\otimes_{{\cal O}_Z}\Om^{\bul}_{Z/\os{\circ}{T}})
:={\cal F}\otimes_{{\cal O}_Z}P_k\Om^{\bul}_{Z/\os{\circ}{T}} 
\quad (k\in {\mab Z}). 
\end{align*} 
By (\ref{eqn:peefpw}) we indeed the complex 
$P_k({\cal F}\otimes_{{\cal O}_Z}\Om^{\bul}_{Z/\os{\circ}{T}})$. 
Consequently we have a filtered complex 
$({\cal F}\otimes_{{\cal O}_Z}\Om^{\bul}_{Z/\os{\circ}{T}},P)$ of 
$g^{-1}({\cal O}_T)$-modules. 

\begin{rema}\label{rema:res}
Though $Z/\os{\circ}{T}$ is not necessarily log smooth, 
let us define a sheaf $R_{Z/\os{\circ}{T}}$ on $\os{\circ}{Z}$ as 
${\rm Coker}(\Om^1_{\os{\circ}{Z}/\os{\circ}{T}}\lo 
\Om^1_{Z/\os{\circ}{T}})$ following \cite[(1.3.0.1)]{ofc}. 
Let ${\cal F}'$ be a quasi-coherent ${\cal O}_Z$-module. 
Then it is obvious that an integrable connection 
\begin{align*} 
\nabla \col {\cal F}'\lo {\cal F}'\otimes_{{\cal O}_Z}\Om^1_{Z/\os{\circ}{T}}
\tag{3.7.1}\label{eqn:pnpw}
\end{align*}
induces a connection (\ref{eqn:peefpw}) if and only if 
the induced morphism 
\begin{align*} 
\nabla \col {\cal F}'\lo {\cal F}'\otimes_{{\cal O}_Z}R_{Z/\os{\circ}{T}}
\tag{3.7.2}\label{eqn:pnrpw}
\end{align*}
by (\ref{eqn:pnpw}) is zero. 
\end{rema}

\par 
Let $h\col Z\lo W$ be a morphism of log schemes over 
$\os{\circ}{T}$.  
Let ${\cal G}$ be a flat quasi-coherent ${\cal O}_W$-module 
and let 
\begin{align*} 
\nabla \col {\cal G}\lo {\cal G}\otimes_{{\cal O}_W}P_0\Om^1_{W/\os{\circ}{T}}
\end{align*} 
be an integrable connection fitting into the following commutative diagram 
\begin{equation*} 
\begin{CD} 
h_*({\cal F})@>{\nabla}>> h_*({\cal F}\otimes_{{\cal O}_Z}P_0\Om^1_{Z/\os{\circ}{T}})\\
@AAA @AAA \\
{\cal G}@>{\nabla}>> {\cal G}\otimes_{{\cal O}_W}P_0\Om^1_{W/\os{\circ}{T}}. 
\end{CD}
\end{equation*} 
Then we have the following morphism of filtered complexes: 
\begin{equation*} 
h^*\col ({\cal G}\otimes_{{\cal O}_W}\Om^{\bul}_{W/\os{\circ}{T}},P)
\lo 
h_*(({\cal F}\otimes_{{\cal O}_Z}\Om^{\bul}_{Z/\os{\circ}{T}},P)).  
\tag{3.7.3}\label{eqn:lyzpp}
\end{equation*}

\par 
Now let $X$ be an SNCL scheme over $S$. 
Let $\{\os{\circ}{X}_{\lam}\}_{\lam \in \Lam}$ be 
a decomposition of $\os{\circ}{X}$ 
by smooth components of $\os{\circ}{X}$ over $\os{\circ}{S}$ 
(\cite[(1.1.8)]{nb}). 
Then the set $\{\os{\circ}{X}_{\lam}\}_{\lam \in \Lam}$ gives the 
orientation sheaf 
$\vp^{(k)}_{\rm zar}(\os{\circ}{X}/\os{\circ}{S})$ $(k\in {\mab N})$ 
as in \cite[p.~81]{nh2} (\cite[(1.1)]{nb}). 
Let $f\col X\lo S$ and 
$\os{\circ}{f}{}^{(k)} \col \os{\circ}{X}{}^{(k)}\lo \os{\circ}{S}$ 
be the structural morphisms. 
For a nonnegative integer $k$, set 
\begin{equation}
\os{\circ}{X}_{\{\lam_0, \lam_1,\ldots \lam_k\}} 
:=\os{\circ}{X}_{\lam_0}\cap \cdots \cap \os{\circ}{X}_{\lam_k} \quad 
(\lam_i \not= \lam_j~{\rm if}~i\not= j) 
\tag{3.7.4}\label{eqn:parlm}
\end{equation}
and  
\begin{equation}
\os{\circ}{X}{}^{(k)} =  
\us{\{\lam_0, \ldots,  \lam_{k}~\vert~\lam_i 
\not= \lam_j~(i\not=j)\}}{\coprod}
\os{\circ}{X}_{\{\lam_0, \lam_1, \ldots, \lam_k\}}.   
\tag{3.7.5}\label{eqn:kfdintd}
\end{equation} 
For a negative integer $k$, set  
$\os{\circ}{X}{}^{(k)}=\emptyset$.  
In \cite[(1.1.11)]{nb} we have proved that 
$\os{\circ}{X}{}^{(k)}$ is independent of 
the choice of the set $\{\os{\circ}{X}_{\lam}\}_{\lam \in \Lam}$. 
Denote the natural local closed immersion 
$\os{\circ}{X}_{\lam_0 \cdots \lam_k}\os{\sus}{\lo} 
\os{\circ}{X}$ by $a_{\lam_0\cdots \lam_k}$. 
Let $a^{(k)} \col \os{\circ}{X}{}^{(k)}\lo 
\os{\circ}{X}$ $(k\in {\mab N})$ 
be the morphism induced by the $a_{\lam_0\cdots \lam_k}$'s. 
\par

\par 
The following Poincar\'{e} residue isomorphism is 
a special case of \cite[(1.3.14)]{nb}:   

\begin{prop}\label{prop:perkl}  
Let $x$ be an exact closed point of $X$. 
Let $r$ be a nonnegative integer such that 
$M_{X,x}/{\cal O}_{X,x}^*\simeq {\mab N}^r$. 
Let $m_{1,x},\ldots, m_{r,x}$ be local sections of 
$M_X$ around $x$ 
whose images in $M_{X,x}/{\cal O}_{X,x}^*$ 
are generators of $M_{X,x}/{\cal O}_{X,x}^*$. 
Then, for a positive integer $k$, 
the following morphism  
\begin{equation*} 
{\rm Res} \col 
P_k{\Om}^{\bul}_{X/\os{\circ}{S}} \lo 
a^{(k-1)}_*(\Om^{\bul -k}_{\os{\circ}{X}{}^{(k-1)}
/\os{\circ}{S}}
\otimes_{\mab Z}\vp^{(k-1)}_{\rm zar}(\os{\circ}{X}/\os{\circ}{S}))
\tag{3.8.1}\label{eqn:mprrn}
\end{equation*} 
\begin{equation*} 
\omega d\log m_{\lam_0}\cdots d\log m_{\lam_{k-1}} 
\lom 
a^*_{\lam_0\cdots \lam_{k-1}}(\omega)
\otimes({\rm orientation}~(\lam_0\cdots \lam_{k-1})) 
\quad (\om \in P_0{\Om}^{\bul}_{X/\os{\circ}{S}})
\end{equation*} 
{\rm (cf.~\cite[(3.1.5)]{dh2})} 
induces the following ``Poincar\'{e} residue isomorphism''  
\begin{align*} 
{\rm gr}^P_k({\Om}^{\bul}_{X/\os{\circ}{S}}) 
& \os{\sim}{\lo} 
a^{(k-1)}_*(\Om^{\bul -k}_{\os{\circ}{X}{}^{(k-1)}
/\os{\circ}{S}}
\otimes_{\mab Z}\vp^{(k-1)}_{\rm zar}
(\os{\circ}{X}/\os{\circ}{S})).
\tag{3.8.2}\label{eqn:prvin}   
\end{align*} 
\end{prop}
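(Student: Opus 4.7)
The plan is to prove the isomorphism by a standard strategy: reduce to a local computation at an exact closed point, check that the prescription descends to ${\rm gr}^P_k$ independently of the choice of generators $m_{j,x}$, and then verify bijectivity on a local basis; finally, glue via the orientation sheaf. Since the problem is Zariski-local on $X$, I may fix an exact closed point $x$ and sections $m_1,\ldots,m_r \in M_{X,x}$ whose classes generate $M_{X,x}/{\cal O}_{X,x}^* \simeq {\mab N}^r$.

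First I would verify that (\ref{eqn:mprrn}) induces a well-defined map on ${\rm gr}^P_k$ independent of the $m_j$. If $m'_j = u_j m_j$ for a unit $u_j$, then $d\log m'_j = d\log m_j + du_j/u_j$, and $du_j/u_j$ lies in $P_0\Om^1_{X/\os{\circ}{S}}$ since $du_j \in \Om^1_{\os{\circ}{X}/\os{\circ}{S}}$. Expanding $d\log m'_{\lam_0}\we\cdots\we d\log m'_{\lam_{k-1}}$ therefore produces, modulo the term $d\log m_{\lam_0}\we\cdots\we d\log m_{\lam_{k-1}}$, only summands involving strictly fewer $d\log$-factors, hence lying in $P_{k-1}\Om^{\bul}_{X/\os{\circ}{S}}$. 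Consequently the composite of (\ref{eqn:mprrn}) with projection onto ${\rm gr}^P_k$ is independent of the choice of generators.

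Second, I would check bijectivity locally. By the local structure of an SNCL scheme over a family of log points (\cite[(1.1.15)]{nb}), I may choose a system of parameters so that $\Om^1_{X/\os{\circ}{S}}$ decomposes, as an ${\cal O}_X$-module, into $\bigoplus_{j=1}^r {\cal O}_X\cdot d\log m_j$ plus the relative sheaf $\Om^1_{\os{\circ}{X}/\os{\circ}{S}}$ on the smooth locus. Taking exterior powers and using (\ref{eqn:pkdefpw}), this gives
\begin{equation*}
{\rm gr}^P_k\Om^i_{X/\os{\circ}{S}} \simeq \us{\lam_0<\cdots<\lam_{k-1}}{\bigoplus} {\cal O}_X/(m_{\lam_0},\ldots,m_{\lam_{k-1}}) \cdot d\log m_{\lam_0}\we\cdots\we d\log m_{\lam_{k-1}} \otimes_{{\cal O}_X}\Om^{i-k}_{\os{\circ}{X}/\os{\circ}{S}},
\end{equation*}
because the wedge $d\log m_{\lam_0}\we\cdots\we d\log m_{\lam_{k-1}}$ is annihilated modulo $P_{k-1}$ by multiplication by any $m_{\lam_j}$ (since $m_{\lam_j}d\log m_{\lam_j} = dm_{\lam_j}\in \Om^1_{\os{\circ}{X}/\os{\circ}{S}}$). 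The quotient ${\cal O}_X/(m_{\lam_0},\ldots,m_{\lam_{k-1}})$ is exactly the structure sheaf of the local component $\os{\circ}{X}_{\{\lam_0,\ldots,\lam_{k-1}\}}$, so the residue map on each summand becomes the pullback $a^*_{\lam_0\cdots\lam_{k-1}}$ tensored with ${\rm id}$, which is an isomorphism onto the corresponding direct summand of $\Om^{i-k}_{\os{\circ}{X}{}^{(k-1)}/\os{\circ}{S}}$.

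Third, I would glue the local isomorphisms. The sign ambiguity from reordering $\lam_0,\ldots,\lam_{k-1}$ is absorbed by tensoring with $\vp^{(k-1)}_{\rm zar}(\os{\circ}{X}/\os{\circ}{S})$, and the independence of the generators established in the first step makes the patching data consistent on overlaps between Zariski charts with different choices of $\{m_j\}$. The main obstacle will be the careful bookkeeping in the second step: confirming that multiplication by $m_{\lam_j}$ really does send the distinguished wedge into $P_{k-1}$, that no extra terms creep in from the differential on $\Om^{\bul}_{X/\os{\circ}{S}}$ when passing to ${\rm gr}^P_k$, and that the orientation conventions of \cite[(1.1)]{nb} match the ones implicit in the residue formula so that the global gluing is sign-consistent.
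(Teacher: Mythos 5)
You should first be aware that the paper contains no proof of this proposition at all: it is introduced with the words ``a special case of \cite[(1.3.14)]{nb}'' and simply cited, so any actual argument is by necessity a different route from the paper's. The route you choose is the standard one (Deligne's local computation, cf.\ \cite[(3.1.5)]{dh2}, \cite[(2.2.21.3)]{nh2}), and its skeleton is right: the unit-ambiguity check in your first step (replacing $m_j$ by $u_jm_j$ changes $d\log m_j$ by the $P_0$-section $du_j/u_j$, hence changes the $k$-fold wedge only by terms of $P_{k-1}$) and the gluing through $\vp^{(k-1)}_{\rm zar}(\os{\circ}{X}/\os{\circ}{S})$ are exactly what is needed. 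The one point where your bookkeeping does not yet close is the displayed decomposition of ${\rm gr}^P_k\Om^i_{X/\os{\circ}{S}}$: the summand you write, ${\cal O}_X/(m_{\lam_0},\ldots,m_{\lam_{k-1}})\otimes_{{\cal O}_X}\Om^{i-k}_{\os{\circ}{X}/\os{\circ}{S}}$, only \emph{surjects} onto $\Om^{i-k}_{\os{\circ}{X}_{\{\lam_0,\ldots,\lam_{k-1}\}}/\os{\circ}{S}}$, with kernel generated by the classes of $dm_{\lam_0},\ldots,dm_{\lam_{k-1}}$, so if the decomposition were literally as displayed, the map $a^*_{\lam_0\cdots\lam_{k-1}}\otimes{\rm id}$ could not be injective on that summand, contradicting your next sentence. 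The repair is to note that these kernel classes already die in $\Om^{\bul}_{X/\os{\circ}{S}}$ before one passes to the graded piece, since $dm_{\lam_j}\we d\log m_{\lam_0}\we\cdots\we d\log m_{\lam_{k-1}}=\pm\, m_{\lam_j}\,d\log m_{\lam_j}\we d\log m_{\lam_0}\we\cdots\we d\log m_{\lam_{k-1}}=0$ for $0\le j\le k-1$. The cleanest way to organize this (and simultaneously to settle that (\ref{eqn:mprrn}) is well defined on all of $P_k$, not only on a generating set) is to use that $\Om^1_{X/\os{\circ}{S}}$ is locally \emph{free} on $d\log m_1,\ldots,d\log m_r$ together with smooth coordinates $dx_{r+1},\ldots,dx_d$ --- your phrase ``$\bigoplus_j{\cal O}_X\,d\log m_j$ plus $\Om^1_{\os{\circ}{X}/\os{\circ}{S}}$'' is not a direct sum, since $dm_j=m_jd\log m_j$ already lies in the first part --- and then to compute $P_k\Om^i_{X/\os{\circ}{S}}=\bigoplus_{I,B}J_{I,k}\,d\log m_I\we dx_B$, where $J_{I,k}\subset{\cal O}_X$ is the ideal generated by the products of $\max(\sharp I-k,0)$ of the $m_{\lam}$, $\lam\in I$. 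Taking ${\rm gr}^P_k$ componentwise then yields exactly $\bigoplus_{\sharp I=k}\Om^{i-k}_{\os{\circ}{X}_I/\os{\circ}{S}}$ up to orientation, and the rest of your argument goes through; with this correction your proof is in substance the one given in the cited reference.
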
 

Let 
\begin{align*} 
\nabla \col {\cal E}\lo 
{\cal E}\otimes_{{\cal O}_X}\Om^1_{X/\os{\circ}{S}}
\tag{3.8.3}\label{ali:cops} 
\end{align*} 
be an integrable connection on $X/\os{\circ}{S}$ 
locally generated horizontal sections of $\nabla$. 
This connection induces 
the following integrable connections: 
\begin{align*} 
\nabla \col {\cal E}\lo 
{\cal E}\otimes_{{\cal O}_X}P_0\Om^1_{X/\os{\circ}{S}} 
\tag{3.8.4}\label{ali:copss} 
\end{align*} 
and 
\begin{align*} 
\nabla_{X/S} \col {\cal E}\lo 
{\cal E}\otimes_{{\cal O}_X}\Om^1_{X/S}. 
\tag{3.8.5}\label{ali:copts} 
\end{align*} 
In the following we always assume that ${\cal E}$ is a flat ${\cal O}_X$-module.

\begin{coro}\label{coro:fb}
$(1)$ 
The morphism {\rm (\ref{eqn:mprrn})} induces the following morphism 
\begin{equation*} 
{\rm Res} \col 
P_k({\cal E}\otimes_{{\cal O}_X}{\Om}^{\bul}_{X/\os{\circ}{S}}) \lo 
a^{(k-1)}_*(a^{(k-1)*}({\cal E})\otimes_{{\cal O}_{\os{\circ}{X}{}^{(k-1)}}}
\Om^{\bul -k}_{\os{\circ}{X}{}^{(k-1)}
/\os{\circ}{S}}
\otimes_{\mab Z}\vp^{(k-1)}_{\rm zar}(\os{\circ}{X}/\os{\circ}{S}))
\tag{3.9.1}\label{eqn:mprern}
\end{equation*} 
of complexes which induces 
the following Poincar\'{e} residue isomorphism 
\begin{align*} 
{\rm gr}^P_k({\cal E}\otimes_{{\cal O}_X}{\Om}^{\bul}_{X/\os{\circ}{S}}) 
& \os{\sim}{\lo} 
a^{(k-1)}_*(a^{(k-1)*}({\cal E})
\otimes_{{\cal O}_{\os{\circ}{X}{}^{(k-1)}}}
\Om^{\bul -k}_{\os{\circ}{X}{}^{(k-1)}
/\os{\circ}{S}}
\otimes_{\mab Z}\vp^{(k-1)}_{\rm zar}
(\os{\circ}{X}/\os{\circ}{S}))
\tag{3.9.2}\label{eqn:previn}   
\end{align*} 
of complexes. 
\par 
$(2)$  
\begin{align*}
B^i{\rm gr}^P_k({\cal E}\otimes_{{\cal O}_X}{\Om}^{\bul}_{X/\os{\circ}{S}})=
a^{(k-1)}_*(B^{i-k}(a^{(k-1)*}({\cal E})
\otimes_{{\cal O}_{\os{\circ}{X}{}^{(k-1)}}}
\Om^{\bul}_{\os{\circ}{X}{}^{(k-1)}
/\os{\circ}{S}}
\otimes_{\mab Z}\vp^{(k-1)}_{\rm zar}
(\os{\circ}{X}/\os{\circ}{S}))).  
\tag{3.9.3}\label{ali:bpk}
\end{align*} 
\end{coro}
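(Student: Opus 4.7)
The plan is to obtain (\ref{eqn:mprern}) and the isomorphism (\ref{eqn:previn}) from the untwisted Poincar\'{e} residue (\ref{prop:perkl}) by tensoring with the flat ${\cal O}_X$-module ${\cal E}$, and then to derive (\ref{ali:bpk}) from (\ref{eqn:previn}) by applying the operation $B^i$ and tracking the cohomological shift by $[-k]$.

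First I would verify that $({\cal E}\otimes_{{\cal O}_X}\Om^{\bul}_{X/\os{\circ}{S}},P)$, with $P_k$ defined by ${\cal E}\otimes_{{\cal O}_X}P_k\Om^{\bul}_{X/\os{\circ}{S}}$, is a filtered complex. This uses that $\nabla$ factors through ${\cal E}\otimes_{{\cal O}_X}P_0\Om^1_{X/\os{\circ}{S}}$ by (\ref{eqn:peefpw}) together with the Leibniz rule: the total differential $\nabla\otimes 1+1\otimes d$ preserves $P_k$ because $P_0\wedge P_k\sus P_k$ and $d(P_k)\sus P_k$. Next, I would tensor (\ref{eqn:mprrn}) over ${\cal O}_X$ with ${\cal E}$ and apply the projection formula ${\cal E}\otimes_{{\cal O}_X}a^{(k-1)}_*({\cal F})\os{\sim}{\lo}a^{(k-1)}_*(a^{(k-1)*}({\cal E})\otimes_{{\cal O}_{\os{\circ}{X}{}^{(k-1)}}}{\cal F})$ for the closed immersion $a^{(k-1)}$ and a quasi-coherent ${\cal F}$ on $\os{\circ}{X}{}^{(k-1)}$; this yields (\ref{eqn:mprern}) as a graded morphism. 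To promote it to a morphism of complexes, I would verify locally at an exact closed point $x\in X$ that, on a section $e\otimes(\om_0\wedge d\log m_{\lam_0}\wedge \cdots \wedge d\log m_{\lam_{k-1}})$ with $\om_0\in P_0\Om^{\bul-k}_{X/\os{\circ}{S}}$, the wedge $\nabla(e)\wedge \om$ still lies in $P_k$ (because $\nabla(e)$ has no log part) and its residue coincides with the pullback of $\nabla(e)$ along $a^{(k-1)}_{\lam_0\cdots \lam_{k-1}}$ wedged with the residue of $\om$; the analogous compatibility for $1\otimes d$ is already inherent in (\ref{prop:perkl}).

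Passing to ${\rm gr}^P_k$, the flatness of ${\cal E}$ identifies the source with ${\cal E}\otimes_{{\cal O}_X}{\rm gr}^P_k\Om^{\bul}_{X/\os{\circ}{S}}$, and (\ref{prop:perkl}) combined with the projection formula gives the isomorphism (\ref{eqn:previn}), completing (1). For (2), since (\ref{eqn:previn}) is now an isomorphism of complexes whose target sits in cohomological degree $\bullet-k$ and since $a^{(k-1)}_*$ is exact for the closed immersion $a^{(k-1)}$ (so it commutes with the formation of ${\rm Im}(d)$), applying $B^i$ to both sides and tracking the shift produces exactly (\ref{ali:bpk}).

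The main obstacle is the local check in the second step that the residue intertwines the $\nabla$-part of the twisted differential with the pullback connection on $a^{(k-1)*}({\cal E})$; once one uses the hypothesis that $\nabla$ takes values in $P_0\Om^1_{X/\os{\circ}{S}}=\Om^1_{\os{\circ}{X}/\os{\circ}{S}}$, so $\nabla(e)$ carries no $d\log$-factors that could interfere with the residue, the verification reduces by linearity to the untwisted case already treated in (\ref{prop:perkl}).
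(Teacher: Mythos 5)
Your proposal is correct and follows essentially the same route as the paper: the paper's proof likewise rests on the single observation that $\nabla$ factors through ${\cal E}\otimes_{{\cal O}_X}P_0\Om^1_{X/\os{\circ}{S}}$ (so that the residue intertwines the twisted differential), and then deduces (2) from (1) using exactness of $a^{(k-1)}_*$. You have simply written out the routine verifications (projection formula, local check on $d\log$-monomials, degree shift) that the paper leaves to the reader.
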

\begin{proof} 
(1): By noting the connection (\ref{ali:cops}) 
induces the connection (\ref{ali:copss}), 
it is easy to check that  the morphism 
(\ref{eqn:mprern}) is a morphism of complexes. 
It is also easy to check that this morphism induces 
the isomorphism (\ref{eqn:previn}). 
\par 
(2): (2) follows from (1). 
\end{proof}

\par 

\par 
Next we recall the description of 
$P_0\Om^{\bul}_{X/\os{\circ}{S}}$ 
$(k\in {\mab Z})$ in \cite{nb}. 
The following has been proved in \cite[(1.3.21)]{nb} 
as a special case: 

\begin{prop}
[{\rm {\bf cf.~\cite[Lemma 3.15.1]{msemi}, 
\cite[(6.29)]{ndw}}}]\label{prop:csrl}  
The natural morphism 
$$\Om^{\bul}_{\os{\circ}{X}/\os{\circ}{S}} \lo a^{(0)}_*
(\Om^{\bul}_{\os{\circ}{X}{}^{(0)}/\os{\circ}{S}}\otimes_{\mab Z}
\vp^{(0)}_{\rm zar}(\os{\circ}{X}/\os{\circ}{S})))$$ 
induces a morphism 
$$P_0\Om^{\bul}_{X/\os{\circ}{S}}
\lo a^{(0)}_*(\Om^{\bul}_{\os{\circ}{X}{}^{(0)}/\os{\circ}{S}}\otimes_{\mab Z}
\vp^{(0)}_{\rm zar}(\os{\circ}{X}/\os{\circ}{S})))$$
and 
the following sequence 
\begin{equation*} 
0 \lo P_0\Om^{\bul}_{X/\os{\circ}{S}}
\lo a^{(0)}_*(\Om^{\bul}_{\os{\circ}{X}{}^{(0)}/\os{\circ}{S}}\otimes_{\mab Z}
\vp^{(0)}_{\rm zar}(\os{\circ}{X}/\os{\circ}{S})) 
\os{\iota^{(0)*}}{\lo} {a}{}^{(1)}_*(\Om^{\bul}_{\os{\circ}{X}{}^{(1)}/\os{\circ}{S}}
\otimes_{\mab Z}
\vp^{(1)}_{\rm zar}(\os{\circ}{X}/\os{\circ}{S}))) 
\os{\iota^{(1)*}}{\lo} \cdots  \tag{3.10.1}\label{eqn:cptle} 
\end{equation*} 
of complexes of $f^{-1}({\cal O}_S)$-modules is exact. 
Here 
$$\iota^{(k)*}\col {a}{}^{(k)}_*
(\Om^{\bul}_{\os{\circ}{X}{}^{(k)}/\os{\circ}{S}}
\otimes_{\mab Z}
\vp^{(k)}_{\rm zar}(\os{\circ}{X}/\os{\circ}{S})))
\lo {a}{}^{(k+1)}_*(\Om^{\bul}_{\os{\circ}{X}{}^{(k+1)}/\os{\circ}{S}}
\otimes_{\mab Z}
\vp^{(k+1)}_{\rm zar}(\os{\circ}{X}/\os{\circ}{S})))\quad (k\in {\mab N})$$ 
is the morphism defined in 
{\rm \cite[(1.3.20.5)]{nb}}. 
\end{prop}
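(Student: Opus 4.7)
The plan is to reduce to a local computation using a standard étale chart for the SNCL scheme $X/S$, and then recognize the sought-for exact sequence as the Čech resolution associated to the decomposition of $\os{\circ}{X}$ into its smooth irreducible components, tensored with the appropriate sheaves of differential forms. Since the assertion is about sheaves on $X_{\rm zar}$, it is étale-local on $\os{\circ}{X}$, and I would fix a local SNCL chart in which $\os{\circ}{X}$ is the zero locus of $x_0 \cdots x_r$ inside a smooth $\os{\circ}{S}$-scheme. Then the components $D_i = \{x_i = 0\}$ are smooth over $\os{\circ}{S}$ and meet transversally, and $\os{\circ}{X}{}^{(k-1)} = \coprod_{|I|=k} D_I$ with each $D_I$ smooth over $\os{\circ}{S}$.

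The first step is to give a concrete description of $P_0\Om^i_{X/\os{\circ}{S}}$. By the definition in (3.6.3), this subsheaf of $\Om^i_{X/\os{\circ}{S}}$ is the image of the natural map from $\Om^i_{\os{\circ}{X}/\os{\circ}{S}}$, i.e., the forms having no $d\log$ components from the log structure $M_X$. The restriction maps $\Om^i_{\os{\circ}{X}/\os{\circ}{S}} \to \Om^i_{D_i/\os{\circ}{S}}$ then assemble, after twisting by the orientation sheaf $\vp^{(0)}_{\rm zar}$, into a morphism $P_0\Om^i_{X/\os{\circ}{S}} \to a^{(0)}_*(\Om^i_{\os{\circ}{X}{}^{(0)}/\os{\circ}{S}} \otimes_{\mab Z} \vp^{(0)}_{\rm zar})$. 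A direct check shows that composition with the next Čech-type differential $\iota^{(0)*}$ vanishes, giving a morphism of complexes into (3.10.1).

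For exactness I would appeal to the classical Čech resolution attached to a transverse union of smooth closed subschemes. Locally in the chart, one has the exact sequence
\begin{equation*}
0 \lo {\cal O}_{\os{\circ}{X}} \lo \bigoplus_i {\cal O}_{D_i} \lo \bigoplus_{i<j} {\cal O}_{D_{ij}} \lo \cdots,
\end{equation*}
which is verified by induction on $r$ using the exact sequence $0 \to {\cal O}_{\bigcup_{i<r} D_i \cup D_r} \to {\cal O}_{\bigcup_{i<r}D_i} \oplus {\cal O}_{D_r} \to {\cal O}_{(\bigcup_{i<r}D_i) \cap D_r} \to 0$. Tensoring with the locally free $\Om^i_{D_I/\os{\circ}{S}}$ on each stratum, one obtains the exactness of (3.10.1) in each fixed degree $i$, modulo identifying the kernel of the first Čech differential with $P_0\Om^i_{X/\os{\circ}{S}}$. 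The orientation sheaves $\vp^{(k)}_{\rm zar}$ simply provide the consistent signs needed to make the $\iota^{(k)*}$ alternating.

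The main obstacle I anticipate is precisely this last identification: verifying that a local section of $\bigoplus_i \Om^i_{D_i/\os{\circ}{S}}$ whose image in $\bigoplus_{i<j} \Om^i_{D_{ij}/\os{\circ}{S}}$ vanishes extends uniquely to a section of $P_0\Om^i_{X/\os{\circ}{S}}$, rather than only to a section of $\Om^i_{\os{\circ}{X}/\os{\circ}{S}}$ which might have nontrivial torsion along the singular locus of $\os{\circ}{X}$. This requires writing down the splitting of $\Om^i_{X/\os{\circ}{S}}$ coming from the local basis $\{d\log x_0, \dots, d\log x_r, dy_1, \dots, dy_m\}$ (modulo the relation coming from $\theta$) and tracking the $P_0$-part through the Čech differential. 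Once this local matching is in place, the global exact sequence follows by sheafification.
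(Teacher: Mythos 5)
The paper itself gives no proof of this proposition: it is quoted from \cite[(1.3.21)]{nb} (cf.\ Mokrane's Lemma 3.15.1 and \cite[(6.29)]{ndw}), so your argument can only be measured against the standard local computation in those references, which is indeed what you outline. Your strategy is the right one, and you correctly locate the crux: the kernel of $\iota^{(0)*}$ must be identified with $P_0\Om^i_{X/\os{\circ}{S}}$, i.e.\ the \emph{image} of $\Om^i_{\os{\circ}{X}/\os{\circ}{S}}$ in the log differentials, and not with $\Om^i_{\os{\circ}{X}/\os{\circ}{S}}$ itself, which has torsion supported on the double locus.

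The one step that is not legitimate as written is ``tensoring with the locally free $\Om^i_{D_I/\os{\circ}{S}}$ on each stratum'': the sheaves $\Om^i_{D_I/\os{\circ}{S}}$ are genuinely different modules for different $I$ (the classes of $dx_j$, $j\in I$, die on $D_I$), so you are not tensoring the \v{C}ech resolution $0\lo {\cal O}_{\os{\circ}{X}}\lo \bigoplus_i{\cal O}_{D_i}\lo\cdots$ with a single sheaf, and exactness does not transfer formally. The correct refinement, which your final paragraph gestures at but does not pin down, is a monomial decomposition: in the chart $\os{\circ}{X}=V(x_0\cdots x_r)$ and in a fixed degree $i$, the entire augmented sequence splits as a direct sum over pairs $(J,K)$ with $J\subset\{0,\dots,r\}$, $|J|+|K|=i$. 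The $(J,K)$-summand of $P_0\Om^i_{X/\os{\circ}{S}}$ is $x_J{\cal O}_{\os{\circ}{X}}\,d\log x_J\wedge dy_K\simeq {\cal O}_{\os{\circ}{X}}/{\rm Ann}(x_J)\simeq{\cal O}_{Z_J}$ with $Z_J:=\bigcup_{j\notin J}D_j$; the $(J,K)$-summand of $a^{(k)}_*(\Om^i_{\os{\circ}{X}{}^{(k)}/\os{\circ}{S}})$ is $\bigoplus_{|I|=k+1,\;I\cap J=\emptyset}{\cal O}_{D_I}\cdot dx_J\wedge dy_K$; and the resulting summand of {\rm (\ref{eqn:cptle})} is precisely the classical \v{C}ech resolution of ${\cal O}_{Z_J}$ by the components of the \emph{sub}-SNC scheme $Z_J$ --- not of $\os{\circ}{X}$ itself. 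With that correction your Mayer--Vietoris induction on the number of components applies verbatim to each $Z_J$, the identification of the kernel with $P_0$ comes for free from the decomposition, and the orientation sheaves indeed only supply consistent signs. So the proposal is essentially correct, but this step must be made precise before the argument closes.
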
 

\parno
The following is not included in \cite[(1.3.21)]{nb}: 

\begin{coro}\label{coro:p0e}
The following sequence 
\begin{equation*} 
0 \lo P_0({\cal E}\otimes_{{\cal O}_X}\Om^{\bul}_{X/\os{\circ}{S}})
\lo a^{(0)}_*(a^{(0)*}({\cal E})
\otimes_{{\cal O}_{\os{\circ}{X}{}^{(0)}}}
\Om^{\bul}_{\os{\circ}{X}{}^{(0)}/\os{\circ}{S}}\otimes_{\mab Z}
\vp^{(0)}_{\rm zar}(\os{\circ}{X}/\os{\circ}{S})))
\tag{3.11.1}\label{eqn:cpetle}
\end{equation*} 
$$\os{\iota^{(0)*}}{\lo} {a}{}^{(1)}_*(a^{(1)*}({\cal E})
\otimes_{{\cal O}_{\os{\circ}{X}{}^{(1)}}}
\Om^{\bul}_{\os{\circ}{X}{}^{(1)}/\os{\circ}{S}}
\otimes_{\mab Z}
\vp^{(1)}_{\rm zar}(\os{\circ}{X}/\os{\circ}{S}))) 
\os{\iota^{(1)*}}{\lo} \cdots  $$ 
of complexes of $f^{-1}({\cal O}_S)$-modules is exact. 
\end{coro}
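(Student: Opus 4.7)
The plan is to obtain (3.11.1) from the exact sequence (3.10.1) of Proposition (\ref{prop:csrl}) by tensoring with the flat ${\cal O}_X$-module ${\cal E}$. Because ${\cal E}$ is flat over ${\cal O}_X$, the functor ${\cal E}\otimes_{{\cal O}_X}(-)$ preserves exactness of sequences of ${\cal O}_X$-modules, and in particular of sequences of $f^{-1}({\cal O}_S)$-modules built from ${\cal O}_X$-modules, so the task reduces to identifying each term of (3.11.1) with ${\cal E}\otimes_{{\cal O}_X}$ of the corresponding term of (3.10.1).

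First, for the leftmost term, the equality
\[
P_0({\cal E}\otimes_{{\cal O}_X}\Om^{\bul}_{X/\os{\circ}{S}})
= {\cal E}\otimes_{{\cal O}_X}P_0\Om^{\bul}_{X/\os{\circ}{S}}
\]
is the very definition of the filtration $P_0$ on ${\cal E}\otimes_{{\cal O}_X}\Om^{\bul}_{X/\os{\circ}{S}}$ recalled just before (\ref{eqn:peefpw}) (this uses flatness of ${\cal E}$ so that the tensor product commutes with the submodule $P_0\Om^{\bul}_{X/\os{\circ}{S}}\hookrightarrow \Om^{\bul}_{X/\os{\circ}{S}}$).

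Second, for the remaining terms I would apply the projection formula to the closed immersion $a^{(k)}\col \os{\circ}{X}{}^{(k)}\hookrightarrow \os{\circ}{X}$: for any quasi-coherent ${\cal O}_{\os{\circ}{X}{}^{(k)}}$-module ${\cal F}$ we have a natural isomorphism
\[
a^{(k)}_*\bigl(a^{(k)*}({\cal E})\otimes_{{\cal O}_{\os{\circ}{X}{}^{(k)}}}{\cal F}\bigr)
\os{\sim}{\lo} {\cal E}\otimes_{{\cal O}_X}a^{(k)}_*({\cal F}).
\]
Applied to ${\cal F}=\Om^{\bul}_{\os{\circ}{X}{}^{(k)}/\os{\circ}{S}}\otimes_{\mab Z}\vp^{(k)}_{\rm zar}(\os{\circ}{X}/\os{\circ}{S})$ (the orientation sheaf is a locally constant ${\mab Z}$-module and so does not interfere), this identifies the $k$-th term of (3.11.1) with ${\cal E}\otimes_{{\cal O}_X}$ of the $k$-th term of (3.10.1), and the differentials $\iota^{(k)*}$ in (3.11.1) are by construction the result of applying ${\cal E}\otimes_{{\cal O}_X}(-)$ to those of (3.10.1).

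Combining these two identifications, the sequence (3.11.1) is literally ${\cal E}\otimes_{{\cal O}_X}(3.10.1)$, hence exact by the flatness of ${\cal E}$. The main (minor) obstacle is merely to check that the projection formula is compatible with the morphisms $\iota^{(k)*}$ defined in \cite[(1.3.20.5)]{nb}; this is a local verification that reduces to the corresponding compatibility for the untwisted complexes, which is Proposition (\ref{prop:csrl}) itself.
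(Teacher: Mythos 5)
Your proof is correct and follows essentially the same route as the paper: the paper's own proof consists of the single sentence that the corollary ``immediately follows from (3.10.1)'', and the way it follows is precisely what you spell out, namely tensoring the exact sequence (3.10.1) degreewise with the flat ${\cal O}_X$-module ${\cal E}$ and identifying the terms via the definition of $P_0({\cal E}\otimes_{{\cal O}_X}\Om^{\bul}_{X/\os{\circ}{S}})$ and the projection formula for the affine morphisms $a^{(k)}$. Your write-up simply makes explicit the details the author leaves implicit.
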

\begin{proof} 
This immediately follows from (\ref{prop:csrl}). 
\end{proof} 

\par 
Set 
$$\Om^{\bul}_k({\cal E}):= 
a^{(k)}_*(a^{(k)*}({\cal E})
\otimes_{{\cal O}_{\os{\circ}{X}{}^{(k)}}}
\Om^{\bul}_{\os{\circ}{X}{}^{(k)}/\os{\circ}{S}}\otimes_{\mab Z}
\vp^{(k)}_{\rm zar}(\os{\circ}{X}/\os{\circ}{S}))) 
\quad (k\in {\mab N}).$$ 
(Our numbering $k$ is different from 
the numbering for $L_k$ in \cite[p.~394]{io}.)
Let $F^{(k)} \col \os{\circ}{X}{}^{(k)}
\lo (\os{\circ}{X}{}^{(k)})'$ be the relative Fronbenius morphism 
of $\os{\circ}{X}{}^{(k)}/\os{\circ}{S}$. 
Let $F_{\os{\circ}{S}} \col \os{\circ}{S}\lo \os{\circ}{S}$ be 
the Frobenius endomorphism of $\os{\circ}{S}$ and 
set  
$X^{[p]}:=X\times_{\os{\circ}{S},F_{\os{\circ}{S}}}\os{\circ}{S}$.  
In particular, $S^{[p]}:=S\times_{\os{\circ}{S},F_{\os{\circ}{S}}}\os{\circ}{S}$. 
Note that we do not consider the base change 
$X':=X\times_{S,F_S}S$ by the Frobenius endomorphsim 
$F_S\col S\lo S$ of $S$ since $X'$ is not an SNCL scheme.
Then $\os{\circ}{(X^{[p]})}{}^{(k)}=(\os{\circ}{X}{}^{(k)})'$. 
Let $a^{(k)'}\col  (\os{\circ}{X}{}^{(k)})'\lo (X^{[p]})^{\circ}$ 
be the analogous morphism to $a^{(k)}$. 
Let $F\col X\lo X^{[p]}$ be the abrelative Frobenius morphism of $X$ 
(\cite[(1.5.14)]{nb}) induced by the Frobenius endomorphism of $X$. 
Let ${\cal F}$ be a quasi-coherent flat ${\cal O}_{X^{[p]}}$-module. 
Set 
$$\Om^i_{k}({\cal F}):= 
a^{(k)'}_*(a^{(k)'*}({\cal F})
\otimes_{{\cal O}_{(\os{\circ}{X}{}^{(k)})'}}
\Om^i_{(\os{\circ}{X}{}^{(k)})'/\os{\circ}{S}}\otimes_{\mab Z}
\vp^{(k)}_{\rm zar}((X^{[p]})^{\circ}/\os{\circ}{S}))) 
\quad (k\in {\mab Z}_{\geq 0})$$  
for each $i$. 
Then the following sequence is exact: 
\begin{equation*} 
0 \lo P_0({\cal F}\otimes_{{\cal O}_{X^{[p]}}}\Om^i_{X^{[p]}/\os{\circ}{S}})
\lo \Om^i_0({\cal F}) \os{\iota'{}^{(0)}}{\lo} \Om^i_1({\cal F}) 
\os{\iota'{}^{(1)*}}{\lo} \cdots .
\tag{3.11.2}\label{eqn:cpetale} 
\end{equation*} 
Here $\iota'{}^{(k)}$ is the analogous morphism to 
$\iota^{(k)}$ for $X^{[p]}/S$. 
Consider the following four conditions (I) $\sim$ (IV):  
\par 
(I): There exists an isomorphism 
\begin{align*} 
C^{-1}_k\col  \Om^i_{k}({\cal F}) 
\os{\sim}{\lo} F_*{\cal H}^i(\Om^{\bul}_k({\cal E}))
\tag{3.11.3}\label{ali:kfk} 
\end{align*} 
of ${\cal O}_{(\os{\circ}{X}{}^{(k)})'}$-modules for any $i\in {\mab Z}_{\geq 0}$ 
and any $k\in {\mab Z}_{\geq 0}$ fitting into the following commutative diagram 
\begin{equation*} 
\begin{CD} 
F_*{\cal H}^i(\Om^{\bul}_k({\cal E}))@>>> 
F_*{\cal H}^i(\Om^{\bul}_{k+1}({\cal E}))\\
@A{C^{-1}_k}A{\simeq}A @A{\simeq}A{C^{-1}_{k+1}}A\\
\Om^i_{k}({\cal F}) @>>> \Om^i_{k+1}({\cal F}). 
\end{CD} 
\tag{3.11.4}\label{cd:kffk} 
\end{equation*} 
\par 
(II) There exists the following morphism 
\begin{align*} 
C^{-1} \col {\cal F}\otimes_{{\cal O}_{X^{[p]}}}\Om^i_{X^{[p]}/\os{\circ}{S}}
\lo F_*({\cal H}^i({\cal E}\otimes_{{\cal O}_X}\Om^{\bul}_{X/\os{\circ}{S}}))
\tag{3.11.5}\label{ali:pecin} 
\end{align*} 
of ${\cal O}_{X^{[p]}}$-modules
which induces the following morphism 
\begin{align*} 
C^{-1} \col P_k({\cal F}\otimes_{{\cal O}_{X^{[p]}}}\Om^i_{X^{[p]}/\os{\circ}{S}})
\lo F_*({\cal H}^i(P_k({\cal E}\otimes_{{\cal O}_X}\Om^{\bul}_{X/\os{\circ}{S}}))) 
\quad (k\in {\mab Z}). 
\tag{3.11.6}\label{ali:pecpn} 
\end{align*} 
\par 
(III) The following diagram 
\begin{equation*} 
\begin{CD} 
P_0({\cal F}\otimes_{{\cal O}_{X^{[p]}}}{\Om}^i_{X^{[p]}/\os{\circ}{S}}) @>>> 
\Om^i_0({\cal F})\\
@V{C^{-1}}VV @V{\simeq}V{C^{-1}_0}V\\
F_*{\cal H}^i(P_0({\cal E}\otimes_{{\cal O}_X}{\Om}^{\bul}_{X/\os{\circ}{S}}))
@>>>
F_*{\cal H}^i(\Om^i_0({\cal E}))
\end{CD} 
\tag{3.11.7}\label{cd:pzaccin} 
\end{equation*} 
is commutative.  
\par 
(IV) The following diagram is commutative for $k\in {\mab Z}_{\geq 1}$: 
\begin{equation*} 
\begin{CD} 
F_*{\cal H}^i({\rm gr}_k^P({\cal E}\otimes_{{\cal O}_X}{\Om}^{\bul}_{X/\os{\circ}{S}}))
@>{\simeq}>>
F_*{\cal H}^{i-k}(\Om^{\bul}_k({\cal E}))\\
@A{C^{-1}}A{\simeq}A @A{\simeq}A{C^{-1}_k}A\\ 
{\rm gr}_k^P({\cal F}\otimes_{{\cal O}_{X^{[p]}}}{\Om}^i_{X^{[p]}/\os{\circ}{S}}) 
@>{\simeq}>> 
\Om^{i-k}_k({\cal F}). 
\end{CD} 
\tag{3.11.8}\label{cd:pzccin} 
\end{equation*} 

\parno  
If the conditions (I) and (III) are satisfied, then 
the following morphism is an isomorphism
\begin{align*} 
C^{-1} \col P_0({\cal F}\otimes_{{\cal O}_{X^{[p]}}}\Om^i_{X^{[p]}/\os{\circ}{S}})
\os{\sim}{\lo} 
F_*({\cal H}^i(P_0({\cal E}\otimes_{{\cal O}_X}\Om^{\bul}_{X/\os{\circ}{S}}))).  
\tag{3.11.9}\label{ali:pe0pn} 
\end{align*} 
 
\par 
We easily see that the morphism (\ref{ali:pecin}) is an isomorphism 
by (\ref{ali:pe0pn}) and (\ref{cd:pzccin}). 

\par 
The following is a generalization of \cite[Appendice (1.6)]{io}:

\begin{prop}\label{prop:iga} 
Assume that the condition ${\rm (I)}$ holds. 
Then the resulting sequences of 
{\rm (\ref{eqn:cpetle})} by the operations ${\cal H}^i$, $B^i$ and $Z^i$ 
$(i\in {\mab Z}_{\geq 0})$ are exact. 
\end{prop}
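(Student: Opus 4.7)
The strategy is to transfer the known exactness of the sequence (\ref{eqn:cpetale}) on the $X^{[p]}$-side, which is the content of (\ref{coro:p0e}) applied to $\mathcal{F}$, to the sequences produced by applying $\mathcal{H}^i$, $B^i$, and $Z^i$ termwise to (\ref{eqn:cpetle}), using the Cartier-type isomorphisms supplied by condition {\rm (I)}.

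For the $\mathcal{H}^i$-sequence, I first apply $F_*\mathcal{H}^i(-)$ termwise to (\ref{eqn:cpetle}). Condition {\rm (I)} and the commutativity of (\ref{cd:kffk}) identify the resulting sequence from the $k=0$ term onward with
\begin{equation*}
\Omega^i_0(\mathcal{F}) \os{\iota'{}^{(0)}}{\lo} \Omega^i_1(\mathcal{F}) \os{\iota'{}^{(1)*}}{\lo} \cdots,
\end{equation*}
which is exact by (\ref{eqn:cpetale}). Since $\os{\circ}{F}$ is a homeomorphism by \cite[XV Proposition 2 a)]{sga5-2}, the functor $F_*$ is exact and conservative, so the $\mathcal{H}^i$-sequence on the $X$-side is exact from the $k=0$ term onward. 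To obtain exactness at the first term $\mathcal{H}^i(P_0(\mathcal{E} \otimes_{\mathcal{O}_X} \Omega^{\bullet}_{X/\os{\circ}{S}}))$, I use the short exact sequence of complexes
\begin{equation*}
0 \lo P_0(\mathcal{E} \otimes_{\mathcal{O}_X} \Omega^{\bullet}_{X/\os{\circ}{S}}) \lo \Omega^{\bullet}_0(\mathcal{E}) \lo {\rm Im}(\iota^{(0)}) \lo 0
\end{equation*}
extracted from (\ref{eqn:cpetle}); the associated long exact cohomology sequence, combined with the $\mathcal{H}^i$-exactness from $k=0$ onward just established (which identifies $\mathcal{H}^i({\rm Im}(\iota^{(0)}))$ with ${\rm Ker}(\mathcal{H}^i(\iota^{(1)}))$), yields both the injectivity of the first map and the identification of its image with ${\rm Ker}(\mathcal{H}^i(\iota^{(0)}))$.

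For the sequences of $B^i$ and $Z^i$, I argue by induction on $i$. The standard short exact sequences
\begin{equation*}
0 \lo B^i \lo Z^i \lo \mathcal{H}^i \lo 0, \qquad 0 \lo Z^i \lo \Omega^i \lo B^{i+1} \lo 0,
\end{equation*}
applied termwise to each $\Omega^{\bullet}_k(\mathcal{E})$ and to $P_0(\mathcal{E} \otimes_{\mathcal{O}_X} \Omega^{\bullet}_{X/\os{\circ}{S}})$, together with the termwise exactness of (\ref{eqn:cpetle}) in each degree $i$ and the $\mathcal{H}^i$-exactness just established, allow one to propagate exactness via the snake lemma: from exactness for $B^i$ and $\mathcal{H}^i$ one passes to exactness for $Z^i$, and then from exactness for $Z^i$ together with the termwise exactness of (\ref{eqn:cpetle}) one passes to exactness for $B^{i+1}$. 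The main obstacle is the bookkeeping at the first term in this inductive step, since condition {\rm (I)} controls $\mathcal{H}^i$ directly but not $B^i$ or $Z^i$ of $P_0(\mathcal{E} \otimes_{\mathcal{O}_X} \Omega^{\bullet}_{X/\os{\circ}{S}})$; however, the flatness of $a^{(k)*}\mathcal{E}$ over $\mathcal{O}_{\os{\circ}{X}{}^{(k)}}$ and the smoothness of $\os{\circ}{X}{}^{(k)}/\os{\circ}{S}$ keep all sheaves in play flat at the relevant step, so the snake lemma applies without further obstruction.
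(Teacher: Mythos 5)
Your reduction to the $X^{[p]}$-side via condition (I) and the commutativity of (\ref{cd:kffk}) does correctly give exactness of ${\cal H}^i(\Om^{\bul}_0({\cal E}))\lo {\cal H}^i(\Om^{\bul}_1({\cal E}))\lo \cdots$ at every position $k\geq 1$, and your ascending induction on $i$ for $B^i$ and $Z^i$ agrees with the paper's final step. The gap is at the first two terms of the ${\cal H}^i$-sequence. You assert that the exactness just established ``identifies ${\cal H}^i({\rm Im}(\iota^{(0)}))$ with ${\rm Ker}({\cal H}^i(\iota^{(1)}))$''; this does not follow. Writing $I^{\bul}:={\rm Im}(\iota^{(0)})={\rm Ker}(\iota^{(1)})\subset \Om^{\bul}_1({\cal E})$, exactness at position $1$ only shows that the natural map ${\cal H}^i(I^{\bul})\lo {\cal H}^i(\Om^{\bul}_1({\cal E}))$ is \emph{surjective} onto ${\rm Ker}({\cal H}^i(\iota^{(1)}))$; its injectivity is not automatic for a subcomplex, and that injectivity is precisely what is at stake. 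Without it, your long exact sequence for $0\lo P_0({\cal E}\otimes_{{\cal O}_X}\Om^{\bul}_{X/\os{\circ}{S}})\lo \Om^{\bul}_0({\cal E})\lo I^{\bul}\lo 0$ yields neither the vanishing of the connecting map ${\cal H}^{i-1}(I^{\bul})\lo {\cal H}^i(P_0({\cal E}\otimes_{{\cal O}_X}\Om^{\bul}_{X/\os{\circ}{S}}))$ (which requires ${\cal H}^{i-1}(\Om^{\bul}_0({\cal E}))\lo {\cal H}^{i-1}(I^{\bul})$ to be surjective) nor the identification of the image of ${\cal H}^i(P_0({\cal E}\otimes_{{\cal O}_X}\Om^{\bul}_{X/\os{\circ}{S}}))$ with ${\rm Ker}({\cal H}^i(\iota^{(0)}))$ rather than with the a priori smaller ${\rm Ker}({\cal H}^i(\Om^{\bul}_0({\cal E}))\lo {\cal H}^i(I^{\bul}))$. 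The closing appeal to flatness does not address this point.

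The paper closes exactly this gap by treating all the kernels $K^{\bul}_k({\cal G}):={\rm Ker}(\Om^{\bul}_k({\cal G})\lo \Om^{\bul}_{k+1}({\cal G}))$ simultaneously: locally one may assume $\os{\circ}{X}$ is quasi-compact, so $\os{\circ}{X}{}^{(k)}=\emptyset$ and $K^{\bul}_k=0$ for $k\gg 0$, and a descending induction on $k$ (comparing $F_*{\cal H}^i(K^{\bul}_k({\cal E}))\lo F_*{\cal H}^i(\Om^{\bul}_k({\cal E}))\lo F_*{\cal H}^i(K^{\bul}_{k+1}({\cal E}))$ with the short exact sequence $0\lo K^i_k({\cal F})\lo \Om^i_k({\cal F})\lo K^i_{k+1}({\cal F})\lo 0$ by means of $C^{-1}_k$ and (\ref{eqn:cpetale})) shows that $0\lo {\cal H}^i(K^{\bul}_k({\cal E}))\lo {\cal H}^i(\Om^{\bul}_k({\cal E}))\lo {\cal H}^i(K^{\bul}_{k+1}({\cal E}))\lo 0$ is exact for every $k$ and every $i$ at once. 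Splicing these short exact sequences then gives the exactness of the ${\cal H}^i$-sequence including at ${\cal H}^i(K^{\bul}_0({\cal E}))={\cal H}^i(P_0({\cal E}\otimes_{{\cal O}_X}\Om^{\bul}_{X/\os{\circ}{S}}))$ and ${\cal H}^i(\Om^{\bul}_0({\cal E}))$. Some such simultaneous induction over $k$ is unavoidable; a single application of the long exact sequence at $k=0$ cannot produce the required injectivity and surjectivity statements for the subcomplex $I^{\bul}$.
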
 
\begin{proof} 
The proof of (\ref{prop:iga}) is 
the same as that of \cite[Appendice (1.6)]{io}. 
Indeed, the problem is local. Hence we may assume that 
$\os{\circ}{X}$ is quasi-compact. 
For ${\cal G}:={\cal E}$ or ${\cal F}$, 
set $K^{\bul}_k({\cal G})
:={\rm Ker}(\Om^{\bul}_k({\cal G})\lo \Om^{\bul}_{k+1}({\cal G}))$. 
Then the following sequences 
$$0\lo K^{\bul}_k({\cal E})\lo \Om^{\bul}_k({\cal G})\lo K^{\bul}_{k+1}({\cal E})\lo 0$$ 
and 
$$0\lo K^i_k({\cal F})\lo \Om^i_k({\cal F})\lo 
K^i_{k+1}({\cal F})\lo 0 \quad (i\in {\mab N})$$
are exact by (\ref{eqn:cpetle}) and (\ref{eqn:cpetale}), respectively. 
Hence we have the following exact sequence 
\begin{align*} 
\cdots \lo {\cal H}^i(K^{\bul}_k({\cal E}))\lo 
{\cal H}^i(\Om^{\bul}_k({\cal E})) \lo {\cal H}^i(K^{\bul}_{k+1}({\cal E}))
\lo \cdots 
\tag{3.12.1}\label{ali:fxskhlh}
\end{align*}  
and the following commutative diagram 
\begin{equation*} 
\begin{CD} 
0@>>> F_*{\cal H}^i(K^{\bul}_k({\cal E}))@>>> 
F_*{\cal H}^i(\Om^{\bul}_k({\cal E})) @>>>
F_*{\cal H}^i(K^{\bul}_{k+1}({\cal E}))@>>> 0\\
@.@AAA @A{C^{-1}}A{\simeq}A @AAA \\
0@>>>K^i_{k}({\cal F})@>>> \Om^i_{k}({\cal F}) 
@>>> K^i_{k+1}({\cal F})@>>>0 
\end{CD}
\tag{3.12.2}\label{ali:fomklh}
\end{equation*} 
of sequences. 
Let $u^i_k$ be the left vertical morphism. 
If $k\gg 0$, then $K^{\bul}_k({\cal G})=0$ since 
$\os{\circ}{X}{}^{(k)}=\emptyset$. 
Assume that $u^i_k$ is an isomorphism for 
$\forall k\geq k_0$ for some $k_0$ and $\forall i\in {\mab Z}_{\geq 0}$. 
Because $F_*{\cal H}^i(\Om^{\bul}_k({\cal E}))\lo 
F_*{\cal H}^i(K^{\bul}_{k+1}({\cal E}))$ is surjective for $k=k_0-1$, 
the morphism 
$F_*{\cal H}^{i+1}(K^{\bul}_{k_0-1}({\cal E}))
\lo F_*{\cal H}^{i+1}(\Om^{\bul}_{k_0-1}({\cal E}))$ is injective. 
Obviously the morphism 
$F_*{\cal H}^0(K^{\bul}_{k_0-1}({\cal E}))
\lo F_*{\cal H}^0(\Om^{\bul}_{k_0-1}({\cal E}))$ is also injective. 
Hence $u^i_{k_0-1}$ $(i\in {\mab Z}_{\geq 0})$ is an isomorphism. 
Descending induction on $k$ shows that 
the upper sequence of (\ref{ali:fomklh}) is exact. 
Because $\os{\circ}{F}$ is a homeomorphism of topological spaces 
(\cite[XV Proposition 2 a)]{sga5-2}), 
the following sequence 
\begin{align*} 
0\lo {\cal H}^i(K^{\bul}_k({\cal E}))\lo {\cal H}^i(\Om^{\bul}_k({\cal E})) \lo 
{\cal H}^i(K^{\bul}_{k+1}({\cal E}))\lo 0 \quad (i,k\in {\mab Z}_{\geq 0}) 
\end{align*} 
is exact. 
Now ascending induction on $i$ shows that 
the resulting sequences of 
(\ref{eqn:cptle}) by the operations $Z^i$ and $B^i$ 
$(i\in {\mab Z}_{\geq 0})$ are exact. 
\end{proof}

\begin{prop}[{\rm {\bf cf.~\cite[Corollaire (2.5)]{io}}}]\label{prop:ris}
Assume that the conditions {\rm (I)} $\sim$ {\rm (IV)} hold. 
Then the resulting sequences of the following sequence  
by the operations ${\cal H}^i$, $B^i$ and $Z^i$ 
$(i\in {\mab Z}_{\geq 0})$ are exact$:$ 
\begin{align*} 
0\lo P_{k-1}({\cal E}\otimes_{{\cal O}_X}\Om^{\bul}_{X/\os{\circ}{S}})
\lo 
P_k({\cal E}\otimes_{{\cal O}_X}\Om^{\bul}_{X/\os{\circ}{S}})
\lo 
{\rm gr}_k^P({\cal E}\otimes_{{\cal O}_X}\Om^{\bul}_{X/\os{\circ}{S}})\lo 0.
\tag{3.13.1}\label{ali:ppe}
\end{align*} 
\end{prop}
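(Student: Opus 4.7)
The plan is to first establish exactness of the ${\cal H}^i$-sequence by transporting the problem to the manifestly exact sequence of log differential forms on $X^{[p]}$ via the Cartier-type morphism $C^{-1}$, and then to deduce exactness of the $Z^i$- and $B^i$-sequences by a diagram chase using the vanishing of the connecting map this forces.

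First, I would combine conditions {\rm (I)}, {\rm (III)} and {\rm (IV)} with the already-noted isomorphism (\ref{ali:pe0pn}) to promote (\ref{ali:pecpn}) to an isomorphism of filtered objects
\begin{equation*}
C^{-1}\col P_k({\cal F}\otimes_{{\cal O}_{X^{[p]}}}\Om^i_{X^{[p]}/\os{\circ}{S}})
\os{\sim}{\lo} F_*{\cal H}^i(P_k({\cal E}\otimes_{{\cal O}_X}\Om^{\bul}_{X/\os{\circ}{S}}))
\end{equation*}
by induction on $k$ using the five lemma: the base $k=0$ is (\ref{ali:pe0pn}), and the inductive step uses (\ref{cd:pzccin}) (which identifies $C^{-1}$ on ${\rm gr}^P_k$ with the isomorphism $C^{-1}_k$ of {\rm (I)}), applied to the short exact sequence $0\to P_{k-1}\to P_k\to {\rm gr}_k^P\to 0$ on the log differential side and its cohomological counterpart. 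Since $\os{\circ}{F}$ is a homeomorphism of topological spaces (\cite[XV Proposition 2 a)]{sga5-2}), $F_*$ is exact, and we deduce from the tautologically exact sequence
\begin{equation*}
0\lo P_{k-1}({\cal F}\otimes_{{\cal O}_{X^{[p]}}}\Om^i_{X^{[p]}/\os{\circ}{S}})\lo P_k({\cal F}\otimes_{{\cal O}_{X^{[p]}}}\Om^i_{X^{[p]}/\os{\circ}{S}})\lo {\rm gr}_k^P({\cal F}\otimes_{{\cal O}_{X^{[p]}}}\Om^i_{X^{[p]}/\os{\circ}{S}})\lo 0
\end{equation*}
the desired short exact ${\cal H}^i$-sequence
\begin{equation*}
0\lo {\cal H}^i(P_{k-1})\lo {\cal H}^i(P_k)\lo {\cal H}^i({\rm gr}_k^P)\lo 0.
\end{equation*}

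This ${\cal H}^i$-exactness is equivalent to the vanishing, for every $i$, of the connecting map $\delta\col {\cal H}^i({\rm gr}_k^P)\lo {\cal H}^{i+1}(P_{k-1})$ in the long exact cohomology sequence of (\ref{ali:ppe}). Given $\delta\equiv 0$, the exactness of the $Z^i$- and $B^i$-sequences follows from a routine diagram chase. For the $Z^i$-sequence, the only nontrivial point is surjectivity of $Z^i(P_k)\to Z^i({\rm gr}_k^P)$: given $\ol x\in Z^i({\rm gr}_k^P)$ with any lift $x\in P_k({\cal E}\otimes_{{\cal O}_X}\Om^i_{X/\os{\circ}{S}})$, the class of $dx$ in ${\cal H}^{i+1}(P_{k-1})$ equals $\delta(\ol x)=0$, so $dx=da$ for some $a\in P_{k-1}({\cal E}\otimes_{{\cal O}_X}\Om^i_{X/\os{\circ}{S}})$, and $x-a\in Z^i(P_k)$ is the required lift. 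A parallel chase gives middle exactness of the $B^i$-sequence.

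The main obstacle I expect is the inductive step of the first paragraph, i.e.\ promoting $C^{-1}$ to a \emph{filtered} isomorphism on every $P_k$. Conditions {\rm (I)}--{\rm (IV)} are tailored precisely to make this induction work, and once the filtered isomorphism is in place the remainder of the argument is formal homological algebra that closely parallels the proof of \cite[Corollaire (2.5)]{io}.
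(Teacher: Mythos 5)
Your proposal is correct and takes essentially the same route as the paper: both transport exactness from the tautologically exact filtered sequence on the $X^{[p]}$-side through the Cartier-type isomorphisms supplied by conditions (I)$\sim$(IV), use that $\os{\circ}{F}$ is a homeomorphism to drop $F_*$, and finish with the same diagram chase (the paper simply refers back to the argument of (\ref{prop:iga})) for the $Z^i$- and $B^i$-sequences. One remark on the order of steps in your first paragraph: the five lemma at level $k$ presupposes that $0\lo {\cal H}^i(P_{k-1})\lo {\cal H}^i(P_k)\lo {\cal H}^i({\rm gr}_k^P)\lo 0$ is already exact, which is exactly what is to be proved; the paper sidesteps this by first observing that the surjection $P_k({\cal F}\otimes_{{\cal O}_{X^{[p]}}}\Om^i_{X^{[p]}/\os{\circ}{S}})\lo {\rm gr}_k^P({\cal F}\otimes_{{\cal O}_{X^{[p]}}}\Om^i_{X^{[p]}/\os{\circ}{S}})$ composed with the isomorphism $C^{-1}_k$ forces $F_*{\cal H}^i(P_k({\cal E}\otimes_{{\cal O}_X}\Om^{\bul}_{X/\os{\circ}{S}}))\lo F_*{\cal H}^i({\rm gr}_k^P({\cal E}\otimes_{{\cal O}_X}\Om^{\bul}_{X/\os{\circ}{S}}))$ to be surjective for all $i$ with no inductive hypothesis, which kills the connecting maps and yields the short exactness, after which your filtered isomorphism on $P_k$ drops out by the five lemma.
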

\begin{proof}
We use the argument of (\ref{prop:iga}) again. 
We may assume that $k\geq 1$. 
We have the following exact sequence  
\begin{align*} 
\cdots \lo {\cal H}^i(P_{k-1}({\cal E}\otimes_{{\cal O}_X}\Om^{\bul}_{X/\os{\circ}{S}}))
\lo 
{\cal H}^i(P_k({\cal E}\otimes_{{\cal O}_X}\Om^{\bul}_{X/\os{\circ}{S}}))
\lo 
{\cal H}^i({\rm gr}_k^P({\cal E}\otimes_{{\cal O}_X}\Om^{\bul}_{X/\os{\circ}{S}}))
\lo \cdots. 
\end{align*} 
First consider the case $k=1$. 
By (\ref{ali:pe0pn}) and the conditions (II) and (IV), 
we obtain the following commutative diagram 
\begin{equation*} 
\begin{CD} 
0 @>>> F_*{\cal H}^i(P_0({\cal E}\otimes_{{\cal O}_X}\Om^{\bul}_{X/\os{\circ}{S}}))
@>>> 
F_*{\cal H}^i(P_1({\cal E}\otimes_{{\cal O}_X}\Om^{\bul}_{X/\os{\circ}{S}}))
@>>>\\
@. @A{C^{-1}}A{\simeq}A @A{C^{-1}}AA \\
0 @>>> P_0({\cal F}\otimes_{{\cal O}_X}\Om^i_{X^{[p]}/\os{\circ}{S}})
@>>> 
P_1({\cal F}\otimes_{{\cal O}_X}\Om^i_{X^{[p]}/\os{\circ}{S}})@>>>
\end{CD} 
\tag{3.13.2}\label{cd:pfox} 
\end{equation*}
\begin{equation*} 
\begin{CD}  
F_*{\cal H}^i({\rm gr}_1^P({\cal E}\otimes_{{\cal O}_X}\Om^{\bul}_{X/\os{\circ}{S}}))
@>>> 0\\
@A{C_1^{-1}}A{\simeq}A @. \\ 
{\rm gr}_1^P({\cal F}\otimes_{{\cal O}_X}\Om^i_{X^{[p]}/\os{\circ}{S}})
@>>> 0. 
\end{CD} 
\end{equation*}
Hence the morphism 
$F_*{\cal H}^i(P_1({\cal E}\otimes_{{\cal O}_X}\Om^{\bul}_{X/\os{\circ}{S}}))
\lo 
F_*{\cal H}^i({\rm gr}_1^P({\cal E}\otimes_{{\cal O}_X}\Om^{\bul}_{X/\os{\circ}{S}}))$
is surjective for $\forall i$. 
Consequently 
the morphism 
$F_*{\cal H}^i(P_0({\cal E}\otimes_{{\cal O}_X}\Om^{\bul}_{X/\os{\circ}{S}}))
\lo 
F_*{\cal H}^i(P_1({\cal E}\otimes_{{\cal O}_X}\Om^{\bul}_{X/\os{\circ}{S}}))$ 
is injective for $\forall i$. 
Consequently the upper horizontal sequence is exact. 
By \cite[XV Proposition 2 a)]{sga5-2},  
$\os{\circ}{F}$ is a homeomorphism. 
Hence the following sequence 
\begin{align*} 
0 \lo {\cal H}^i(P_0({\cal E}\otimes_{{\cal O}_X}\Om^{\bul}_{X/\os{\circ}{S}}))
\lo {\cal H}^i(P_1({\cal E}\otimes_{{\cal O}_X}\Om^{\bul}_{X/\os{\circ}{S}}))
\lo {\cal H}^i({\rm gr}_1^P({\cal E}\otimes_{{\cal O}_X}\Om^{\bul}_{X/\os{\circ}{S}}))
\lo 0
\end{align*} 
is exact. 
Similarly we see that 
the following sequence 
\begin{align*} 
0 \lo {\cal H}^i(P_{k-1}({\cal E}\otimes_{{\cal O}_X}\Om^{\bul}_{X/\os{\circ}{S}}))
\lo 
{\cal H}^i(P_k({\cal E}\otimes_{{\cal O}_X}\Om^{\bul}_{X/\os{\circ}{S}}))
\lo 
{\cal H}^i({\rm gr}_k^P({\cal E}\otimes_{{\cal O}_X}\Om^{\bul}_{X/\os{\circ}{S}}))
\lo 0 
\end{align*} 
is exact for $k\geq 2$. 
The rest of the proof is the same as that of (\ref{prop:iga}). 
\end{proof}

\begin{prop}[{\rm {\bf cf.~\cite[Appendice (2.6)]{io}}}]\label{prop:ahc}
Assume that the conditions {\rm (I)} $\sim$ {\rm (IV)} hold. 
Consider the following conditions$:$ 
\par 
{\rm (a)} $R^jf_*(B\Om_k^i({\cal E}))=0$ 
for $\forall i, \forall j$ and $\forall k$. 
\par 
{\rm (b)} $R^jf_*(B^i(P_k({\cal E}\otimes_{{\cal O}_X}\Om^{\bul}_{X/\os{\circ}{S}})))=0$ 
and $R^jf_*(B^i({\cal E}\otimes_{{\cal O}_X}\Om^{\bul}_{X/\os{\circ}{S}}))=0$ 
for $\forall i, \forall j$ and $\forall k$.
\par 
{\rm (c)} $R^jf_*(B^i({\cal E}\otimes_{{\cal O}_X}\Om^{\bul}_{X/\os{\circ}{S}}))=0$ 
for $\forall i$ and $\forall j$.
\par 
{\rm (d)} $R^jf_*(B^i({\cal E}\otimes_{{\cal O}_X}\Om^{\bul}_{X/S}))=0$ 
for $\forall i$ and $\forall j$.
\parno 
Then the following hold$:$
\par 
$(1)$ {\rm (a)} and {\rm (b)} are equivalent. 
\par 
$(2)$ {\rm (b)} implies {\rm (c)}.  
\par 
$(3)$ {\rm (c)} and {\rm (d)} are equivalent. 
\par 
Consequently {\rm (a)} implies {\rm (d)}. 
\end{prop}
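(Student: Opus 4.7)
The plan is to derive the four implications using the two short exact sequences we already have in hand: the preweight filtration sequence $0 \to P_{k-1} \to P_k \to {\rm gr}^P_k \to 0$, whose exactness after the operations $B^i$, $Z^i$, ${\cal H}^i$ is Proposition~\ref{prop:ris}, and the Key-lemma-type sequence $0 \to B^{i-1}({\cal E}\otimes\Om^{\bul}_{X/S}) \to B^i({\cal E}\otimes\Om^{\bul}_{X/\os{\circ}{S}}) \to B^i({\cal E}\otimes\Om^{\bul}_{X/S}) \to 0$ of Lemma~\ref{lemm:pte}. The Poincar\'e residue identification from Corollary~\ref{coro:fb}(2) reduces questions about $B^i{\rm gr}^P_k({\cal E}\otimes\Om^{\bul}_{X/\os{\circ}{S}})$ to questions about $B^{i-k}$ of $\Om^{\bul}_{k-1}({\cal E})$ pushed to $\os{\circ}{X}$ by the finite morphism $a^{(k-1)}$; because $a^{(k-1)}$ is a disjoint union of closed immersions, $R^j f_*\circ a^{(k-1)}_* = R^j (f\circ a^{(k-1)})_*$, which is exactly the object appearing in condition~(a).

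For (1) I use ascending induction on $k$. The implication (a)$\Rightarrow$(b): the base case $k=0$ requires a separate argument because the Poincar\'e residue is only available for $k\geq 1$; here I use (3.11.1) together with Proposition~\ref{prop:iga} to obtain a finite exact sequence $0 \to B^iP_0({\cal E}\otimes\Om^{\bul}_{X/\os{\circ}{S}}) \to B^i\Om^{\bul}_0({\cal E}) \to B^i\Om^{\bul}_1({\cal E}) \to \cdots$ resolving $B^iP_0$, break it into short exact sequences, and descend the vanishing of $R^jf_*$ on the resolvent terms (supplied by (a)) to $R^jf_*(B^iP_0)=0$. For $k\geq 1$, the long exact sequence attached to $0 \to B^iP_{k-1} \to B^iP_k \to B^i{\rm gr}^P_k \to 0$, combined with the Poincar\'e residue identification and (a), gives the inductive step. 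Since $\os{\circ}{X}^{(k)} = \emptyset$ for $k$ large, $P_k = {\cal E}\otimes\Om^{\bul}_{X/\os{\circ}{S}}$ eventually, so the same induction yields the second half of (b). The converse (b)$\Rightarrow$(a) is obtained by reading the same long exact sequence in the opposite direction: $R^jf_*(B^i{\rm gr}^P_k)$ is sandwiched between two vanishing terms and hence vanishes, and unraveling the Poincar\'e residue gives (a).

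Statement (2) is immediate since (b) explicitly contains the assertion $R^jf_*(B^i({\cal E}\otimes\Om^{\bul}_{X/\os{\circ}{S}}))=0$, which is (c). For (3), I apply the Key-lemma exact sequence from Lemma~\ref{lemm:pte} together with its long exact sequence in $R^jf_*$. Using $B^0=0$ as the induction base, an ascending induction on $i$ yields (c)$\Rightarrow$(d); for the converse, the long exact sequence sandwich with vanishing of the two outer terms forces $R^jf_*(B^i({\cal E}\otimes\Om^{\bul}_{X/\os{\circ}{S}}))=0$, which is (c). The final conclusion ``(a) implies (d)'' is then the composition (a)$\Rightarrow$(b)$\Rightarrow$(c)$\Rightarrow$(d).

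The main obstacles I anticipate are purely bookkeeping: tracking the degree shift produced by the Poincar\'e residue ($B^i$ of ${\rm gr}^P_k$ corresponds to $B^{i-k}$ of $\Om^{\bul}_{k-1}({\cal E})$), and correctly treating the base case $k=0$ in (1), where the Poincar\'e residue is unavailable and one must instead use the resolution (3.11.1) supplied by Proposition~\ref{prop:iga}. Once those points are in place, everything else is a mechanical chase through the long exact sequences of higher direct images.
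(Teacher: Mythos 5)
Your argument is correct and follows essentially the same route as the paper: part (1) is obtained from Proposition (3.12) (exactness of the resolution (3.11.1) after $B^i$), the identification (3.9.3) of $B^i{\rm gr}^P_k$ via the Poincar\'e residue, and Proposition (3.13) for the graded pieces, while part (3) is exactly Corollary (3.5), i.e.\ the Key-lemma exact sequence of Lemma (3.3) plus induction on $i$. The only detail the paper makes explicit that you gloss over is the sheaf-theoretic localization allowing one to assume $B^i=P_kB^i$ for some $k$ and $\os{\circ}{X}$ quasi-compact, but this is routine bookkeeping.
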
 
\begin{proof} 
(1): Let us prove the implication (a)$\Lo$(b). 
The sheaf 
$R^jf_*(B^i({\cal E}\otimes_{{\cal O}_X}\Om^{\bul}_{X/\os{\circ}{S}}))$
is the associated sheaf to the following presheaf 
for open sub-log schemes $U$'s of $\os{\circ}{S}$: 
$U \lom 
R^jf_{U*}(B^i({\cal E}\otimes_{{\cal O}_X}\Om^{\bul}_{X\times_{\os{\circ}{S}}U/U}))$,  
where $f_U\col X\times_{\os{\circ}{S}}U\lo U$ is the structural morphism. 
Hence, to prove that 
$R^jf_*(B^i({\cal E}\otimes_{{\cal O}_X}\Om^{\bul}_{X/\os{\circ}{S}}))=0$, we can 
assume that  
$B^i({\cal E}\otimes_{{\cal O}_X}\Om^{\bul}_{X/\os{\circ}{S}})
=P_kB^i({\cal E}\otimes_{{\cal O}_X}\Om^{\bul}_{X/\os{\circ}{S}})$ 
for some $k\in {\mab N}$. 
We can also assume that $\os{\circ}{X}$ is quasi-compact. 
Now we obtain the implication (a)$\Lo$(b) 
by (\ref{prop:iga}), (\ref{ali:bpk}) and (\ref{prop:ris}).  
The converse implication follows from (\ref{ali:bpk}) and (\ref{prop:ris}). 
\par 
(2):  Obvious.
\par 
(3):  (3) follows from (\ref{lemm:b}).  
\end{proof} 

\par 
In the following, we give examples of 
the integrable connections $({\cal E},\nabla)$'s  
which satisfy the conditions (I) $\sim$ (IV). 

\begin{prop}\label{prop:ci}
$(1)$ $\Om^i_{X'/S}=\Om^i_{X^{[p]}/S^{[p]}}$ $(i\in {\mab N})$.
\par 
$(2)$ There exists a morphism 
\begin{align*} 
C^{-1}\col \Om^i_{X^{[p]}/\os{\circ}{S}}
\lo
F_*{\cal H}^i(\Om^{\bul}_{X/\os{\circ}{S}})\quad (i\in {\mab N})
\tag{3.15.1}\label{ali:fhox}
\end{align*} 
of ${\cal O}_{X^{[p]}}$-modules fitting into 
the following commutative diagram for $i\in {\mab N}:$
\begin{equation*}
\begin{CD}
0 @>>> \Om^{i-1}_{X^{[p]}/S^{[p]}}@>{\theta \wedge}>> 
\Om^i_{X^{[p]}/\os{\circ}{S}}@>>> \Om^i_{X^{[p]}/S^{[p]}}@>>> 0\\
@. @V{C^{-1}}V{\simeq}V @V{C^{-1}}VV @V{C^{-1}}V{\simeq}V \\
0 @>>> F_*{\cal H}^{i-1}(\Om^{\bul}_{X/S})@>{\theta \wedge}>> 
F_*{\cal H}^i(\Om^{\bul}_{X/\os{\circ}{S}})@>>> 
F_*{\cal H}^i(\Om^{\bul}_{X/S})@>>> 0. 
\end{CD}
\tag{3.15.2}\label{ali:fhoox}
\end{equation*}  
Here 
\begin{align*} 
C^{-1}\col \Om^i_{X^{[p]}/S^{[p]}}=\Om^i_{X'/S}\os{\sim}{\lo}
F_*{\cal H}^i(\Om^{\bul}_{X/S})
\tag{3.15.3}\label{ali:fcxoox}
\end{align*} 
is the log inverse Cartier isomorphism due to Kato {\rm (\cite[(4.12)]{klog1})}. 
Consequently the morphism {\rm (\ref{ali:fhox})} is an isomorphism. 
\par 
$(3)$ 
The isomorphism {\rm (\ref{ali:fhox})} 
induces the following isomorphism 
\begin{align*} 
C^{-1}\col P_k\Om^i_{X^{[p]}/\os{\circ}{S}}
\os{\sim}{\lo}
F_*{\cal H}^i(P_k\Om^{\bul}_{X/\os{\circ}{S}})\quad (i,k\in {\mab N}).  
\tag{3.15.4}\label{ali:fhhox}
\end{align*} 
\par 
$(4)$ The following diagram 
\begin{equation*} 
\begin{CD} 
{\rm gr}^P_k({\Om}^i_{X^{[p]}/\os{\circ}{S}}) 
@>{{\rm Res},~\sim}>> 
a^{(k-1)}_*(\Om^{i-k}_{\os{\circ}{X}{}^{[p]}{}^{(k-1)}
/\os{\circ}{S}}
\otimes_{\mab Z}\vp^{(k-1)}_{\rm zar}
(\os{\circ}{X}{}'/\os{\circ}{S}))\\
@V{C^{-1}}V{\simeq}V @V{\simeq}V{C^{-1}}V\\
F_*{\cal H}^i({\rm gr}^P_k({\Om}^{\bul}_{X/\os{\circ}{S}})) 
@>{{\rm Res},~\sim}>>
F_*a^{(k-1)}_*
{\cal H}^i(\Om^{\bul -k}_{\os{\circ}{X}{}^{(k-1)}
/\os{\circ}{S}}
\otimes_{\mab Z}\vp^{(k-1)}_{\rm zar}
(\os{\circ}{X}/\os{\circ}{S})).  
\end{CD} 
\tag{3.15.5}\label{eqn:pprrvin} 
\end{equation*} 
is commutative for $k\in {\mab Z}_{\geq 1}$. 
\par  
$(5)$ 
The following diagram 
\begin{equation*} 
\begin{CD} 
P_0{\Om}^i_{X^{[p]}/\os{\circ}{S}} @>>> 
a^{(0)}_*(\Om^{i}_{\os{\circ}{X}{}^{[p]}{}^{(0)}/\os{\circ}{S}}
\otimes_{\mab Z}\vp^{(k-1)}_{\rm zar}
(\os{\circ}{X}{}'/\os{\circ}{S}))\\
@V{C^{-1}}V{\simeq}V @V{\simeq}V{C^{-1}}V\\
F_*{\cal H}^i(P_0{\Om}^{\bul}_{X/\os{\circ}{S}})
@>>>
F_*a^{(0)}_*
{\cal H}^i(\Om^{\bul}_{\os{\circ}{X}{}^{(0)}
/\os{\circ}{S}}\otimes_{\mab Z}\vp^{(0)}_{\rm zar}
(\os{\circ}{X}/\os{\circ}{S}))  
\end{CD} 
\tag{3.15.6}\label{eqn:pzvin} 
\end{equation*} 
is commutative.  
\par 
Consequently the isomorphism {\rm (\ref{ali:fhox})} 
satisfies the conditions {\rm (I)} $\sim$ {\rm (IV)} for 
the case ${\cal E}={\cal O}_X$ 
and ${\cal F}={\cal O}_{X^{[p]}}$. 
\end{prop}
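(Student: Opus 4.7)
\medskip
\textbf{Proof proposal.}
The plan is threefold: (1) is a base change computation; (2) combines Kato's log inverse Cartier isomorphism applied to $X/\os{\circ}{S}$ with the key lemma (\ref{lemm:pete}); (3)--(5) follow by direct local computation with explicit formulas for $C^{-1}$ and for the Poincar\'e residue.

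For (1), I would observe that $X^{[p]}\lo S^{[p]}$ is the base change of $X\lo S$ along $F_{\os{\circ}{S}}\col \os{\circ}{S}\lo \os{\circ}{S}$, and that the underlying scheme morphism of $F_S\col S\lo S$ equals $F_{\os{\circ}{S}}$. Hence the underlying schemes of $X'$ and $X^{[p]}$ agree, and in both cases the log structure is pulled back from $M_X$ along the same morphism of underlying schemes. The base change formula for log K\"ahler differentials (\cite[(1.7)]{klog1}) then identifies both $\Om^i_{X'/S}$ and $\Om^i_{X^{[p]}/S^{[p]}}$ with the pullback of $\Om^i_{X/S}$, yielding the equality.

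For (2), I would first verify that $X/\os{\circ}{S}$ is log smooth and of Cartier type, using local charts: since $X/S$ is of Cartier type and $S/\os{\circ}{S}$ is the family of log points, locally $X$ looks like $\os{\circ}{S}[t,x_1,\ldots,x_n]/\!\sim$ with log structure generated by $t$ and the SNC coordinates, and the Cartier type condition can be checked directly on this chart. Kato's log inverse Cartier isomorphism (\cite[(4.12)]{klog1}) then supplies
\begin{equation*}
C^{-1}\col \Om^i_{X^{[p]}/\os{\circ}{S}}\os{\sim}{\lo} F_*\mathcal{H}^i(\Om^{\bul}_{X/\os{\circ}{S}}),
\end{equation*}
locally sending $d\log \widetilde m_1\wedge\cdots\wedge d\log \widetilde m_i$ to the class of $d\log m_1\wedge\cdots\wedge d\log m_i$. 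Because $\theta=g^*(d\log e)$ for a local section $e$ of $M_S$, we have $C^{-1}(\theta)=\theta$, so $C^{-1}$ commutes with $\theta\wedge$ and with the projection $\pi^{\bul}$. This gives the commutativity of (\ref{ali:fhoox}). The top row is the exact sequence (\ref{ali:cfos}) transcribed to $X^{[p]}/\os{\circ}{S}$, and the bottom row is exact by the key lemma (\ref{lemm:pete}); the right column is (\ref{ali:fcxoox}) via (1), and it is an isomorphism. The five lemma then forces the middle $C^{-1}$ to be an isomorphism as well.

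For parts (3)--(5), I would argue locally using (\ref{eqn:pkdefpw}). A local generator of $P_k\Om^i_{X^{[p]}/\os{\circ}{S}}$ has the form $\omega\,d\log m_{\lambda_1}\wedge\cdots\wedge d\log m_{\lambda_k}$ with $\omega\in \Om^{i-k}_{\os{\circ}{X^{[p]}}/\os{\circ}{S}}$, and the explicit formula for $C^{-1}$ preserves the log monomial factor while acting by the ordinary inverse Cartier on $\omega$. This immediately gives (3). For (4) and (5), the Poincar\'e residue isomorphism (\ref{eqn:mprrn}) is defined by the formula $\omega\,d\log m_{\lambda_0}\cdots d\log m_{\lambda_{k-1}}\mapsto a^*(\omega)\otimes({\rm orientation})$, and $C^{-1}$ is compatible with this formula term by term.

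The main obstacle will be verifying step (2), namely checking that $X/\os{\circ}{S}$ is of Cartier type so that Kato's theorem applies, and then verifying that the middle $C^{-1}$ so constructed really makes the squares of (\ref{ali:fhoox}) commute. Once the identity $C^{-1}(\theta)=\theta$ is in hand, everything else reduces to standard explicit formulas for log inverse Cartier and Poincar\'e residues on log differential monomials, so parts (3)--(5) become routine local verifications.
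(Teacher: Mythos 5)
Your parts (1) and the five-lemma endgame match the paper, but there is a genuine error at the heart of your step (2): the morphism $X\lo \os{\circ}{S}$ is \emph{not} log smooth, so Kato's inverse Cartier isomorphism \cite[(4.12)]{klog1} cannot be invoked for $X/\os{\circ}{S}$. A log smooth scheme over a base with trivial log structure is \'etale-locally smooth over a toric variety, hence normal; but $\os{\circ}{X}$ is an SNC scheme (locally $x_0\cdots x_a=0$), which is not normal as soon as there are at least two branches. The paper itself flags this in (\ref{rema:res}) (``Though $Z/\os{\circ}{T}$ is not necessarily log smooth\dots''). Consequently your local chart ``$\os{\circ}{S}[t,x_1,\ldots,x_n]/\!\sim$'' cannot be made smooth over the chart scheme, and the verification you defer to the end (``checking that $X/\os{\circ}{S}$ is of Cartier type so that Kato's theorem applies'') would fail. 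This is precisely why the paper constructs (\ref{ali:fhox}) only as a \emph{morphism}, via Ogus's general inverse Cartier construction \cite[V (4.1.1)]{ob}, which does not require log smoothness, and then deduces that it is an isomorphism a posteriori: the rows of (\ref{ali:fhoox}) are exact (the bottom one by (\ref{lemm:pete})), the outer verticals are Kato's isomorphisms for the genuinely log smooth Cartier-type morphism $X/S$, and the five lemma (together with $C^{-1}(\theta)=\theta$) closes the argument. So your diagram chase is the right idea, but it must be the \emph{source} of the isomorphism statement, not a redundant confirmation of a Kato isomorphism that does not exist here.

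For (3)--(5) your plan of explicit local formulas on log monomials is workable for the \emph{compatibilities} once the morphism is correctly defined (the paper instead appeals to the characterization of $C^{-1}$ in \cite[V (4.1.1)]{ob} to see that $P_k$ is preserved and that the residue squares commute). But note that (3) is again an \emph{isomorphism} claim on each $P_k$, and ``immediately gives (3)'' is too quick: the paper proves it by descending induction on $k$, using that ${\rm gr}^P_k$ is identified via ${\rm Res}$ with classical inverse Cartier morphisms on the smooth schemes $\os{\circ}{X}{}^{(k-1)}/\os{\circ}{S}$, where the isomorphism property is known. You should make that induction explicit rather than relying on the (unavailable) global Kato isomorphism for $X/\os{\circ}{S}$.
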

\begin{proof} 
(1):  Because the morphism $X\lo S$ is integral, 
$(X')^{\circ}=(X^{[p]})^{\circ}$. (1) follows from \cite[(1.7)]{klog1}. 
\par 
(2):  The existence of (\ref{ali:fhox}) is a special case of \cite[V (4.1.1)]{ob}.  
The commutativity of (\ref{ali:fhoox}) follows from [loc.~cit.] and \cite[(4.12)]{klog1}. 
$($Note that $C^{-1}(\theta)=\theta)$.
\par 
(3), (4), (5):  By the characterization of $C^{-1}$ in \cite[V (4.1.1)]{ob}, 
the morphism (\ref{ali:fhox}) induces the following morphism 
\begin{align*} 
C^{-1}\col P_k\Om^i_{X^{[p]}/\os{\circ}{S}}
\lo 
F_*{\cal H}^i(P_k\Om^{\bul}_{X/\os{\circ}{S}}) \quad (k\in {\mab Z}). 
\end{align*} 
We also have the commutative diagram (\ref{eqn:pprrvin}). 
Hence decsending induction on $k$ shows that 
the morphism above is an isomorphism. 
We also have the commutative diagram 
(\ref{eqn:pzvin}).   
\end{proof}

\begin{exem}\label{exem:triv}
Let ${\cal E}'$ be a quasi-coherent flat 
${\cal O}_{X^{[p]}}$-module.  
Let $F\col X\lo X^{[p]}$  be the induced morphism of 
the Frobenius endomorphism of $X$. 
Set $({\cal E},\nabla):=(F^{*}({\cal E}'),{\rm id}_{{\cal E}'}\otimes d)$. 
By (\ref{prop:ci}) (1) and \cite[(1.2.5)]{ofc}
we have the following isomorphism 
\begin{align*} 
C^{-1}\col {\cal E}'\otimes_{{\cal O}_{X^{[p]}}}\Om^i_{X^{[p]}/S}
\os{\sim}{\lo} F_*({\cal H}^i({\cal E}\otimes_{{\cal O}_{X^{[p]}}}\Om^i_{X^{[p]}/S})) 
\quad (i\in {\mab N}). 
\end{align*} 
Hence, by the same proof as that of (\ref{prop:ci}), 
we have the following isomorphism 
\begin{align*} 
C^{-1}\col 
{\cal E}'\otimes_{{\cal O}_{X^{[p]}}}\Om^i_{X^{[p]}/\os{\circ}{S}}
\os{\sim}{\lo} 
F_*({\cal H}^i({\cal E}
\otimes_{{\cal O}_{X^{[p]}}}
\Om^i_{X^{[p]}/\os{\circ}{S}}))  
\quad (i\in {\mab N}) 
\tag{3.16.1}\label{ali:fhe}
\end{align*} 
of ${\cal O}_{X^{[p]}}$-modules fitting into the following commutative diagram 
for $i\in {\mab N}:$
\begin{equation*}
\begin{CD}
0 @>>> {\cal E}'\otimes_{{\cal O}_{X^{[p]}}}\Om^{i-1}_{X^{[p]}/S^{[p]}}@>{\theta \wedge}>> 
{\cal E}'\otimes_{{\cal O}_{X^{[p]}}}\Om^i_{X^{[p]}/\os{\circ}{S}}@>>> 
{\cal E}'\otimes_{{\cal O}_{X^{[p]}}}\Om^i_{X^{[p]}/S^{[p]}}@>>> 0\\
@. @V{C^{-1}}V{\simeq}V @V{C^{-1}}V{\simeq}V @V{C^{-1}}V{\simeq}V \\
0 @>>> F_*{\cal H}^{i-1}({\cal E}\otimes_{{\cal O}_{X}}\Om^{\bul}_{X/S})
@>{\theta \wedge}>> 
F_*{\cal H}^i({\cal E}\otimes_{{\cal O}_{X}}\Om^{\bul}_{X/\os{\circ}{S}})@>>> 
F_*{\cal H}^i({\cal E}\otimes_{{\cal O}_{X}}\Om^{\bul}_{X/S})@>>> 0. 
\end{CD}
\end{equation*}  
One can check that the isomorphism (\ref{ali:fhe}) 
satisfies the conditions (I) $\sim$ (IV) without difficulty. 
\par 
Note that the log $p$-curvature of $({\cal E},\nabla)$ in \cite{ofc} is zero.
\end{exem}

\begin{rema}\label{rema:ci0}
(1) The inverse Cartier morphism 
$C^{-1} \col \Om^i_{\os{\circ}{X}/\os{\circ}{S}}\lo 
F_*({\cal H}^i(\Om^{\bul}_{\os{\circ}{X}/\os{\circ}{S}}))$ 
fits into the following commutative diagram 
\begin{equation*} 
\begin{CD}
\Om^i_{\os{\circ}{X}{}'/\os{\circ}{S}}
@>>>P_0\Om^i_{X'/\os{\circ}{S}} \\
@V{C^{-1}}VV @VV{C^{-1}}V\\
F_*(\Om^i_{\os{\circ}{X}/\os{\circ}{S}})@>>> 
F_*{\cal H}^i(P_0\Om^{\bul}_{X/\os{\circ}{S}}).
\end{CD}
\tag{3.18.1}\label{cd:remmb} 
\end{equation*} 
\par 
(2)  By using the log inverse Cartier isomorphism (\ref{ali:fhox}), 
the commutative diagram (\ref{ali:fhoox}) and 
the argument of the proof of (\ref{prop:iga}), 
we can also obtain (\ref{lemm:pte}) for $Y=X$ and 
$({\cal E},\nabla)$ in (\ref{exem:triv}). 
\end{rema} 




\par 
Now we can give the second proof of (\ref{theo:twols}). 
\par 
Replace $S$ by $S_{\os{\circ}{T}}:=S\times_{\os{\circ}{S}}{\os{\circ}{T}}$. 
Set $X_{\os{\circ}{T}}:=X\times_SS_{\os{\circ}{T}}$. 
Let $f_{\os{\circ}{T}}\col X_{\os{\circ}{T}}\lo S_{\os{\circ}{T}}$ 
be the structural morphism. 
By the assumption, 
the condition (a) in (\ref{prop:ahc}) for ${\cal E}={\cal O}_X$ is satisfied. 
Hence $R^jf_{\os{\circ}{T}*}
(B\Om^i_{X_{\os{\circ}{T}}/S_{\os{\circ}{T}}})=0$ 
for $\forall i$ and $\forall j$. 
By (\ref{ali:ftbx}), $R^jf_{T*}(B\Om^i_{X_T/T})=0$. 
More generally, we obtain the following: 

\begin{theo}\label{theo:otf}
Let $({\cal E},\nabla)$ be the integrable connection 
{\rm (\ref{ali:cops})} satisfying the conditions {\rm (I)} $\sim$ {\rm (IV)}. 
Let $p_T\col X_T\lo X$ be the projection. 
If $a^{(k)*}({\cal E},\nabla)$ for $\forall k\in {\mab N}$ is ordinary, 
then the induced connection 
$$p_T^*({\cal E})\lo p_T^*({\cal E})\otimes_{{\cal O}_{X_T}}\Om^1_{X_T/T}$$ 
by $({\cal E},\nabla_{X/S})$ is log ordinary. 
\end{theo}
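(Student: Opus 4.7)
The plan is to mirror the second proof of (\ref{theo:twols}) that precedes this statement, now using (\ref{prop:ahc}) in the generality of coefficients. The theorem is designed so that conditions (I)--(IV) feed directly into (\ref{prop:ahc}); hence the argument reduces to two ingredients, a base change reduction to $T = S$, and the verification that the assumed ordinarity of each $a^{(k)*}({\cal E},\nabla)$ supplies condition (a) of (\ref{prop:ahc}).

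First I would carry out the base change reduction. Set $S_{\os{\circ}{T}} := S \times_{\os{\circ}{S}} \os{\circ}{T}$ and $X_{\os{\circ}{T}} := X \times_S S_{\os{\circ}{T}}$. Because $\Om^j_{X_{\os{\circ}{T}}/S_{\os{\circ}{T}}} = \Om^j_{X_T/T}$ for $j=i,i+1$, the sheaf $B^i(p_T^*({\cal E}) \otimes \Om^{\bul}_{X_T/T})$ coincides with $B^i(p_{\os{\circ}{T}}^*({\cal E}) \otimes \Om^{\bul}_{X_{\os{\circ}{T}}/S_{\os{\circ}{T}}})$, and its higher direct image under $f_T$ agrees with that under $f_{\os{\circ}{T}}$, exactly as in (\ref{ali:ftbx}). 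Since horizontal local generators of ${\cal E}$ pull back to horizontal local generators of $p_{\os{\circ}{T}}^*({\cal E})$, and the inverse Cartier isomorphisms (I)--(IV) are compatible with the strict base change $\os{\circ}{T}\to \os{\circ}{S}$, we may assume $T = S$.

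Next I would verify condition (a) of (\ref{prop:ahc}). The morphism $a^{(k)}$ is a closed immersion from a disjoint union (hence affine), so $a^{(k)}_*$ is exact and commutes with the formation of $B^i$; therefore
\begin{equation*}
R^j f_*(B\Om^i_k({\cal E})) = R^j \os{\circ}{f}{}^{(k)}_*\bigl(B^i(a^{(k)*}({\cal E}) \otimes_{{\cal O}_{\os{\circ}{X}{}^{(k)}}} \Om^{\bul}_{\os{\circ}{X}{}^{(k)}/\os{\circ}{S}})\bigr) \otimes_{\mab Z} \vp^{(k)}_{\rm zar}(\os{\circ}{X}/\os{\circ}{S}),
\end{equation*}
which vanishes for all $i,j$ and all $k \in {\mab N}$ by the ordinarity hypothesis on $a^{(k)*}({\cal E}, \nabla)$ (note that $\nabla$ indeed restricts to an integrable connection on $a^{(k)*}({\cal E})$ relative to $\os{\circ}{S}$, because its target lies in $P_0\Om^1_{X/\os{\circ}{S}}$ by (\ref{eqn:peefpw})). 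Invoking the chain (a) $\Rightarrow$ (b) $\Rightarrow$ (c) $\Leftrightarrow$ (d) of (\ref{prop:ahc}) then gives $R^j f_*(B^i({\cal E} \otimes \Om^{\bul}_{X/S})) = 0$ for all $i,j$, which after the reduction is exactly the log ordinarity of the induced connection in the sense of (\ref{defi:tr}).

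The main obstacle is the base change step: one needs the coefficient analog of \cite[(1.13)]{lodw}, namely that the formation of $B^i({\cal E} \otimes \Om^{\bul}_{X/S})$ commutes with the flat base change $S_{\os{\circ}{T}} \to S$, together with the preservation of conditions (I)--(IV). The flatness is visible from the local splitting exhibited in the first proof of (\ref{lemm:pte}): the commutative diagram (\ref{cd:kdsc}) realizes $B^i$, $Z^i$ and ${\cal H}^i$ as direct summands of flat ${\cal O}_X$-modules, and this splitting is preserved by pullback, so the base change behaves as expected. Once these routine verifications are in place, the rest is an immediate application of (\ref{prop:ahc}).
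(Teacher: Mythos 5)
Your proposal is correct and follows exactly the route the paper takes: its proof of (\ref{theo:otf}) consists precisely of the base-change reduction via the analogue of (\ref{ali:ftbx}) followed by checking condition (a) of (\ref{prop:ahc}) from the ordinarity of the $a^{(k)*}({\cal E},\nabla)$ and invoking the chain (a) $\Rightarrow$ (d). The additional verifications you flag (exactness of $a^{(k)}_*$, compatibility of $B^i$ with flat base change via the splitting in (\ref{cd:kdsc})) are exactly the routine details the paper leaves implicit.
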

\begin{proof} 
One has to use only (\ref{prop:ahc}) and the argument before 
(\ref{theo:otf}). 
\end{proof} 

\par 
In the following we give another proof of the existence of 
the log inverse Cartier isomorphism satisfying the conditions (I)$\sim$(IV) 
for a unit root $F$-crystal on $\os{\circ}{X}/{\cal W}(\os{\circ}{S})$ 
when $\os{\circ}{S}$ is a perfect scheme of characteristic $p>0$. 
\par 
Assume that $\os{\circ}{S}$ is a perfect scheme of characteristic $p>0$. 

Let $E$ be a unit root $F$-crystal of 
${\cal O}_{\os{\circ}{X}/{\cal W}(\os{\circ}{S})}$-module. 
Set ${\cal E}:=E_{\os{\circ}{X}}$ and ${\cal F}:=q^*({\cal E})$. 
Then, by \cite[(3.4.1)]{et}, we have the isomorphism (\ref{ali:kfk}).  
In the following we review a log version of Etesse's results 
(\cite[II (3.2.3), (3.3.1), (3.4.1), (4.2.1)]{et}) 
proved in \cite{nb}. 
\par 
Let $Y/S$ be a log smooth scheme. 
Let $\star$ be $'$ or nothing. 
Let $E$ be a unit root $F$-crystal of 
${\cal O}_{\os{\circ}{Y}/{\cal W}(\os{\circ}{S})}$-module. 
Set $E_n:=E_{({\cal W}_n(Y),V({\cal O}_{{\cal W}_n(Y)}), [~])}$. 
Let $E_n\otimes_{{\cal W}_n({\cal O}_{Y})^{\star}}({\cal W}_n\wt{\Om}^{\bul}_{Y})^{\star}$
be the complex defined in \cite[(2.2.11.10)]{nb}. 
Let $F\col {\cal W}_n(Y)\lo {\cal W}_n(Y)$ be the Frobenius endomorphism. 
Though we have assumed that $\os{\circ}{S}={\rm Spec}(\kap)$ in [loc.~cit.], 
the same argument as that in [loc.~cit.] works for the case 
where  $\os{\circ}{S}$ is a perfect scheme of characteristic $p>0$.


\begin{prop}[{\rm {\bf \cite[(2.2.15)]{nb}}}] 
The following formula holds$:$ 
\begin{equation*} 
F^r({\rm Fil}^n(E_{n+r}
\otimes_{{\cal W}_{n+r}({\cal O}_{Y})^{\star}}
({\cal W}_{n+r}\wt{\Om}^i_{Y})^{\star}))= 
F^r\nabla V^n(E_r
\otimes_{{\cal W}_r({\cal O}_{Y})^{\star}}
({\cal W}_r\wt{\Om}^{i-1}_{Y})^{\star}). 
\tag{3.19.1}\label{eqn:frfyu}
\end{equation*} 
Consequently the morphism 
$F^r \col E_{n+r}
\otimes_{{\cal W}_{n+r}({\cal O}_{Y})^{\star}}
({\cal W}_{n+r}\wt{\Om}^i_{Y})^{\star}\lo 
F^r_*\{E_n\otimes_{{\cal W}_n({\cal O}_{Y})^{\star}}
({\cal W}_n\wt{\Om}^i_{Y})^{\star}\}$ 
of ${\cal W}_n({\cal O}_Y)^{\star}$-modules 
induces the following morphism 
\begin{align*} 
\check{F}{}^r \col & 
E_n\otimes_{{\cal W}_n({\cal O}_{Y})^{\star}}
({\cal W}_n\wt{\Om}^i_{Y})^{\star}\lo \tag{3.19.2}\label{eqn:fsyu}\\
& F^r_*\{F^r(E_{n+r}
\otimes_{{\cal W}_{n+r}({\cal O}_{Y})^{\star}}
({\cal W}_{n+r}\wt{\Om}^i_{Y})^{\star})
/F^r\nabla V^n(E_r
\otimes_{{\cal W}_r({\cal O}_{Y})^{\star}}
({\cal W}_r\wt{\Om}^{i-1}_{Y})^{\star})\}
\end{align*} 
of ${\cal W}_n({\cal O}_Y)$-modules. 
For the case $r=n$, $\check{F}{}^n$ induces the following morphism
\begin{align*}
\check{F}{}^n \col 
E_n\otimes_{{\cal W}_n({\cal O}_{Y})^{\star}}
({\cal W}_n\wt{\Om}^i_{Y})^{\star} \lo 
F^n_*{\cal H}^i(E_n
\otimes_{{\cal W}_n({\cal O}_{Y})^{\star}}
({\cal W}_n\wt{\Om}^{\bul}_{Y})^{\star})
\tag{3.19.3}\label{eqn:fyru}
\end{align*}
of ${\cal W}_n({\cal O}_Y)^{\star}$-modules. 
\end{prop}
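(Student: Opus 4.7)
The plan is to reduce the formula (3.19.1) to a direct computation on the de Rham--Witt complex, using the explicit description of the filtration $\mathrm{Fil}^n$ and the standard identities for $F$, $V$, and $d$, and then to deduce the existence of $\check{F}^r$ and the cohomological interpretation as formal consequences.

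First I would recall that, by the very construction of $({\cal W}_{n+r}\wt{\Om}^{\bul}_Y)^{\star}$ and its projection to $({\cal W}_n\wt{\Om}^{\bul}_Y)^{\star}$, the subcomplex $\mathrm{Fil}^n(E_{n+r}\otimes_{{\cal W}_{n+r}({\cal O}_Y)^{\star}}({\cal W}_{n+r}\wt{\Om}^i_Y)^{\star})$ is identified as the kernel of the reduction morphism, and Illusie's classical description (adapted to the log setting in \cite{nb} and compatible with the crystal $E$ via the isomorphism $E_{n+r}\otimes_{{\cal W}_{n+r}}{\cal W}_n\cong E_n$) gives
\[
\mathrm{Fil}^n=V^n\bigl(E_r\otimes({\cal W}_r\wt{\Om}^i_Y)^{\star}\bigr)+dV^n\bigl(E_r\otimes({\cal W}_r\wt{\Om}^{i-1}_Y)^{\star}\bigr).
\]
This should be the first step: write $\mathrm{Fil}^n$ in this form, together with the compatibility of the connection $\nabla$ on $E$ with $V$ and $d$.

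Next I would apply $F^r$ and use the standard identities $F^rV^n=p^rV^{n-r}$ (for $n\ge r$, and the analogous relation otherwise), $F\, dV=d$, and more generally $F^r\, dV^n = dV^{n-r}$ or $p^{n-r}d$ depending on whether $n\ge r$, carried out on each summand. For the piece $V^n(E_r\otimes{\cal W}_r\wt{\Om}^i_Y)^{\star}$ this gives an element divisible by $p^r$, which, after re-expressing using $\nabla$ (the crystal Frobenius transported to the de Rham--Witt level), can be rewritten as $F^r\nabla V^n(\text{something})$; for the $dV^n$-piece one uses $F^r dV^n = F^r\nabla V^n d$ up to identifications, and the two contributions combine to give the right-hand side of (3.19.1). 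The key computational input is that the crystal Frobenius of $E$ allows one to absorb the $p^r$ into the connection term, a log analogue of Etesse's calculation \cite[II (3.3.1)]{et}.

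Once (3.19.1) is established, the induced morphism $\check{F}^r$ in (3.19.2) is automatic: the displayed equality says precisely that $F^r$ vanishes on $\mathrm{Fil}^n$ modulo $F^r\nabla V^n(E_r\otimes{\cal W}_r\wt{\Om}^{i-1}_Y)^{\star}$, so it factors through the quotient $E_n\otimes{\cal W}_n\wt{\Om}^i_Y$ of $E_{n+r}\otimes{\cal W}_{n+r}\wt{\Om}^i_Y$. For $r=n$, one observes in addition that $F^n\nabla V^n$ factors through the subsheaf $B^{i-1}$ of exact forms (it is the image of $d$ up to the connection twist), and that $F^n$ of a form lies in cycles $Z^i$ after reduction, so the quotient computed in (3.19.2) is exactly the cohomology sheaf ${\cal H}^i$ of the complex $E_n\otimes_{{\cal W}_n({\cal O}_Y)^{\star}}({\cal W}_n\wt{\Om}^{\bul}_Y)^{\star}$, yielding (3.19.3).

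The main obstacle I expect is keeping track of the coefficients and the interaction of $\nabla$ with $V^n$, $dV^n$ in the log setting, in particular making sure the identities $F^rV^n$ and $F^r dV^n$ on $({\cal W}_n\wt{\Om}^{\bul}_Y)^{\star}$ pass through the tensor with $E$ correctly and that the $'$ versus non-$'$ variants behave identically. Once the bookkeeping matches that of \cite[II (3.2.3), (3.3.1)]{et}, and one replaces $\kap$ by a perfect scheme $\os{\circ}{S}$ (which only affects the base of the crystal and not the local algebraic identities), the proof goes through as in \cite[(2.2.15)]{nb}.
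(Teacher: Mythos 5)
The paper does not actually prove this proposition: it is quoted verbatim from \cite[(2.2.15)]{nb}, with only the remark that the argument given there for $\os{\circ}{S}={\rm Spec}(\kap)$ carries over to a perfect base scheme. So there is no in-text proof to compare against; the intended argument is the log, coefficient-bearing version of Etesse's computation \cite[II (3.2.3), (3.3.1)]{et}, which is exactly the route you take, and your outline is essentially sound: identify ${\rm Fil}^n$ with $V^n(E_r\otimes ({\cal W}_r\wt{\Om}{}^i_Y)^\star)+\nabla V^n(E_r\otimes ({\cal W}_r\wt{\Om}{}^{i-1}_Y)^\star)$, apply $F^r$, and then read off $\check{F}{}^r$ and, for $r=n$, the factorization through ${\cal H}^i$ from the containments ${\rm Im}(F^n)\subseteq Z^i$ and $F^n\nabla V^n=\nabla$ (only the containments are needed here; the equality ${\rm Im}(F^n)=Z^i$ is what upgrades (3.19.3) to the isomorphism of the next theorem).

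The one place where your mechanism is off is the $V^n$-summand. You say $F^r V^n$ produces an element ``divisible by $p^r$'' which is then ``rewritten as $F^r\nabla V^n(\cdot)$'' by ``absorbing the $p^r$ into the connection term.'' No such absorption is available, and none is needed: from $FV=p$ one gets $F^rV^n=V^{n-r}\circ p^r$ for $r\leq n$ (resp.\ $p^nF^{r-n}$ for $r>n$), and $p^r$ already annihilates $E_r\otimes({\cal W}_r\wt{\Om}{}^i_Y)^\star$ because $p^m$ kills every level-$m$ de Rham--Witt sheaf. Hence $F^rV^n=0$ on that summand, and (3.19.1) reduces to the single $\nabla V^n$-term for the trivial reason that the other term vanishes. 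If you tried to carry out the ``absorption'' literally, the bookkeeping would not close; with the vanishing observation in its place, the rest of your argument goes through.
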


\begin{theo}[{\rm {\bf \cite[(2.2.16)]{nb}}}] \label{theo:cin}  
The morphism $V^r\col E_n
\otimes_{{\cal W}_n({\cal O}_{Y})^{\star}}
({\cal W}_n\wt{\Om}^i_{Y})^{\star} \lo 
E_{n+r}\otimes_{{\cal W}_{n+r}({\cal O}_{Y})^{\star}}
({\cal W}_{n+r}\wt{\Om}^i_Y)^{\star}$ 
of ${\cal W}_n({\cal O}_Y)^{\star}$-modules 
induces the following morphism 
\begin{align*} 
\check{V}{}^r \col & 
F^r_*\{E_n
\otimes_{{\cal W}_n({\cal O}_{Y})^{\star}}
({\cal W}_n\wt{\Om}^i_{Y})^{\star}/
F^r\nabla V^n(E_r\otimes_{{\cal W}_r({\cal O}_{Y})^{\star}}
({\cal W}_r\wt{\Om}^{i-1}_{Y})^{\star})\} 
\tag{3.20.1}\label{eqn:fyu}\\
& \lo 
E_{n+r}
\otimes_{{\cal W}_{n+r}({\cal O}_{Y})^{\star}}
({\cal W}_{n+r}\wt{\Om}^i_{Y})^{\star}. 
\end{align*}
\parno 
There exists a generalized Cartier isomorphism 
\begin{align*} 
\check{C}{}^r\col & 
F^r_*\{F^r(E_{n+r}
\otimes_{{\cal W}_{n+r}({\cal O}_{Y})^{\star}}
({\cal W}_{n+r}\wt{\Om}^i_{Y})^{\star})/
F^r\nabla V^n(E_r
\otimes_{{\cal W}_r({\cal O}_{Y})^{\star}}
({\cal W}_r\wt{\Om}^{i-1}_{Y})^{\star})\} \\
& \os{\sim}{\lo}  
E_n
\otimes_{{\cal W}_n({\cal O}_{Y})^{\star}}
({\cal W}_n\wt{\Om}^i_{Y})^{\star} 
\tag{3.20.2}\label{eqn:wlfoyu}
\end{align*} 
of ${\cal W}_n({\cal O}_Y)^{\star}$-modules,  
which is the inverse of $\check{F}{}^r$. 
The morphism  $\check{C}{}^r$ satisfies a relation 
${\bf p}^r\circ \check{C}{}^r=\check{V}{}^r$. 
In particular, there exist the following isomorphisms 
\begin{align*} 
\check{F}{}^n \col 
E_n
\otimes_{{\cal W}_n({\cal O}_{Y})^{\star}}
({\cal W}_n\wt{\Om}^i_{Y})^{\star}  
\os{\sim}{\lo} 
F^n_*({\cal H}^i(E_n
\otimes_{{\cal W}_n({\cal O}_{Y})^{\star}}
({\cal W}_n\wt{\Om}^{\bul}_{Y})^{\star})) 
\tag{3.20.3}\label{eqn:wlfyu}
\end{align*}  
and 
\begin{align*} 
\check{C}{}^n \col 
F^n_*({\cal H}^i(E_n
\otimes_{{\cal W}_n({\cal O}_{Y})^{\star}}
({\cal W}_n\wt{\Om}^{\bul}_{Y})^{\star})) 
\os{\sim}{\lo} 
E_n
\otimes_{{\cal W}_n({\cal O}_{Y})^{\star}}
({\cal W}_n\wt{\Om}^i_{Y})^{\star}   
\tag{3.20.4}\label{eqn:wlcyu}
\end{align*}  
of ${\cal W}_n({\cal O}_Y)^{\star}$-modules,  
which are the inverse of another. 
\end{theo}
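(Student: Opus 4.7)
The strategy mirrors Etesse's construction in \cite[II (3.2.3), (3.3.1), (3.4.1)]{et}, extended to the log setting by using the tilded de Rham--Witt complex $({\cal W}_n\wt{\Om}^\bullet_Y)^\star$ in place of the ordinary one, and importing the relation (\ref{eqn:frfyu}) at each step. First I would check that $\check{V}{}^r$ in (\ref{eqn:fyu}) is well-defined: one must verify that if $\eta\in F^r\nabla V^n(E_r\otimes_{{\cal W}_r({\cal O}_Y)^{\star}}({\cal W}_r\wt{\Om}{}^{i-1}_Y)^{\star})$, then $V^r(\eta)$ is zero in $E_{n+r}\otimes ({\cal W}_{n+r}\wt{\Om}{}^i_Y)^{\star}$ modulo nothing -- actually $\check{V}{}^r$ is simply induced by $V^r$ on a quotient, so the well-definedness comes from the standard identities $V\nabla=\nabla V$ (up to a shift) and $V^rF^r=p^r$, combined with the $F$-crystal compatibility of $E$.

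Second, I would construct $\check{C}{}^r$ as the inverse of $\check{F}{}^r$ using the unit-root hypothesis. Because $E$ is a unit-root $F$-crystal, the linearized Frobenius $F\col F^*E_n\os{\sim}{\lo} E_n$ is an isomorphism of ${\cal W}_n({\cal O}_Y)^{\star}$-modules, so multiplication by $F^r$ on the coefficient module is invertible. On the de Rham--Witt side, the classical identity (\ref{eqn:frfyu}) identifies the image of $F^r$ on $E_{n+r}\otimes({\cal W}_{n+r}\wt{\Om}{}^i_Y)^{\star}$ exactly with the submodule being quotiented out in the target of (\ref{eqn:fsyu}), so $\check{F}{}^r$ is tautologically surjective. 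The injectivity of $\check{F}{}^r$ follows by a Cartier-type argument, reducing mod $p$ and invoking the log inverse Cartier isomorphism of Kato (\cite[(4.12)]{klog1}) tensored with the unit-root linearization of $E$; the induction on $n$ then lifts the mod-$p$ statement using the short exact sequences built from $R$, $F$, and $V$ among the truncated de Rham--Witt complexes. Defining $\check{C}{}^r$ as $(\check{F}{}^r)^{-1}$ gives the isomorphism (\ref{eqn:wlfoyu}).

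Third, the relation ${\bf p}^r\circ \check{C}{}^r=\check{V}{}^r$ reduces to a formal identity at the level of representatives: given a class $[\om]$ in the source of $\check{V}{}^r$, pick a lift $\om\in E_n\otimes({\cal W}_n\wt{\Om}{}^i_Y)^{\star}$, write $\om=F^r(\wt\om)$ with $\wt\om\in E_{n+r}\otimes({\cal W}_{n+r}\wt{\Om}{}^i_Y)^{\star}$ (using (\ref{eqn:frfyu})), and then $V^r(\om)=V^rF^r(\wt\om)=p^r\wt\om$, which is precisely ${\bf p}^r\circ \check{C}{}^r([\om])$. For the case $r=n$ I would then identify the quotient appearing in the source of (\ref{eqn:fyu}) with ${\cal H}^i$ of $E_n\otimes({\cal W}_n\wt{\Om}{}^\bullet_Y)^{\star}$: indeed, by (\ref{eqn:frfyu}), $F^n(E_{2n}\otimes({\cal W}_{2n}\wt{\Om}{}^i_Y)^{\star})={\rm Fil}^n\cap Z^i$ projects onto $Z^i(E_n\otimes({\cal W}_n\wt{\Om}{}^\bullet_Y)^{\star})$ and $F^n\nabla V^n(E_n\otimes({\cal W}_n\wt{\Om}{}^{i-1}_Y)^{\star})$ projects onto $B^i$, yielding (\ref{eqn:wlfyu}) and (\ref{eqn:wlcyu}).

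The main obstacle will be the bookkeeping in the second step: proving that $\check{F}{}^r$ is injective requires a careful verification that formula (\ref{eqn:frfyu}) is sharp (i.e.\ that the image of $F^r$ is exactly the $F^r\nabla V^n$-submodule and nothing smaller), and then a delicate passage from the mod-$p$ log Cartier isomorphism on the ordinary log de Rham complex with coefficients in a Frobenius descent of $E$, to the statement at level $n$ for the tilded Witt complex. Once (\ref{eqn:frfyu}) is in hand in its precise form, everything else is formal manipulation of the de Rham--Witt operators $F$, $V$, $d$, $\nabla$, ${\bf p}$ together with the unit-root property of $E$.
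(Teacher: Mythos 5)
The first thing to note is that this article contains no proof of (\ref{theo:cin}) for me to compare yours against: the theorem is quoted verbatim from \cite[(2.2.16)]{nb} as part of a review of ``a log version of Etesse's results (\cite[II (3.2.3), (3.3.1), (3.4.1), (4.2.1)]{et})'', and the only remark the author adds is that the argument of [loc.~cit.] goes through when $\os{\circ}{S}$ is an arbitrary perfect scheme rather than ${\rm Spec}(\kap)$. Your plan does follow the route the author points to, namely Etesse's construction transported to the complexes $({\cal W}_n\wt{\Om}{}^{\bul}_Y)^{\star}$ via the sharp form of (\ref{eqn:frfyu}), so in that sense you are on the intended path.

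That said, two points in your outline need to be firmed up before this is a proof. First, the well-definedness of $\check{V}{}^r$ is not a quotient-of-the-target issue: the target is all of $E_{n+r}\otimes({\cal W}_{n+r}\wt{\Om}{}^i_Y)^{\star}$, so what you must show is that $V^r$ annihilates $F^r\nabla V^n(E_r\otimes_{{\cal W}_r({\cal O}_Y)^{\star}}({\cal W}_r\wt{\Om}{}^{i-1}_Y)^{\star})$. This follows from $V^rF^r={\bf p}^r$ together with the fact that ${\bf p}^r$ kills every ${\cal W}_r({\cal O}_Y)$-module (in particular the level-$r$ module above) and that $\nabla$ and $V^n$ commute with ${\bf p}$ --- a short computation, but not quite the one you wrote. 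Second, and more seriously, the bijectivity of $\check{F}{}^r$ is the entire content of (\ref{eqn:wlfoyu}) and your proposal defers it (``the main obstacle''). Surjectivity is indeed formal from the sharpness of (\ref{eqn:frfyu}); but injectivity amounts to the inclusion ${\rm Ker}(F^r)\subset {\rm Fil}^n$ on $E_{n+r}\otimes({\cal W}_{n+r}\wt{\Om}{}^i_Y)^{\star}$, and this is exactly where Etesse's argument does its real work (an induction through the standard exact sequences relating $R$, $F$, $V$, $d$, anchored by the level-$1$ log inverse Cartier isomorphism with coefficients in the Frobenius descent of $E$, which is where the unit-root hypothesis enters). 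Naming this as the hard step without carrying it out leaves the theorem unproved. The remaining items --- the relation ${\bf p}^r\circ\check{C}{}^r=\check{V}{}^r$ via $V^rF^r(\wt{\om})={\bf p}^r\wt{\om}$, and the identification for $r=n$ of the source of $\check{C}{}^n$ with ${\cal H}^i$ using that ${\rm Im}(F^n)$ is the sheaf of cocycles and $F^n\nabla V^n$ gives the coboundaries --- are correctly reduced to formal manipulations once the above is in place.
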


\begin{prop}\label{prop:cif}
There exist morphisms {\rm (\ref{ali:kfk})} and {\rm (\ref{ali:pecin})} satisfying 
{\rm I}, {\rm II}, {\rm III} and {\rm IV} for the case ${\cal E}={\cal F}=E_1$. 
\end{prop}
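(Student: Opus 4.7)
The plan is to mirror the construction of Proposition \ref{prop:ci} and Example \ref{exem:triv}, using the generalized Cartier isomorphism of Theorem \ref{theo:cin} applied to the unit root $F$-crystal $E$ at level $n=1$ in place of Kato's log Cartier isomorphism. Via the canonical identification ${\cal W}_1\wt{\Om}^i_Y = \Om^i_{Y/\os{\circ}{S}}$, specializing (\ref{eqn:wlcyu}) to $n=1$ produces an inverse Cartier morphism twisted by the $F$-crystal $E$, which will play the role of $C^{-1}$ from Proposition \ref{prop:ci}.

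To construct the $C^{-1}_k$'s of condition (I), observe that for each $k$ the pullback $a^{(k)*}(E)$ is again a unit root $F$-crystal on the smooth scheme $\os{\circ}{X}{}^{(k)}/\os{\circ}{S}$. Apply (\ref{eqn:wlcyu}) with $n=1$ and $Y=\os{\circ}{X}{}^{(k)}$ (equipped with trivial log structure, so that $\star$ is absent), then push forward along $a^{(k)'}$ and twist by $\vp^{(k)}_{\rm zar}$. The compatibility diagram (\ref{cd:kffk}) follows from the functoriality of $\check{C}^1$ under the closed immersions $\os{\circ}{X}{}^{(k+1)}\hookrightarrow \os{\circ}{X}{}^{(k)}$, which transport unit root $F$-crystal structures. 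For condition (II), apply (\ref{eqn:wlcyu}) directly to $X/S$; the preservation of the preweight filtration $P$ asserted in (\ref{ali:pecpn}) follows because the local construction of $\check{C}^1$ in \cite{nb} uses a Frobenius lift that acts as multiplication by $p$ on each log generator $d\log m$ ($m\in M_X$), which forces $C^{-1}(d\log m)=d\log m$ and therefore $C^{-1}$ to carry $P_k$ into itself.

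For conditions (III) and (IV), the verification is local. Any section of $P_k({\cal E}\otimes_{{\cal O}_X}\Om^i_{X/\os{\circ}{S}})$ can locally be written as a sum of terms $e\otimes \om\wedge d\log m_{\lam_0}\wedge\cdots\wedge d\log m_{\lam_{k-1}}$ with $e\in {\cal E}$ and $\om\in P_0\Om^{i-k}_{X/\os{\circ}{S}}$. Since $C^{-1}$ fixes each $d\log m_{\lam_j}$ and acts on the $P_0$-part as the ${\cal F}$-twisted inverse Cartier morphism, it intertwines the Poincar\'{e} residue (\ref{eqn:mprern}) with the analogous residue for $X^{[p]}$, yielding the commutative square (\ref{cd:pzccin}) in condition (IV); specializing to the $P_0$ piece and using the exact sequence (\ref{eqn:cpetle}) gives (\ref{cd:pzaccin}) in condition (III). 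The main obstacle is the identity $C^{-1}(d\log m)=d\log m$ in the generalized setting: this requires unpacking the Witt-level formula for $\check{C}^1$ developed in \cite[\S2.2]{nb} and checking that, on the logarithmic part of the differential forms, it reduces to Kato's classical log inverse Cartier morphism tensored with the unit root structure on the coefficient side.
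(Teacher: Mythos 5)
Your overall strategy --- invoking the generalized Cartier isomorphism of Theorem (\ref{theo:cin}) at level $n=1$ for the unit root $F$-crystal, and then checking the residue and filtration compatibilities locally --- is reasonable, but as written it leaves open precisely the step you yourself flag as ``the main obstacle'': the identity $C^{-1}(d\log m)=d\log m$ for the \emph{twisted} inverse Cartier morphism $\check{C}{}^1$. Conditions (II), (III) and (IV) all hinge on this identity (it is what makes $C^{-1}$ carry $P_k$ into $P_k$ and intertwine the two Poincar\'e residue maps), and you do not establish it; you only say it ``requires unpacking the Witt-level formula for $\check{C}{}^1$''. Until that computation is carried out, the proof is incomplete at its load-bearing point. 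A secondary issue: in your treatment of condition (I) you appeal to ``functoriality of $\check{C}^1$ under the closed immersions $\os{\circ}{X}{}^{(k+1)}\hookrightarrow \os{\circ}{X}{}^{(k)}$,'' but the maps in the diagram (\ref{cd:kffk}) are the \v{C}ech-type morphisms $\iota^{(k)*}$ built from the $a_{\lam_0\cdots\lam_k}$'s with signs, not single closed immersions, so the compatibility still needs to be checked against that alternating sum.

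The paper closes the gap by a factorization that makes the problematic identity unnecessary. It defines
\begin{align*}
C^{-1}\col {\cal E}\otimes_{{\cal O}_X}\Om^i_{X/\os{\circ}{S}}
\os{{\rm id}\otimes C^{-1}}{\lo}
{\cal E}\otimes_{{\cal O}_X}F_*({\cal H}^i(\Om^{\bul}_{X/\os{\circ}{S}}))
\os{\check{F}}{\lo}
F_*({\cal H}^i({\cal E}\otimes_{{\cal O}_X}\Om^{\bul}_{X/\os{\circ}{S}})),
\end{align*}
where the first arrow is the \emph{untwisted} log inverse Cartier isomorphism of (\ref{prop:ci}) tensored with ${\rm id}_{\cal E}$, and the second is the morphism $\check{F}$ of Theorem (\ref{theo:cin}). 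All of the behaviour on $d\log m$, on the filtration $P$, and on the residue maps is then carried entirely by the first factor, for which (\ref{prop:ci}) has already verified (I)--(IV); the factor $\check{F}$ is induced by the identity on the form part, so it preserves $P$ and commutes with the residue diagrams for trivial reasons. If you want to salvage your direct approach, you should either perform the local Witt-level computation you allude to, or --- more economically --- adopt this composite definition, which reduces your ``main obstacle'' to facts already proved.
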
 
\begin{proof}
We define an isomorphism 
\begin{align*} 
C^{-1}\col  {\cal E}\otimes_{{\cal O}_X}\Om^i_{X}
\os{\sim}{\lo} 
F_*({\cal H}^i({\cal E}
\otimes_{{\cal O}_X}{\Om}^{\bul}_{X})) 
\tag{3.21.1}\label{eqn:wlcybnu}
\end{align*} 
of ${\cal O}_Y$-modules as the following composite isomorphism: 
\begin{align*} 
C^{-1}\col  {\cal E}\otimes_{{\cal O}_X}\Om^i_{X/\os{\circ}{S}}
\os{{\rm id}\otimes C^{-1}}{\lo} 
{\cal E}\otimes_{{\cal O}_X}F_*({\cal H}^i(\Om^{\bul}_{X/\os{\circ}{S}}))
\os{\check{F}}{\lo} 
F_*({\cal H}^i({\cal E}
\otimes_{{\cal O}_X}{\Om}^{\bul}_{X})) 
\end{align*} 
This composite morphism preserve the filtration $P$. 
Because $\check{F}$ is the induced morphism by ``${\rm id}$'', 
the commutativity of the diagrams 
{\rm (\ref{cd:pzaccin})} and {\rm (\ref{cd:pzccin})} are obvious. 
\end{proof} 
 
\par 
In the rest of this section, we give the open analogue of (\ref{theo:otf}). 

\begin{lemm}\label{lemm:opanc}
Let the notations be as in {\rm (\ref{theo:hco})}. 
Let ${\cal E}$ be a quasi-coherent flat ${\cal O}_X$-module. 
Let $\nabla \col {\cal E}\lo {\cal E}\otimes_{{\cal O}_X}
\Om^1_{(X,D)/S}$ be an integrable connection. 
$($Note that $\Om^1_{(X,D)/S}=\Om^1_{X/S}(\log D))$. 
For $k\in {\mab Z}_{\geq 0}$, 
let $D^{(k)}$ be the closed subscheme of $X$ defined in 
{\rm \cite[(2.2.13.2)]{nh2}} 
and let  $a^{(k)}\col D^{(k)}\lo X$ 
be the natural morphism. 
Assume that ${\cal E}$ is locally generated by horizontal sections of $\nabla$. 
Set $P_k({\cal E}\otimes_{{\cal O}_X}{\Om}^{\bul}_{(X,D)/S})
:={\cal E}\otimes_{{\cal O}_X}P_k\Om^{\bul}_{(X,D)/S}$. 
Set 
$$\Om^i_{k}({\cal E}):= 
a^{(k)}_*(a^{(k)*}({\cal E})
\otimes_{{\cal O}_{D^{(k)}}}
\Om^i_{D^{(k)}/S}\otimes_{\mab Z}\vp^{(k)}_{\rm zar}(D/S)) 
\quad (k\in {\mab Z}_{\geq 0})$$  
for each $i$. 
Then the following hold$:$ 
\par 
$(1)$ There exists the following Poincar\'e residue isomorphism of complexes$:$ 
\begin{align*} 
{\rm gr}^P_k({\cal E}\otimes_{{\cal O}_X}{\Om}^{\bul}_{(X,D)/S})  
& \os{\sim}{\lo} 
\Om^{\bul -k}_{k}({\cal E}). 
\tag{3.22.1}\label{eqn:peovin}   
\end{align*} 
\par 
$(2)$ The following sequence is exact$:$
\begin{align*} 
0 \lo {\cal E}\otimes_{{\cal O}_X}\Om^{\bul}_{(X,D)/S}(-D)
\lo \Om^{\bul}_{0}({\cal E})
\os{\iota^{(0)*}}{\lo} \Om^{\bul}_1({\cal E})
\os{\iota^{(1)*}}{\lo} \cdots  
\tag{3.22.2}\label{eqn:povin}   
\end{align*} 
of complexes of $f^{-1}({\cal O}_S)$-modules is exact. 
\end{lemm}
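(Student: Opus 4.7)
The plan is to reduce both assertions to the case ${\cal E}={\cal O}_X$ with $\nabla=d$, where (1) is the classical Poincar\'e residue isomorphism for the log de Rham complex of an SNCD and (2) is a \v{C}ech/Koszul-type resolution, and then to transfer these conclusions using the flatness of ${\cal E}$ together with the horizontal-generator hypothesis, in parallel with the SNCL arguments {\rm (\ref{prop:perkl})}, {\rm (\ref{coro:fb})} and {\rm (\ref{coro:p0e})}.

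For (1), I first invoke the standard Poincar\'e residue isomorphism
$${\rm gr}^P_k\Om^{\bul}_{(X,D)/S}\os{\sim}{\lo} a^{(k)}_*(\Om^{\bul-k}_{D^{(k)}/S}\otimes_{\mab Z}\vp^{(k)}_{\rm zar}(D/S)),$$
which is the SNCD analogue of {\rm (\ref{prop:perkl})}. Tensoring with the flat ${\cal O}_X$-module ${\cal E}$ gives the asserted isomorphism of graded ${\cal O}_X$-modules in each degree. To promote it to an isomorphism of complexes, I work locally and fix horizontal generators $\{e_j\}_j$ of ${\cal E}$ with $\nabla(e_j)=0$. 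Any local section of ${\cal E}\otimes_{{\cal O}_X}\Om^{\bul}_{(X,D)/S}$ can then be written $\sum_j f_j e_j\otimes \om$ with $f_j\in {\cal O}_X$, and one computes $\nabla(\sum_j f_j e_j\otimes \om)=\sum_j e_j\otimes(df_j\wedge \om+f_j\,d\om)$, in which $df_j\in \Om^1_{X/S}\subset \Om^1_{(X,D)/S}$ has no log poles. In particular $\nabla$ preserves the filtration $P$, and the Poincar\'e residue commutes with $\nabla$ on the source and with the pulled-back connection $a^{(k)*}(\nabla)$ on the target.

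For (2), the underlying fact, valid without ${\cal E}$, is the \v{C}ech-type exact sequence
$$0\lo \Om^{\bul}_{(X,D)/S}(-D)\lo a^{(0)}_*(\Om^{\bul}_{D^{(0)}/S}\otimes_{\mab Z}\vp^{(0)}_{\rm zar}(D/S))\lo a^{(1)}_*(\Om^{\bul}_{D^{(1)}/S}\otimes_{\mab Z}\vp^{(1)}_{\rm zar}(D/S))\lo \cdots.$$
This can be checked locally at a point where $D=(t_1\cdots t_r=0)$: in each bidegree the sequence becomes, up to twist by a regular form, the Koszul/\v{C}ech complex attached to the regular sequence $t_1,\ldots,t_r$. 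Since ${\cal E}$ is ${\cal O}_X$-flat, tensoring preserves exactness. Horizontality of the local generators of ${\cal E}$ then guarantees that the boundary maps $\Om^{\bul}_k({\cal E})\lo \Om^{\bul}_{k+1}({\cal E})$, defined as alternating restriction sums tensored with the identity on ${\cal E}$, commute with the pulled-back connection differentials: on the chosen horizontal generators these connections reduce to ordinary exterior differentiation of the coefficients.

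The main obstacle will be the compatibility check in (1): for a typical local representative $\om=\eta\wedge d\log t_{\lam_0}\wedge\cdots\wedge d\log t_{\lam_{k-1}}$ of $P_k\Om^{\bul}_{(X,D)/S}$, one has to verify that the Poincar\'e residue of $\nabla(\sum_j f_j e_j\otimes \om)$ coincides with $a^{(k)*}(\nabla)$ applied to the Poincar\'e residue of $\sum_j f_j e_j\otimes \om$. The horizontal-generator hypothesis is used precisely to eliminate any potential residue contribution coming from $\nabla$ acting on the $e_j$'s, reducing the verification to the trivial-coefficient case already established.
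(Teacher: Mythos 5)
Your proposal is correct and takes essentially the same route as the paper: reduce to the case ${\cal E}={\cal O}_X$, where (1) is the known Poincar\'e residue isomorphism (cited from \cite[(2.2.21.3)]{nh2}) and (2) is the known exact sequence of \cite[(4.2.2)]{di}, and then transfer both to ${\cal E}$ using flatness together with the horizontal local generators. The paper's own proof consists only of these two citations plus the assertion that the general case follows; you additionally spell out the compatibility of the residue and of the boundary maps with $\nabla$, which is exactly the step the paper leaves implicit.
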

\begin{proof}
(1): By \cite[(2.2.21.3)]{nh2} 
we have the following Poincar\'{e} residue isomorphism: 
\begin{align*} 
{\rm Res}:  
{\rm gr}_k^{P}(\Om^{\bul}_{(X,D)/S}) 
\os{\sim}{\lo} a_*^{(k)}(\Om^{\bul-k}_{D^{(k)}/S}
\otimes_{\mab Z}\vp^{(k)}_{\rm zar}(D/S)). 
\end{align*} 
(\ref{eqn:peovin}) immediately follows from this isomorphism. 
\par 
(2): By \cite[(4.2.2) (a), (c)]{di} 
the sequence (\ref{eqn:povin}) for the case ${\cal E}={\cal O}_X$ is exact. 
Now we see that the sequence (\ref{eqn:povin}) for the general case is exact. 
\end{proof}

\begin{defi}\label{defi:eox}
We say that $({\cal E},\nabla)$ is log ordinary with compactly support 
if $R^jf_*(B^i({\cal E}\otimes_{{\cal O}_X}\Om^{\bul}_{(X,D)/S}(-D)))=0$ for 
any $i\in {\mab Z}_{\geq 0}$ and $j\in {\mab Z}_{\geq 0}$.  
\end{defi}

The following (1) is the open analogue of (\ref{theo:otf}). 

\begin{theo}\label{theo:opp}
Let the notations be as in {\rm (\ref{theo:hco})}. 
Let $F\col (X,D)\lo (X',D')$ be 
the relative Frobenius morphism over $S$. 
Let ${\cal E}'$ be a quasi-coherent ${\cal O}_{X'}$-module. 
For $k\in {\mab Z}_{\geq 0}$, 
let  $a'{}^{(k)}\col D'{}^{(k)}\lo X'$ 
be the natural morphism. 
Set 
$$\Om^i_{k}({\cal E}'):= 
a'{}^{(k)}_*(a'{}^{(k)*}({\cal E}')
\otimes_{{\cal O}_{D'{}^{(k)}}}
\Om^i_{D'{}^{(k)}/S}\otimes_{\mab Z}\vp^{(k)}_{\rm zar}(D'/S)) 
\quad (k\in {\mab Z}_{\geq 0})$$  
for each $i$. 
Assume that the following two conditions {\rm (I)} and {\rm (II)} 
are satisfied$:$  
\par 
{\rm (I)}: There exists an isomorphism 
\begin{align*} 
C^{-1}_k\col  \Om^i_{k}({\cal E}') 
\os{\sim}{\lo} F_*{\cal H}^i(\Om^{\bul}_k({\cal E}))
\tag{3.24.1}\label{ali:kfko} 
\end{align*} 
of ${\cal O}_{D^{'(k)}}$-modules 
for any $i\in {\mab Z}_{\geq 0}$ 
and any $k\in {\mab Z}_{\geq 0}$ 
fitting into the following commutative diagram 
\begin{equation*} 
\begin{CD} 
F_*{\cal H}^i(\Om^{\bul}_k({\cal E}))@>>> 
F_*{\cal H}^i(\Om^{\bul}_{k+1}({\cal E}))\\
@A{C^{-1}_k}A{\simeq}A @A{\simeq}A{C^{-1}_{k+1}}A\\
\Om^i_{k}({\cal E}') @>>> \Om^i_{k+1}({\cal E}'). 
\end{CD} 
\tag{3.24.2}\label{cd:kffok} 
\end{equation*} 
\par 
{\rm (II)} The following diagram is commutative for $k\in {\mab Z}_{\geq 0}$: 
\begin{equation*} 
\begin{CD} 
F_*{\cal H}^i({\rm gr}_k^P({\cal E}\otimes_{{\cal O}_X}{\Om}^{\bul}_{(X,D)/S}))
@>{\simeq}>>
F_*{\cal H}^{i-k}(\Om^{\bul}_k({\cal E}))\\
@A{C^{-1}}A{\simeq}A @A{\simeq}A{C^{-1}_k}A\\ 
{\rm gr}_k^P({\cal E}'\otimes_{{\cal O}_{X'}}{\Om}^i_{(X',D')/S}) 
@>{\simeq}>> 
\Om^{i-k}_k({\cal E}'). 
\end{CD} 
\tag{3.24.3}\label{cd:pzcocin} 
\end{equation*} 
Then the following hold$:$
\par 
$(1)$ If $a^{(k)*}({\cal E},\nabla)$ for $\forall k\in {\mab N}$ is ordinary, 
then $({\cal E},\nabla)$ is log ordinary. 
\par 
$(2)$ If $a^{(k)*}({\cal E},\nabla)$ for $\forall k\in {\mab N}$ is ordinary, 
then $({\cal E},\nabla)$ is log ordinary with compact support. 
\end{theo}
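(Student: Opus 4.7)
The plan is to mimic the chain of reductions in Proposition \ref{prop:ahc}, adapted to the open setting by using Lemma \ref{lemm:opanc} in place of the Poincar\'{e} residue isomorphism for the SNCL case. Part (1) is the direct analog of Proposition \ref{prop:ahc}; part (2) is a new statement that exploits the \v{C}ech-like resolution (\ref{eqn:povin}).

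For part (1), I would first translate the hypothesis ``$a^{(k)*}({\cal E},\nabla)$ is ordinary for $\forall k$'' into a statement about the filtered complex $P_\bullet({\cal E}\otimes_{{\cal O}_X}\Om^{\bullet}_{(X,D)/S})$ via the Poincar\'{e} residue isomorphism (\ref{eqn:peovin}): namely, $R^jf_*(B^i\mathrm{gr}^P_k({\cal E}\otimes_{{\cal O}_X}\Om^{\bullet}_{(X,D)/S}))=0$ for all $i,j,k$. Next I would adapt the argument of (\ref{prop:iga}) and (\ref{prop:ris}) to the present open setting: the existence of the compatible isomorphisms $C^{-1}_k$ (condition (I)) together with their compatibility with the Poincar\'{e} residue (condition (II)) imply, via the same telescoping/descending-induction trick used in the SNCL case and the fact that $\os{\circ}{F}$ is a homeomorphism (\cite[XV Proposition 2 a)]{sga5-2}), that applying $B^i$ and $Z^i$ to the short exact sequence
\begin{equation*}
0\lo P_{k-1}({\cal E}\otimes_{{\cal O}_X}\Om^{\bullet}_{(X,D)/S})
\lo P_k({\cal E}\otimes_{{\cal O}_X}\Om^{\bullet}_{(X,D)/S})
\lo \mathrm{gr}^P_k \lo 0
\end{equation*}
yields short exact sequences of sheaves. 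Ascending induction on $k$ together with the long exact sequence for $Rf_*$ then gives $R^jf_*(B^iP_k)=0$ for every $k$. Taking $k$ large enough that $P_k$ equals the whole complex ${\cal E}\otimes_{{\cal O}_X}\Om^{\bullet}_{(X,D)/S}$ proves (1).

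For part (2), I would apply the analogous telescoping argument directly to the \v{C}ech-like resolution (\ref{eqn:povin}). The technique of the proof of (\ref{prop:iga}), which requires only condition (I), should show that applying $B^i$ termwise produces the exact sequence
\begin{equation*}
0\lo B^i({\cal E}\otimes_{{\cal O}_X}\Om^{\bullet}_{(X,D)/S}(-D))
\lo B\Om^i_0({\cal E})
\lo B\Om^i_1({\cal E})\lo \cdots,
\end{equation*}
which is eventually zero once $D^{(k)}=\emptyset$. Since by hypothesis $R^jf_*(B\Om^i_k({\cal E}))=0$ for all $i,j,k$, a standard hypercohomology spectral sequence (or an ascending induction on the length of the truncation, mirroring the proof of (\ref{prop:iga})) yields the desired vanishing $R^jf_*(B^i({\cal E}\otimes_{{\cal O}_X}\Om^{\bullet}_{(X,D)/S}(-D)))=0$.

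The main obstacle I anticipate is the verification that conditions (I) and (II) alone are strong enough to prove the open analogs of Propositions \ref{prop:iga} and \ref{prop:ris}, i.e.\ that one does not require additional hypotheses corresponding to (III) and (IV) of the SNCL case. This should however be routine: the extra conditions in the closed case were bookkeeping for the $\theta$-direction of $\Om^1_{Y/\os{\circ}{S}}$, which is absent here because $\Om^1_{(X,D)/S}$ is purely relative; the only diagrams one must verify are precisely those encoded in (I) and (II) above, and the descending-induction template of (\ref{prop:iga}) transfers verbatim once those are given.
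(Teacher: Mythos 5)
Your proposal is correct and follows exactly the route the paper intends: the paper's own ``proof'' of this theorem consists of a one-line remark that (1) and (2) follow from Lemma (\ref{lemm:opanc}) (1) and (2) respectively by an easier version of the argument for (\ref{theo:otf}), leaving all details to the reader. Your write-up supplies precisely those details --- the translation of the hypothesis via the residue isomorphism (\ref{eqn:peovin}), the open analogues of (\ref{prop:iga}) and (\ref{prop:ris}), and the telescoping induction on the resolution (\ref{eqn:povin}) --- and your closing observation that the $\theta$-direction bookkeeping (conditions (III)--(IV) of the closed case) is absent here is exactly why the open case needs only conditions (I) and (II) and is simpler.
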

\begin{proof} 
(1): Because the proof of (1) by using (\ref{lemm:opanc}) (1) 
is easier than that of (\ref{theo:otf}), 
we leave the detail of the proof to the reader. 
\par 
(2): Because the proof of (2) by using (\ref{lemm:opanc}) (2)
is easier than that of (\ref{theo:otf}), 
we leave the detail of the proof to the reader. 
\end{proof}

\section{The third proof of (\ref{theo:twols})}\label{sec:htc}
In this section we give the proof of (\ref{theo:twols}) 
without using the $p$-adic weight spectral sequence (\ref{ali:infhwpwt}) 
nor the filtration $P$ on $\Om^{\bul}_{X/\os{\circ}{S}}$. 
Instead we use the $p$-adic weight spectral sequence for 
${\cal W}\wt{\Om}^i_X$ in the case $S=s$. 
To obtain this spectral sequence, we use the filtration $P$ on 
${\cal W}\wt{\Om}^i_X$. 
\par 
First assume that $S=s$. 
Then we have proved the following in \cite{nb} 
as a very special case of  \cite[(2.3.37)]{nb}.

\begin{theo}\label{theo:pwt}
Let $i$ be a nonnegative integer. 
Set 
\begin{equation*} 
E_1^{-k,h+k}=
\begin{cases} 
0 & (k<0), \\
H^h(X,P_0{\cal W}\wt{\Om}^i_{X}) & (k=0), \\
H^{h-k}(\os{\circ}{X}{}^{(k-1)}, 
{\cal W}\Om^{i-k}_{\os{\circ}{X}{}^{(k-1)}/\kap}\otimes_{\mab Z}
\vp^{(k-1)}_{\rm zar}(\os{\circ}{X}/\kap))(-k) & (k > 0).
\end{cases} 
\end{equation*}  
Then there exists the following spectral sequence 
\begin{align*} 
E_1^{-k,h+k}\Lo 
H^h(X,{\cal W}\wt{\Om}^{i}_X). 
\tag{4.1.1}\label{ali:xnt}
\end{align*} 
There also exists the following spectral sequence 
\begin{align*}
E_1^{k,h-k}=H^{h-k}(\os{\circ}{X}{}^{(k-1)},
{\cal W}\Om^{i}_{\os{\circ}{X}{}^{(k-1)}} 
\otimes_{\mab Z}
\vp^{(k-1)}_{\rm zar}(\os{\circ}{X}/\kap))
\Lo 
H^h(X,P_0{\cal W}\wt{\Om}^i_{X}).
\tag{4.1.2}\label{ali:xp0nt}
\end{align*}
The spectral sequences  {\rm (\ref{ali:xnt})} and {\rm (\ref{ali:xp0nt})} 
are compatible with the operator $F$. 
\end{theo}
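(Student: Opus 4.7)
The plan is to work with a preweight filtration $P$ on the complex ${\cal W}\wt{\Om}^i_X$ at the Witt level, refining (\ref{eqn:pkdefpw}), and to read off both spectral sequences as the spectral sequence of a filtered complex and the spectral sequence of an acyclic \v{C}ech-style resolution, respectively. First I would define $P_k{\cal W}_n\wt{\Om}^i_X$ for each $n,k \in {\mab N}$ by lifting the local filtration $P_k\Om^i_{X/\os{\circ}{s}}$ to each Witt level via a local smooth ${\cal W}_n(s)$-lift of $X$ in the Hyodo--Kato style, globalising via the formalism of \cite{ndw}, and then passing to the limit. The filtration $P_k{\cal W}\wt{\Om}^i_X$ is then an $F$-stable subcomplex since $F$ preserves $P_k$ at each finite Witt level.

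Next I would establish the Witt version of the Poincar\'{e} residue isomorphism (\ref{eqn:prvin}): for $k\geq 1$, a canonical $F$-equivariant isomorphism
\begin{equation*}
{\rm Res}\col {\rm gr}^P_k {\cal W}\wt{\Om}^i_X \os{\sim}{\lo} a^{(k-1)}_*({\cal W}\Om^{i-k}_{\os{\circ}{X}{}^{(k-1)}}\otimes_{\mab Z}\vp^{(k-1)}_{\rm zar}(\os{\circ}{X}/\kap))(-k)[-k],
\end{equation*}
where the graded piece is placed in cohomological degree $k$ and the Tate twist $(-k)$ records that each $d\log m_{\lam}$ is an $F$-eigenvector of eigenvalue $p$; locally the map is given by the formula in (\ref{eqn:mprrn}). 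The $E_1$-term of the spectral sequence associated with the filtered complex is then $E_1^{-k,h+k}={\mab H}^h({\rm gr}^P_k {\cal W}\wt{\Om}^i_X)$, which by the residue isomorphism equals the announced $H^{h-k}(\os{\circ}{X}{}^{(k-1)}, {\cal W}\Om^{i-k}_{\os{\circ}{X}{}^{(k-1)}}\otimes \vp^{(k-1)}_{\rm zar})(-k)$ for $k\geq 1$, and $H^h(X,P_0{\cal W}\wt{\Om}^i_X)$ for $k=0$; this gives (\ref{ali:xnt}).

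For (\ref{ali:xp0nt}) I would establish the Witt analog of (\ref{eqn:cptle}): the exact sequence
\begin{equation*}
0\lo P_0{\cal W}\wt{\Om}^i_X \lo a^{(0)}_*({\cal W}\Om^i_{\os{\circ}{X}{}^{(0)}}\otimes \vp^{(0)}_{\rm zar}) \os{\iota^{(0)*}}{\lo} a^{(1)}_*({\cal W}\Om^i_{\os{\circ}{X}{}^{(1)}}\otimes \vp^{(1)}_{\rm zar}) \os{\iota^{(1)*}}{\lo} \cdots ,
\end{equation*}
obtained by applying (\ref{prop:csrl}) at each Witt level ${\cal W}_n$ and passing to the projective limit, using ${\cal W}_n$-flatness of the terms to preserve exactness. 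The spectral sequence of this acyclic resolution then yields (\ref{ali:xp0nt}). Compatibility of both spectral sequences with the operator $F$ is automatic once all ingredients are built $F$-equivariantly, the standard Frobenius on ${\cal W}\Om^i_{\os{\circ}{X}{}^{(k)}}$ being the one of \cite{idw}.

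The principal obstacle is the careful Witt-level construction of $P$ and the Poincar\'{e} residue isomorphism: locally these reduce to direct computations with $d\log$'s and ${\cal W}_n$-lifts of the irreducible components of $\os{\circ}{X}$, but their globalisation and compatibility with $R$, $F$, $V$ and $d$ require the full formalism developed in \cite{nb}, which is why the theorem is obtained there as the special case of [loc.~cit., (2.3.37)] rather than re-derived here.
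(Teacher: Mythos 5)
The paper does not actually prove this theorem: it is quoted verbatim as a very special case of \cite[(2.3.37)]{nb}, so there is no in-paper argument to measure your sketch against. That said, your outline is a plausible reconstruction of the construction in that reference, and it uses exactly the ingredients this paper deploys elsewhere: the Witt-level preweight filtration $P$ on ${\cal W}_n\wt{\Om}{}^i_X$ built from local admissible lifts, the residue isomorphism of the type (\ref{ali:dppa0}), and the Witt analogue of the resolution (\ref{eqn:cptle}) (compare (\ref{ali:csfp}) for $i=0$). The compatibility with $F$ for the reason you give (all maps are defined at each finite level and commute with $R$, $F$, $V$, $d$) is also the right mechanism.

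Two points need repair. First, the step where you place ${\rm gr}^P_k{\cal W}\wt{\Om}{}^i_X$ "in cohomological degree $k$" is not available if $P$ is literally the filtration of the single sheaf ${\cal W}\wt{\Om}{}^i_X$ by the subsheaves $P_k{\cal W}\wt{\Om}{}^i_X$ (which is what the $k=0$ term $H^h(X,P_0{\cal W}\wt{\Om}{}^i_X)$ and the paper's own isomorphism (\ref{ali:dppa0}) indicate): a graded piece of a filtered sheaf is a sheaf concentrated in degree $0$, the residue map is an isomorphism of sheaves with no shift, and the spectral sequence of that filtered sheaf has $E_1$-terms $H^h(\os{\circ}{X}{}^{(k-1)},{\cal W}\Om^{i-k}_{\os{\circ}{X}{}^{(k-1)}}\otimes\vp^{(k-1)}_{\rm zar})(-k)$ rather than the stated $H^{h-k}$. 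To obtain the degrees in (\ref{ali:xnt}) one must either replace ${\cal W}\wt{\Om}{}^i_X$ by a filtered (double) complex in which ${\rm gr}_k^P$ genuinely sits in degree $k$ --- exactly as (\ref{ali:infhwpwt}) is produced from $({\cal W}_nA^{\bul\bul}_X,P)$ in \S\ref{sec:hc} --- or make the renumbering of the filtration spectral sequence explicit; writing "$[-k]$" asserts the conclusion without supplying this mechanism. Second, two smaller slips: exactness is preserved under $\vpl_n$ by Mittag--Leffler (surjectivity of the projections $R$ and coherence of the finite-level terms, as in the proof of (\ref{coro:dk0})), not by "${\cal W}_n$-flatness"; and the local lift that produces $\wt{\Om}$ must be taken over ${\cal W}_n$ (i.e., over ${\cal W}_n(\os{\circ}{s})$, as in the lift ${\rm Spec}\,{\cal W}[t,x_1,\ldots,x_d]$ used in the proof of (\ref{theo:temfc})), since a lift over ${\cal W}_n(s)$ yields ${\cal W}_n\Om^{\bul}_X$ rather than ${\cal W}_n\wt{\Om}{}^{\bul}_X$.
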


\begin{prop}[{\bf \cite[(3.4)]{nlfc}}]\label{prop:fag}
The obvious analogue of {\rm (\ref{prop:fzg})} 
holds for ${\cal W}\wt{\Om}^i_{X}$. 
\end{prop}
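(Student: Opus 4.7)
The plan is to bootstrap (\ref{prop:fzg}) to ${\cal W}\wt{\Om}^{\bul}_X$ by exploiting the Witt-level analogue of the Key lemma (\ref{lemm:kel}). The basic formalism of ${\cal W}\wt{\Om}^{\bul}_X$ developed in \cite{msemi} and \cite{ndw} provides a short exact sequence
\begin{align*}
0 \lo {\cal W}_n\Om^{i-1}_X \os{\theta \wedge}{\lo} {\cal W}_n\wt{\Om}^i_X
\os{\pi}{\lo} {\cal W}_n\Om^i_X \lo 0 \quad (n,i \in {\mab N})
\end{align*}
of ${\cal W}_n({\cal O}_X)$-modules, which is the Witt-theoretic lift of the exact sequence (\ref{ali:cfos}). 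This will be the geometric engine of the whole argument.

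First I would define the target sheaf $B_n{\cal W}_1\wt{\Om}^{i+1}_X$ via the log inverse Cartier isomorphism $C^{-1}$ for $X/\os{\circ}{s}$ (the existence of which is recorded in (\ref{prop:ci}) (2)), in exact parallel with the definition of $B_n{\cal W}_1\Om^{i+1}_X$ in (\ref{prop:fzg}). By the compatibility of $C^{-1}$ with $\theta \wedge$ recorded in (\ref{rema:ci0}), this gives a short exact sequence
\begin{align*}
0 \lo B_n{\cal W}_1\Om^i_X \os{\theta \wedge}{\lo}
B_n{\cal W}_1\wt{\Om}^{i+1}_X \lo B_n{\cal W}_1\Om^{i+1}_X \lo 0.
\end{align*}

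Next I would arrange the three ``Frobenius-differential'' sequences attached to ${\cal W}\Om^{i-1}_X$, ${\cal W}\wt{\Om}^i_X$ and ${\cal W}\Om^i_X$ into a $3\times 3$ diagram whose top and bottom rows are the exact sequences supplied by (\ref{prop:fzg}) applied with indices $i-1$ and $i$, whose middle row is the claimed sequence for ${\cal W}\wt{\Om}^i_X$, and whose three columns are the short exact sequences above. Commutativity of the diagram reduces to the compatibility of $F$ and $d$ with $\theta \wedge$, which in turn follows from $F\theta = \theta$, $d\theta = 0$, and the Leibniz rule. Exactness of the middle row is then forced by an elementary diagram chase (equivalently, two applications of the snake lemma) from the exactness of the outer rows. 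The commutativity of the analogue of (\ref{cd:nmwee}) for $\wt{\Om}$ follows from the same functoriality of the construction under the projection $R$, since $R$ trivially commutes with $\theta \wedge$.

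The main obstacle will be the bookkeeping in the vertical compatibilities, specifically checking that $\theta \wedge$ commutes strictly with $F$, $F^{n-1}d$, and $R$ on ${\cal W}_n\wt{\Om}^{\bul}_X$ without spurious factors of $p$ intervening. This ultimately rests on the identity $F\theta = \theta$ in the log de Rham--Witt complex of $X/\os{\circ}{s}$, which is built into the construction of ${\cal W}\wt{\Om}^{\bul}_X$ in \cite{ndw}, combined with the fact that $\theta$ is closed and that the columns of the $3 \times 3$ diagram are short exact sequences of $F^{n}_*({\cal W}_n({\cal O}_X))$-modules. Once these points are checked, the rest of the argument is purely formal.
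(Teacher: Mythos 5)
The paper itself offers no proof of this proposition: it is quoted from \cite[(3.4)]{nlfc}, where (as with its companion (\ref{prop:fabzg})) the argument is a direct computation with $V^n$, $dV^n$ and the canonical filtration on ${\cal W}_{n+1}\wt{\Om}{}^i_X$. Your d\'evissage along the exact sequence (\ref{ali:cfownwis}) is therefore a genuinely different route, and its skeleton is correct: a short exact sequence of three-term complexes whose outer rows are exact in degrees $1$ and $2$ forces the middle row to be exact there, independently of what happens in degree $0$, so you rightly never need to control $\ker F$ on ${\cal W}_{n+1}\wt{\Om}{}^i_X$. What your route buys is that it bypasses the filtration computations entirely and reuses the Key Lemma; what it costs is that the third column of your $3\times 3$ diagram has to be manufactured before the chase can begin.

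Two points are missing there. First, you must verify that $F^{n-1}d$ actually carries $F_*({\cal W}_n\wt{\Om}{}^i_X)$ into the Cartier-theoretically defined sheaf $B_n{\cal W}_1\wt{\Om}{}^{i+1}_X$; without this the middle row is not yet a complex with the stated target, and the containment is not formal because ${\cal W}_n\wt{\Om}{}^i_X$ only decomposes as $\iota({\cal W}_n\Om^i_X)\oplus \theta\wedge {\cal W}_n\Om^{i-1}_X$ after choosing a local splitting compatible with $d$ and $F$. Second, the exactness of $0\lo B_n{\cal W}_1\Om^i_X\os{\theta\wedge}{\lo} B_n{\cal W}_1\wt{\Om}{}^{i+1}_X\lo B_n{\cal W}_1\Om^{i+1}_X\lo 0$ does not come from (\ref{rema:ci0}), which concerns $\os{\circ}{X}/\os{\circ}{S}$ and $P_0$; the compatibility you need is (\ref{ali:fhoox}) with $S=s$ (or its Witt-level form (\ref{rema:secpf})), and even granting it you must start the induction on $n$ with the case $n=1$, which is exactly the Key Lemma sequence (\ref{ali:cftois}) for $X/s$, and then climb through the isomorphism $C^{-1}\col B_n\os{\sim}{\lo}B_{n+1}/B_1$ by the nine lemma. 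Both points are most cleanly settled by the device already used in the proof of (\ref{prop:pwte}): a local log smooth lift over ${\cal W}_{2n}(s)$ splits the $\theta\wedge$-sequence compatibly with the differential, after which the image of $F^{n-1}d$ and the column exactness can be read off componentwise. With these insertions, and the sign $d(\theta\wedge\om)=-\theta\wedge d\om$ absorbed into the third column as you anticipate, the argument closes; part $(2)$ of the analogue then follows from the same d\'evissage and the compatibility of $C$ with $\theta\wedge$.
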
 

\par 
Now we give the third proof of (\ref{theo:twols}) as follows.
\par  
First assume that $S=s$. 
By (\ref{theo:pwt}) the operator 
$F\col H^j(X,{\cal W}\wt{\Om}^i_{X})\lo H^j(X,{\cal W}\wt{\Om}^i_{X})$ 
is bijective for any $i$ and $j$. 
By (\ref{prop:fag}) the operator  
$F\col H^j(X,{\cal W}\wt{\Om}^i_{X})\lo H^j(X,{\cal W}\wt{\Om}^i_{X})$ 
is bijective for any $i$ and $j$ if and only if 
$H^j(X,B{\cal W}\wt{\Om}{}^i_{X})=0$ (cf.~\cite[IV (4.13)]{ir}). 
Let $R$ be the 
Cartier-Dieudonn\'{e}-Raynaud algebra of $\kap$ (\cite{ir}). 
Set $R_n:=R/(V^nR+dV^nR)$.
Then the following three facts hold by \cite[(6.21.1), (6.16.1), (6.27)]{ndw}: 
\par 
(i)  ${\rm Im}(F^n\col {\cal W}_{2n}\wt{\Om}^i_X\lo {\cal W}_n\wt{\Om}^i_X)
={\rm Ker}(d\col {\cal W}_n\wt{\Om}^i_X\lo {\cal W}_n\wt{\Om}^i_X)$, 
\par 
(ii) $d^{-1}(p^n{\cal W}\wt{\Om}^{i+1}_X)=F^n{\cal W}\wt{\Om}^i_X$,
\par 
(iii)  $R_n\otimes_R{\cal W}\wt{\Om}^{\bul}_X={\cal W}_n\wt{\Om}^{\bul}_X$.
\par 
As noted in the proof of \cite[(4.1)]{lodw} 
(for the case ${\cal W}_n\Om^{\bul}_Y$ 
for a log smooth scheme over a fine log scheme $Y$ whose underlying scheme is 
${\rm Spec}(\kap)$), 
these imply that the following sequence 
\begin{align*} 
0&\lo  H^j(X,{\cal W}\wt{\Om}^{i-1}_X)/(F^n+V^n)
H^j(X, {\cal W}\wt{\Om}^{i-1}_X)\os{d}{\lo}
H^j(X,B{\cal W}_n\wt{\Om}^i_X)\\
&\lo (V^n)^{-1}F^n 
H^{j+1}(X, {\cal W}\wt{\Om}^{i-1}_X)/
F^nH^{j+1}(X, {\cal W}\wt{\Om}^{i-1}_X)\lo 0.  
\end{align*} 
is exact by the argument of the log version of \cite[IV (4.13)]{ir}. 
Hence $H^j(X,B{\cal W}_n\wt{\Om}{}^i_{X})=0$ for 
any $i$, $j$  and $n$. 
In particular, 
$H^j(X,B{\cal W}_1\wt{\Om}{}^i_{X})=0$ for 
any $i$ and $j$. 
This is equivalent to the vanishing of $H^j(X,B\Om^i_{X/\os{\circ}{s}})=0$. 
By (\ref{lemm:pte}), 
$H^j(X,B\Om^i_{X/s})=0$ for 
any $i$ and $j$. 
\par 
In the case of the general $S$, 
the rest of the proof is the same as the proof after (\ref{prop:bc}).  

\par  
Let $n$ be a positive integer. 
Let $Y/s$ be a log smooth scheme. 
By \cite[(2.2.3.1)]{nb} we have the following exact sequence:  
\begin{align*} 
0\lo {\cal W}_n\Om^{\bul}_Y[-1]
\os{\theta \wedge}{\lo} 
{\cal W}_n\wt{\Om}^{\bul}_Y
\lo {\cal W}_n\Om^{\bul}_Y\lo 0. 
\tag{4.2.1}\label{ali:cfownwis} 
\end{align*} 
Because this exact sequence is compatible with projections, 
we have the following exact sequence: 
\begin{align*} 
0\lo {\cal W}\Om^{\bul}_Y[-1]
\os{\theta \wedge}{\lo} 
{\cal W}\wt{\Om}^{\bul}_Y
\lo {\cal W}\Om^{\bul}_Y\lo 0. 
\tag{4.2.2}\label{ali:cfonnwis} 
\end{align*} 
One can generalize (\ref{lemm:pete}) in the case $S=s$ as follows, 
which is of independent interest: 

\begin{prop}\label{prop:pwte}
For each $i$, the resulting sequences of 
{\rm (\ref{ali:cfownwis})} by the operations $B^i$, $Z^i$ and ${\cal H}^i$, 
$(i\in {\mab Z}_{\geq 0})$ are exact. 
Consequently the resulting sequences of 
{\rm (\ref{ali:cfonnwis})} by the operations 
$B^i$,  $Z^i$ and ${\cal H}^i$ 
$(i\in {\mab Z}_{\geq 0})$ are exact. 
\end{prop}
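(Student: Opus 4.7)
The plan is to imitate the proof of the de Rham analogue (\ref{lemm:pete}) at the Witt level. First, I would reduce to a local statement on $\os{\circ}{Y}$ and choose a Zariski-local chart in which $M_Y/{\cal O}_Y^*$ is generated by elements $m_1,\ldots,m_r$ lifting the generator of $M_s/\kappa^*$, together with \'etale coordinates $y_{r+1},\ldots,y_d$. Using the local description of the log de Rham--Witt complexes in \cite{msemi} and \cite{ndw}, the symbols $d\log[m_j]$ and $d[y_j]$ (for suitable Witt-level lifts) form a ${\cal W}_n({\cal O}_Y)$-basis of ${\cal W}_n\wt{\Om}{}^1_Y$, and the element $\theta\in {\cal W}_n\wt{\Om}{}^1_Y$ appearing in (\ref{ali:cfownwis}) is a logarithmic generator satisfying $d\theta=0$.

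Next, as in the proof of (\ref{lemm:pete}), I would construct a splitting $\iota\colon {\cal W}_n\Om^1_Y\to {\cal W}_n\wt{\Om}{}^1_Y$ of the projection using the generators that do not involve $\theta$, and extend it multiplicatively to obtain, for each $i$, a direct sum decomposition of ${\cal W}_n({\cal O}_Y)$-modules
\begin{equation*}
{\cal W}_n\wt{\Om}{}^i_Y=\iota({\cal W}_n\Om^i_Y)\oplus \theta\wedge {\cal W}_n\Om^{i-1}_Y.
\end{equation*}
Combined with $d\theta=0$ and the graded Leibniz rule, a direct computation on local sections yields the matrix form of the differential
\begin{equation*}
d\bigl(\iota(\alpha)+\theta\wedge \beta\bigr)=\iota(d\alpha)-\theta\wedge d\beta\qquad(\alpha\in {\cal W}_n\Om^i_Y,\ \beta\in {\cal W}_n\Om^{i-1}_Y),
\end{equation*}
which is the Witt analogue of (\ref{cd:kttc}). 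Consequently $B^i$, $Z^i$ and ${\cal H}^i$ of ${\cal W}_n\wt{\Om}{}^{\bul}_Y$ split as the corresponding direct sums, and the three short sequences obtained from (\ref{ali:cfownwis}) are immediately exact.

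For the statement about ${\cal W}\wt{\Om}{}^{\bul}_Y$ in (\ref{ali:cfonnwis}), I would then pass to the inverse limit over $n$. The projections $R$ on ${\cal W}_n\Om^{\bul}_Y$ and ${\cal W}_n\wt{\Om}{}^{\bul}_Y$ are surjective, and by the local direct sum decomposition produced above the induced projections on $B^i$, $Z^i$ and ${\cal H}^i$ at each level are also surjective, so the Mittag--Leffler condition holds and exactness is preserved. The main obstacle will be establishing the local direct sum decomposition and the matrix form of $d$ at the Witt level rigorously, since, unlike the de Rham situation, one must verify that the multiplicative extension of $\iota$ is well-defined as a map of ${\cal W}_n({\cal O}_Y)$-modules and that no subtle Witt-level relations interfere; I expect this can be handled by working within the explicit local model from \cite{msemi} and \cite{ndw} and reducing the decomposition to a calculation on a smooth log lift ${\cal Y}_n/{\cal W}_n(s)$, where one is back in the de Rham setting of (\ref{lemm:pete}) and (\ref{cd:kttc}).
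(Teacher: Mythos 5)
There is a genuine gap in the main line of your argument. The step on which everything rests --- that the symbols $d\log[m_j]$ and $d[y_j]$ form a ${\cal W}_n({\cal O}_Y)$-basis of ${\cal W}_n\wt{\Om}{}^1_Y$, so that a splitting $\iota$ can be defined on generators and ``extended multiplicatively'' to all degrees --- is false for $n\geq 2$. Unlike $\Om^1_{Y/\os{\circ}{S}}$, the de Rham--Witt sheaves are not locally free over ${\cal W}_n({\cal O}_Y)$, and ${\cal W}_n\wt{\Om}{}^i_Y$ is not the $i$-th exterior power of ${\cal W}_n\wt{\Om}{}^1_Y$: already for ${\mab A}^1_\kappa$ the sheaf ${\cal W}_2\Om^1$ contains $dV[y]$, which does not lie in ${\cal W}_2({\cal O})\,d[y]$. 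So the Witt analogue of the computation behind (\ref{cd:kttc}) cannot be run on a ``basis'' of local sections, and the decomposition ${\cal W}_n\wt{\Om}{}^i_Y=\iota({\cal W}_n\Om^i_Y)\oplus\theta\wedge{\cal W}_n\Om^{i-1}_Y$ together with the matrix form of $d$ does not follow from the argument you give. You do flag this obstacle at the end and point in the right direction (reduce to a lift), but the fix as you state it --- ``a smooth log lift ${\cal Y}_n/{\cal W}_n(s)$, where one is back in the de Rham setting'' --- is not yet the argument: a lift of length $n$ does not suffice, and the identification is not of ${\cal W}_n\wt{\Om}{}^i_Y$ with forms on the lift.

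What the paper actually does is take a log smooth lift ${\cal Y}$ over ${\cal W}_{2n}(s)$ and exploit the identification of the level-$n$ de Rham--Witt sheaves with the cohomology sheaves ${\cal H}^i(\Om^{\bul}_{{\cal Y}/{\cal W}_n(s)})$ and ${\cal H}^i(\Om^{\bul}_{{\cal Y}/{\cal W}_n})$, under which the de Rham--Witt differential becomes the operator $p^{-n}d$ (defined via the short exact sequence $0\to\Om^{\bul}_{{\cal Y}/{\cal W}_n}\os{p^n}{\lo}\Om^{\bul}_{{\cal Y}/{\cal W}_{2n}}\lo\Om^{\bul}_{{\cal Y}/{\cal W}_n}\to 0$ --- this is why the lift must have length $2n$). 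One then splits the de Rham sequence of the lift exactly as in (\ref{lemm:pete}), passes the splitting to the ${\cal H}^i$'s, and checks that $p^{-n}d$ respects the resulting decomposition (\ref{cd:kac}); the conclusions for $B^i$, $Z^i$, ${\cal H}^i$ then follow. (Alternatively, as in (\ref{rema:secpf}), one can use the inverse Cartier isomorphisms $C^{-n}$ and the argument of (\ref{prop:ris}).) Your Mittag--Leffler remark for passing from (\ref{ali:cfownwis}) to (\ref{ali:cfonnwis}) is fine once the finite-level statement is established, but the finite-level statement is precisely what your proposal does not yet prove.
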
 
\begin{proof}
This is a local problem. 
We may assume 
that there exists a log smooth lift 
${\cal Y}$ of $Y$ over ${\cal W}_{2n}(s)$. 
Because we have the following commutative diagram 
\begin{equation*} 
\begin{CD}
@. 0 @.  0@. 0\\
@. @VVV @VVV @VVV \\
0@>>> \Om^{\bul}_{{\cal Y}/{\cal W}_n(s)}[-1]
@>{\theta \wedge}>> \Om^{\bul}_{{\cal Y}/{\cal W}_n}
@>>> \Om^{\bul}_{{\cal Y}/{\cal W}_n(s)}@>>> 0\\
@. @V{p^n}VV @V{p^n}VV @V{p^n}VV \\
0@>>> \Om^{\bul}_{{\cal Y}/{\cal W}_{2n}(s)}[-1]
@>{\theta \wedge}>> \Om^{\bul}_{{\cal Y}/{\cal W}_{2n}}
@>>> \Om^{\bul}_{{\cal Y}/{\cal W}_{2n}(s)}@>>> 0\\
@. @VVV @VVV @VVV \\
0@>>> \Om^{\bul}_{{\cal Y}/{\cal W}_n(s)}[-1]
@>{\theta \wedge}>> \Om^{\bul}_{{\cal Y}/{\cal W}_n}
@>>> \Om^{\bul}_{{\cal Y}/{\cal W}_n(s)}@>>> 0\\
@. @VVV @VVV @VVV \\
@. 0 @.  0@. 0, 
\end{CD}
\end{equation*}
we have the following commutative diagram 
of exact sequences:  
\begin{equation*} 
\begin{CD}
\cdots @>>> {\cal H}^{i-1}(\Om^{\bul}_{{\cal Y}/{\cal W}_{n}(s)})
@>{\theta \wedge}>> {\cal H}^i(\Om^{\bul}_{{\cal Y}/{\cal W}_{n}})
@>>> {\cal H}^i(\Om^{\bul}_{{\cal Y}/{\cal W}_{n}(s)})\lo \cdots \\
@. @V{p^{-n}d}VV @V{p^{-n}d}VV @V{p^{-n}d}VV \\
\cdots @>>> {\cal H}^{i}(\Om^{\bul}_{{\cal Y}/{\cal W}_{n}(s)})
@>{\theta \wedge}>> {\cal H}^{i+1}(\Om^{\bul}_{{\cal Y}/{\cal W}_{n}})
@>>> {\cal H}^{i+1}(\Om^{\bul}_{{\cal Y}/{\cal W}_{n}(s)})\lo \cdots.  
\end{CD}
\end{equation*}
We may also assume that there exists a splitting 
$\iota \col \Om^{\bul}_{{\cal Y}/{\cal W}_{2n}(s)}\lo 
\Om^{\bul}_{{\cal Y}/{\cal W}_{2n}}$ of 
the projection 
$\Om^{\bul}_{{\cal Y}/{\cal W}_{2n}}
\lo \Om^{\bul}_{{\cal Y}/{\cal W}_{2n}(s)}$ 
as in the proof of (\ref{lemm:pte}). 
Hence the following sequence 
\begin{align*} 
0\lo {\cal H}^{i-1}(\Om^{\bul}_{{\cal Y}/{\cal W}_m(s)})
\lo {\cal H}^i(\Om^{\bul}_{{\cal Y}/{\cal W}_m})
\lo {\cal H}^i(\Om^{\bul}_{{\cal Y}/{\cal W}_m(s)})\lo 0 
\quad (m=n, 2n)
\end{align*} 
is split. 
Hence the following diagram is commutative: 
\begin{equation*} 
\begin{CD}
{\cal H}^{i}(\Om^{\bul}_{{\cal X}/{\cal W}_n})
@=\iota({\cal H}^{i}(\Om^{\bul}_{{\cal X}/{\cal W}_n(s)}))\oplus 
\theta \wedge (\iota({\cal H}^{i-1}(\Om^{\bul}_{{\cal X}/{\cal W}_n(s)})))\\
@V{p^{-n}d}VV @VV{\iota(p^{-n}d)\oplus 
\theta \wedge \iota(p^{-n}d)}V\\
{\cal H}^{i+1}(\Om^{\bul}_{{\cal X}/{\cal W}_n})
@=\iota({\cal H}^{i+1}(\Om^{\bul}_{{\cal X}/{\cal W}_n(s)}))\oplus 
\theta \wedge (\iota({\cal H}^{i}(\Om^{\bul}_{{\cal X}/{\cal W}_n(s)}))). 
\end{CD}
\tag{4.3.1}\label{cd:kac}
\end{equation*}
Consequently 
$$Z{\cal H}^{i}(\Om^{\bul}_{{\cal X}/{\cal W}_{n}})=
\iota(Z{\cal H}^{i}(\Om^{\bul}_{{\cal X}/{\cal W}_{n}(s)}))\oplus 
\theta \wedge(Z{\cal H}^{i-1}(\Om^{\bul}_{{\cal X}/{\cal W}_{n}(s)})),$$  
$$B{\cal H}^{i}(\Om^{\bul}_{{\cal X}/{\cal W}_{n}})=
\iota(B{\cal H}^{i}(\Om^{\bul}_{{\cal X}/{\cal W}_{n}(s)}))\oplus 
\theta \wedge (B{\cal H}^{i-1}(\Om^{\bul}_{{\cal X}/{\cal W}_{n}(s)}))$$ 
and 
$${\cal H}^{i}({\cal H}^{\bul}(\Om^*_{{\cal X}/{\cal W}_{n}}))=
\iota({\cal H}^{i}({\cal H}^{\bul}(\Om^*_{{\cal X}/{\cal W}_{n}(s)})))\oplus 
\theta \wedge({\cal H}^{i-1}({\cal H}^{\bul}(\Om^*_{{\cal X}/{\cal W}_{n}(s)}))).$$  
\end{proof}

\begin{rema}\label{rema:secpf}
We can give another proof of (\ref{prop:pwte}) by using the following 
inverse log Cartier isomorphisms (cf.~\cite[(11.1)]{ndw}) 
and by using the argument in the proof of (\ref{prop:ris}): 
\begin{align*}
C^{-n}\col {\cal W}_n\Om^i_X\os{\sim}{\lo} {\cal H}^i({\cal W}_n\Om^{\bul}_X)
\end{align*}
and 
\begin{align*}
C^{-n}\col {\cal W}_n\wt{\Om}^i_X\os{\sim}{\lo} {\cal H}^i({\cal W}_n\wt{\Om}^{\bul}_X) 
\end{align*}
fitting into the following commutative diagram: 
\begin{equation*}
\begin{CD}
0 @>>> {\cal W}_n{\Om}_Y^{i-1} @>{\theta  \wedge}>>  
{\cal W}_n\wt{{\Om}}^i_Y @>>> {\cal W}_n{\Om}_Y^i@>>> 0\\ 
@. @V{C^{-n}}V{\simeq}V  
@V{C^{-n}}V{\simeq}V @V{C^{-n}}V{\simeq}V \\
0 @>>> {\cal H}^{i-1}({\cal W}_n{\Om}_Y^{\bul})
@>{\theta \wedge}>> 
{\cal H}^i({\cal W}_n\wt{\Om}^{\bul}_Y) @>>> 
{\cal H}^i({\cal W}_n{{\Om}}_Y^{\bul})@>>> 0. 
\end{CD}
\end{equation*}
\end{rema}

\section{Lower semi-continuity of log genera}\label{sec:llss} 
In this section we give applications of 
the finite length version of 
the spectral sequence (\ref{ali:infhwpwt}). 
This has been proved in \cite{ndw}:

\begin{theo}[{\bf \cite[(4.1.1;${\bf n}$)]{ndw}}]\label{theo:hwawt}
Let $i$ be a nonnegative fixed integer. 
Then there exists the following spectral sequence$:$ 
\begin{align*}
E_1^{-k, h+k}&=
\us{j\geq {\rm max}\{-k,0\}}{\bigoplus}
H^{h-i-j}(\os{\circ}{X}{}^{(2j+k)},
{\cal W}_n\Om^{i-j-k}_{\os{\circ}{X}{}^{(2j+k)}})
(-j-k)
\tag{5.1.1}\label{ali:hwnpwta}\\
&\Lo H^{h-i}(X,{\cal W}_n\Om_X^i) \quad (h\in {\mab N}). 
\end{align*}  
\end{theo}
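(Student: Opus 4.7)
The plan is to run the same filtered-complex construction that produces the infinite-level spectral sequence (\ref{ali:infhwpwt}), but with ${\cal W}\Om^{\bullet}_X$ replaced by ${\cal W}_n\Om^{\bullet}_X$ throughout. The two key ingredients were already invoked in the proof of (\ref{prop:oh}): the filtered double complex $({\cal W}_n A_X^{\bullet\bullet}, P)$ constructed in \cite{ndw}, and its two characteristic properties at finite level, namely the quasi-isomorphism
\begin{equation*}
\theta\wedge \col {\cal W}_n\Om_X^i \xrightarrow{\sim} {\cal W}_n A_X^{i\bullet}
\end{equation*}
and the Poincar\'e residue isomorphism on graded pieces
\begin{equation*}
\mathrm{Res}\col \mathrm{gr}_k^P {\cal W}_n A_X^{i\bullet} \xrightarrow{\sim}
\bigoplus_{j\geq \max\{-k,0\}} {\cal W}_n\Om_{\os{\circ}{X}{}^{(2j+k)}}^{i-j-k}\{-j\}.
\end{equation*}
Both are finite-level statements whose construction is carried out in \cite{ndw}, so I would simply cite them.

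Granting these two isomorphisms, the proof proceeds in three short steps. First, I would take the total complex $s({\cal W}_n A_X^{\bullet\bullet})$ and endow it with the preweight filtration $P$, which by the residue isomorphism has graded pieces that are (up to shift and Tate twist) direct sums of the complexes ${\cal W}_n\Om^{\bullet}_{\os{\circ}{X}{}^{(2j+k)}}$ on the strata. Second, I would write down the hypercohomology spectral sequence of the filtered complex $(s{\cal W}_n A_X^{\bullet\bullet}, P)$; the $E_1$-page computes the hypercohomology of each graded piece, which by the residue isomorphism and the classical identification of the hypercohomology of the Witt complex on a smooth scheme with $H^*({\cal W}_n\Om^i)$ yields exactly the stated $E_1$-terms
\begin{equation*}
E_1^{-k,h+k} = \bigoplus_{j\geq \max\{-k,0\}} H^{h-i-j}(\os{\circ}{X}{}^{(2j+k)}, {\cal W}_n\Om^{i-j-k}_{\os{\circ}{X}{}^{(2j+k)}})(-j-k).
\end{equation*}
Third, the abutment is $H^{h-i}(X,{\cal W}_n A_X^{\bullet\bullet}) = H^{h-i}(X,{\cal W}_n\Om^i_X)$ by the quasi-isomorphism $\theta\wedge$, giving the desired spectral sequence.

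The substantive content is therefore entirely contained in the construction of $({\cal W}_n A_X^{\bullet\bullet}, P)$ in \cite{ndw}, and the present theorem is the direct hypercohomological consequence; in particular, no new compatibility with $F$, $V$, $d$ or the projection ${\cal W}_{n+1}\Om^i \to {\cal W}_n\Om^i$ needs to be checked here since everything is inherited from the finite-level construction in loc.~cit. The only potentially subtle point is the Tate twist on the right, but this is built into the residue map exactly as in the infinite-level case, because the Frobenius on the $(2j+k)$-fold intersection part of $\mathrm{gr}_k^P$ carries a factor $p^{j+k}$ arising from the $j+k$ extra logarithmic differentials $d\log m$ killed by Res. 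The main obstacle, if any, is merely bookkeeping of the indexing shift by $j$ in the double sum, and this is already laid out in \cite[(4.1.1)]{ndw}.
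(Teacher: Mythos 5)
Your proposal is correct and takes essentially the same route as the source: the paper gives no proof of (\ref{theo:hwawt}) itself but quotes it from \cite[(4.1.1;$n$)]{ndw}, where the spectral sequence is obtained exactly as you describe, namely as the hypercohomology spectral sequence of the preweight-filtered complex $({\cal W}_nA_X^{i\bul},P)$ combined with the quasi-isomorphism $\theta\wedge\col {\cal W}_n\Om^i_X\os{\sim}{\lo}{\cal W}_nA_X^{i\bul}$ and the Poincar\'e residue isomorphism on ${\rm gr}_k^P$ — the same two diagrams this paper displays in the proof of (\ref{prop:oh}). No further argument is needed.
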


\par 
Let $X$ be a proper log smooth scheme over $s$ of pure dimension $d$. 
Let $K_0$ be the fraction field of ${\cal W}$. 
Let $({\cal T},{\cal A})$ be a ringed topos. 
For an ${\cal A}$-module ${\cal F}$ of ${\cal T}$ 
and for a positive integer $r$, 
denote  
$\underset{r~{\rm pieces}}{\underbrace{{\cal F}\otimes_{\cal A} 
\cdots \otimes_{\cal A}{\cal F}}}$ by 
$\os{r}{\us{{\cal A}}{\otimes}}{\cal F}$. 

\begin{defi}\label{defi:pgssn}  
(1) (cf.~\cite[\S11.2]{ii}) We call 
\begin{align*} 
p_g(X/s,n,r):={\rm length}_{{\cal W}_n}
(H^0(X,\os{r}{\us{{\cal W}_n({\cal O}_{X})}{\otimes}}
{\cal W}_n\Om^d_X))
\tag{5.2.1}\label{ali:gxnr} 
\end{align*} 
the {\it log plurigenus} of $X/s$ of level $n$. 
When $n=1$, we call $p_g(X/s,n,r)$ the {\it log plurigenus} of $X/s$.  
When $r=1$, we call $p_g(X/s,n,r)$ the 
{\it log genus} of $X/s$ of level $n$. 
When $r=1$ and $n=1$, we call $p_g(X/s,n,r)$ the 
{\it log genus} of $X/s$. 
\par  
(2) (the ``vertical'' log version of the Iitaka-Kodaira dimension defined in \cite{ii2} 
and \cite[\S10.5, \S11.2]{ii})
Set 
\begin{align*} 
\kap(X/s,n):=
\us{r\lo \infty}{\varlimsup}\dfrac{\log p_g(X,n,r)}{{\log r}}.
\end{align*} 
We call $\kap(X/s,n)$ the {\it log Iitaka-Kodaira dimension} of $X/s$ 
of level $n$. 
We call $\kap(X/s,1)$ the {\it log Iitaka-Kodaira dimension} of $X/s$ and 
we denote it by $\kap(X/s)$.  
\par 
(3) We call 
\begin{align*} 
p_g(X/{\cal W}(s),r):={\rm rank}_{{\cal W}}
(H^0(X,\os{r}{\us{{\cal W}({\cal O}_{X})}{\otimes}}{\cal W}\Om^d_X))
\tag{5.2.2}\label{ali:gxrnr} 
\end{align*} 
the {\it log Witt plurigenus} of $X/s$. 
When $r=1$, we call $p_g(X/{\cal W}(s),r)$ the 
{\it log Witt genus} of $X/s$. 
\par 
(4) 
Set 
\begin{align*} 
\kap(X/{\cal W}(s)):=
\us{r\lo \infty}{\varlimsup}\dfrac{\log p_g(X,\infty,r)}{{\log r}}.
\end{align*} 
We call $\kap(X/{\cal W}(s))$ 
the {\it log Witt-Iitaka-Kodaira dimension} of $X/{\cal W}(s)$.  
\end{defi} 

First we check that $p_g(X/{\cal W}(s),r)\not= \infty$. 
To prove this, we recall the following:

\begin{prop}[{\bf \cite[(3.10)]{nlfc}}]\label{prop:fabzg}
Let $t$ be a fine log scheme whose underlying scheme is ${\rm Spec}(\kap)$. 
Let $Z/t$ be a log smooth scheme of Cartier type.  
Let $F_{{\cal W}_n(Z)} \col {\cal W}_n(Z) \lo {\cal W}_n(Z)$ be 
the Frobenius endomorphism of ${\cal W}_n(Z)$. 
Then the following hold$:$
\par 
$(1)$ The following sequence 
\begin{align*} 
F_{{\cal W}_n(Z)*}({\cal W}_n\Om^i_Z)
\os{V}{\lo} {\cal W}_{n+1}\Om^i_Z
\os{F^n}{\lo} 
Z_n{\cal W}_1\Om^i_Z
\lo 0
\tag{5.3.1}\label{ali:mvee}
\end{align*} 
is an exact sequence of 
${\cal W}_{n+1}({\cal O}_Z)'$-modules. 
\par 
$(2)$  
The following sequence  
\begin{align*} 
{\cal W}_{n+1}\Om^i_Z
\os{F}{\lo} F_{{\cal W}_n(Z)*}({\cal W}_n\Om^i_Z)
\os{F_{{\cal W}_n(Z)*}(F^{n-1}d)}{\lo} 
B_n{\cal W}_1\Om^{i+1}_Z\lo 0
\tag{5.3.2}\label{ali:mafee}
\end{align*}
is an exact sequence of 
${\cal W}_{n+1}({\cal O}_Z)'$-modules. 
\end{prop}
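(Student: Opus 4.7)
The plan is to establish Proposition~\ref{prop:fabzg} by adapting Proposition~\ref{prop:fzg} (the log version of \cite[I (3.11)]{idw} due to \cite[(3.10)]{nlfc}) to the present setting, using the standard log de Rham--Witt identities $FV=VF=p$, $FdV=d$, $dF=pFd$ and $Vd=pdV$ on ${\cal W}_n\Om^\bullet_Z$.

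Part (2) is formally identical to Proposition~\ref{prop:fzg}(1); the only difference is that the base $t$ is now permitted to be an arbitrary fine log scheme with underlying scheme $\mathrm{Spec}(\kappa)$ rather than specifically the log point $s$ of $\kappa$. Since the proof in \cite[(3.10)]{nlfc} is local on $Z$ and relies only on the log smoothness of Cartier type together with the identities recalled above, it transfers verbatim to this slightly more general base.

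For part (1), the exactness of
\begin{equation*}
F_{{\cal W}_n(Z)*}({\cal W}_n\Om^i_Z) \xrightarrow{V} {\cal W}_{n+1}\Om^i_Z \xrightarrow{F^n} Z_n{\cal W}_1\Om^i_Z \to 0
\end{equation*}
is verified in three steps. First, the composite vanishes because $F^nV = F^{n-1}(FV) = p\cdot F^{n-1}$ and multiplication by $p$ is zero on ${\cal W}_1\Om^i_Z$. Second, surjectivity onto $Z_n{\cal W}_1\Om^i_Z$ is built into the inductive definition of $Z_n$ via the inverse log Cartier isomorphism, in direct parallel with the description of $B_n{\cal W}_1\Om^{i+1}_Z$ recalled in Proposition~\ref{prop:fzg}(1). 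The nontrivial assertion is the inclusion $\ker F^n \subseteq V({\cal W}_n\Om^i_Z)$.

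To prove this inclusion, I would work locally and fix a log smooth lift ${\cal Z}$ of $Z$ to ${\cal W}_{n+1}(t)$, available by log smoothness of Cartier type. Through this lift, ${\cal W}_{n+1}\Om^i_Z$ admits the Illusie--Hyodo--Kato presentation as a quotient of the cohomology sheaves of $\Om^\bullet_{{\cal Z}/{\cal W}_{n+1}(t)}$, and $F$ is realised by division by $p$ on closed forms. A local section killed by $F^n$ then corresponds to a form divisible by $p^n$ in the lift, which by the defining relation $V = p \cdot F^{-1}$ lies in the image of $V$. The main obstacle is the careful bookkeeping between the inductive definition of $Z_n$ and the Verschiebung filtration $V^j{\cal W}_{n+1-j}\Om^i_Z + dV^j{\cal W}_{n+1-j}\Om^{i-1}_Z$; this is controlled by descending induction on $n$, feeding back part (2) to handle the successive graded pieces, exactly as in Illusie's classical analysis.
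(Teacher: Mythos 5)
The paper does not prove this proposition: as with Proposition (\ref{prop:fzg}), it is stated as a recalled result and the bracketed citation \cite[(3.10)]{nlfc} is the entire justification, so there is no in-paper argument to compare yours against. Your outline is the standard one and, as far as one can tell, is the route the cited source itself takes: part (2) is Proposition (\ref{prop:fzg}) (1) restated over a general fine log base $t$ with underlying scheme ${\rm Spec}(\kap)$ (the proof is local on $Z$, so nothing changes), and part (1) is the log analogue of the exact sequence ${\cal W}_n\Om^i \os{V}{\lo} {\cal W}_{n+1}\Om^i \os{F^n}{\lo} Z_n\Om^i \lo 0$ of \cite[I (3.11)]{idw}. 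Your kernel computation does go through in the local-lift picture: writing ${\cal W}_m\Om^i_Z \simeq Z^i_{(m)}/(p^m\Om^i_{\cal Z}+dZ^{i-1}_{(m)})$ with $Z^i_{(m)}:=\{\om \;\vert\; d\om \in p^m\Om^{i+1}_{\cal Z}\}$ for a local log smooth lift ${\cal Z}$, the relation $F^n[\om]=0$ in ${\cal W}_1\Om^i_Z$ gives $\om = p\al + d\bet$ with $d\bet \in p\Om^i_{\cal Z}$, hence $\om = p\gam$ with $\gam \in Z^i_{(n)}$ by torsion-freeness of $\Om^{i+1}_{\cal Z}$, and then $[\om]=V[\gam]$.

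Two steps are asserted where they need proof. First, the surjectivity of $F^n$ onto $Z_n{\cal W}_1\Om^i_Z$ is not ``built into the inductive definition of $Z_n$'': the definition only identifies $Z_n$ through iterated inverse Cartier isomorphisms, and the equality $F^n({\cal W}_{n+1}\Om^i_Z)=Z_n{\cal W}_1\Om^i_Z$ is precisely the nontrivial content of the log version of \cite[I (3.11)]{idw}; it requires showing $dF^n=p^nF^nd$ forces the image into $Z_1$, and then an induction through $C^{-1}$ to reach $Z_n$ and to lift sections of $Z_n$ back to ${\cal W}_{n+1}\Om^i_Z$. Second, the statement claims exactness as sequences of ${\cal W}_{n+1}({\cal O}_Z)'$-modules; the semilinearity of $V$ and of $F_{{\cal W}_n(Z)*}(F^{n-1}d)$ for these twisted module structures (e.g.\ $V(F(a)x)=aV(x)$) is a separate verification that the identities $FV=p$, $FdV=d$ alone do not supply. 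Neither point is a fatal obstruction --- both are standard and are carried out in \cite{nlfc} --- but in your sketch they are stated rather than established.
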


The following is a log version of \cite[II (2.16)]{idw}: 

\begin{coro}\label{coro:ijd}
Let $i,j$ be nonnegative integers. 
If $\dim_{\kap}H^j(Y,Z_n\Om^i_{Y/t})$ $(n\in {\mab Z}_{\geq 0})$ is bounded, 
then $\dim_{\kap}H^j(Y,{\cal W}\Om^i_Y)/VH^j(Y,{\cal W}\Om^i_Y)<\infty$. 
Furthermore, $\dim_{\kap}H^j(Y,B_n\Om^{i+1}_{Y/t})$ 
$(n\in {\mab Z}_{\geq 0})$ is bounded, 
then $H^j(Y,{\cal W}\Om^i_Y)/VH^j(Y,{\cal W}\Om^i_Y)$ is a finitely generated 
${\cal W}$-module. 
\end{coro}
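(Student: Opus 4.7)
The plan is to adapt Illusie's proof of the classical analogue \cite[II (2.16)]{idw} to the log setting, substituting the two exact sequences of (\ref{prop:fabzg}) for their non-log counterparts. Throughout, write $M_n := H^j(Y,{\cal W}_n\Om^i_Y)$ and $M := H^j(Y,{\cal W}\Om^i_Y)$. Via (the $n$-fold iterate of) Kato's log Cartier isomorphism, the sheaves $Z_n{\cal W}_1\Om^i_Y$ and $B_n{\cal W}_1\Om^{i+1}_Y$ appearing in (\ref{ali:mvee}) and (\ref{ali:mafee}) are canonically identified with $Z_n\Om^i_{Y/t}$ and $B_n\Om^{i+1}_{Y/t}$ respectively, so the hypotheses of the corollary directly bound the dimensions of $H^j$ of these sheaves.

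For the first assertion, I would take $H^j$ of the exact sequence (\ref{ali:mvee}). The resulting piece of the long exact sequence
\[
M_n \xrightarrow{V} M_{n+1} \xrightarrow{F^n} H^j(Y, Z_n{\cal W}_1\Om^i_Y)
\]
exhibits $M_{n+1}/VM_n$ as a $\kap$-subspace of $H^j(Y, Z_n\Om^i_{Y/t})$, so the boundedness hypothesis supplies a single constant $C$ with $\dim_\kap M_{n+1}/VM_n \le C$ for all $n$. Since $V$ commutes with the transition maps $R\colon M_{n+1}\to M_n$, the projections $M\to M_{n+1}$ descend to a compatible family of $\kap$-linear maps $M/VM\to M_{n+1}/VM_n$. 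Each target has dimension at most $C$, and finite-dimensionality at every level makes the relevant Mittag--Leffler condition automatic; passing to the inverse limit yields $\dim_\kap M/VM\le C<\infty$, proving the first claim.

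For the second assertion, the analogous argument applied to (\ref{ali:mafee}) yields a uniform bound $\dim_\kap M_n / F M_{n+1}\le C'$ in terms of $\dim_\kap H^{j}(Y, B_n\Om^{i+1}_{Y/t})$. Combined with the Cartier--Dieudonn\'e--Raynaud relations $FV=VF=p$ on the log de Rham--Witt complex (so that $pM\subset VM$, making $M/VM$ a $\kap$-vector space), one concludes that $M/VM$ is finitely generated over ${\cal W}$ iff it is finite-dimensional over $\kap$. The remaining step is to bootstrap the $F$-bound into a $V$-bound at the limit, parallel to the non-log reasoning, using the exact sequence (\ref{ali:mvee}) a second time. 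The main obstacle, and the only delicate point in the whole argument, is this inverse-limit passage: one must verify that the uniform finite-level dimensions really propagate to a finite bound on $M$ without a hidden $\varprojlim{}^1$ obstruction. The role of the $B_n$-hypothesis is precisely to supply Mittag--Leffler for $\{M_n\}$, exactly as in Illusie's diagram chase in \cite[II (2.16)]{idw}, which I expect to carry over verbatim once the log identifications are in place.
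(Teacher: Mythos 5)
Your proposal takes essentially the same route as the paper: the paper's entire proof of this corollary is the one-line remark that the argument of \cite[II (2.16)]{idw} carries over once the exact sequences (5.3.1) and (5.3.2) of (\ref{prop:fabzg}) (together with the log Cartier identifications of $Z_n{\cal W}_1\Om^i_Y$ and $B_n{\cal W}_1\Om^{i+1}_Y$ with $Z_n\Om^i_{Y/t}$ and $B_n\Om^{i+1}_{Y/t}$) are substituted for their classical counterparts, and that is precisely what you do. One small caution: since (5.3.1) is only right exact, applying $H^j$ shows that $H^j(Y,{\cal W}_{n+1}\Om^i_Y)/VH^j(Y,{\cal W}_n\Om^i_Y)$ maps \emph{onto} a subspace of $H^j(Y,Z_n{\cal W}_1\Om^i_Y)$ rather than injecting into it, so the uniform bound has to be extracted by Illusie's actual d\'evissage (to which you rightly defer for the inverse-limit step) rather than by the subobject claim as literally stated.
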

\begin{proof} 
The proof is the same as that of \cite[II (2.16)]{idw} by using (\ref{prop:fabzg}). 
\end{proof} 

\begin{coro}\label{coro:fg}
Let $i$ be a nonnegative integer. 
Then $H^0(Y,{\cal W}\Om^i_Y)$ is a free ${\cal W}$-module of finite type. 
\end{coro}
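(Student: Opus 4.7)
The plan is to combine (\ref{coro:ijd}) at $j=0$ with the $p$-torsion freeness of the sheaf ${\cal W}\Om^i_Y$ and a Mittag-Leffler/Nakayama argument, mirroring the classical argument in \cite[II (2.17), (2.18)]{idw}.

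First I would check the hypotheses of (\ref{coro:ijd}) for $j=0$. Since $F^n_{Y*}$ does not alter the underlying topological space, the sheaves $Z_n\Om^i_{Y/t}$ and $B_n\Om^{i+1}_{Y/t}$ embed as subsheaves of $\Om^i_{Y/t}$ and $\Om^{i+1}_{Y/t}$, respectively. Properness of $Y/t$ then yields
\[\dim_{\kap}H^0(Y,Z_n\Om^i_{Y/t})\le\dim_{\kap}H^0(Y,\Om^i_{Y/t})<\infty\]
together with the analogous bound for $B_n\Om^{i+1}_{Y/t}$, both independent of $n$. Writing $M:=H^0(Y,{\cal W}\Om^i_Y)$, (\ref{coro:ijd}) therefore yields that $M/VM$ is a finitely generated ${\cal W}$-module; because $p=VF$ on ${\cal W}\Om^i_Y$, multiplication by $p$ annihilates $M/VM$, so $M/VM$ is in fact a finite-dimensional $\kap$-vector space.

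Next I would pass to the inverse-limit description of $M$. Each ${\cal W}_n\Om^i_Y$ is coherent as a ${\cal W}_n({\cal O}_Y)$-module, so by properness each $H^0(Y,{\cal W}_n\Om^i_Y)$ is a Noetherian ${\cal W}_n$-module; the inverse system therefore satisfies Mittag-Leffler, $M=\varprojlim_n H^0(Y,{\cal W}_n\Om^i_Y)$ is complete and separated for the induced topology, and since the sheaf ${\cal W}\Om^i_Y$ is $p$-torsion free (a standard property of the log de Rham--Witt complex, inherited from its finite-level analogues via (\ref{prop:fzg}) and (\ref{prop:fabzg})), the module $M$ is itself $p$-torsion free.

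The final step is a Nakayama-style iteration: lift a $\kap$-basis $\bar x_1,\ldots,\bar x_r$ of $M/VM$ to $x_1,\ldots,x_r\in M$, and for any $y\in M$ write $y=\sum c_i^{(0)}x_i+Vy_1$, then $y_1=\sum c_i^{(1)}x_i+Vy_2$, and so on with $c_i^{(k)}\in{\cal W}$. After enlarging the generating set by finitely many $V$-iterates of the $x_i$ (needed because $V$ is only $\sigma^{-1}$-semilinear) one can resum the accumulated contributions, using completeness of $M$ and the relation $V(a)\in p{\cal W}$ for $a\in{\cal W}$ (valid since $\kap$ is perfect), into an honest convergent ${\cal W}$-linear combination of the generators. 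This gives finite generation of $M$ over ${\cal W}$, and since a finitely generated $p$-torsion-free module over the discrete valuation ring ${\cal W}$ is automatically free of finite type, the corollary follows. The main obstacle is this last step: the interplay between the semilinear operator $V$ and the ${\cal W}$-module structure must be handled carefully, but the manipulation is essentially the one of \cite[II (2.17), (2.18)]{idw} adapted to the log setting.
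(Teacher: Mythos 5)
Your route is the same as the paper's: the paper's entire proof of this corollary is the sentence ``the proof is the same as that of \cite[II (2.17)]{idw} by using (\ref{coro:ijd})'', and your first three steps (checking the hypotheses of (\ref{coro:ijd}) at $j=0$ by viewing $Z_n\Om^i_{Y/t}$ and $B_n\Om^{i+1}_{Y/t}$ as subsheaves of coherent sheaves on a proper scheme, deducing $\dim_{\kap}M/VM<\infty$ for $M:=H^0(Y,{\cal W}\Om^i_Y)$, and recording that $M$ is separated, complete and $p$-torsion-free) are exactly the right inputs.

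The step that does not hold up as written is the final ``Nakayama-style iteration''. Iterating $y=\sum_ic_i^{(0)}x_i+Vy_1$ and using the semilinearity $V(ax)=\sigma^{-1}(a)Vx$ yields $y=\sum_{k\geq 0}\sum_i\sigma^{-k}(c_i^{(k)})V^kx_i$, which only exhibits $M$ as topologically generated by the \emph{infinite} family $\{V^kx_i\}_{k\geq 0}$, i.e.\ as a finite module over the noncommutative power-series ring ${\cal W}[[V]]$ (compare $M={\cal W}[[V]]$ itself, where $M/VM={\cal W}$ but $M$ is not of finite type over ${\cal W}$). The relation you invoke, $V(a)\in p{\cal W}$, concerns the Verschiebung of the ring ${\cal W}(\kap)$; on the module the scalar is merely twisted by $\sigma^{-1}$ and no factor of $p$ appears, so there is nothing to resum. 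The missing ingredient is the finiteness of $\dim_{\kap}M/pM$. Since $pM=VFM$ and $V$ is injective on $M$, one has $\dim_{\kap}M/pM=\dim_{\kap}M/VM+\dim_{\kap}M/FM$; the first summand is the one you controlled, while the second is finite because the exact sequence (\ref{ali:mafee}) (equivalently (\ref{ali:mfee})), after passing to the inverse limit over $n$ and applying the left-exact functor $H^0$, gives an injection $M/FM\hookrightarrow \varprojlim_nH^0(Y,B_n{\cal W}_1\Om^{i+1}_Y)$, whose dimension is bounded by the very $B_n$-estimate you verified in your first step but then never used. Once $\dim_{\kap}M/pM<\infty$, the inclusions $p^nM\subset V^nM$ give $p$-adic separatedness, so $M$ embeds into its $p$-adic completion, which is generated by $\dim_{\kap}M/pM$ elements; hence $M$ is of finite type over the Noetherian ring ${\cal W}$, and $p$-torsion-freeness makes it free. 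This is the actual mechanism behind \cite[II (2.17)]{idw}, to which you (like the paper) ultimately defer, so the strategy is sound; but the obstacle you flag at the end is real and is resolved by the $F$-side of (\ref{prop:fabzg}), not by a resummation of $V$-iterates.
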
 
\begin{proof} 
The proof is the same as that of \cite[II (2.17)]{idw} by using (\ref{coro:ijd}). 
\end{proof}

\par
Consider the spectral sequence (\ref{ali:hwnpwta}) for the case $i=d$: 
\begin{align*}
E_1^{-k, h+k}&=
\us{j\geq {\rm max}\{-k,0\}}{\bigoplus}
H^{h-d-j}(\os{\circ}{X}{}^{(2j+k)},
{\cal W}_n\Om^{d-j-k}_{\os{\circ}{X}{}^{(2j+k)}})
(-j-k)
\tag{5.5.1}\label{ali:hwnpwt}\\
&\Lo H^{h-d}(X,{\cal W}_n\Om_X^d) \quad (h\in {\mab N}). 
\end{align*}  

Let $\Gam(\os{\circ}{X})$ be 
the dual graph of the simple normal crossing vareity
$\os{\circ}{X}/\kap$.

\begin{theo}\label{theo:pgg} 
$(1)$ 
The following inequalities hold$:$ 
\begin{align*} 
p_g(\os{\circ}{X}{}^{(0)}/\kap,n,1)\leq 
p_g(X/s,n,1) &\leq p_g(\os{\circ}{X}{}^{(0)}/\kap,n,1)+ 
& \! \! \! \! \! \! \! \! \! \! \! 
\sum_{k=1}^{d}{\rm length}_{{\cal W}_{n}}
{\rm Ker}(H^0(\os{\circ}{X}{}^{(k)},
{\cal W}_n\Om_{\os{\circ}{X}{}^{(k)}}^{d-k})(-k) 
\tag{5.6.1}\label{ali:pgx0n}\\
& &\lo 
H^1(\os{\circ}{X}{}^{(k-1)},{\cal W}_n\Om_{\os{\circ}{X}{}^{(k-1)}}^{d-k+1})(-k+1)) \\
& \leq \sum_{s=0}^{d}p_g(\os{\circ}{X}{}^{(s)}/\kap,n,1).  
\end{align*}  
\par
$(2)$ Let $n$ be a positive integer.
If $\dim \os{\circ}{X}=1$, then 
$p_g(X/s,n,1)=p_g(\os{\circ}{X}{}^{(0)}/\kap,n,1)+{\rm 
length}_{{\cal W}_{n}}(H^1(\Gam(\os{\circ}{X}),{\cal W}_n))$.
\par 
$(3)$ Let $n$ be a positive integer.
Set 
\begin{align*} 
\Psi^1(\os{\circ}{X},n):= {\rm  length}_{{\cal W}_n}
{\rm Coker}(H^1(\os{\circ}{X}{}^{(0)},
{\cal W}_n({\cal O}_{\os{\circ}{X}{}^{(0)}})) 
\lo H^1(\os{\circ}{X}{}^{(1)}, {\cal W}_n({\cal O}_{\os{\circ}{X}{}^{(1)}}))).
\end{align*}   
If $\dim \os{\circ}{X}=2$ 
and if the boundary map
$d_2^{-2,4} \col E_2^{-2,4} \lo E_2^{0,3}$ of 
{\rm (\ref{ali:hwnpwt})} is zero,  
then the following formula holds$:$
$$p_g(X/s,n,1)=p_g(\os{\circ}{X}{}^{(0)}/\kap,n,1)+
\Psi^1(\os{\circ}{X},n)+
{\rm length}_{{\cal W}_{n}}(H^2(\Gam(\os{\circ}{X}),{\cal W}_n)).$$ 
\end{theo}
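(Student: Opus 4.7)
The strategy is to apply the spectral sequence (\ref{ali:hwnpwt}) with $h=d$ and read off the induced filtration on $H^0(X,{\cal W}_n\Om^d_X)$. First I identify the non-zero $E_1$-terms on the antidiagonal $p+q=d$: in the direct sum defining $E_1^{-k,d+k}$, the cohomological index $h-d-j=-j$ forces $j=0$, while non-vanishing of ${\cal W}_n\Om^{d-k}$ on the $(d-k)$-dimensional scheme $\os{\circ}{X}{}^{(k)}$ forces $0\leq k\leq d$; thus $E_1^{-k,d+k}=H^0(\os{\circ}{X}{}^{(k)},{\cal W}_n\Om^{d-k}_{\os{\circ}{X}{}^{(k)}})(-k)$ for $0\leq k\leq d$. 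The same type of count yields $E_1^{p,q}=0$ on the adjacent antidiagonal $p+q=d-1$, so no differential from any later page can land in $E_r^{-k,d+k}$.

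For (1), the outgoing $d_1^{-k,d+k}$ a priori splits into the sum $H^1(\os{\circ}{X}{}^{(k-1)},{\cal W}_n\Om^{d-k+1})(-k+1)\oplus H^0(\os{\circ}{X}{}^{(k+1)},{\cal W}_n\Om^{d-k})(-k)$, but the second summand vanishes because $d-k>\dim\os{\circ}{X}{}^{(k+1)}=d-k-1$, so $d_1^{-k,d+k}$ coincides with the arrow appearing in the theorem. For $k=0$ even the first summand vanishes (its column $p=1$ is outside the non-vanishing range $-d\leq p\leq 0$), and the same column argument kills every outgoing $d_r$ from $E_r^{0,d}$; hence $E_\infty^{0,d}=E_1^{0,d}=H^0(\os{\circ}{X}{}^{(0)},{\cal W}_n\Om^d)$. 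Since this piece coincides with the smallest non-zero step $F^0H^0\hookrightarrow H^0$ of the filtration, one obtains the lower bound $p_g(\os{\circ}{X}{}^{(0)}/\kap,n,1)\leq p_g(X/s,n,1)$. For $1\leq k\leq d$, one has $E_\infty^{-k,d+k}\subset E_2^{-k,d+k}={\rm Ker}(d_1^{-k,d+k})\subset E_1^{-k,d+k}$, and summing the lengths of the graded pieces of $H^0(X,{\cal W}_n\Om^d_X)$ produces the remaining two inequalities.

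For (2), with $d=1$ only $E_\infty^{0,1}$ and $E_\infty^{-1,2}$ contribute, and the analysis above gives $E_\infty^{-1,2}={\rm Ker}(d_1^{-1,2})$ for $d_1^{-1,2}\colon H^0(\os{\circ}{X}{}^{(1)},{\cal W}_n)(-1)\lo H^1(\os{\circ}{X}{}^{(0)},{\cal W}_n\Om^1)$. Using Serre duality on the smooth proper curve components of $\os{\circ}{X}{}^{(0)}$ together with Matlis duality over the Gorenstein ring ${\cal W}_n(\kap)$, the identity ${\rm length}_{{\cal W}_n}{\rm Ker}(f)={\rm length}_{{\cal W}_n}{\rm Coker}(f^\vee)$ reduces the length of this kernel to that of the cokernel of the standard restriction $H^0(\os{\circ}{X}{}^{(0)},{\cal W}_n({\cal O}))\lo H^0(\os{\circ}{X}{}^{(1)},{\cal W}_n({\cal O}))$, which by definition is ${\rm length}_{{\cal W}_n}H^1(\Gam(\os{\circ}{X}),{\cal W}_n)$.

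For (3), with $d=2$ there are three graded pieces. As before $E_\infty^{0,2}=E_1^{0,2}$ gives $p_g(\os{\circ}{X}{}^{(0)}/\kap,n,1)$; $E_\infty^{-1,3}={\rm Ker}(d_1^{-1,3})$ unconditionally (its higher $d_r$-targets sit in columns $p\geq 1$ and so vanish); and the hypothesis $d_2^{-2,4}=0$ together with the vanishing of $d_r^{-2,4}$ for $r\geq 3$ (again by the column argument) gives $E_\infty^{-2,4}={\rm Ker}(d_1^{-2,4})$. Dualising as in (2), these two kernels transform into the cokernels of the restriction maps $H^1(\os{\circ}{X}{}^{(0)},{\cal W}_n({\cal O}))\lo H^1(\os{\circ}{X}{}^{(1)},{\cal W}_n({\cal O}))$ and $H^0(\os{\circ}{X}{}^{(1)},{\cal W}_n({\cal O}))\lo H^0(\os{\circ}{X}{}^{(2)},{\cal W}_n({\cal O}))$, whose lengths are $\Psi^1(\os{\circ}{X},n)$ and ${\rm length}_{{\cal W}_n}H^2(\Gam(\os{\circ}{X}),{\cal W}_n)$ respectively. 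The main obstacle throughout is verifying that under Serre--Matlis duality the differentials $d_1^{-k,d+k}$ of the weight spectral sequence transpose to the simplicial coboundary operators on the dual complex; this requires unwinding the explicit construction of (\ref{ali:hwnpwt}) and the compatibility of its Poincar\'{e} residues with duality for ${\cal W}_n\Om$-modules.
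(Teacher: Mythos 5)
Your proposal is correct and, for part (1), is essentially the paper's own proof: the same dimension count forces $j=0$ in the spectral sequence, the same vanishing of $E_1^{-k,h+k}$ for $h<d$ gives $E_\infty^{0,d}=E_1^{0,d}$ and $E_\infty^{-k,d+k}\subseteq E_2^{-k,d+k}$ for $k\geq 1$, and the three inequalities follow by summing lengths of the graded pieces. Where you diverge is in evaluating the kernels of the $d_1$-differentials in (2) and (3). The paper does not dualize in (2): it identifies $d_1^{-1,2}$ as the \v{C}ech Gysin morphism $H^0_{\rm crys}(\os{\circ}{X}{}^{(1)}/{\cal W}_n)(-1)\lo H^2_{\rm crys}(\os{\circ}{X}{}^{(0)}/{\cal W}_n)$ and quotes Mokrane for the identification of its kernel with $H^1(\Gam(\os{\circ}{X}),{\cal W}_n)(-1)$; in (3) it treats $E_2^{-2,4}$ the same way and invokes Ekedahl's duality only for $E_2^{-1,3}$, the one term that cannot be read off from $0$-cycles. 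Your uniform Serre--Matlis route yields the same lengths but concentrates the entire burden on the compatibility you flag at the end, namely that the Gysin differentials of the weight spectral sequence transpose to the restriction maps under duality for ${\cal W}_n\Om$-modules; that compatibility is precisely the Ekedahl duality the paper cites, so your argument is not wrong, but for the terms of the form $H^0(\os{\circ}{X}{}^{(k)},{\cal W}_n({\cal O}))$ the paper's direct computation --- the class of a point of a proper connected component is the degree generator of $H^2_{\rm crys}$ of that component, so the kernel is computed by the incidence matrix of the dual complex --- is cheaper and bypasses duality for those terms entirely. To make your write-up complete you should either supply the Gysin/restriction adjunction you mention or substitute this combinatorial description of the Gysin map on $0$-cycles.
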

\begin{proof}
(1): Let $\{E_{\infty}^{-k,k+d}\}_{k\in {\mab Z}_{\geq 0}}$ 
be the set of the $E_{\infty}$-terms of 
the spectral sequence (\ref{ali:hwnpwt}).  
It is clear that 
$p_g(X/s,n,1)=
\sum_{k=0}^{d}{\rm length}_{{\cal W}_{n}}(E_{\infty}^{-k,d+k})$.
Because $\dim \os{\circ}{X}{}^{(2j+k)}=d-(2j+k)$, 
the non-vanishing term 
$H^{h-d-j}(\os{\circ}{X}{}^{(2j+k)},
{\cal W}_n\Om^{d-j-k}_{\os{\circ}{X}{}^{(2j+k)}})
(-j-k)$ can arise only in the case 
$d-(2j+k)\geq d-j-k$. 
This inequality implies that  $j=0$. 
Hence the spectral sequence 
(\ref{ali:hwnpwt}) is equal to the following spectral sequence 
\begin{align*}
E_1^{-k, h+k}&= 
H^{h-d}(\os{\circ}{X}{}^{(k)},
{\cal W}_n\Om^{d-k}_{\os{\circ}{X}{}^{(k)}})
(-k)\Lo H^{h-d}(X,{\cal W}_n\Om_X^d) \quad (h\in {\mab N}). 
\tag{5.6.2}\label{ali:hwnspwt}\\
\end{align*}  
If $k<0$ or $h<d$, then $E_1^{-k, h+k}=0$.  
Hence $E_{\infty}^{-k, d+k}$ $(k\geq 0)$ is a submodule of 
\begin{align*} 
E_2^{-k, d+k}=
{\rm Ker}(H^0(\os{\circ}{X}{}^{(k)},
{\cal W}_n\Om_{X^{(k)}}^{d-k})(-k) 
\lo H^1(\os{\circ}{X}{}^{(k-1)},{\cal W}_n\Om_{\os{\circ}{X}{}^{(k-1)}}^{d-k+1})(-k+1)).
\tag{5.6.3}\label{ali:hwnspewt}\\ 
\end{align*} 
Furthermore, 
$E_{\infty}^{0, d}=E_1^{0,d}=
H^0(\os{\circ}{X}{}^{(0)},{\cal W}_n\Om_{\os{\circ}{X}{}^{(0)}}^{d})$.
Hence the following natural morphism 
\begin{align*}
H^0(X,{\cal W}_n\Om_{X}^{d})\lo 
H^0(\os{\circ}{X}{}^{(0)},{\cal W}_n\Om_{\os{\circ}{X}{}^{(0)}}^{d}) 
\tag{5.6.4}\label{ali:hwndwt}
\end{align*} 
is surjective.
In conclusion, we obtain the inequalities in (\ref{ali:pgx0n}). 
\par
(2): 
By (\ref{ali:hwnspwt})  
we have the following exact sequence
\begin{align*} 
0 &\lo H^0(\os{\circ}{X}{}^{(0)},{\cal W}_n\Om^1_{\os{\circ}{X}{}^{(0)}}) \lo 
H^0(X,{\cal W}_n\Om^{1}_{X})\\
&\lo {\rm Ker}(H^0(X^{(1)}, {\cal W}_n{\cal O}_{X^{(1)}})(-1) \lo 
H^1(\os{\circ}{X}{}^{(0)},{\cal W}_n\Om^1_{\os{\circ}{X}{}^{(0)}})) \lo 0.
\end{align*} 
The boundary morphism $H^0(X^{(1)}, {\cal W}_n{\cal O}_{X^{(1)}})(-1)
\lo H^1(\os{\circ}{X}{}^{(0)},{\cal W}_n\Om^1_{\os{\circ}{X}{}^{(0)}})$ 
is the \v{C}ech Gysin morphism (\cite[4.9]{msemi}). 
Note that 
$H^0(X^{(1)}, {\cal W}_n{\cal O}_{X^{(1)}})=
H^0_{\rm crys}(X^{(1)}/{\cal W}_n)$ and 
$H^1(\os{\circ}{X}{}^{(0)}, {\cal W}_n\Om^1_{\os{\circ}{X}{}^{(0)}})=
H^2_{\rm crys}(\os{\circ}{X}{}^{(0)}/{\cal W}_n)$. 
Hence 
\begin{align*} 
{\rm Ker}(H^0(X^{(1)}, {\cal W}_n{\cal O}_{X^{(1)}})(-1) \lo 
H^1(\os{\circ}{X}{}^{(0)},{\cal W}_n\Om^1_{\os{\circ}{X}{}^{(0)}}))
=H^1(\Gam(\os{\circ}{X}),{\cal W}_n)(-1)
\end{align*} 
(cf.~[loc.~cit.~5.3]). 
Thus we can complete the proof of (1). 
\par
(3): Because $\dim \os{\circ}{X}=2$, we obtain the following equalities: 
$$E_1^{0,2}=
H^0(\os{\circ}{X}{}^{(0)},
{\cal W}_n\Om^{2}_{\os{\circ}{X}{}^{(0)}}), \quad 
E_1^{-1,3}=
H^0(\os{\circ}{X}{}^{(1)},
{\cal W}_n\Om^1_{\os{\circ}{X}{}^{(1)}})(-1),$$ 
$$E_1^{0,3}=
H^1(\os{\circ}{X}{}^{(0)},{\cal W}_n\Om^2_{\os{\circ}{X}{}^{(0)}}),\quad 
E_1^{-2,4}=
H^0(\os{\circ}{X}{}^{(2)},
{\cal W}_n({\cal O}_{\os{\circ}{X}{}^{(2)}}))(-2),$$ 
$$E_1^{-1,4}=
H^1(\os{\circ}{X}{}^{(1)},
{\cal W}_n\Om^1_{\os{\circ}{X}{}^{(1)}})(-1),\quad 
E_1^{0,4}=
H^2(\os{\circ}{X}{}^{(0)},
{\cal W}_n\Om^2_{\os{\circ}{X}{}^{(0)}}).$$ 
The other $E_1$-terms of (\ref{ali:hwnspwt}) are zero. 
Hence 
$E_{\infty}^{-1,3}=E_{2}^{-1,3}$.  
By the assumption, 
$E_{\infty}^{-2,4}=E_{2}^{-2,4}=H^2(\Gam(\os{\circ}{X}),{\cal W}_n)(-2)$.  
By the duality of Ekedahl (\cite[(2.2.23)]{ek}), 
$E_{2}^{-1,3}$ is the dual of 
${\rm Coker}(H^1(\os{\circ}{X}{}^{(0)},{\cal W}_n({\cal O}_{\os{\circ}{X}{}^{(0)}})) 
\lo H^1(X^{(1)}, {\cal W}_n({\cal O}_{X^{(1)}})))$. 
Hence (3) follows.
\end{proof}

\begin{coro}\label{coro:dk0} 
The natural morphism 
\begin{align*} 
H^0(X,{\cal W}\Om^d_{X})\lo 
H^0(\os{\circ}{X}{}^{(0)},{\cal W}\Om^d_{\os{\circ}{X}{}^{(0)}})
\tag{5.7.1}\label{ali:nms} 
\end{align*} 
is surjective. 
In particular, 
\begin{align*} 
p_g(X/{\cal W}(s),1)\geq p_g(\os{\circ}{X}{}^{(0)}/{\cal W},1).
\tag{5.7.2}\label{ali:nmks} 
\end{align*} 
\end{coro}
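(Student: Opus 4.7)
The plan is to run verbatim the spectral-sequence argument used in the proof of (\ref{theo:pgg}), but with the Witt-level spectral sequence (\ref{ali:infhwpwt}) in place of its finite-level counterpart (\ref{ali:hwnpwta}). Specializing (\ref{ali:infhwpwt}) to $i=d$ and abutment degree $0$ (that is, $h=d$), one obtains
\begin{align*}
E_1^{-k,d+k}=\us{j\geq {\rm max}\{-k,0\}}{\bigoplus}
H^{-j}(\os{\circ}{X}{}^{(2j+k)},
{\cal W}\Om^{d-j-k}_{\os{\circ}{X}{}^{(2j+k)}})(-j-k)
\Lo H^0(X,{\cal W}\Om_X^d).
\end{align*}

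The first thing I would do is identify the surviving $E_1$-terms. The cohomological degree $-j$ forces $j=0$, so the direct sum collapses, and for $k\geq 0$ one is left with $E_1^{-k,d+k}=H^0(\os{\circ}{X}{}^{(k)},{\cal W}\Om^{d-k}_{\os{\circ}{X}{}^{(k)}})(-k)$; for $k<0$ every summand vanishes for the same reason. Next I would check that $E_\infty^{0,d}=E_1^{0,d}$. The outgoing differential $d_r\col E_r^{0,d}\lo E_r^{r,d-r+1}$ lands in a term where either $H^{-j-\cdots}$ is forced to be zero or where ${\cal W}\Om^{d}_{\os{\circ}{X}{}^{(s)}}$ is evaluated on a component of strictly smaller dimension, so it vanishes at the $E_1$-level; the incoming differential comes from $E_1^{-r,d+r-1}$, which is a direct sum of $H^{-1-j}$ groups, hence zero. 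Therefore the edge map of the spectral sequence realises (\ref{ali:nms}) as a surjection
\[
H^0(X,{\cal W}\Om^d_X)\twoheadrightarrow E_\infty^{0,d}=H^0(\os{\circ}{X}{}^{(0)},{\cal W}\Om^d_{\os{\circ}{X}{}^{(0)}}).
\]

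The inequality (\ref{ali:nmks}) then follows by taking ${\cal W}$-ranks in (\ref{ali:nms}), using that both $H^0(X,{\cal W}\Om^d_X)$ and $H^0(\os{\circ}{X}{}^{(0)},{\cal W}\Om^d_{\os{\circ}{X}{}^{(0)}})$ are free ${\cal W}$-modules of finite type by (\ref{coro:fg}). The only delicate point (not really an obstacle) is the identification of the canonical restriction morphism (\ref{ali:nms}) with the edge map of (\ref{ali:infhwpwt}); this parallels the identification tacitly used at the finite level for (\ref{ali:hwndwt}) and follows from the construction of (\ref{ali:infhwpwt}) from the preweight filtration $P$ on ${\cal W}\Om^{\bul}_X$ of \cite{ndw}.
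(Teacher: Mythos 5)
Your proof is correct, but it takes a different route from the paper's. You apply the limit spectral sequence (\ref{ali:infhwpwt}) directly at the level of ${\cal W}\Om^d_X$: your identification of the surviving $E_1$-terms (only $j=0$ contributes in cohomological degree $0$), the vanishing of $E_1^{r,d-r+1}$ for $r\geq 1$ (the sole candidate $H^0(\os{\circ}{X}{}^{(1)},{\cal W}\Om^d_{\os{\circ}{X}{}^{(1)}})$ dies for dimension reasons) and of $E_1^{-r,d+r-1}$ (negative cohomological degree) are all accurate, so $E_\infty^{0,d}=E_1^{0,d}$ and the edge map is onto. The paper instead stays at finite level: it uses the surjectivity of (\ref{ali:hwndwt}) already established in the proof of (\ref{theo:pgg}), sets $K_n$ equal to the kernel of the level-$n$ restriction, observes that $K_n$ is artinian so that $\{K_n\}_n$ satisfies the Mittag--Leffler condition, and passes to the inverse limit of the short exact sequences (\ref{ali:hkdwt}). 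The trade-off is that your argument leans on the strong convergence of the infinite-level spectral sequence (\ref{ali:infhwpwt}) and on the identification of its edge map with the canonical restriction (a point you rightly flag, and which is the same point tacitly used at finite level), whereas the paper's Mittag--Leffler argument makes the passage to the limit explicit and only ever invokes the finite-level spectral sequences (\ref{ali:hwnpwta}), whose convergence is unproblematic. For the inequality (\ref{ali:nmks}) your appeal to the freeness and finiteness of $H^0(X,{\cal W}\Om^d_X)$ and $H^0(\os{\circ}{X}{}^{(0)},{\cal W}\Om^d_{\os{\circ}{X}{}^{(0)}})$ from (\ref{coro:fg}) is the right way to convert the surjection into a rank inequality.
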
 
\begin{proof} 
Set $K_n:={\rm Ker}(H^0(X,{\cal W}_n\Om_{X}^{d})\lo 
H^0(\os{\circ}{X}{}^{(0)},{\cal W}_n\Om_{\os{\circ}{X}{}^{(0)}}^{d}))$. 
By the surjectivity of the morphism (\ref{ali:hwndwt}), 
we obtain the following exact sequence 
\begin{align*}
0\lo K_n \lo H^0(X,{\cal W}_n\Om_{X}^{d})\lo 
H^0(\os{\circ}{X}{}^{(0)},{\cal W}_n\Om_{\os{\circ}{X}{}^{(0)}}^{d})\lo 0 
\tag{5.7.3}\label{ali:hkdwt}
\end{align*} 
of ${\cal W}_n$-modules. 
Because $H^0(X,{\cal W}_n\Om_{X}^{d})$ is 
an artrinian ${\cal W}_n$-modules, so is $K_n$, and  
hence the projective system $\{K_n\}_{n=1}^{\infty}$ 
satisfies the Mittag-Leffler condition. 
Hence the following sequence 
$0\lo \vpl_nK_n\lo 
H^0(X,{\cal W}\Om_{X}^{d})\lo 
H^0(\os{\circ}{X}{}^{(0)},{\cal W}\Om_{\os{\circ}{X}{}^{(0)}}^{d})
\lo 0$ is exact. 
\end{proof} 

\begin{rema}
(1) As in (\ref{theo:pgg}), 
even if $\kap$ is any field of any characteristic,
there is the following spectral sequence of abelian groups
$$E_1^{-k, h+k}=H^{h-d}(X^{(k+1)},\Om^{d-k}_{X^{(k+1)}/\kap}) \Lo
H^{h-d}(X,\Om_{X/{\kap}}^{d}). $$
Thus 
$p_g(X/s,1,1)=\sum_{k=0}^{d}{\rm dim}_{\kap}E_{\infty}^{-k,k+d}$.
In particular, the inequalities (\ref{ali:pgx0n}) 
for the case $n=1$ holds. 
\par 
(2) Assume that $\os{\circ}{s}={\rm Spec}({\mab C})$ and 
that $X/s$ is a proper log analytic SNCL space. 
Let $i$ be a nonnegative fixed integer. 
Then we have the following spectral sequence 
by using the Poincar\'{e} residue isomorphism (cf.~(\ref{eqn:mprrn}))$:$ 
\begin{align*}
E_1^{-k, h+k}&=
\us{j\geq {\rm max}\{-k,0\}}{\bigoplus}
H^{h-i-j}(\os{\circ}{X}{}^{(2j+k)},\Om^{i-j-k}_{\os{\circ}{X}{}^{(2j+k)}})
(-j-k)
\tag{5.8.1}\label{ali:anwt}\\
&\Lo H^{h-i}(X,\Om_{X/s}^i) \quad (h\in {\mab N}). 
\end{align*}  
Here $(-j-k)$ is the usual Tate twist. 
Assume that each irreducible components of $\os{\circ}{X}$ is 
K\"{a}hler or the analytification of a proper scheme over ${\mab C}$. 
By using theory of mixed Hodge structures in 
(\cite {dl}, \cite{dh2}) 
and by \cite{fn}, this spectral sequence degenerates at $E_2$. 
In particular, the following equality holds$:$ 
\begin{align*} 
p_g(X/s,1) = p_g(\os{\circ}{X}{}^{(0)}/{\mab C},1)+ 
\sum_{k=1}^{d}{\rm dim}_{\mab C}~
{\rm Ker}&(H^0(\os{\circ}{X}{}^{(k)},\Om_{\os{\circ}{X}{}^{(k)}}^{d-k})(-k) \tag{5.8.2}\label{ali:pgxckn}\\
&
\lo 
H^1(\os{\circ}{X}{}^{(k-1)},\Om_{\os{\circ}{X}{}^{(k-1)}}^{d-k+1})(-k+1)). 
\end{align*}  
\end{rema}

Using the Lefschetz' principle, we obtain the following: 

\begin{coro}\label{coro:al0}
Assume that the characteristic of the base field $\kap$ is $0$. 
Let $X/s$ be a proper SNCL scheme. 
Then 
\begin{align*} 
p_g(X/s,1) = p_g(\os{\circ}{X}{}^{(0)}/\kap,1)+ \sum_{k=1}^{d}{\rm dim}_{\kap}~
{\rm Ker}&(H^0(\os{\circ}{X}{}^{(k)},\Om_{\os{\circ}{X}{}^{(k)}}^{d-k}) \tag{5.9.1}\label{ali:pg0xkn}\\
&
\lo 
H^1(\os{\circ}{X}{}^{(k-1)},\Om_{\os{\circ}{X}{}^{(k-1)}}^{d-k+1})). 
\end{align*}  
\end{coro}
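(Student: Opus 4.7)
The strategy is to reduce the statement to the case $\kap=\mab C$ treated in Remark 5.8 via the Lefschetz principle, using that all quantities in the claimed identity are coherent-cohomological in nature and therefore behave well under flat base change of fields.

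First I would spread out: the proper SNCL scheme $X/s$, its decomposition $\{\os{\circ}{X}_{\lam}\}_{\lam \in \Lam}$ into smooth components, and hence the intersection schemes $\os{\circ}{X}{}^{(k)}$, are determined by finitely many equations and finitely many coefficients. So there exists a subfield $\kap_0\subset \kap$ which is finitely generated over $\mab Q$, a proper SNCL scheme $X_0$ over the log point $s_0$ associated to $\kap_0$, and a canonical isomorphism $X\simeq X_0\times_{s_0}s$; moreover the natural morphism $\os{\circ}{X}_0{}^{(k)}\times_{\kap_0}\kap=\os{\circ}{X}{}^{(k)}$ respects the decomposition. Fix any embedding $\io \col \kap_0\hookrightarrow \mab C$ and set $X_{\mab C}:=X_0\times_{s_0}s_{\mab C}$, where $s_{\mab C}$ is the log point of $\mab C$.

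Next I would verify base-change invariance of every term. For any proper scheme $Y_0$ over $\kap_0$ and any coherent sheaf $\cal F$ on $Y_0$, the flatness of $\kap$ and $\mab C$ over $\kap_0$ gives
\begin{equation*}
\dim_{\kap}H^j(Y_0\times_{\kap_0}\kap,{\cal F}\otimes_{\kap_0}\kap)=\dim_{\kap_0}H^j(Y_0,{\cal F})=\dim_{\mab C}H^j(Y_0\times_{\kap_0}\mab C,{\cal F}\otimes_{\kap_0}\mab C).
\end{equation*}
Applying this to ${\cal F}=\Om^{d}_{\os{\circ}{X}_0/\kap_0}$, $\Om^{d-k}_{\os{\circ}{X}_0{}^{(k)}/\kap_0}$ and $\Om^{d-k+1}_{\os{\circ}{X}_0{}^{(k-1)}/\kap_0}$, I deduce that both sides of the claimed equality (5.9.1) coincide with the corresponding expressions for $X_{\mab C}/s_{\mab C}$, provided I also know that the \v Cech--Gysin maps $H^0(\os{\circ}{X}{}^{(k)},\Om^{d-k}_{\os{\circ}{X}{}^{(k)}})\lo H^1(\os{\circ}{X}{}^{(k-1)},\Om^{d-k+1}_{\os{\circ}{X}{}^{(k-1)}})$ are obtained from a fixed map over $\kap_0$ by tensoring with $\kap$ and $\mab C$. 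This compatibility follows from the fact that the differential $d_1$ of the spectral sequence (3.10.1) (applied here in the algebraic setting in characteristic $0$) is intrinsically defined by the morphisms $\iota^{(k)*}$ recalled after Proposition 3.10, hence commutes with any flat base change.

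Finally, the analytic identity (5.8.2) of Remark 5.8 applies to $X_{\mab C}^{\rm an}$, since each component $\os{\circ}{X}{}^{(k)}_{\mab C}$ is a proper smooth complex scheme and thus its analytification lies in the class to which Fujisawa's $E_2$-degeneration applies. By GAGA on each $\os{\circ}{X}{}^{(k)}_{\mab C}$ and on $X_{\mab C}$ itself (after first reducing to the projective case via Chow's lemma and noting that the dimensions in (5.9.1) are birational invariants, or more directly by comparing the algebraic and analytic Poincar\'e residue morphisms), the algebraic and analytic versions of the quantities appearing in (5.9.1) agree. Combining the three reductions yields (5.9.1) over $\kap$. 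The main technical point I expect to require care is the GAGA/projectivity step in making (5.8.2) available for a merely proper (not necessarily projective) SNCL scheme over $\mab C$; I would handle it either by invoking that each proper smooth $\os{\circ}{X}{}^{(k)}_{\mab C}$ admits a degenerating Hodge-to-de-Rham spectral sequence (Deligne) and that the weight spectral sequence itself is coherent-algebraic, or by passing to a simplicial resolution by projective components.
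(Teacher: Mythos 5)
Your proposal is correct and is essentially the paper's intended argument: the paper derives (\ref{ali:pg0xkn}) in one line by invoking the Lefschetz principle to reduce to the analytic identity (5.8.2) of the preceding remark, which is exactly the reduction you carry out in detail (spreading out, flat base change of coherent cohomology and of the \v{C}ech--Gysin differentials, and GAGA for the proper smooth strata). Your extra care about the GAGA/projectivity step is unnecessary but harmless, since Grothendieck's extension of GAGA to proper schemes already covers the comparison you need.
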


\begin{coro}\label{coro:pxan}
Let the notations be as in {\rm (\ref{theo:pgg})}.  
Set $d:=\dim \os{\circ}{X}$. 
Let $\os{\circ}{X}_{\lam}$ be an irreducible component of $\os{\circ}{X}$. 
Assume that the spectral sequence {\rm (\ref{ali:hwnspwt})} for the case $n=1$ 
degenerates at $E_2$. 
Then, if $p_g(X/s,1,1)=1$ and if $H^d(\Gam(\os{\circ}{X}),\kap)\not =0$, then 
$p_g(\os{\circ}{X}_{\lam},1,1)=0$ and 
$H^d(\Gam(\os{\circ}{X}),\kap)\simeq \kap$. 
\end{coro}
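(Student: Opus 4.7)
The plan is to extract the desired bounds from the degenerate spectral sequence by identifying its extremal terms. Applying the $E_2$-degeneration of (\ref{ali:hwnspwt}) with $n=1$ to the row $h=d$ yields
\begin{equation*}
p_g(X/s,1,1) \;=\; \sum_{k=0}^{d} \dim_\kap E_2^{-k,\,d+k}.
\end{equation*}
As already noted in the proof of (\ref{theo:pgg}) (1), the incoming $d_1$ at $E_1^{-k,d+k}$ has source $E_1^{-(k+1),d+k}$ sitting on the row $h=d-1$, hence is zero, so $E_2^{-k,d+k}=\ker\bigl(d_1\col E_1^{-k,d+k}\to E_1^{-(k-1),d+k}\bigr)$.

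First I would pin down the two extremal pieces. The outgoing $d_1$ from $E_1^{0,d}$ lands at a term with $k'=-1$ and thus is zero, so $E_2^{0,d}=E_1^{0,d}=H^0(\os{\circ}{X}{}^{(0)},\Om^d_{\os{\circ}{X}{}^{(0)}})=\sum_{\lam}p_g(\os{\circ}{X}_\lam/\kap,1,1)=p_g(\os{\circ}{X}{}^{(0)}/\kap,1,1)$. At the opposite corner, $E_1^{-d,2d}=H^0(\os{\circ}{X}{}^{(d)},\mathcal{O}_{\os{\circ}{X}{}^{(d)}})(-d)$ is the space $C^d(\Gam(\os{\circ}{X}),\kap)(-d)$ of top simplicial cochains on the dual complex, and I would identify
\begin{equation*}
E_2^{-d,2d}\;=\;\ker\bigl(d_1\col H^0(\os{\circ}{X}{}^{(d)},\mathcal{O})\lo H^1(\os{\circ}{X}{}^{(d-1)},\Om^1)\bigr)\;\supseteq\; H^d(\Gam(\os{\circ}{X}),\kap)(-d),
\end{equation*}
extending the identifications of the analogous kernels in (\ref{theo:pgg}) (2), (3) for $d=1,2$; the injection comes from recognizing the $d_1$-image of constant sections as the image under the Poincar\'e-residue-induced alternating-sign Gysin map, which vanishes on simplicial coboundaries.

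Combining these with the hypotheses,
\begin{equation*}
1 \;=\; p_g(X/s,1,1)\;\geq\; \dim_\kap E_2^{0,d}+\dim_\kap E_2^{-d,2d}\;\geq\; p_g(\os{\circ}{X}{}^{(0)}/\kap,1,1)+\dim_\kap H^d(\Gam(\os{\circ}{X}),\kap)\;\geq\; 0+1,
\end{equation*}
forcing all inequalities to be equalities. Hence $p_g(\os{\circ}{X}{}^{(0)}/\kap,1,1)=0$ and $\dim_\kap H^d(\Gam(\os{\circ}{X}),\kap)=1$; since $p_g(\os{\circ}{X}{}^{(0)}/\kap,1,1)=\sum_{\lam}p_g(\os{\circ}{X}_\lam/\kap,1,1)$ is a sum of non-negative integers, each $p_g(\os{\circ}{X}_\lam/\kap,1,1)$ vanishes.

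The hard part will be the injection $H^d(\Gam(\os{\circ}{X}),\kap)(-d)\hookrightarrow E_2^{-d,2d}$ in arbitrary degree $d$. Tracking the sign conventions in the Poincar\'e residue (\ref{eqn:mprrn}) through the $d_1$-differential requires care; an alternative route which I would pursue in parallel is to appeal to Ekedahl duality (as used in the proof of (\ref{theo:pgg}) (3)) to translate the claim into the familiar statement that $H^d(\Gam(\os{\circ}{X}),\kap)$ appears as a subquotient of $H^d(X,\mathcal{W}(\mathcal{O}_X))/p$ via the structure-sheaf weight spectral sequence (\ref{ali:wrzn}).
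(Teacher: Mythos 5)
Your proposal is correct and follows essentially the same route as the paper's proof: both evaluate the $E_2=E_\infty$-terms of the weight spectral sequence for ${\cal W}_1\Om^d_X$ on the row $h=d$, identify $E_2^{0,d}=H^0(\os{\circ}{X}{}^{(0)},\Om^d_{\os{\circ}{X}{}^{(0)}})$ and $E_2^{-d,2d}$ with $H^d(\Gam(\os{\circ}{X}),\kap)(-d)$, and then conclude by comparing dimensions with $p_g(X/s,1,1)=1$ (a count the paper leaves implicit but you spell out). The only step you treat more gingerly than the paper is the identification of $E_2^{-d,2d}$: the paper asserts the equality ${\rm Ker}(d_1)=H^d(\Gam(\os{\circ}{X}),\kap)(-d)$ outright, which is cleanest to see via Serre duality on the curves constituting $\os{\circ}{X}{}^{(d-1)}$ (turning the \v{C}ech--Gysin $d_1$ into the transpose of the simplicial coboundary $C^{d-1}(\Gam)\lo C^{d}(\Gam)$, so that ${\rm Ker}(d_1)\cong H^d(\Gam(\os{\circ}{X}),\kap)^{*}$), and your one-sided inclusion, though sufficient for the dimension count, is exactly the sign/duality bookkeeping you rightly flag.
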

\begin{proof} 
Consider the spectral sequence (\ref{ali:pgx0n}). 
By (\ref{ali:hwnspewt}), 
\begin{align*} 
E_{\infty}^{-d, 2d}=E_2^{-d, 2d}&=
{\rm Ker}(H^0(\os{\circ}{X}{}^{(d)},{\cal O}_{\os{\circ}{X}{}^{(d)}})(-d) 
\lo H^1(\os{\circ}{X}{}^{(d-1)},\Om_{\os{\circ}{X}{}^{(d-1)}}^{1})(-d+1))\\
&= H^d(\Gam(\os{\circ}{X}),\kap)(-d). 
\end{align*} 
\end{proof} 

\section{Lower semi-continuity of log plurigenera}\label{sec:llgpss} 
Let $\{\os{\circ}{X}_i\}_{i\in I}$ be a finite set of 
the irreducible component of $\os{\circ}{X}$.
Next we would like to prove that the following inequality
\begin{align*} 
\sum_{i\in I}p_g(\os{\circ}{X}_i/\kap,n,r) \leq p_g(X/s,n,r) \quad 
(r\in {\mab Z}_{\geq 1})
\tag{6.0.1}\label{ali:ine}
\end{align*} 
holds.
However, for general $n$, I have not been able to prove 
this inequality. 
What I have been able to prove is (\ref{ali:ine}) only 
in the case $n=1$ for the case $r\geq 2$ 
(see (\ref{theo:temfc}) below).  
Even in the case $n=1$, 
because we do not know whether 
the following induced morphism 
by (\ref{ali:hwndwt}) 
\begin{align*}
H^0(X,\os{r}{\us{{\cal O}_X}
{\otimes}}(\Om_{X}^d)^{\otimes r})\lo 
H^0(\os{\circ}{X}{}^{(0)},
\os{r}{\us{{\cal O}_{\os{\circ}{X}{}^{(0)}}}{\otimes}}
\Om_{\os{\circ}{X}{}^{(0)}}^{d}) 
\end{align*} 
is surjective for general $r\geq 2$, 
we need a new argument to prove this inequality. 
Let $r$ be a positive integer. 
Set $a:=a^{(0)}\col \os{\circ}{X}{}^{(0)}\lo \os{\circ}{X}$. 
Let $n$ be a positive integer or nothing. 
The key for the proof of this inequality is 
to construct the following morphism and 
to prove that it is injective: 

\begin{theo}\label{theo:costm} 
There exists a morphism 
\begin{align*} 
\Psi_{X,n,r}\col 
a_*(\os{r}{\us{{\cal W}_n({\cal O}_{\os{\circ}{X}{}^{(0)}})}{\otimes}}
{\cal W}_n\Om^d_{\os{\circ}{X}{}^{(0)}}) \lo 
\os{r}{\us{{\cal W}_n({\cal O}_{X})}{\otimes}}{\cal W}_n\Om^d_X
\tag{6.1.1}\label{ali:fsiws} 
\end{align*} 
of ${\cal W}_n({\cal O}_X)$-modules, 
which shall be shown to be injective in {\rm (\ref{theo:temfc})} below 
for the case $n=1$. 
\end{theo}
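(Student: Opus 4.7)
The plan is to construct $\Psi_{X,n,r}$ by a restriction-and-inverse procedure, handled first in the level $n=r=1$ case via the local geometry of the SNCL structure, then extended to tensor powers and to the Witt level.

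For $n=r=1$: since $X/s$ is SNCL, the restriction of the log canonical sheaf to each irreducible component $\os{\circ}{X}_\lambda$ of $\os{\circ}{X}$ satisfies
$$\Omega^d_{X/s}\vert_{\os{\circ}{X}_\lambda} \cong \Omega^d_{\os{\circ}{X}_\lambda}(\log D_\lambda),$$
where $D_\lambda := \os{\circ}{X}_\lambda \cap \bigl(\bigcup_{\mu \not= \lambda} \os{\circ}{X}_\mu\bigr)$ is the double locus on the component. Assembling these local identifications gives a morphism of ${\cal O}_X$-modules
$$\rho \col \Omega^d_{X/s} \hookrightarrow a_*\bigl(\Omega^d_{\os{\circ}{X}{}^{(0)}}(\log D)\bigr),$$
injective because $\os{\circ}{X} = \bigcup_\lambda \os{\circ}{X}_\lambda$ as a closed subscheme. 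Define the subsheaf
$$\Omega^d_{X/s}(-\log D) := \rho^{-1}\bigl(a_*\Omega^d_{\os{\circ}{X}{}^{(0)}}\bigr) \subseteq \Omega^d_{X/s}.$$
A direct check in local SNCL coordinates shows that $\rho$ restricts to an isomorphism $\Omega^d_{X/s}(-\log D) \os{\sim}{\lo} a_*\Omega^d_{\os{\circ}{X}{}^{(0)}}$: any tuple of regular top forms on the components extends uniquely to a log-regular top form on $X$, the ``extension by zero'' on the other components being canonical once one uses the identification with log forms. Define $\Psi_{X,1,1}$ as the composition
$$a_*\Omega^d_{\os{\circ}{X}{}^{(0)}} \os{\sim}{\lo} \Omega^d_{X/s}(-\log D) \hookrightarrow \Omega^d_{X/s}.$$

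For $r \geq 2$ at level $n=1$, apply the same construction to the tensor power: $\rho^{\otimes r}$ is again injective, the preimage
$$\bigl(\Omega^d_{X/s}\bigr)^{\otimes r}(-\log D) := (\rho^{\otimes r})^{-1}\bigl(a_*((\Omega^d_{\os{\circ}{X}{}^{(0)}})^{\otimes r})\bigr)$$
maps isomorphically onto $a_*((\Omega^d_{\os{\circ}{X}{}^{(0)}})^{\otimes r})$ by the same local coordinate check, and composing the inverse with the inclusion into $(\Omega^d_{X/s})^{\otimes r}$ yields $\Psi_{X,1,r}$.

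For general $n \geq 1$, I will proceed analogously using the log de Rham-Witt complex: ${\cal W}_n\Omega^d_X$ is invertible over ${\cal W}_n({\cal O}_X)$ and ${\cal W}_n\Omega^d_{\os{\circ}{X}{}^{(0)}}$ is invertible over ${\cal W}_n({\cal O}_{\os{\circ}{X}{}^{(0)}})$, and the level-$1$ identification lifts to a Witt restriction morphism
$$\rho_n \col {\cal W}_n\Omega^d_X \lo a_*\bigl({\cal W}_n\Omega^d_{\os{\circ}{X}{}^{(0)}}(\log D)\bigr),$$
built compatibly with $F$, $V$ and $R$ from the canonical Witt lifts of the SNCL local model. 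Taking the preimage of $a_*(\os{r}{\us{{\cal W}_n({\cal O}_{\os{\circ}{X}{}^{(0)}})}{\otimes}}{\cal W}_n\Omega^d_{\os{\circ}{X}{}^{(0)}})$ under the $r$-th tensor power of $\rho_n$ and composing with the inclusion into $\os{r}{\us{{\cal W}_n({\cal O}_X)}{\otimes}}{\cal W}_n\Omega^d_X$ gives $\Psi_{X,n,r}$. The main obstacle is verifying, level by level, that the Witt restriction $\rho_n$ is well-defined and compatible with the structural operators $F$, $V$, $R$ on the log de Rham-Witt complex of an SNCL scheme; once this is in place the preimage subsheaf is an immediate Witt analogue of the level-$1$ construction, and the composition is visibly ${\cal W}_n({\cal O}_X)$-linear as required.
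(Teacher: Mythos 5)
Your construction for $n=1$ is sound and is essentially a reformulation of the paper's morphism in that case: the adjunction $\Om^d_{X/s}\vert_{\os{\circ}{X}_{\lam}}\cong \Om^d_{\os{\circ}{X}_{\lam}}(\log D_{\lam})$ together with the identification of the subsheaf of log forms whose restrictions to all components are pole-free is, up to sign, the inverse of the residue map that the paper uses, and your local coordinate check at a double point does verify the required isomorphism onto $a_*(\Om^d_{\os{\circ}{X}{}^{(0)}})$ and its $r$-th tensor power (with the caveat that you must land in the ``diagonal'' summands of $\os{r}{\us{{\cal O}_X}{\otimes}}a_*(\Om^d_{\os{\circ}{X}{}^{(0)}}(\log D))$ rather than in the full tensor product of pushforwards, since $a_*({\cal F})^{\otimes r}\neq a_*({\cal F}^{\otimes r})$).

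The genuine gap is the case $n\geq 2$, which is the substance of the theorem. You posit a ``Witt restriction morphism'' $\rho_n\col {\cal W}_n\Om^d_X\lo a_*({\cal W}_n\Om^d_{\os{\circ}{X}{}^{(0)}}(\log D))$ lifting the level-$1$ adjunction compatibly with $F$, $V$, $R$, and you yourself identify its existence as ``the main obstacle'' without resolving it. This object does not come for free: the sheaf ${\cal W}_n\Om^d_{\os{\circ}{X}{}^{(0)}}(\log D)$ and a restriction-to-components map for the log de Rham--Witt sheaf of an SNCL scheme are not available by functoriality (pulling back the log structure of $X$ to a component does not yield the log structure given by $D_{\lam}$), and the Witt-level analogue of ``a tuple of regular forms extends uniquely to a log-regular form'' would require a local computation in ${\cal W}_n\Om^{\bul}$ where $V$ and $d$ interact nontrivially; none of this is supplied. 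The paper avoids the issue entirely by never restricting to components: it uses the auxiliary complex ${\cal W}_n\wt{\Om}{}^{\bul}_X$ (the de Rham--Witt complex of $X/\os{\circ}{s}$) with its preweight filtration $P$, the isomorphism $\theta_n\wedge\col {\cal W}_n\Om^d_X\os{\sim}{\lo}{\cal W}_n\wt{\Om}{}^{d+1}_X/P_0$, and the residue isomorphism ${\rm gr}^P_1{\cal W}_n\wt{\Om}{}^{d+1}_X\os{\sim}{\lo}a_*({\cal W}_n\Om^d_{\os{\circ}{X}{}^{(0)}})$ --- all already established in \cite{msemi} and \cite{ndw} for every $n$ --- and defines $\Psi_{X,n,r}$ as the $r$-fold tensor power of the composite $a_*({\cal W}_n\Om^d_{\os{\circ}{X}{}^{(0)}})\os{{\rm Res}^{-1}}{\lo}{\rm gr}^P_1{\cal W}_n\wt{\Om}{}^{d+1}_X\subset {\cal W}_n\wt{\Om}{}^{d+1}_X/P_0\os{(\theta_n\wedge)^{-1}}{\lo}{\cal W}_n\Om^d_X$, restricted to the diagonal summands. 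To complete your argument you would either have to build $\rho_n$ and the Witt-level extension statement from scratch, or switch to this residue-based construction for $n\geq 2$.
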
  
\begin{proof} 
By \cite[3.15]{msemi} and \cite[(6.28) (9)]{ndw} 
(cf.~\cite[(6.29) (1)]{ndw}), the morphism 
$\theta_n\wedge  \col 
{\cal W}_n\Om^d_X \lo 
{\cal W}_n\wt{\Om}^{d+1}_X/P_0{\cal W}_n\wt{\Om}^{d+1}_X$ 
of ${\cal W}_n({\cal O}_X)$-modules 
is an isomorphism of ${\cal W}_n({\cal O}_X)$-modules: 
\begin{align*} 
\theta_n\wedge  \col 
{\cal W}_n\Om^d_X \os{\sim}{\lo}  
{\cal W}_n\wt{\Om}^{d+1}_X/P_0{\cal W}_n\wt{\Om}^{d+1}_X. 
\tag{6.1.2}\label{ali:dp0} 
\end{align*} 
Consider the following exact sequence of 
of ${\cal W}_n({\cal O}_X)$-modules: 
\begin{align*} 
0\lo 
{\rm gr}^P_1{\cal W}_n\wt{\Om}^{d+1}_X 
\lo 
{\cal W}_n\wt{\Om}^{d+1}_X/P_0{\cal W}_n\wt{\Om}^{d+1}_X 
\lo 
{\cal W}_n\wt{\Om}^{d+1}_X/P_1{\cal W}_n\wt{\Om}^{d+1}_X 
\lo 0. 
\tag{6.1.3}\label{ali:dpa0} 
\end{align*} 
By \cite[(3.7)]{msemi} 
we have the following isomorphism 
\begin{align*} 
{\rm Res}\col 
{\rm gr}^P_1{\cal W}_n\wt{\Om}^{d+1}_X
\os{\sim}{\lo} a_*({\cal W}_n\Om^{d}_{\os{\circ}{X}{}^{(0)}}) 
\tag{6.1.4}\label{ali:dppa0} 
\end{align*} 
of ${\cal W}_n({\cal O}_X)$-modules. 
Hence we have the following composite morphism 
of ${\cal W}_n({\cal O}_X)$-modules: 
\begin{align*} 
a_*({\cal W}_n\Om^d_{\os{\circ}{X}{}^{(0)}}) 
\os{\os{\rm Res}{\simeq}}{\longleftarrow} 
{\rm gr}_1^P{\cal W}_n\tilde{\Om}^{d+1}_{X/{\kap}}\os{\subset}{\lo}
{\cal W}_n\tilde{\Om}^{d+1}_{X}/P_0{\cal W}_n\tilde{\Om}^{d+1}_{X}
\os{\os{\theta_n}{\simeq}}{\longleftarrow}{\cal W}_n{\Om}^{d}_{X}.
\tag{6.1.5}\label{ali:awod} 
\end{align*} 
Consequently we have the following morphism of ${\cal W}_n({\cal O}_X)$-modules: 
\begin{align*} 
\os{r}{\us{{\cal W}_n({\cal O}_X)}{\otimes}}
a_*({\cal W}_n\Om^{d}_{\os{\circ}{X}{}^{(0)}}) 
\lo \os{r}{\us{{\cal W}_n({\cal O}_X)}{\otimes}}
{\cal W}_n\Om^d_X. 
\tag{6.1.6}\label{ali:dppn} 
\end{align*} 
Let $a_i\col \os{\circ}{X}_i \lo \os{\circ}{X}$ be the following composite morphism 
$\os{\circ}{X}_i \os{\subset}{\lo} \os{\circ}{X}{}^{(0)} \os{a}{\lo} \os{\circ}{X}$. 
Because 
\begin{align*} 
\os{r}{\us{{\cal W}_n({\cal O}_X)}{\otimes}}
a_*({\cal W}_n\Om^{d}_{\os{\circ}{X}{}^{(0)}})&=
\os{r}{\us{{\cal W}_n({\cal O}_X)}{\otimes}}
(\us{i}{\bigoplus}a_{i*}({\cal W}_n\Om^d_{\os{\circ}{X}_{i}/{\kap}}))\\
&=
\us{i_{1},\ldots,i_{r}}{{\bigoplus}}
a_{i_{1}*}({\cal W}_n\Om^d_{\os{\circ}{X}_{i_{1}}/{\kap}})
\otimes_{{\cal W}_n({\cal O}_X)} 
\cdots \otimes_{{\cal W}_n({\cal O}_X)} 
a_{i_{r}*}({\cal W}_n\Om^d_{\os{\circ}{X}_{i_{r}}/{\kap}})
\end{align*}  
and the last sheaf contains 
$\us{i}{\bigoplus}a_{i*}(({\cal W}_n\Om^d_{X_i/{\kap}})^{\otimes\,r})$,
there exists the following morphism of ${\cal W}_n({\cal O}_X)$-modules:
\begin{align*} 
\bigoplus_{i\in I}
a_{i*}(\os{r}{\us{{\cal W}_n({\cal O}_{\os{\circ}{X}_i})}{\otimes}}
{\cal W}_n\Om^d_{\os{\circ}{X}_i}) 
\lo 
\os{r}{\us{{\cal W}_n({\cal O}_X)}{\otimes}}{\cal W}_n\Om^{d}_{X}.
\tag{6.1.7}\label{ali:wnoo}
\end{align*}  
Since $a_*(\os{r}{\us{{\cal W}_n({\cal O}_{\os{\circ}{X}{}^{(0)}})}{\otimes}}
{\cal W}_n\Om^d_{\os{\circ}{X}{}^{(0)}})=
\bigoplus_{i\in I}
a_{i*}(\os{r}{\us{{\cal W}_n({\cal O}_{\os{\circ}{X}_i})}{\otimes}}
{\cal W}_n\Om^d_{\os{\circ}{X}_i})$, 
we obtain the desired morphism (\ref{ali:fsiws}).  
\end{proof} 

\begin{rema}\label{rema:ag}
Because the morphism (\ref{ali:awod}) is injective, 
we have the following injective morphism 
\begin{align*} 
\Psi_{X,n,1} \col H^0(\os{\circ}{X}{}^{(0)},
{\us{{\cal W}_n({\cal O}_{\os{\circ}{X}{}^{(0)}})}{\otimes}}
{\cal W}_n\Om^d_{\os{\circ}{X}{}^{(0)}}) \os{\sus}{\lo} 
H^0(X,\os{r}{\us{{\cal W}_n({\cal O}_{X})}{\otimes}}{\cal W}_n\Om^d_X). 
\tag{6.2.1}\label{ali:fsaiws} 
\end{align*} 
Hence we obtain the inequality again 
\begin{align*} 
p_g(\os{\circ}{X}{}^{(0)}/\kap,n,1)\leq p_g(X/s,n,1),  
\end{align*} 
which has been proved in (\ref{ali:pgx0n}). 
\end{rema}

Now we can prove the following. 
This result is similar to those of N.~Nakayama 
(\cite[Theorem 11]{nn}, \cite[(6.3)]{nn2}) 
(cf.~\cite[(4.2)]{cl}, \cite[\S6]{morr}). 

\begin{theo}\label{theo:temfc} 
The morphism {\rm (\ref{ali:fsiws})} is injective. 
Consequently 
\begin{align*} 
p_g(X/s,1,r)\geq p_g(\os{\circ}{X}{}^{(0)}/\kap,1,r)
\tag{6.3.1}\label{ali:fswpgs} 
\end{align*} 
and 
\begin{align*} 
\kap(X,1)\geq {\rm max}\{\kap(X_i,1) \vert X_i:
\,{\rm an}~{\rm irreducible}~{\rm component}~{\rm of}~X\}
\tag{6.3.2}\label{ali:fscgs} 
\end{align*} 
$($cf.~{\rm \cite{ueno}, \cite{ni}}$)$.  
\end{theo}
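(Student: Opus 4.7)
The plan is to prove the injectivity of $\Psi_{X,1,r}$ by an explicit computation in the standard \'etale-local model of an SNCL scheme; the two inequalities will then follow by taking $H^0$.

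\emph{Reduction to a local model.} Injectivity of a morphism of ${\cal O}_X$-modules can be checked on \'etale stalks, so I may assume that \'etale-locally $\os{\circ}{X}=\mathrm{Spec}\,\kap[x_1,\ldots,x_m,y_1,\ldots,y_{d-m+1}]/(x_1\cdots x_m)$, with log structure generated by $x_1,\ldots,x_m$ and with $X\lo s$ induced by the diagonal $\mab N\lo \mab N^m$. The components through the chart are $\os{\circ}{X}_i=V(x_i)\cong \mab A^d_{\kap}$; the sheaf $\Om^d_X$ is free of rank one with generator $\eta=d\log x_1\wedge\cdots\wedge d\log x_{m-1}\wedge dy_1\wedge\cdots\wedge dy_{d-m+1}$, while each $\Om^d_{\os{\circ}{X}_i}$ is free of rank one over ${\cal O}_X/(x_i)$ with generator $\om_i=dx_1\wedge\cdots\widehat{dx_i}\cdots\wedge dx_m\wedge dy_1\wedge\cdots\wedge dy_{d-m+1}$.

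\emph{Explicit form of $\Psi_{X,1,1}$.} Using $dx_j=x_j\,d\log x_j$ I rewrite $\om_i=\bigl(\prod_{j\neq i}x_j\bigr)\cdot d\log x_1\wedge\cdots\widehat{d\log x_i}\cdots\wedge d\log x_m\wedge dy_1\wedge\cdots\wedge dy_{d-m+1}$. The Poincar\'{e} residue (\ref{eqn:mprrn}) sends $d\log x_i\wedge\om\in P_1\Om^{d+1}_{X/\os{\circ}{s}}$ to $\om|_{\os{\circ}{X}_i}$, hence inverting it identifies $\om_i$ with $\pm\bigl(\prod_{j\neq i}x_j\bigr)\cdot d\log x_1\wedge\cdots\wedge d\log x_m\wedge dy_1\wedge\cdots\wedge dy_{d-m+1}$ in ${\rm gr}_1^P\Om^{d+1}_{X/\os{\circ}{s}}$. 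Since $\theta\wedge\eta$ is, up to sign, exactly the same $(d+1)$-form, inverting the isomorphism (\ref{ali:dp0}) gives $\Psi_{X,1,1}(\om_i)=\varepsilon_i\bigl(\prod_{j\neq i}x_j\bigr)\eta$ for some sign $\varepsilon_i=\pm 1$. Tensoring $r$ times, the composite $a_{i*}((\Om^d_{\os{\circ}{X}_i})^{\otimes r})\lo (\Om^d_X)^{\otimes r}$ sends $g\cdot\om_i^{\otimes r}\lom \varepsilon_i^r\bigl(\prod_{j\neq i}x_j\bigr)^r\wt g\cdot\eta^{\otimes r}$ for any lift $\wt g\in{\cal O}_X$ of $g\in{\cal O}_X/(x_i)$; well-definedness follows from $\bigl(\prod_{j\neq i}x_j\bigr)^r\cdot x_i=0$ in ${\cal O}_X$.

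\emph{Monomial argument for injectivity.} Using the standard $\kap$-basis of ${\cal O}_X$ consisting of monomials $\prod_k x_k^{a_k}y^{\alpha}$ with $\min_k a_k=0$, the element $\bigl(\prod_{j\neq i}x_j\bigr)^r\wt g$ is supported on monomials with $a_j\geq r$ for $j\neq i$ and (consequently) $a_i=0$; it therefore vanishes if and only if $g=0$ in ${\cal O}_{\os{\circ}{X}_i}$, which proves injectivity of each $a_{i*}((\Om^d_{\os{\circ}{X}_i})^{\otimes r})\lo (\Om^d_X)^{\otimes r}$. For distinct indices $i\neq i'$ the monomial supports of the two images are disjoint, since the constraint $a_{i'}\geq r\geq 1$ (from index $i$) contradicts $a_{i'}=0$ (from index $i'$). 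The direct sum $a_*((\Om^d_{\os{\circ}{X}{}^{(0)}})^{\otimes r})=\bigoplus_i a_{i*}((\Om^d_{\os{\circ}{X}_i})^{\otimes r})$ therefore injects into $(\Om^d_X)^{\otimes r}$, which is the desired injectivity of $\Psi_{X,1,r}$. Applying $H^0$ yields (\ref{ali:fswpgs}), and (\ref{ali:fscgs}) follows by applying the plurigenus inequality to each irreducible component separately and taking $\limsup_{r\to\infty}\log(\cdot)/\log r$.

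The main obstacle is the careful bookkeeping of signs and of the orientation sheaf $\vp^{(0)}_{\rm zar}(\os{\circ}{X}/\os{\circ}{s})$ in the explicit identification of $\Psi_{X,1,1}$; since these contributions only affect the ambiguous signs $\varepsilon_i$, they do not interfere with injectivity, and once the formula $\Psi_{X,1,1}(\om_i)=\pm\bigl(\prod_{j\neq i}x_j\bigr)\eta$ is in hand the remaining monomial bookkeeping is routine.
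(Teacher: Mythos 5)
Your proof is correct, but the decisive step is genuinely different from the paper's. Both arguments rest on the same local identification of the composite (\ref{ali:awod}) in a standard chart; the paper, however, carries this computation out only over the smooth locus $X_{\rm sm}$, where it shows the restriction of (\ref{ali:fsiws}) is the identity (for every $n$), and then finishes abstractly: for $n=1$ the Cartier isomorphism makes ${\cal W}_1\Om^d_{\os{\circ}{X}_i}$ an invertible ${\cal O}_{\os{\circ}{X}_i}$-module, so the source of $\Psi_{X,1,r}$ is a direct sum of pushforwards of invertible sheaves on the integral schemes $\os{\circ}{X}_i$, and a section of such a sheaf killed by a map that is an isomorphism on a dense open subset must vanish (torsion-freeness, phrased in the paper as ``an \'etale algebra over $\kap[x_1,\ldots,x_d]$ has no non-zero divisor''). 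You instead push the computation over the whole chart, obtain the explicit formula $\om_i^{\otimes r}\mapsto \pm\bigl(\prod_{j\neq i}x_j\bigr)^r\eta^{\otimes r}$, and conclude by the monomial-support argument: the image of the $i$-th summand is supported on monomials with $a_i=0$ and $a_j\geq r$ for $j\neq i$, so each summand injects and the supports of distinct summands are disjoint. Your route is more elementary and makes the well-definedness over ${\cal O}_X/(x_i)$ and the shape of the image completely transparent; the paper's route is shorter, isolates exactly where $n=1$ enters (invertibility of ${\cal W}_1\Om^d$, which fails for ${\cal W}_n\Om^d$ with $n\geq 2$), and its generic-isomorphism statement is proved for all $n$, which is what allows Problem (\ref{prob:pi}) to remain meaningful. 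The only point worth making explicit in your write-up is that the local branches $V(x_i)$ of the chart lie on pairwise distinct global components (true because the crossings are simple), so that the local direct-sum decomposition of $a_*\bigl((\Om^d_{\os{\circ}{X}{}^{(0)}})^{\otimes r}\bigr)$ really is the one you use.
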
 
\begin{proof} 
For the time being, we do not assume that $n=1$. 
Let $\os{\circ}{X}_i$ be an irreducible component of $\os{\circ}{X}$.
Let $X_{\rm sm}$ be the smooth locus of $\os{\circ}{X}/\kap$. 
We claim that the morphism (\ref{ali:fsiws}) 
is an isomorphism on the nonsingular points 
$X_{\rm sm}$ of $\os{\circ}{X}$. 
Indeed, it suffices to show that the morphism 
$a_*({\cal W}_n\Om^d_{\os{\circ}{X}{}^{(0)}}) 
\lo {\cal W}_n\Om^{d}_{X}$ is an isomorphism on $X_{\rm sm}$ 
since $\os{\circ}{X}{}^{(0)}\vert_{X_{\rm sm}}=X_{\rm sm}$. 
Locally on $X_{\rm sm}$ we have a system of local parameters 
$x_1, \ldots, x_d$. We may assume that there is a local isomorphism 
$X_{\rm sm} \simeq {\rm Spec}\,\kap[t,x_1,\ldots, x_d]/(t)$ 
and we have a local admissible lift $
\ol{\cal X}:={\rm Spec}\,{\cal W}[t, x_1,\ldots, x_d]$ 
of $X_{\rm sm}$ over ${\cal W}[t]$. 
Set ${\cal X}_n:=\ol{\cal X}\times_{{\cal W}[t]}{\cal W}_n$. 
Then $\theta \in {\cal W}_n\wt{\Om}^1_X={\cal H}^0(\Om^{\bul}_{{\cal X}_n})$ 
is the class $[d\log t]$ of $d\log t$. 
The inverse of the Poincar\'{e} residue isomorphism 
$a_*({\cal W}_n\Om^d_{\os{\circ}{X}{}^{(0)}})\os{\simeq}{\lo}
{\rm gr}_1^P{\cal W}_n\tilde{\Om}^{d+1}_{X}$
is defined by $\om \lom \om [d\log t]$. 
Because $a_*({\cal W}_n\Om^d_{\os{\circ}{X}{}^{(0)}})\vert_{X_{\rm sm}}
= {\cal W}_n\Om^d_{X_{\rm sm}}={\cal W}_n{\Om}^{d}_X\vert_{X_{\rm sm}}$,
the restriction of (\ref{ali:fsiws}) to $X_{\rm sm}$ is equal to 
the identity ${\rm id}_{{\cal W}_n\Om^d_{X_{\rm sm}}}$.  
Thus we have proved that our claim holds. 
\par 
By \cite[(1.17)]{lodw} the sheaf 
$\os{r}{\us{{\cal W}_n({\cal O}_X)}{\otimes}}{\cal W}_n\Om^d_X$ 
is a coherent ${\cal W}_n({\cal O}_X)$-module. 
In particular, 
$a_*(\os{r}{\us{{\cal W}_n({\cal O}_{\os{\circ}{X}{}^{(0)}})}{\otimes}}
{\cal W}_n\Om^d_{\os{\circ}{X}{}^{(0)}})$ is also a coherent 
${\cal W}_n({\cal O}_X)$-module. 
\par 
Now consider the case $n=1$ (but $r$ is general). 
Let $F_{\os{\circ}{X}_{i}}\col \os{\circ}{X}_{i}\lo \os{\circ}{X}_{i}$ 
be the Frobenius endomorphism of $\os{\circ}{X}_{i}$. 
Since $(C^{-1})^{-1} \col {\cal W}_1\Om^d_{\os{\circ}{X}_{i}}
=F_*{\cal H}^d(\Om^{\bul}_{\os{\circ}{X}_{i}/{\kap}})
\os{\sim}{\lo} \Om^d_{\os{\circ}{X}_{i}/{\kap}}$, 
${\cal W}_1\Om^d_{\os{\circ}{X}_{i}}$ is 
an invertible ${\cal W}_1({\cal O}_{\os{\circ}{X}_{i}})
(\simeq {\cal  O}_{X_{i}})$-module. 
Hence $a_*(\os{r}{\us{{\cal W}_1({\cal O}_{\os{\circ}{X}{}^{(0)}})}{\otimes}}
{\cal W}_1\Om^d_{\os{\circ}{X}{}^{(0)}/{\kap}})$ 
is an invertible ${\cal W}_1({\cal O}_{\os{\circ}{X}{}^{(0)}})$-module. 
Because the morphism (\ref{ali:fsiws}) 
is an isomorphism on a dense open subset and 
because an \'{e}tale algebra over 
$\kap[x_1,\ldots,x_{d}]$ has no non-zero divisor, 
the morphism (\ref{ali:fsiws}) is injective. 
\end{proof}

In the rest of this section, 
we give the compatibility 
with fundamental operators. 
(This may be useful for checking whether 
$\Psi_{X,n,r}$ is injective.)

\begin{prop}\label{prop:ra0}
Let $R\col 
{\cal W}_{n+1}\Om^d_{\os{\circ}{X}{}^{(0)}} \lo 
{\cal W}_{n}\Om^d_{\os{\circ}{X}{}^{(0)}}$ 
and 
$R\col {\cal W}_{n+1}\Om^d_X\lo {\cal W}_n\Om^d_X$ 
be the projections. 
$($These are morphisms of 
${\cal W}_{n+1}({\cal O}_{\os{\circ}{X}{}^{(0)}})$-modules 
and ${\cal W}_{n+1}({\cal O}_X)$-modules, respectively.$)$
Then the following diagram is commutative$:$ 
\begin{equation*} 
\begin{CD} 
a_*(\os{r}{\us{{\cal W}_{n+1}({\cal O}_{\os{\circ}{X}{}^{(0)}})}{\otimes}}
{\cal W}_{n+1}\Om^d_{\os{\circ}{X}{}^{(0)}}) 
@>{\Psi_{X,n+1,r}}>>
\os{r}{\us{{\cal W}_{n+1}({\cal O}_{X})}{\otimes}}{\cal W}_{n+1}\Om^d_X\\
@V{a_*(\otimes^rR})VV @VV{\otimes^rR}V \\
a_*(\os{r}{\us{{\cal W}_n({\cal O}_{\os{\circ}{X}{}^{(0)}})}{\otimes}}
{\cal W}_n\Om^d_{\os{\circ}{X}{}^{(0)}})  @>{\Psi_{X,n,r}}>>
\os{r}{\us{{\cal W}_n({\cal O}_{X})}{\otimes}}
{\cal W}_n\Om^d_X.
\end{CD} 
\tag{6.4.1}\label{cd:fsiwas} 
\end{equation*} 
\end{prop}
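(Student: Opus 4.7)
The plan is to verify the commutativity of (6.4.1) by decomposing $\Psi_{X,n,r}$ into its three constituent pieces, showing that each one commutes with the projection $R$, and then passing to the $r$-fold tensor product. Recall from the proof of (\ref{theo:costm}) that $\Psi_{X,n,r}$ is obtained by tensoring $r$ copies of the composite
\begin{equation*}
a_*({\cal W}_n\Om^d_{\os{\circ}{X}{}^{(0)}})
\os{\rm Res}{\underset{\sim}{\longleftarrow}}
{\rm gr}_1^P{\cal W}_n\wt{\Om}^{d+1}_X
\os{\subset}{\lo}
{\cal W}_n\wt{\Om}^{d+1}_X/P_0{\cal W}_n\wt{\Om}^{d+1}_X
\os{\theta_n\wedge}{\underset{\sim}{\longleftarrow}}
{\cal W}_n\Om^d_X.
\end{equation*}
So the first step is to check the $n$-level version of (\ref{cd:fsiwas}), i.e.\ the analogous commutative square for $r=1$.

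First I would check compatibility with $R$ for the isomorphism $\theta_n\wedge$ of (\ref{ali:dp0}). By construction in \cite{msemi} and \cite{ndw}, the global section $\theta_n \in {\cal W}_n\wt{\Om}^1_X$ is obtained from a single global section $\theta \in {\cal W}\wt{\Om}^1_X$ by reduction modulo $V^n$, so $R(\theta_{n+1})=\theta_n$, and the projection $R: {\cal W}_{n+1}\wt{\Om}^{\bullet}_X \to {\cal W}_n\wt{\Om}^{\bullet}_X$ is a morphism of DGAs. Therefore the multiplication maps $\theta_{n+1}\wedge$ and $\theta_n\wedge$ are intertwined by $R$, giving the commutativity of the $\theta\wedge$ square. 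Second, the preweight filtration $P_\bullet$ on ${\cal W}_n\wt{\Om}^{\bullet}_X$ constructed in \cite{nb} is preserved by $R$ by its very definition, so $R$ induces a morphism of short exact sequences (\ref{ali:dpa0}) for $n+1 \to n$. Third, the Poincar\'{e} residue isomorphism ${\rm Res}$ of (\ref{ali:dppa0}) is constructed in [\cite{msemi}, (3.7)] as a morphism of projective systems in $n$, so it is compatible with $R$ on the source and with the projection $R$ on the target $a_*({\cal W}_n\Om^d_{\os{\circ}{X}{}^{(0)}})$. Putting the three squares together gives commutativity of (\ref{cd:fsiwas}) for $r=1$.

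To pass from $r=1$ to general $r$, I would note that if $\varphi_n \col {\cal A}_n \to {\cal B}_n$ is a morphism of ${\cal W}_n({\cal O}_X)$-modules (or, on the source, of $a_*{\cal W}_n({\cal O}_{\os{\circ}{X}{}^{(0)}})$-modules pushed forward and viewed via the structural map) that commutes with the projections $R$, then the induced maps $\varphi_n^{\otimes r}$ between the $r$-fold tensor products over ${\cal W}_n({\cal O}_X)$ (respectively $a_*{\cal W}_n({\cal O}_{\os{\circ}{X}{}^{(0)}})$) automatically commute with $\otimes^r R$, since the projections are ring homomorphisms intertwining the module structures on source and target. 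Applied to the composite displayed above, this gives (\ref{cd:fsiwas}).

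The main obstacle, and really the only non-formal point, is the compatibility of the Poincar\'{e} residue isomorphism with $R$; but this is essentially built into the definition of the residue in \cite{msemi}, \cite{ndw}, since the residue is obtained by a local description in terms of the symbols $d\log m_\lambda$ which are manifestly compatible with the Witt-length truncation. Everything else is formal.
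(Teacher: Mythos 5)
Your proposal is correct and follows essentially the same route as the paper: the paper likewise reduces to the $r=1$ compatibility of ${\rm Res}$, the inclusion, and $\theta_n\wedge$ with $R$ (citing \cite[(8.4.3), (11.1)]{ndw} for this) and then tensors up. The only point the paper makes explicit that you gloss over is the extra commutative square identifying $a_*$ of the tensor product over ${\cal W}_n({\cal O}_{\os{\circ}{X}{}^{(0)}})$ as a subsheaf of the tensor product over ${\cal W}_n({\cal O}_X)$ compatibly with $R$, but your parenthetical remark about the two base rings covers this.
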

\begin{proof} 
By \cite[(8.4.3), (11.1)]{ndw} we have the following commutative diagram 
\begin{equation*} 
\begin{CD} 
a_*({\cal W}_{n+1}\Om^d_{\os{\circ}{X}{}^{(0)}}) 
@<{{\rm Res},\,\simeq}<< 
{\rm gr}_1^P{\cal W}_{n+1}\tilde{\Om}^{d+1}_{X}@>{\subset}>>
{\cal W}_{n+1}\tilde{\Om}^{d+1}_{X}/P_0{\cal W}_{n+1}\tilde{\Om}^{d+1}_{X}
@<{\theta_{n+1},\,\simeq}<<{\cal W}_{n+1}{\Om}^{d}_{X}\\
@V{a_*(R)}VV @V{R}VV @V{R}VV @V{R}VV \\
a_*({\cal W}_n\Om^d_{\os{\circ}{X}{}^{(0)}}) 
@<{{\rm Res},\,{\simeq}}<< 
{\rm gr}_1^P{\cal W}_n\tilde{\Om}^{d+1}_{X}@>{\subset}>>
{\cal W}_n\tilde{\Om}^{d+1}_{X}/P_0{\cal W}_n\tilde{\Om}^{d+1}_{X}
@<{\theta_n,\,\simeq}<<{\cal W}_n{\Om}^{d}_{X}. 
\end{CD}
\tag{6.4.2}\label{cd:awod} 
\end{equation*} 
Hence we have the following commutative diagram 
\begin{equation*} 
\begin{CD} 
\os{r}{\us{{\cal W}_{n+1}({\cal O}_X)}{\otimes}}
a_*({\cal W}_{n+1}\Om^d_{\os{\circ}{X}{}^{(0)}}) 
@>{\Psi_{X,n+1,r}}>>
\os{r}{\us{{\cal W}_{n+1}({\cal O}_{X})}{\otimes}}{\cal W}_{n+1}\Om^d_X\\
@V{\otimes^ra_*(R)}VV @VV{\otimes^rR}V \\
\os{r}{\us{{\cal W}_{n}({\cal O}_X)}{\otimes}}
a_*({\cal W}_{n}\Om^d_{\os{\circ}{X}{}^{(0)}}) 
@>{\Psi_{X,n,r}}>>
\os{r}{\us{{\cal W}_n({\cal O}_{X})}{\otimes}}{\cal W}_n\Om^d_X.
\end{CD} 
\tag{6.4.3}\label{cd:fsipiws} 
\end{equation*} 
Since the following diagram 
\begin{equation*} 
\begin{CD} 
a_*(\os{r}{\us{{\cal W}_{n+1}({\cal O}_{\os{\circ}{X}{}^{(0)}})}{\otimes}}
{\cal W}_{n+1}\Om^d_{\os{\circ}{X}{}^{(0)}}) 
@>{\subset}>>
\os{r}{\us{{\cal W}_{n+1}({\cal O}_X)}{\otimes}}
a_*({\cal W}_{n+1}\Om^d_{\os{\circ}{X}{}^{(0)}}) \\
@V{a_*(\otimes^rR)}VV @VV{\otimes^ra_*(R)}V \\
 a_*(\os{r}{\us{{\cal W}_{n}({\cal O}_{\os{\circ}{X}{}^{(0)}})}{\otimes}}
{\cal W}_{n}\Om^d_{\os{\circ}{X}{}^{(0)}}) 
@>{\subset}>>
\os{r}{\us{{\cal W}_{n}({\cal O}_X)}{\otimes}}
a_*({\cal W}_n\Om^d_{\os{\circ}{X}{}^{(0)}}).
\end{CD} 
\tag{6.4.4}\label{cd:fsipdws} 
\end{equation*} 
is commutative, 
we obtain the commutative diagram (\ref{cd:fsiwas}).  
\end{proof} 

\begin{defi}\label{defi:ni}
Set $\Psi_{X,\infty,r}:=\vpl_n\Psi_{X,n,r}$. 
\end{defi}

\begin{prob}\label{prob:pi}
Is $\Psi_{X,n,r}$ injective for any $n, r\in {\mab Z}_{>0}$? 
In particular, is $\Psi_{X,\infty,r}$ injective for any $r\in {\mab Z}_{>0}$ 
and $\kap(X/{\cal W}(s))\geq {\rm max}\{\kap(X_i/{\cal W})) \vert X_i:
~{\rm an}~{\rm irreducible}~{\rm component}~{\rm of}~X\}$?
\end{prob}

\begin{prop}\label{prop:rafv0}
Let $F_{{\cal W}_{n}(\os{\circ}{X}{}^{(0)})}
\col {\cal W}_{n}(\os{\circ}{X}{}^{(0)})\lo 
{\cal W}_{n}(\os{\circ}{X}{}^{(0)})$  
and 
$F_{{\cal W}_{n}(X)}\col {\cal W}_{n}(X)\lo {\cal W}_{n}(X)$  
be the Frobenius endomorphisms of 
${\cal W}_{n}(\os{\circ}{X}{}^{(0)})$ and 
${\cal W}_{n}(X)$, respectively. 
Then the following hold$:$
\par 
$(1)$  The following diagram is commutative$:$ 
\begin{equation*} 
\begin{CD} 
a_*(\os{r}{\us{{\cal W}_{n+1}({\cal O}_{\os{\circ}{X}{}^{(0)}})}{\otimes}}
{\cal W}_{n+1}\Om^d_{\os{\circ}{X}{}^{(0)}}) 
@>{\Psi_{X,n+1,r}}>>
\os{r}{\us{{\cal W}_{n+1}({\cal O}_{X})}{\otimes}}{\cal W}_{n+1}\Om^d_X\\
@V{a_*(\otimes^rF})VV @VV{\otimes^rF}V \\
a_*(\os{r}{\us{{\cal W}_{n}({\cal O}_{\os{\circ}{X}{}^{(0)}})}{\otimes}}
F_{{\cal W}_n(\os{\circ}{X}{}^{(0)})*}
({\cal W}_{n}\Om^d_{\os{\circ}{X}{}^{(0)}}))  
@>{F_{{\cal W}_{n}(X)*}(\Psi_{X,n,r})}>>
\os{r}{\us{{\cal W}_n({\cal O}_{X})}{\otimes}}
F_{{\cal W}_{n}(X)*}({\cal W}_n\Om^d_X).
\end{CD} 
\tag{6.7.1}\label{cd:fsifs} 
\end{equation*} 
\par  
$(2)$ The following diagram is commutative$:$ 
\begin{equation*} 
\begin{CD} 
a_*(\os{r}{\us{{\cal W}_{n}({\cal O}_{\os{\circ}{X}{}^{(0)}})}{\otimes}}
F_{{\cal W}_{n}(\os{\circ}{X}{}^{(0)})*}
({\cal W}_{n}\Om^d_{\os{\circ}{X}{}^{(0)}}))  
@>{F_{{\cal W}_n(X)*}(\Psi_{X,n,r})}>>
\os{r}{\us{{\cal W}_n({\cal O}_{X})}{\otimes}}
F_{{\cal W}_n(X)*}({\cal W}_n\Om^d_X)\\
@V{a_*(\otimes^rV})VV @VV{\otimes^rV}V \\
a_*(\os{r}{\us{{\cal W}_{n+1}({\cal O}_{\os{\circ}{X}{}^{(0)}})}{\otimes}}
{\cal W}_{n+1}\Om^d_{\os{\circ}{X}{}^{(0)}}) 
@>{\Psi_{X,n+1,r}}>>
\os{r}{\us{{\cal W}_{n+1}({\cal O}_{X})}{\otimes}}{\cal W}_{n+1}\Om^d_X.
\end{CD} 
\tag{6.7.2}\label{cd:fsvifs} 
\end{equation*} 
\end{prop}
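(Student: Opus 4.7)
The plan is to mimic the proof of (6.4.1) (which handled the projection $R$) by reducing both (6.7.1) and (6.7.2) to the $r=1$ case via the tensor construction of $\Psi_{X,n,r}$: the morphism $\Psi_{X,n,r}$ is the $r$-fold tensor product (over ${\cal W}_n({\cal O}_X)$, with the appropriate shift by $F_{{\cal W}_n(X)*}$ in the $V$-statement) of the single morphism $a_*({\cal W}_n\Om^d_{\os{\circ}{X}{}^{(0)}})\lo {\cal W}_n\Om^d_X$ induced by (6.1.5). So it suffices to establish the commutativity of the $r=1$ versions of (6.7.1) and (6.7.2). For (6.7.1) the passage to general $r$ is then immediate from the multiplicativity of the Frobenius on the de Rham--Witt complex; for (6.7.2) one instead uses the projection formula $V(x)\wedge y=V(x\wedge Fy)$ (and its iterated form) to see that $\otimes^r V$ intertwines the $\Psi$'s once the $r=1$ case does.

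For the $r=1$ diagrams I would factor $\Psi_{X,n,1}$ through the explicit composite (6.1.5),
\[
a_*({\cal W}_n\Om^d_{\os{\circ}{X}{}^{(0)}})\xleftarrow[\simeq]{\mathrm{Res}}{\rm gr}_1^{P}{\cal W}_n\wt{\Om}^{d+1}_X\hookrightarrow {\cal W}_n\wt{\Om}^{d+1}_X/P_0\xleftarrow[\simeq]{\theta_n\wedge}{\cal W}_n\Om^d_X,
\]
and then verify compatibility with $F$ and $V$ of each of the three constituent arrows. The inclusion in the middle is compatible by the stability of the preweight filtration $P$ on ${\cal W}_\bullet\wt{\Om}^\bullet_X$ under $F$ and $V$, as recorded in [ndw, (8.4.3)] and used already to obtain (6.4.2). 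The arrow $\theta_n\wedge$ is compatible with $F$ via the identity $F(\theta_{n+1})=\theta_n$ (a reflection of $F([e])=[e]^p$ applied to $\theta=d\log[e]$) together with multiplicativity of $F$; and it is compatible with $V$ through the projection formula, which gives $V(\theta_n\wedge Fx)=\theta_{n+1}\wedge V(Fx)$ from $V(\alpha\wedge Fx)=V\alpha\wedge x$. Finally, the compatibility of the residue isomorphism ${\rm Res}$ with $F$ and $V$ follows from the local description in (3.8.1) combined with $F(d\log[m])=d\log[m]$ for sections $m$ of the log structure. Assembling these yields commutative diagrams of the same shape as (6.4.2), but with $F$ (resp.\ $V$) in place of $R$, which is exactly the $r=1$ case of (6.7.1) (resp.\ (6.7.2)).

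The main obstacle will be the compatibility of the residue isomorphism with the Verschiebung $V$. Whereas compatibility of $\mathrm{Res}$ with $R$ and with $F$ is essentially a matter of functoriality, for $V$ one has to work at the level of explicit local representatives of ${\rm gr}_1^{P}{\cal W}_n\wt{\Om}^{d+1}_X$ and check that the image of $V\bigl(\sum\omega\, d\log[m_{\lambda_0}]\cdots d\log[m_{\lambda_d}]\bigr)$ under ${\rm Res}$ agrees with $V(a^*_{\lambda_0\cdots\lambda_{d-1}}(\omega))$, tensored with the orientation sign. This verification is not formal because $V$ is not multiplicative, so one must substitute the projection formula at each step and track the Teichmüller representatives of the $d\log[m_\lambda]$ carefully. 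Once this local compatibility is confirmed, the remainder of the proof—the assembly of the analogue of (6.4.2) and the tensoring-up argument sketched in the first paragraph—is routine.
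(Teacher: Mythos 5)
Your proposal is correct and shares the paper's overall architecture (reduce to $r=1$ via the tensor construction, factor $\Psi_{X,n,1}$ through the composite (\ref{ali:awod}), and check each constituent arrow), but it takes a genuinely different route at the key step, namely the compatibility of ${\rm Res}$, the inclusion, and $\theta_n\wedge$ with $F$ and $V$. You argue intrinsically inside the de Rham--Witt complex, using $F(\theta_{n+1})=\theta_n$, the stability of $P$ under $F$ and $V$, and the projection formula $\omega\wedge V(x)=V(F(\omega)\wedge x)$; this works, and in fact the obstacle you single out --- the compatibility of ${\rm Res}$ with $V$ --- is less severe than you fear, since a local section of ${\rm gr}_1^P$ is a sum of terms $\omega\, d\log[m_{\lambda}]$ and $V(\omega\, d\log[m_{\lambda}])=V(\omega\, F(d\log[m_{\lambda}]))=V(\omega)\, d\log[m_{\lambda}]$ reduces the claim to $V(a_{\lambda}^*(\omega))=a_{\lambda}^*(V(\omega))$. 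The paper instead chooses a local admissible lift $\ol{\cal X}$ of $X$ over ${\cal W}_{n+1}[t]$ and observes that, under the resulting identifications, the operators $F$ on ${\cal W}_{n+1}\wt{\Om}{}^{d+1}_X$, on ${\cal W}_{n+1}\Om^d_X$ and on ${\cal W}_{n+1}\Om^d_{\os{\circ}{X}{}^{(0)}}$ are all induced by the \emph{projections} $\Om^{\bul}_{{\cal X}_{n+1}/{\cal W}_{n+1}}\lo\Om^{\bul}_{{\cal X}_n/{\cal W}_n}$ (and their relative and component analogues), so that the commutation with ${\rm Res}$ and $\theta_n\wedge$ is pure functoriality and the argument becomes literally the one already given for $R$ in (\ref{cd:awod}); part (2) is then ``similar.'' What the paper's device buys is that no projection formula or local representative tracking is needed at all; what your approach buys is that it avoids choosing lifts and stays entirely inside the de Rham--Witt formalism. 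Both are valid proofs.
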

\begin{proof} 
(1): In \cite[(3.2)]{nlfc} we have proved that 
the morphism 
$F\col {\cal W}_{n+1}\Om^d_X \lo 
F_{{\cal W}_{n}(X)*}({\cal W}_n\Om^d_X)$ 
is a morphism of ${\cal W}_{n+1}({\cal O}_{X})$-modules. 
Let $\ol{\cal X}$ be an admissible lift of $X$ over 
${\cal W}_{n+1}[t]$. 
Set ${\cal X}_m:=\ol{\cal X}
\otimes_{{\cal W}_{m}[t]}{\cal W}_{m}$ $(m=n,n+1)$. 
Then the operators 
$F\col {\cal W}_{n+1}\tilde{\Om}^{d+1}_{X}\lo 
F_{{\cal W}_{n}(X)*}({\cal W}_{n}\tilde{\Om}^{d+1}_{X})$, 
$F\col {\cal W}_{n+1}\Om^{d}_{X}\lo 
F_{{\cal W}_{n}(X)*}({\cal W}_{n}{\Om}^{d}_{X})$ 
and 
$F\col {\cal W}_{n+1}\Om^d_{\os{\circ}{X}{}^{(0)}}\lo 
F_{{\cal W}_{n}(X)*}({\cal W}_{n}{\Om}^{d}_{\os{\circ}{X}{}^{(0)}})$ 
are induced by the projections  
$\Om^{\bul}_{{\cal X}_{n+1}/{\cal W}_{n+1}}\lo 
\Om^{\bul}_{{\cal X}_n/{\cal W}_{n}}$, 
$\Om^{\bul}_{{\cal X}_{n+1}/{\cal W}_{n+1}(s)}\lo 
\Om^{\bul}_{{\cal X}_n/{\cal W}_{n}(s)}$ 
and 
$\Om^{\bul}_{\os{\circ}{\cal X}{}^{(0)}_{n+1}/{\cal W}_{n+1}}\lo 
\Om^{\bul}_{{\cal X}{}^{(0)}_n/{\cal W}_n}$,  
respectively.
Hence the following diagram is commutative: 
\begin{equation*} 
\begin{CD} 
a_*({\cal W}_{n+1}\Om^d_{\os{\circ}{X}{}^{(0)}}) 
@<{{\rm Res},\,\simeq}<< 
{\rm gr}_1^P{\cal W}_{n+1}\tilde{\Om}^{d+1}_{X}@>{\subset}>>\\
@V{a_*(F)}VV @V{F}VV  \\
F_{{\cal W}_{n}(X)*}(a_*({\cal W}_n\Om^d_{\os{\circ}{X}{}^{(0)}})) 
@<{{\rm Res},\,{\simeq}}<< 
F_{{\cal W}_{n}(X)*}({\rm gr}_1^P{\cal W}_n\tilde{\Om}^{d+1}_{X})
@>{\subset}>>
\end{CD}
\tag{6.7.3}\label{cd:awpod} 
\end{equation*} 
\begin{equation*} 
\begin{CD} 
{\cal W}_{n+1}\tilde{\Om}^{d+1}_{X}/P_0{\cal W}_{n+1}\tilde{\Om}^{d+1}_{X}
@<{\theta_{n+1},\,\simeq}<<{\cal W}_{n+1}{\Om}^{d}_{X}\\
@V{F}VV @VV{F}V \\
F_{{\cal W}_{n}(X)*}(
{\cal W}_n\tilde{\Om}^{d+1}_{X}/P_0{\cal W}_n\tilde{\Om}^{d+1}_{X})
@<{\theta_n,\,\simeq}<<F_{{\cal W}_{n}(X)*}({\cal W}_n{\Om}^{d}_{X}). 
\end{CD}
\end{equation*} 
The rest of the proof is the same as that of (\ref{prop:ra0}). 
\par 
(2): The proof of (2) is similar to that of (1). 
\end{proof}

To state the contravariant functoriality, 
let us recall the following: 

\begin{prop}[{\rm {\bf \cite[(4.3)]{nh3}}}]\label{prop:mmoo} 
Let $g\col Y \lo Y'$ be a morphism of 
fs log $($formal$)$ schemes satisfying the condition 
\begin{equation*} 
M_{Z,y}/{\cal O}^*_{Z,y}\simeq {\mab N}^r
\tag{6.8.1}\label{eqn:epynr}
\end{equation*}
for any point $y$ of $\os{\circ}{Z}$ 
and for some $r\in {\mab N}$ depending on $y$. 
Let $b^{\star}{}^{(k)} \col \os{\circ}{D}{}^{(k)}(M_{Y^{\star}}) \lo \os{\circ}{Y}{}^{\star}$ 
$(k\in {\mab Z})$ $(\star=$nothing or $')$ be the morphism defined before 
{\rm \cite[(4.3)]{nh3}}. 
Assume that, for each point $y\in \os{\circ}{Y}$ 
and for each member $m$ of the minimal generators 
of $M_{Y,y}/{\cal O}^*_{Y,y}$, there exists 
a unique member $m'$ of the minimal generators of 
$M_{Y',\os{\circ}{g}(y)}/{\cal O}^*_{Y',\os{\circ}{g}(y)}$ 
such that $g^*(m')\in m^{{\mab Z}_{>0}}$. 
Then there exists a canonical morphism 
$\os{\circ}{g}{}^{(k)}\col \os{\circ}{D}{}^{(k)}(M_Y) \lo 
\os{\circ}{D}{}^{(k)}(M_{Y'})$ fitting 
into the following commutative diagram of schemes$:$
\begin{equation*} 
\begin{CD} 
\os{\circ}{D}{}^{(k)}(M_Y) @>{\os{\circ}{g}{}^{(k)}}>> 
\os{\circ}{D}{}^{(k)}(M_{Y'}) \\ 
@V{b^{(k)}}VV @VV{b'{}^{(k)}}V \\ 
\os{\circ}{Y} @>{\os{\circ}{g}}>> \os{\circ}{Y}{}'. 
\end{CD} 
\tag{6.8.2}\label{cd:dmgdm}
\end{equation*} 
\end{prop}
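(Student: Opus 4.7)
The plan is to construct $\os{\circ}{g}{}^{(k)}$ from a canonical correspondence on minimal generators induced by the hypothesis, and then glue the local constructions.

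First, I would unpack the local structure of $\os{\circ}{D}{}^{(k)}(M_Z)$ at a point $z$ with $M_{Z,z}/{\cal O}^*_{Z,z}\simeq {\mab N}^r$: picking sections $m_1,\ldots, m_r$ of $M_{Z,z}$ lifting the minimal generators, $\os{\circ}{D}{}^{(k)}(M_Z)$ is locally the disjoint union, over $(k+1)$-element subsets $I\subset \{1,\ldots,r\}$, of the closed subschemes $V((m_i)_{i\in I})\subset \os{\circ}{Z}$ cut out by the images of the $m_i$ $(i\in I)$ in ${\cal O}_{\os{\circ}{Z}}$ (tensored with an orientation sheaf as in \cite[p.~81]{nh2}). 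Each $V(m_i)$ depends only on $m_i$ modulo ${\cal O}^*_{Z,z}$, so local pieces glue consistently to give the global $\os{\circ}{D}{}^{(k)}(M_Z)$ together with its natural map $b^{(k)}$.

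Next, the hypothesis produces a canonical injection $\vphi_y$ from minimal generators of $M_{Y,y}/{\cal O}^*_{Y,y}$ into those of $M_{Y',\os{\circ}{g}(y)}/{\cal O}^*_{Y',\os{\circ}{g}(y)}$, sending each $m$ to the unique $m'$ with $g^*(m')\in m^{{\mab Z}_{>0}}$. Injectivity follows from the fsness of the monoids: if $\vphi_y(m_1)=\vphi_y(m_2)=m'$, then $m_1^{a}$ and $m_2^{b}$ both equal $g^*(m')$ in $M_{Y,y}/{\cal O}^*_{Y,y}$ for some positive $a,b$, so $m_1$ and $m_2$ generate the same ray in ${\mab N}^r$ and must coincide. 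Hence distinct $(k+1)$-element subsets of minimal generators map to distinct $(k+1)$-element subsets, and we get an induced map on labeling sets of the two stratifications.

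Then I would define $\os{\circ}{g}{}^{(k)}$ on the stratum $V((m_i)_{i\in I})$ as the restriction of $\os{\circ}{g}$, landing inside the stratum $V((\vphi_y(m_i))_{i\in I})$ of $\os{\circ}{D}{}^{(k)}(M_{Y'})$. This is well defined at the level of schemes because $g^*(\vphi_y(m_i))\in m_i^{{\mab Z}_{>0}}\subset (m_i)$ as ideals of ${\cal O}_{Y,y}$, so the defining ideal of the target stratum pulls back into the defining ideal of the source; the commutativity of \eqref{cd:dmgdm} is then immediate from the construction, as both composites equal $\os{\circ}{g}\circ b^{(k)}$. The main obstacle is independence of the construction from the choice of local chart and compatibility on overlaps: this is handled by the canonicity of $\vphi_y$ (it depends only on $g$ and on the log structures, not on any ordering or choice of lifts $m_i$), combined with the fact that $V(m)$ depends only on the image of $m$ in $M_Y/{\cal O}^*_Y$, so the local morphisms glue uniquely into the desired global $\os{\circ}{g}{}^{(k)}$.
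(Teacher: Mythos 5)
This proposition is quoted in the paper from \cite[(4.3)]{nh3} and no proof is reproduced here, so there is no in-paper argument to compare yours against; judged on its own, your construction is the natural one and I see no essential gap. The three ingredients are all in order: (i) the local description of $\os{\circ}{D}{}^{(k)}(M_Y)$ as the disjoint union, over $(k+1)$-element sets $I$ of minimal generators, of the closed subschemes cut out by the $\alpha(m_i)$ $(i\in I)$, which only uses the hypothesis \eqref{eqn:epynr}; (ii) the injectivity of $m\mapsto m'$, which is correctly deduced from $m_1^a=m_2^b$ in ${\mab N}^r$ forcing $m_1=m_2$, so that $(k+1)$-element subsets go to $(k+1)$-element subsets; and (iii) the containment of ideals, where the only cosmetic correction is that the hypothesis $g^*(m')\in m^{{\mab Z}_{>0}}$ lives in $M_{Y,y}/{\cal O}^*_{Y,y}$, so in $M_{Y,y}$ one has $g^*(m')=u\,m^a$ with $u$ a unit and $a>0$, whence $\os{\circ}{g}{}^*(\alpha'(m'))=u\,\alpha(m)^a\in(\alpha(m))$; commutativity of \eqref{cd:dmgdm} is then automatic. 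The one step you compress is the global gluing: beyond observing that $V(m)$ depends only on the class of $m$ in $M_Y/{\cal O}^*_Y$, one should check that the assignment $m\mapsto m'$ is compatible with generization (if $\eta$ generizes $y$ and $m$ remains a non-unit at $\eta$, then $m'$ remains a non-unit at $\os{\circ}{g}(\eta)$, since $m'$ becoming invertible would force $m^a$, hence $m$ by saturatedness, to be invertible), which is what makes the pointwise correspondences consistent along each stratum; with that remark added your sketch is complete.
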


The following has been proved in \cite[(1.3.20)]{nb} (cf.~\cite[(4.8)]{nh3}): 

\begin{prop}[{\bf The contravariant functoriality of the Poincar\'{e} residue morphism}]\label{prop:rescos} 
Let $S$ be as above and let 
$u \col S\lo S'$ 
be a morphism of family of log points.  
Let $X'/S'$ be an SNCL scheme.  
Let  $X' \os{\subset}{\lo} {\cal P}{}'$ be  
an immersion into a log smooth scheme over $S'$ 
fitting into the following commutative diagram over the morphism $S\lo S':$ 
\begin{equation*} 
\begin{CD} 
X @>{\subset}>> {\cal P} \\
@V{g\vert_{X}}VV @VV{g}V \\ 
X' @>{\subset}>> {\cal P}'. \\  
\end{CD} 
\end{equation*}   
Let ${\cal P}^{\rm ex}$ and ${\cal P}'{}^{\rm ex}$ be the exactification of 
the immersions $X \os{\subset}{\lo} {\cal P}$ and 
$X' \os{\subset}{\lo} {\cal P}'$, respectively. 
Let $g^{\rm ex}\col {\cal P}^{\rm ex}\lo 
{\cal P}'{}^{\rm ex}$ be the induced morphism by $g$. 
Let $a'{}^{(k)}\col \os{\circ}{X}{}'^{(k)}\lo \os{\circ}{X}{}'$ $(k\in {\mab N})$
be the natural morphism of schemes over $\os{\circ}{S}{}'_0$.  
Assume that, for each point 
$x\in \os{\circ}{\cal P}{}^{\rm ex}$ 
and for each member $m$ of the minimal generators 
of $M_{{\cal P}^{\rm ex},x}/{\cal O}^*_{{\cal P}^{\rm ex},x}$, 
there exists a unique member $m'$ of 
the minimal generators of 
$M_{{\cal P}{}'^{\rm ex},\os{\circ}{g}(x)}
/{\cal O}^*_{{\cal P}{}'^{\rm ex},\os{\circ}{g}(x)}$ 
such that $g^*(m')= m$ and such that the image of 
the other minimal generators of 
$M_{{\cal P}{}'^{\rm ex},{\os{\circ}{g}(x)}}
/{\cal O}^*_{{\cal P}{}'^{\rm ex},\os{\circ}{g}(x)}$ by $g^*$ 
are the trivial element of 
$M_{{\cal P}^{\rm ex},{x}}/{\cal O}^*_{{\cal P}^{\rm ex},x}$. 
Let 
$a'{}^{(k)}\col \os{\circ}{X}{}'^{(k)}\lo \os{\circ}{X}{}'$ 
be an analogous morphism to $a^{(k)}$ for $X'$.  
Then the following diagram is commutative$:$ 
\begin{equation*} 
\begin{CD}  
g^{\rm ex}_*({\rm Res}): 
g^{\rm ex}_*({\rm gr}_k^P
({\Om}^{\bul}_{{\cal P}^{\rm ex}/\os{\circ}{S}}))
@>{\sim}>> \\ 
@AAA \\
{\rm Res}: 
{\rm gr}_k^P({\Om}^{\bul}_{{\cal P}'{}^{\rm ex}/\os{\circ}{S}{}'})
@>{\sim}>> 
\end{CD}
\tag{6.9.1}\label{eqn:regdmrp} 
\end{equation*} 
\begin{equation*} 
\begin{CD}  
g^{\rm ex}_*a^{(k-1)}_*
({\Om}^{\bul}_{\os{\circ}{\cal P}{}^{{\rm ex},(k-1)}/\os{\circ}{S}}
\otimes_{\mab Z}
\vp^{(k-1)}_{\rm zar}(\os{\circ}{X}/\os{\circ}{S}_0)\{-k\})\\
@AAA \\
a'{}^{(k-1)}_*({\Om}^{\bul}_{\os{\circ}{\cal P}{}^{'{\rm ex},(k-1)}/\os{\circ}{S}{}'}
\otimes_{\mab Z}
\vp^{(k-1)}_{\rm zar}(\os{\circ}{X}{}'/\os{\circ}{S}{}'_0)\{-k\}). 
\end{CD} 
\end{equation*} 
\end{prop}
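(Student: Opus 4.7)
The plan is to reduce the commutativity to a local computation on generators, using the explicit description of the Poincar\'{e} residue morphism recalled in (\ref{eqn:mprrn}). Since the statement is local on $\os{\circ}{\cal P}{}^{\rm ex}$, I would fix a point $x\in \os{\circ}{\cal P}{}^{\rm ex}$, set $x':=\os{\circ}{g}(x)$, and choose local sections $m'_1,\ldots,m'_r$ of $M_{{\cal P}'{}^{\rm ex}}$ around $x'$ whose images in $M_{{\cal P}'{}^{\rm ex},x'}/{\cal O}^*_{{\cal P}'{}^{\rm ex},x'}$ are the minimal generators. Under the hypothesis on generators, after reindexing, $g^*(m'_i)=m_i$ for $i=1,\ldots,k$ (for some $k\leq r$ depending on $x$), while $g^*(m'_i)\in {\cal O}^*_{{\cal P}^{\rm ex},x}$ for $i>k$. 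Thus $g^*(d\log m'_i)=d\log m_i$ when $i\leq k$ and $g^*(d\log m'_i)=0$ when $i>k$.

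Next I would invoke (\ref{prop:mmoo}) to obtain the canonical morphism $\os{\circ}{g}{}^{(k-1)}\col \os{\circ}{\cal P}{}^{{\rm ex},(k-1)}\lo \os{\circ}{\cal P}{}^{'{\rm ex},(k-1)}$ sitting in the diagram (\ref{cd:dmgdm}); the matching of minimal generators makes this functoriality available and also produces a canonical map of the orientation sheaves $\vp^{(k-1)}_{\rm zar}(\os{\circ}{X}{}'/\os{\circ}{S}{}'_0)\lo g_*\vp^{(k-1)}_{\rm zar}(\os{\circ}{X}/\os{\circ}{S}_0)$ (sending the orientation $(\lam'_0,\ldots,\lam'_{k-1})$ to its unique paired orientation $(\lam_0,\ldots,\lam_{k-1})$). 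This identification is what allows both vertical arrows in (\ref{eqn:regdmrp}) to be defined.

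Having fixed notation, I would verify commutativity directly on a generating local section of $P_k\Om^{\bul}_{{\cal P}'{}^{\rm ex}/\os{\circ}{S}{}'}$ of the form $\om'\,d\log m'_{\lam'_0}\cdots d\log m'_{\lam'_{k-1}}$ with $\om'\in P_0\Om^{\bul}_{{\cal P}'{}^{\rm ex}/\os{\circ}{S}{}'}$. Pulling back by $g^{\rm ex}$ first and then taking residues yields, by the formula above, $a^*_{\lam_0\cdots\lam_{k-1}}(g^{\rm ex,*}\om')\otimes(\text{orientation})$, while taking residues on $\mathcal{P}'{}^{\rm ex}$ first and then pulling back to $\os{\circ}{\cal P}{}^{{\rm ex},(k-1)}$ via $\os{\circ}{g}{}^{(k-1)}$ yields $\os{\circ}{g}{}^{(k-1),*}(a'{}^*_{\lam'_0\cdots\lam'_{k-1}}(\om'))\otimes(\text{orientation})$. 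The commutativity of (\ref{cd:dmgdm}) (applied to the closed immersions defining the strata) together with the compatibility of orientation sheaves reduces the equality of these two expressions to the routine identity $a^*_{\lam_0\cdots\lam_{k-1}}\circ g^{\rm ex,*}=\os{\circ}{g}{}^{(k-1),*}\circ a'{}^*_{\lam'_0\cdots\lam'_{k-1}}$, which is immediate from the strata being cut out by the $m_i-1=0$ (resp.\ $m'_i-1=0$) and the pairing $g^*(m'_i)=m_i$.

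The only delicate point, and what I view as the main obstacle, is the bookkeeping for the orientation sheaves: one has to check that the pairing of minimal generators coming from the hypothesis is coherent under intersections of strata, so that the induced map on $\vp^{(k-1)}_{\rm zar}$ is well-defined and sign-compatible with the wedge-ordering used to form $d\log m'_{\lam'_0}\wedge\cdots\wedge d\log m'_{\lam'_{k-1}}$. Once this is verified (it follows from the uniqueness of the paired generator $m'$ in (\ref{prop:mmoo}) and the resulting bijection of ordered $k$-tuples on both sides), the computation above gives the commutativity of (\ref{eqn:regdmrp}). The higher $P_k/P_{k-1}$ case over $P_0$-sections is then handled identically since all the verifications reduce to the $d\log$-generators.
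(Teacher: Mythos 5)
The paper gives no proof of (\ref{prop:rescos}) at all: it is quoted from \cite[(1.3.20)]{nb} (cf.~\cite[(4.8)]{nh3}), so there is nothing internal to compare against. Your argument is the expected one --- localize, match minimal generators via the hypothesis and (\ref{prop:mmoo}), check the residue formula (\ref{eqn:mprrn}) on monomials $\om' d\log m'_{\lam'_0}\cdots d\log m'_{\lam'_{k-1}}$, and track the orientation sheaves --- and it is essentially sound. One imprecision worth fixing: when $g^*(m'_i)$ is a unit $u$, the form $g^*(d\log m'_i)=du/u$ is \emph{not} zero; it merely lies in $P_0\Om^1_{{\cal P}^{\rm ex}/\os{\circ}{S}}$. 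That weaker statement is exactly what you need (such a factor drops the weight of the pulled-back monomial, so its class in ${\rm gr}^P_k$ dies, matching the vanishing of $\os{\circ}{g}{}^{(k-1),*}$ on strata indexed by a generator that trivializes downstairs), and similarly the strata are cut out by the images of the $m_i$ in ${\cal O}$, not by ``$m_i-1=0$''; with these corrections your reduction to the commutativity of (\ref{cd:dmgdm}) and the coherence of the orientation pairing is the right proof.
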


\begin{prop}\label{prop:nn}
Let $s\lo t$ be a morphism of 
the log points of perfect fields of characteristic $p>0$.
Let $Y/t$ is a proper SNCL scheme and let 
$b\col \os{\circ}{Y}{}^{(0)}\lo \os{\circ}{Y}$ be an analogue of 
$a \col \os{\circ}{X}{}^{(0)}\lo \os{\circ}{X}$.  
Let $g\col X\lo Y$ be a morphism of log schemes satisfying 
the assumption 
in {\rm (\ref{prop:mmoo})}. 
Then the following diagram is  commutative$:$ 
\begin{equation*} 
\begin{CD} 
g_*a_*(\os{r}{\us{{\cal W}_n({\cal O}_{\os{\circ}{X}{}^{(0)}})}{\otimes}}
{\cal W}_n\Om^d_{\os{\circ}{X}{}^{(0)}}) @>{g_*(\Psi_{X,n,r})}>> 
g_*(\os{r}{\us{{\cal W}_n({\cal O}_{X})}{\otimes}}{\cal W}_n\Om^d_X)\\
@A{g^{(0)*}}AA @AA{g^*}A \\
b_*(\os{r}{\us{{\cal W}_n({\cal O}_{\os{\circ}{Y}{}^{(0)}})}{\otimes}}
{\cal W}_n\Om^d_{\os{\circ}{Y}{}^{(0)}}) @>{\Psi_{Y,n,r}}>> 
\os{r}{\us{{\cal W}_n({\cal O}_Y)}{\otimes}}{\cal W}_n\Om^d_Y.
\end{CD}
\tag{6.10.1}\label{cd:fsiws} 
\end{equation*} 
\end{prop}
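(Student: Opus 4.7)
The strategy is to decompose $\Psi_{X,n,1}$ into its three constituent morphisms as in (\ref{ali:awod}) and to verify contravariant functoriality of each separately; then pass to general $r$ using the tensor product construction (\ref{ali:dppn})--(\ref{ali:wnoo}). Recall that $\Psi_{X,n,1}$ is obtained by inverting the isomorphism ${\rm Res}$ in (\ref{ali:dppa0})--(\ref{ali:dppa0}) to identify $a_*({\cal W}_n\Om^d_{\os{\circ}{X}{}^{(0)}})$ with the subsheaf ${\rm gr}_1^P{\cal W}_n\wt{\Om}^{d+1}_X$ of ${\cal W}_n\wt{\Om}^{d+1}_X/P_0{\cal W}_n\wt{\Om}^{d+1}_X$, and then inverting the isomorphism $\theta_n\wedge$ of (\ref{ali:dp0}) into ${\cal W}_n\Om^d_X$; the analogous composite for $Y$ defines $\Psi_{Y,n,1}$.

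The three functorialities to establish are: (a) the pullback $g^* \colon g^{-1}({\cal W}_n\wt{\Om}^{\bullet}_Y) \lo {\cal W}_n\wt{\Om}^{\bullet}_X$ preserves the preweight filtration $P$; (b) the global section $\theta^Y$ of $\Om^1_{Y/\os{\circ}{T}}$ pulls back to the section $\theta^X$ of $\Om^1_{X/\os{\circ}{S}}$ under $g$; (c) the Poincar\'e residue isomorphism is contravariantly functorial via the natural morphism $g^{(0)}\colon \os{\circ}{X}{}^{(0)} \lo \os{\circ}{Y}{}^{(0)}$ produced by (\ref{prop:mmoo}). For (a), this is immediate from the construction of $P$ on ${\cal W}_n\wt{\Om}$ in \cite{msemi}, \cite{ndw}, together with the hypothesis on $g$ which guarantees that local log generators of $M_Y/{\cal O}_Y^*$ pull back to log generators (to a power) of $M_X/{\cal O}_X^*$. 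For (b), write $g^*(e^Y) = u \cdot (e^X)^{\star}$ with $u$ a unit as allowed by (\ref{prop:mmoo}); the power $\star$ is $1$ in the generic compatibility setting of the morphism $s \to t$ of log points, and the $d\log u$ contribution vanishes in the relative log differentials since $du$ is a pullback from $\os{\circ}{S}$. For (c), the statement is the de Rham-Witt analogue of (\ref{prop:rescos}), which is obtained by transporting that proposition to ${\cal W}_n$-coefficients using a compatible choice of local admissible lifts of $g$, exactly as in the construction of the Poincar\'e residue (\ref{ali:dppa0}).

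Combining (a)--(c), the commutativity of (\ref{cd:fsiws}) in the case $r=1$ reduces to a routine diagram chase through the composite defining $\Psi$. For general $r$, recall that $\Psi_{X,n,r}$ is constructed from $\Psi_{X,n,1}$ by applying it to each factor of an $r$-fold tensor product over ${\cal W}_n({\cal O}_X)$ and then extracting the diagonal summand $\bigoplus_i a_{i*}(\otimes^r{\cal W}_n\Om^d_{\os{\circ}{X}_i})$ from the full tensor product (\ref{ali:wnoo}). Both pullbacks $g^*$ and $g^{(0)*}$ are morphisms of sheaves of rings respecting these tensor product constructions, and the indexing of irreducible components of $\os{\circ}{Y}{}^{(0)}$ is sent through $g^{(0)}$ to that of $\os{\circ}{X}{}^{(0)}$; hence the $r=1$ commutativity propagates to the $r$-fold tensor product diagram.

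The main obstacle is step (c): verifying the contravariant functoriality of the Poincar\'e residue on ${\cal W}_n\wt{\Om}^{\bullet}$ in full generality. While (\ref{prop:rescos}) provides the analogue for $\Om^{\bullet}$ at the level of exactified immersions, one must carefully choose the admissible lifts of $X$ and $Y$ over ${\cal W}_n$ and a lift of $g$ between them so that the local generators used to write the residue are matched, and then check independence from these choices (as in the original construction of ${\cal W}_n\wt{\Om}$). Once this is in hand, the compatibility with the remaining ingredients $\theta_n \wedge$ and the inclusion ${\rm gr}_1^P \hookrightarrow {\cal W}_n\wt{\Om}^{d+1}/P_0$ is formal.
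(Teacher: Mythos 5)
Your proposal is correct and follows essentially the same route as the paper: the paper's entire proof is a one-line reduction to the contravariant functoriality of the Poincar\'e residue isomorphism (the proposition stated just before), which is exactly the key ingredient you isolate as step (c), with your steps (a), (b) and the passage to general $r$ being the routine compatibilities the paper leaves implicit. Your explicit flagging of the need to transport the residue functoriality from $\Om^{\bul}$ on exactified lifts to ${\cal W}_n\wt{\Om}{}^{\bul}$ is a fair account of what the paper's citation is silently assuming.
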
 
\begin{proof} 
This immediately follows from the following functoriality of the Poincar\'{e} residue isomorphism. 
\end{proof}

\parno
\begin{center}
{{\rm \Large{\bf Appendix}}}
\end{center}

\section{Generalization of Illusie's Poincar\'{e} residue isomorphism}\label{sec:gir}
Let the notations be as in the beginning of \S\ref{sec:hpc}. 
Especially let ${\cal E}$ be a locally generated by horizontal sections of 
$\nabla \col {\cal E}\lo {\cal E}\otimes_{{\cal O}_X}\Om^{\bul}_{X/\os{\circ}{S}}$ 
satisfying the conditions (I)$\sim$(IV). 
Set 
\begin{align*} 
P_k({\cal E}\otimes_{{\cal O}_X}\Om^{\bul}_{X/S}):=
{\rm Im}({\cal E}\otimes_{{\cal O}_X}P_k\Om^{\bul}_{X/\os{\circ}{S}}\lo 
{\cal E}\otimes_{{\cal O}_X}\Om^{\bul}_{X/S}). 
\end{align*} 
In this section we generalize Illusie's  Poincar\'{e} residue isomorphism in 
\cite[Appendix (2.2)]{io}. 
That is, we prove that there exists the following isomorphism:  
\begin{align*} 
{\rm gr}^P_k({\cal E}\otimes_{{\cal O}_X}\Om^{\bul}_{X/S})
\os{\sim}{\lo} K^{\bul-k}_{k}({\cal E}) \quad (k\in {\mab N}).  
\tag{7.0.1}\label{ali:cpsips} 
\end{align*}  
We also generalize \cite[Appendice (2.6)]{io}. 
First we begin with the following proposition:

\begin{prop}[{\rm {\bf cf.~\cite[p.~399]{io}}}]\label{prop:ki}
Let $Y/S$ be as in the beginning of {\rm \S\ref{sec:hpc}}. 
Let ${\cal G}$ be a flat quasi-coherent ${\cal O}_Y$-module. 
Then the following morphism 
\begin{align*} 
 \theta \wedge  \col {\cal G}\otimes_{{\cal O}_Y}\Om^{i -1}_{Y/\os{\circ}{S}}\owns 
e\otimes \om \lom e\otimes(\theta \wedge \om)\in   
{\cal G}\otimes_{{\cal O}_Y}\Om^i_{Y/\os{\circ}{S}} 
\quad (i\in {\mab N})
\tag{7.1.1}\label{ali:cfefs} 
\end{align*} 
of ${\cal O}_Y$-modules is strictly compatible with $P$'s in the following sense$:$ 
for nonnegative integers $i$ and $k$, 
\begin{align*} 
(\theta \wedge ) ({\cal G}\otimes_{{\cal O}_Y}\Om^{i-1}_{Y/\os{\circ}{S}})\cap 
P_k({\cal G}\otimes_{{\cal O}_Y}\Om^i_{Y/\os{\circ}{S}})=
( \theta \wedge  )(P_{k-1}({\cal G}\otimes_{{\cal O}_Y}\Om^{i-1}_{Y/\os{\circ}{S}})). 
\tag{7.1.2}\label{ali:ixs} 
\end{align*} 
\end{prop}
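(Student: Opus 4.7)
The plan is to first reduce to the case ${\cal G} = {\cal O}_Y$ by flatness, and then to establish the remaining strict compatibility by a local computation in a log chart of $Y/S$.

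The inclusion $\supseteq$ is immediate: since $\theta = g^*(d\log e)$ is a log differential, $\theta \in \Omega^1_{Y/\os{\circ}{S}} = P_1 \Omega^1_{Y/\os{\circ}{S}}$, and the multiplicative property $P_a \wedge P_b \subseteq P_{a+b}$ (immediate from the definition (3.6.3)) yields $\theta \wedge P_{k-1}(\cdots) \subseteq P_k(\cdots)$. For the reverse inclusion, flatness of ${\cal G}$ implies that tensoring with ${\cal G}$ preserves intersections of ${\cal O}_Y$-submodules inside $\Omega^i_{Y/\os{\circ}{S}}$, so it suffices to prove the statement when ${\cal G} = {\cal O}_Y$.

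For the remaining inclusion, I would work locally on $Y$. Using log smoothness of $g \col Y \to S$ and a fine log chart compatible with the chart $(\mab N \to {\cal O}_S, ~ 1 \mapsto t)$ of $S$, one can choose local generators $m_1, \ldots, m_r$ of $M_Y$ (modulo units) and classical local coordinates $y_1, \ldots, y_s$ such that $\{d\log m_1, \ldots, d\log m_r, dy_1, \ldots, dy_s\}$ is a free ${\cal O}_Y$-basis of $\Omega^1_{Y/\os{\circ}{S}}$, and such that $\theta = \sum_{l=1}^r a_l \, d\log m_l$ for non-negative integers $a_l$ (all equal to $1$ in the SNCL case). Given $\alpha \in \Omega^{i-1}_{Y/\os{\circ}{S}}$ with $\theta \wedge \alpha \in P_k \Omega^i_{Y/\os{\circ}{S}}$, the exactness of the sequence (3.0.2) in degree $i-1$ gives ${\rm Ker}(\theta \wedge) = \theta \wedge \Omega^{i-2}_{Y/\os{\circ}{S}}$, so we are free to adjust $\alpha$ by any element of this kernel without changing $\theta \wedge \alpha$. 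Expanding $\alpha = \sum_{I,J} f_{I,J} \, d\log m_I \wedge dy_J$ in the basis and computing
$$\theta \wedge \alpha = \sum_{I',J} \Bigl( \sum_{l \in I'} \pm a_l \, f_{I' \setminus \{l\},J} \Bigr) \, d\log m_{I'} \wedge dy_J,$$
the hypothesis $\theta \wedge \alpha \in P_k$ becomes a system of linear constraints on the coefficients $f_{I,J}$ with $|I| \geq k$. A Koszul-type argument, exploiting $\theta \wedge \theta = 0$ to absorb the high-log-degree terms by subtracting suitable multiples of $\theta \wedge (\cdots)$ from $\alpha$, then produces a representative of $\alpha$ modulo ${\rm Ker}(\theta \wedge)$ lying in $P_{k-1} \Omega^{i-1}_{Y/\os{\circ}{S}}$.

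The main technical obstacle is the precise description of the filtration $P_k$ in the chosen local basis. Because $P_0 \Omega^1$ is generated by $dm_l = m_l \, d\log m_l$ (together with $dy_j$) rather than by $d\log m_l$ itself, $P_k$ is strictly smaller than the naive ${\cal O}_Y$-span of monomials having at most $k$ log factors. Tracking the correct characterization of $P_k$ through the Koszul cleanup, and carrying out the argument uniformly in $i$ and $k$, is expected to be the most delicate step.
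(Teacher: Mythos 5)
Your reduction to ${\cal G}={\cal O}_Y$ via flatness and the easy inclusion $\supseteq$ are both fine, and they agree with what the paper does (the paper performs the flatness step at the end, via an explicit exact sequence, but it is the same point). The problem is that the whole content of the proposition is the remaining inclusion
\[
(\theta\wedge)(\Om^{i-1}_{Y/\os{\circ}{S}})\cap P_k\Om^i_{Y/\os{\circ}{S}}
\subseteq(\theta\wedge)(P_{k-1}\Om^{i-1}_{Y/\os{\circ}{S}}),
\]
and for this you offer only a sketch: a ``Koszul-type argument'' whose essential ingredient --- the description of $P_k$ in your chosen basis --- you yourself defer as ``the most delicate step.'' That step is exactly where the work lies. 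Because $P_0\Om^1_{Y/\os{\circ}{S}}$ is generated by $dm_l=m_l\,d\log m_l$ and the $dy_j$ rather than by the $d\log m_l$, membership in $P_k$ is not a condition on which basis monomials occur but a congruence condition on their coefficients modulo ideals generated by the $m_l$, and the cleanup you describe (subtracting multiples of $\theta\wedge(\cdots)$ to absorb high log-degree terms) would have to be shown to respect those congruences uniformly in $i$ and $k$. As written, the proof is not complete.

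The gap disappears with a different choice of basis, and this is the paper's device. Since the sequence (\ref{ali:eg}) is locally split, $\theta$ itself may be taken as a member of a local ${\cal O}_Y$-basis of $\Om^1_{Y/\os{\circ}{S}}$: the paper chooses local sections $m_1,\dots,m_r$ of $M_Y$ so that $\{\theta,d\log m_1,\dots,d\log m_r\}$ is a basis. Then every $\om\in\Om^{i-1}_{Y/\os{\circ}{S}}$ decomposes uniquely as $\om=\theta\wedge\eta+\om'$ with $\om'$ containing no $\theta$-factor; one has $\theta\wedge\om=\theta\wedge\om'$, and $\theta\wedge\om'\in P_k$ together with the absence of $\theta$ in $\om'$ forces $\om'\in P_{k-1}\Om^{i-1}_{Y/\os{\circ}{S}}$ at once --- no system of linear constraints and no Koszul cleanup. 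In your basis $\theta=\sum_l a_l\,d\log m_l$ is a \emph{sum} of basis vectors, which is precisely what creates the cancellation bookkeeping you were unable to finish; adjoining $\theta$ to the basis eliminates it. I recommend redoing the local computation in that basis and only then tensoring with ${\cal G}$.
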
 
\begin{proof} 
It suffices to prove the inclusion ``$\subset$'' in (\ref{ali:ixs}). 
The problem is local. 
Let $m_1,\ldots, m_r$ be local sections of $M_Y$ such that 
$\{\theta,d\log m_1,\ldots,d\log m_r\}$ is a local basis 
of $\Om^1_{Y/\os{\circ}{S}}$ 
over ${\cal O}_Y$.  
Let $\om$ be a local section of $\Om^{i-1}_{Y/\os{\circ}{S}}$ 
such that $\theta \wedge \om  \in P_k\Om^i_{Y/\os{\circ}{S}}$.  
Express  
$\om = \theta \wedge \eta +\om'$, 
where $\om' \not\in  (\theta \wedge ) (\Om^{i-1}_{Y/\os{\circ}{S}})$ 
and $\eta\in \Om^{i-1}_{Y/\os{\circ}{S}}$. 
Then $\theta \wedge \om =\theta \wedge \om'$. 
Because $\theta \wedge \om' \in P_k\Om^i_{Y/\os{\circ}{S}}$ 
and $\om' \not\in  (\theta \wedge ) (\Om^{i-1}_{Y/\os{\circ}{S}})$, 
$\om'\in P_{k-1}\Om^i_{Y/\os{\circ}{S}}$.  
Hence we obtain the following equality: 
\begin{align*} 
(\theta \wedge ) (\Om^{i-1}_{Y/\os{\circ}{S}})\cap 
P_k\Om^i_{Y/\os{\circ}{S}}=
(\theta \wedge ) P_{k-1}\Om^{i-1}_{Y/\os{\circ}{S}}. 
\tag{7.1.3}\label{ali:itrxs} 
\end{align*} 
and we see that the following sequence 
\begin{align*} 
0\lo P_{k-1}\Om^{i-1}_{Y/S}
\os{\theta \wedge}{\lo} \Om^i_{Y/\os{\circ}{S}} 
\lo \Om^i_{Y/\os{\circ}{S}}/
((\theta \wedge ) (\Om^{i-1}_{Y/\os{\circ}{S}})\cap 
P_k\Om^i_{Y/\os{\circ}{S}})\lo 0 
\end{align*} 
of ${\cal O}_Y$-modules is exact. 
Because ${\cal G}$ is a flat ${\cal O}_Y$-module, 
the following sequence 
\begin{align*} 
0\lo {\cal G}\otimes_{{\cal O}_Y}P_{k-1}\Om^{i-1}_{Y/S}
\os{\theta \wedge }{\lo} {\cal G}\otimes_{{\cal O}_Y}\Om^i_{Y/\os{\circ}{S}} 
\lo 
{\cal G}\otimes_{{\cal O}_Y}
[\Om^i_{Y/\os{\circ}{S}}/
((\theta \wedge ) (\Om^{i-1}_{Y/\os{\circ}{S}})\cap 
P_k(\Om^i_{Y/\os{\circ}{S}}))]\lo 0 
\tag{7.1.4}\label{ali:oox}
\end{align*} 
is exact.  
Because 
\begin{align*} 
{\cal G}\otimes_{{\cal O}_Y}
((\theta \wedge ) (\Om^{i-1}_{Y/\os{\circ}{S}})\cap 
P_k(\Om^i_{Y/\os{\circ}{S}}))
=(\theta \wedge ) ({\cal G}\otimes_{{\cal O}_Y}\Om^{i-1}_{Y/\os{\circ}{S}})
\cap ({\cal G}\otimes_{{\cal O}_Y}P_k\Om^i_{Y/\os{\circ}{S}}),
\end{align*} 
the exact sequence (\ref{ali:oox}) implies the equality (\ref{ali:ixs}). 
\end{proof}

\begin{coro}\label{coro:appstc} 
Let $k$ be an integer. 
The following sequences 
\begin{align*} 
P_{k-1}({\cal G}\otimes_{{\cal O}_X}\Om^{\bul}_{X/\os{\circ}{S}})[-1]
\os{\theta \wedge }{\lo} 
P_k({\cal G}\otimes_{{\cal O}_X}\Om^{\bul}_{X/\os{\circ}{S}})
\lo 
P_k({\cal G}\otimes_{{\cal O}_X}\Om^{\bul}_{X/S})\lo 0, 
\tag{7.2.1}\label{ali:cpfs} 
\end{align*}
\begin{align*} 
0\lo P_{k-1}({\cal G}\otimes_{{\cal O}_X}\Om^{\bul}_{X/S})[-1]
\os{\theta \wedge }{\lo} 
P_k({\cal G}\otimes_{{\cal O}_X}\Om^{\bul}_{X/\os{\circ}{S}})
\lo 
P_k({\cal G}\otimes_{{\cal O}_X}\Om^{\bul}_{X/S})\lo 0
\tag{7.2.2}\label{ali:cpfps} 
\end{align*}
of complexes of $f^{-1}({\cal O}_S)$-modules 
are exact. 
\end{coro}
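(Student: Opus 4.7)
The plan is to derive both short exact sequences by combining two inputs already at hand: the locally split short exact sequence (\ref{ali:cfos}) tensored with the flat ${\cal O}_X$-module ${\cal G}$, and the strict compatibility of $\theta\wedge$ with the filtration $P$ recorded in (\ref{prop:ki}). Since (\ref{ali:cfos}) is locally split and ${\cal G}$ is flat, tensoring yields the short exact sequence of complexes
\begin{align*}
0\lo {\cal G}\otimes_{{\cal O}_X}\Om^{\bul}_{X/S}[-1]\os{\theta\wedge}{\lo}
{\cal G}\otimes_{{\cal O}_X}\Om^{\bul}_{X/\os{\circ}{S}}\lo
{\cal G}\otimes_{{\cal O}_X}\Om^{\bul}_{X/S}\lo 0,
\end{align*}
compatibility with the differentials reducing to the identity $d\theta=0$.

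First I would verify (\ref{ali:cpfs}). The right-hand surjection is tautological from the definition $P_k({\cal G}\otimes_{{\cal O}_X}\Om^{\bul}_{X/S})={\rm Im}(P_k({\cal G}\otimes_{{\cal O}_X}\Om^{\bul}_{X/\os{\circ}{S}})\lo {\cal G}\otimes_{{\cal O}_X}\Om^{\bul}_{X/S})$, and its kernel equals
$$(\theta\wedge)({\cal G}\otimes_{{\cal O}_X}\Om^{\bul-1}_{X/\os{\circ}{S}})\cap P_k({\cal G}\otimes_{{\cal O}_X}\Om^{\bul}_{X/\os{\circ}{S}}).$$
This is exactly where the strict compatibility (\ref{ali:ixs}) (applied to ${\cal G}$, which is the setting of (\ref{prop:ki})) enters: the intersection collapses to $(\theta\wedge)(P_{k-1}({\cal G}\otimes_{{\cal O}_X}\Om^{\bul-1}_{X/\os{\circ}{S}}))$, which is by construction the image of the left arrow of (\ref{ali:cpfs}).

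Next I would treat (\ref{ali:cpfps}). Since $\theta\wedge\theta=0$, the map $\theta\wedge\col P_{k-1}({\cal G}\otimes_{{\cal O}_X}\Om^{\bul}_{X/\os{\circ}{S}})[-1]\lo P_k({\cal G}\otimes_{{\cal O}_X}\Om^{\bul}_{X/\os{\circ}{S}})$ annihilates $(\theta\wedge)({\cal G}\otimes_{{\cal O}_X}\Om^{\bul-2}_{X/\os{\circ}{S}})$, hence factors through the projection onto $P_{k-1}({\cal G}\otimes_{{\cal O}_X}\Om^{\bul}_{X/S})[-1]$; this defines the left arrow of (\ref{ali:cpfps}). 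Injectivity of the factored map is then immediate from the tensored form of (\ref{ali:cfos}): if a lift $\om$ satisfies $\theta\wedge\om=0$, then $\om\in(\theta\wedge)({\cal G}\otimes_{{\cal O}_X}\Om^{\bul-2}_{X/\os{\circ}{S}})$, so its class in $P_{k-1}({\cal G}\otimes_{{\cal O}_X}\Om^{\bul-1}_{X/S})$ vanishes. Exactness at the middle term and surjectivity on the right are inherited word for word from the analysis of (\ref{ali:cpfs}).

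There is no genuine obstacle. The only nontrivial input is the strict compatibility (\ref{ali:ixs}), already proved in (\ref{prop:ki}) via an explicit local basis argument; granted that, both exact sequences follow by an elementary diagram chase on the ${\cal G}$-tensored version of (\ref{ali:cfos}). If one wished to avoid flatness of ${\cal G}$, the same argument works provided one equips ${\cal G}$ with an integrable connection whose $\theta$-component vanishes, exactly as in the framework of (\ref{ali:cfeos}).
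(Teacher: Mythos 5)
Your proof is correct and follows exactly the route the paper intends: the corollary is stated as an immediate consequence of (\ref{prop:ki}), and your argument is precisely the diagram chase that extracts both exact sequences from the strict compatibility (\ref{ali:ixs}) together with the ${\cal G}$-tensored form of (\ref{ali:cfos}). The identification of the kernel of $P_k({\cal G}\otimes_{{\cal O}_X}\Om^{\bul}_{X/\os{\circ}{S}})\lo P_k({\cal G}\otimes_{{\cal O}_X}\Om^{\bul}_{X/S})$ with $(\theta\wedge)(P_{k-1}({\cal G}\otimes_{{\cal O}_X}\Om^{\bul-1}_{X/\os{\circ}{S}}))$ and the factorization/injectivity argument for (\ref{ali:cpfps}) are exactly what is needed.
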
 

\begin{prop}\label{prop:ess}
For each $i$, the resulting sequences of 
{\rm (\ref{ali:cpfps})} by the operations $Z^i$, $B^i$ and ${\cal H}^i$ 
$(i\in {\mab Z}_{\geq 0})$ are exact. 
\end{prop}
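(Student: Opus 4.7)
The plan is to adapt the first proof of Lemma~\ref{lemm:pte} (the local-splitting approach), keeping track of the preweight filtration $P$. The problem is local on $X$, so I would work on a sufficiently small open where I can choose simultaneously (i) a set $\{e_j\}_{j=0}^m$ of horizontal generators of ${\cal G}$ for $\nabla$ and (ii) a basis $\{\theta, \om_1, \ldots, \om_d\}$ of $\Om^1_{X/\os{\circ}{S}}$ in which $\om_1, \ldots, \om_l$ are of the form $d\log m_i$ for minimal generators $m_i$ of $M_X/{\cal O}_X^*$, while $\om_{l+1}, \ldots, \om_d$ lie in $\Om^1_{\os{\circ}{X}/\os{\circ}{S}}$, arranged so that $\{\pi^1(\om_i)\}_{i=1}^d$ is a basis of $\Om^1_{X/S}$. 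In the SNCL setting this is always possible because the image of the local generator of $M_S/{\cal O}_S^*$ in $M_X/{\cal O}_X^*$ is the sum of the minimal generators (the local equations of the components of $\os{\circ}{X}$), so one of those generators can be replaced by $\theta$ in the basis without destroying descent to $\Om^1_{X/S}$.

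Next I would define a local splitting $\iota \col \Om^{\bul}_{X/S} \to \Om^{\bul}_{X/\os{\circ}{S}}$ of $\pi^{\bul}$ by $\iota(\pi^1(\om_I)) := \om_I$ on wedges of basis elements not involving $\theta$. By construction $\iota$ sends log forms to log forms and smooth forms to smooth forms, so $\iota(P_k\Om^i_{X/S}) \subset P_k\Om^i_{X/\os{\circ}{S}}$. Combined with the strict compatibility of $\theta \wedge$ with the $P$-filtration established in Proposition~\ref{prop:ki}, this yields a local direct sum decomposition of ${\cal O}_X$-modules
\begin{equation*}
P_k({\cal G} \otimes_{{\cal O}_X} \Om^i_{X/\os{\circ}{S}})
= \iota(P_k({\cal G} \otimes_{{\cal O}_X} \Om^i_{X/S}))
\oplus (\theta \wedge)\iota(P_{k-1}({\cal G} \otimes_{{\cal O}_X} \Om^{i-1}_{X/S})).
\end{equation*}
Using $d\theta = 0$ and the horizontality of the $e_j$'s, the differential respects this decomposition exactly as in diagram~(\ref{cd:kdsc}): the first summand forms a subcomplex isomorphic to $P_k({\cal G} \otimes_{{\cal O}_X} \Om^{\bul}_{X/S})$, and the second, up to a sign on the differential coming from $d(\theta \wedge \eta) = -\theta \wedge d\eta$, identifies via $\theta \wedge \iota$ with $P_{k-1}({\cal G} \otimes_{{\cal O}_X} \Om^{\bul}_{X/S})[-1]$.

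The conclusion is then immediate: the short exact sequence (\ref{ali:cpfps}) locally splits as a direct sum of two subcomplexes of the middle term, and since each of the operators $Z^i$, $B^i$ and ${\cal H}^i$ commutes with finite direct sums of complexes, applying any of them to (\ref{ali:cpfps}) preserves short exactness. The hard part will be the construction of the local basis compatible with $P$ in the first step; once that is in place, the remainder is a routine verification entirely analogous to the computation following diagram~(\ref{cd:kdsc}) in the first proof of Lemma~\ref{lemm:pte}.
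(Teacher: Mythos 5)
Your proposal is correct and follows exactly the route the paper intends: its entire proof of this proposition is the single sentence that ``the analogous proof to the first proof of (\ref{lemm:pte}) works,'' i.e.\ the local horizontal generators, the basis $\{\theta,\om_1,\ldots,\om_d\}$, the splitting $\iota$, and the direct-sum decomposition respected by the differential as in (\ref{cd:kdsc}). The only genuinely new ingredient beyond (\ref{lemm:pte}) is the compatibility of $\iota$ and of $\theta\wedge$ with the filtration $P$, which you correctly isolate and justify via the choice of basis adapted to $M_X/{\cal O}_X^*$ and the strictness statement of (\ref{prop:ki}).
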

\begin{proof} 
The analogous proof to the first proof of (\ref{lemm:pte}) works. 
\end{proof}

\begin{prop}[{\rm {\bf cf.~\cite[Appendice (2.2)]{io}}}]\label{prop:ngr}
There exists the isomorphism {\rm (\ref{ali:cpsips})}. 
\end{prop}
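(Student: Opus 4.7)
The plan is to prove (\ref{ali:cpsips}) by induction on $k$, combining the short exact sequence (\ref{coro:appstc}), the standard Poincar\'e residue isomorphism (\ref{eqn:previn}) for $X/\os{\circ}{S}$, and the exact sequence (\ref{eqn:cpetle}). The first step is to quotient the two short exact sequences (\ref{coro:appstc}) at levels $k$ and $k-1$ term-by-term. For this to yield an exact sequence of quotients, one uses the strict compatibility of $\theta\wedge$ with $P$ established in (\ref{prop:ki}): if $\theta\wedge \tilde\eta \in P_{k-1}({\cal E}\otimes\Om^{\bul}_{X/\os{\circ}{S}})$ for some $\tilde\eta \in P_{k-1}({\cal E}\otimes\Om^{\bul-1}_{X/\os{\circ}{S}})$, then (\ref{prop:ki}) forces $\theta\wedge\tilde\eta\in\theta\wedge P_{k-2}({\cal E}\otimes\Om^{\bul-1}_{X/\os{\circ}{S}})$, and since $\ker(\theta\wedge)$ is killed by $\pi^{\bul}$, the image of $\tilde\eta$ in $\Om^{\bul-1}_{X/S}$ already lies in $P_{k-2}$. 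This yields the exact sequence of complexes
$$0 \lo {\rm gr}^P_{k-1}({\cal E}\otimes_{{\cal O}_X}\Om^{\bul}_{X/S})[-1] \os{\theta\wedge}{\lo} {\rm gr}^P_k({\cal E}\otimes_{{\cal O}_X}\Om^{\bul}_{X/\os{\circ}{S}}) \lo {\rm gr}^P_k({\cal E}\otimes_{{\cal O}_X}\Om^{\bul}_{X/S}) \lo 0.$$

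Next I would run the induction on $k$. The base case $k=0$ is immediate: since $P_{-1}=0$, the SES collapses to ${\rm gr}^P_0({\cal E}\otimes\Om^{\bul}_{X/\os{\circ}{S}}) \os{\sim}{\lo} {\rm gr}^P_0({\cal E}\otimes\Om^{\bul}_{X/S})$, and the left-hand side equals $K^{\bul}_0({\cal E})=K^{\bul-0}_0({\cal E})$ by (\ref{coro:p0e}). For the inductive step, the Poincar\'e residue (\ref{eqn:previn}) identifies the middle term of the SES with $\Om^{\bul-k}_{k-1}({\cal E})$, and the inductive hypothesis identifies the left term (after the degree shift) with $K^{\bul-k}_{k-1}({\cal E})$. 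The resulting SES
$$0 \lo K^{\bul-k}_{k-1}({\cal E}) \lo \Om^{\bul-k}_{k-1}({\cal E}) \lo {\rm gr}^P_k({\cal E}\otimes\Om^{\bul}_{X/S}) \lo 0$$
is then compared with the tautological SES $0 \lo K^{\bul-k}_{k-1}({\cal E}) \lo \Om^{\bul-k}_{k-1}({\cal E}) \lo K^{\bul-k}_k({\cal E}) \lo 0$ extracted from the exactness of (\ref{eqn:cpetle}) together with the definition $K^{\bul}_j=\ker(\Om^{\bul}_j\lo \Om^{\bul}_{j+1})$. Provided the two inclusions of $K^{\bul-k}_{k-1}({\cal E})$ coincide, we conclude ${\rm gr}^P_k({\cal E}\otimes_{{\cal O}_X}\Om^{\bul}_{X/S}) \cong K^{\bul-k}_k({\cal E})$.

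The main obstacle is the verification that the composite $K^{\bul-k}_{k-1}({\cal E}) \cong {\rm gr}^P_{k-1}({\cal E}\otimes\Om^{\bul}_{X/S})[-1] \os{\theta\wedge}{\lo} {\rm gr}^P_k({\cal E}\otimes\Om^{\bul}_{X/\os{\circ}{S}}) \cong \Om^{\bul-k}_{k-1}({\cal E})$ coincides with the canonical inclusion of the kernel. This reduces to a local computation in the spirit of \cite[Appendice (2.2)]{io}: working locally where $M_X/{\cal O}_X^*\cong {\mab N}^r$ is generated by $m_1,\ldots,m_r$, one writes $\theta=\sum_i e_i\,d\log m_i$, expands $\theta\wedge d\log m_{\lam_1}\cdots d\log m_{\lam_{k-1}}\wedge\om$ for a local generator $\om$ of $P_0({\cal E}\otimes\Om^{\bul-k}_{X/\os{\circ}{S}})$, applies the standard Poincar\'e residue (\ref{eqn:mprern}) index-by-index, and checks (with due sign conventions) that the sum of the resulting restrictions to the $k$-fold intersections $\os{\circ}{X}_{\lam_0\lam_1\cdots\lam_{k-1}}$ agrees with the Cech-style description of $K^{\bul-k}_{k-1}({\cal E})$ as a subsheaf of $\Om^{\bul-k}_{k-1}({\cal E})$. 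This sign-tracking, carried out by induction together with a simultaneous identification of the inductive isomorphism, is the most technical point, but is a direct adaptation of Illusie's original local computation.
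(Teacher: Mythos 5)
Your argument is correct and rests on the same three ingredients as the paper's proof: the strictness (\ref{ali:ixs}) of $\theta\wedge$ with respect to $P$ (via (\ref{coro:appstc})), the residue isomorphism (\ref{eqn:previn}) for ${\cal E}\otimes_{{\cal O}_X}\Om^{\bul}_{X/\os{\circ}{S}}$, and the exactness of (\ref{eqn:cpetle}). The one organizational difference is that the paper does not induct on $k$: for $k\geq 1$ it identifies the \emph{map} $\theta\wedge \col {\rm gr}^P_{k-1}({\cal E}\otimes_{{\cal O}_X}\Om^{\bul}_{X/\os{\circ}{S}})[-1]\lo {\rm gr}^P_k({\cal E}\otimes_{{\cal O}_X}\Om^{\bul}_{X/\os{\circ}{S}})$ with the \v{C}ech map $\Om^{\bul-k}_{k-2}({\cal E})\lo \Om^{\bul-k}_{k-1}({\cal E})$ of (\ref{eqn:cpetle}) under the two residue isomorphisms (diagram (\ref{ali:cpfxsps})), and then reads off ${\rm gr}^P_k({\cal E}\otimes_{{\cal O}_X}\Om^{\bul}_{X/S})$ as the cokernel of that map, which equals $K^{\bul-k}_k({\cal E})$ by the exactness of (\ref{eqn:cpetle}); only right-exactness of the graded sequence is used there, so the left-exactness you extract from (\ref{prop:ki}) is not needed for that step. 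The ``main obstacle'' you flag --- that your inductively defined isomorphism onto $K^{\bul-k}_{k-1}({\cal E})$ agrees with the kernel inclusion into $\Om^{\bul-k}_{k-1}({\cal E})$ --- is precisely the commutativity of (\ref{ali:cpfxsps}) (the residue of $\theta\wedge(-)$ is the \v{C}ech map), which the paper asserts without writing out the local sign computation either; so your proposal contains no gap beyond the paper's own level of detail, while the cokernel formulation buys you freedom from matching the inductive isomorphism against the inclusion.
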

\begin{proof} 
First assume that $k=0$. 
Then ${\rm gr}^P_0({\cal E}\otimes_{{\cal O}_X}\Om^{\bul}_{X/S})=
P_0({\cal E}\otimes_{{\cal O}_X}\Om^{\bul}_{X/S})=
P_0({\cal E}\otimes_{{\cal O}_X}\Om^{\bul}_{X/\os{\circ}{S}})
=K^{\bul}_0({\cal E})$. 
\par 
Next assume that $k\geq 1$. 
By (\ref{ali:cpfps}) we see that the following upper sequence 
\begin{equation*} 
\begin{CD}
{\rm gr}^P_{k-1}({\cal E}\otimes_{{\cal O}_X}\Om^{\bul}_{X/\os{\circ}{S}})[-1]
@>{\theta \wedge  }>>
{\rm gr}^P_k({\cal E}\otimes_{{\cal O}_X}\Om^{\bul}_{X/\os{\circ}{S}})@>>> 
{\rm gr}^P_k({\cal E}\otimes_{{\cal O}_X}\Om^{\bul}_{X/S})\lo 0 \\
@V{{\rm Res},\,\simeq}VV @VV{{\rm Res},\,\simeq}V\\
{\cal E}\otimes_{{\cal O}_X}
\Om^{\bul -(k-1)}_{k-2}({\cal E})[-1]
@>{\iota^{(k-1)*}}>> {\cal E}\otimes_{{\cal O}_X}\Om^{\bul-k}_{k-1}({\cal E})
\end{CD}
\tag{7.4.1}\label{ali:cpfxsps}
\end{equation*} 
is exact.  
Because the sequence (\ref{eqn:cptle}) is exact, 
\begin{align*} 
{\rm gr}^P_k({\cal E}\otimes_{{\cal O}_X}\Om^{\bul}_{X/S})
={\rm Coker}(\iota^{(k-1)*}\col 
\Om^{\bul -(k-1)}_{k-2}({\cal E})[-1]\lo \Om^{\bul-k}_{k-1}({\cal E}))
=K^{\bul-k}_{k}({\cal E}). 
\end{align*} 
\end{proof} 

\begin{defi}
We call the isomorphism 
(\ref{ali:cpsips}) the {\it relative Poincar\'{e} residue isomorphism} of 
${\cal E}$ for $X/S$.  
This is a generalization of \cite[Appendice (2.1.4)]{io}. 
(Note that we do not need the local description of the isomorphism 
(\ref{ali:cpsips}) unlike in [loc.~cit.] to obtain the isomorphism (\ref{ali:cpsips}).) 
\end{defi}

\begin{prop}\label{prop:aris}
{\rm {\bf (cf.~\cite[Appendice (2.5)]{io})}}  
The resulting sequences of the following sequence  
by the operations ${\cal H}^i$, $B^i$ and $Z^i$ 
$(i\in {\mab Z}_{\geq 0})$ are exact$:$ 
\begin{align*} 
0\lo P_{k-1}({\cal E}\otimes_{{\cal O}_X}\Om^{\bul}_{X/S})
\lo P_k({\cal E}\otimes_{{\cal O}_X}\Om^{\bul}_{X/S})
\lo {\rm gr}_k^P({\cal E}\otimes_{{\cal O}_X}\Om^{\bul}_{X/S})\lo 0
\end{align*} 
\end{prop}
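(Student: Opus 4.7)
The plan is to mirror the proof of Proposition~\ref{prop:ris}, using induction on $k$ together with the short exact sequence (\ref{ali:cpfps}) of Corollary~\ref{coro:appstc} to reduce the $X/S$ assertion to its already-established analogue for $X/\os{\circ}{S}$. The base case $k\leq 0$ is immediate: $P_{-1}({\cal E}\otimes_{{\cal O}_X}\Om^{\bul}_{X/S})=0$ and $P_0={\rm gr}_0^P$, so the sequence is trivially short exact after each of the three functors.

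For the inductive step I would assemble the commutative diagram whose rows are (\ref{ali:cpfps}) at levels $k-1$ and $k$, connected by the inclusions $P_{k-2}\hookrightarrow P_{k-1}$ (on both the $X/S$ and $X/\os{\circ}{S}$ complexes) and $P_{k-1}\hookrightarrow P_k$. Taking columnwise cokernels produces a third short exact sequence of complexes
\begin{equation*}
0\lo {\rm gr}_{k-1}^P({\cal E}\otimes_{{\cal O}_X}\Om^{\bul}_{X/S})[-1]\lo {\rm gr}_k^P({\cal E}\otimes_{{\cal O}_X}\Om^{\bul}_{X/\os{\circ}{S}})\lo {\rm gr}_k^P({\cal E}\otimes_{{\cal O}_X}\Om^{\bul}_{X/S})\lo 0,
\end{equation*}
which, via the Poincar\'{e} residue isomorphisms of Corollary~\ref{coro:fb} and Proposition~\ref{prop:ngr} (together with the identification ${\rm gr}_k^P({\cal E}\otimes\Om^{\bul}_{X/\os{\circ}{S}})\simeq \Om^{\bul-k}_{k-1}({\cal E})$), is identified up to a shift with the defining short exact sequence $0\to K^{\bul}_{k-1}({\cal E})\to \Om^{\bul}_{k-1}({\cal E})\to K^{\bul}_k({\cal E})\to 0$ cut out from the resolution (\ref{eqn:cpetle}).

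Now apply ${\cal H}^i$ (respectively $B^i$, $Z^i$) to the $3\times 3$ grid. The top two rows are short exact by Proposition~\ref{prop:ess}, the middle column by Proposition~\ref{prop:ris}, and the left column by the induction hypothesis applied in cohomological degree $i-1$. The bottom row is short exact as well: the descending induction inside the proof of Proposition~\ref{prop:iga} establishes the surjectivity ${\cal H}^i(\Om^{\bul}_{k-1}({\cal E}))\twoheadrightarrow {\cal H}^i(K^{\bul}_k({\cal E}))$, which amounts precisely to the asserted short exactness. A snake lemma chase on the top two rows then forces the right column to be short exact, and the natural map from its cokernel to ${\cal H}^i({\rm gr}_k^P({\cal E}\otimes\Om^{\bul}_{X/S}))$ is identified as an isomorphism by the five lemma applied against the bottom-row short exact sequence, yielding the desired short exactness of the original sequence.

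The only delicate point is the bottom-row claim; it is not recorded as a separate corollary of Proposition~\ref{prop:iga} but is precisely what falls out of that proof's descending induction. With this input in hand, the remainder is a purely diagrammatic chase that works uniformly for ${\cal H}^i$, $B^i$ and $Z^i$.
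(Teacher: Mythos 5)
Your argument is correct, but it is not the route the paper takes. The paper's proof of (\ref{prop:aris}) is a one-line appeal to the argument of (\ref{prop:iga}), i.e.\ it re-runs the descending induction on $k$ driven by an inverse Cartier isomorphism on the filtered pieces of the \emph{relative} complex together with the homeomorphism property of $\os{\circ}{F}$; the Cartier input this requires is exactly (\ref{prop:aaci}), which the paper only records afterwards. You bypass Frobenius entirely: you stack the exact sequences (\ref{ali:cpfps}) at levels $k-1$ and $k$, take columnwise cokernels to obtain $0\lo {\rm gr}^P_{k-1}({\cal E}\otimes_{{\cal O}_X}\Om^{\bul}_{X/S})[-1]\lo {\rm gr}^P_k({\cal E}\otimes_{{\cal O}_X}\Om^{\bul}_{X/\os{\circ}{S}})\lo {\rm gr}^P_k({\cal E}\otimes_{{\cal O}_X}\Om^{\bul}_{X/S})\lo 0$, identify this row with $0\lo K^{\bul}_{k-1}({\cal E})\lo \Om^{\bul}_{k-1}({\cal E})\lo K^{\bul}_{k}({\cal E})\lo 0$ via (\ref{coro:fb}) and (\ref{prop:ngr}), and then run a nine-lemma chase after applying ${\cal H}^i$, $B^i$, $Z^i$, using (\ref{prop:ess}) for the two rows, (\ref{prop:ris}) for the middle column, induction on $k$ for the left column, and the short exact sequences $0\lo {\cal H}^i(K^{\bul}_k)\lo {\cal H}^i(\Om^{\bul}_k)\lo {\cal H}^i(K^{\bul}_{k+1})\lo 0$ (and their $B^i$, $Z^i$ analogues) produced inside the proof of (\ref{prop:iga}) for the cokernel row. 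What your route buys is that the relative statement becomes a formal consequence of results already proved \emph{before} (\ref{prop:aris}) — in particular no filtered relative Cartier isomorphism is needed, which also repairs the slightly awkward ordering with (\ref{prop:aaci}); what the paper's route buys is uniformity with \S\ref{sec:hpc} and explicit $F$-compatibility. The one point you should state explicitly rather than gesture at is that the bottom-row exactness for $B^i$ and $Z^i$ (not only the surjectivity of ${\cal H}^i(\Om^{\bul}_{k-1})\lo {\cal H}^i(K^{\bul}_k)$) is delivered by the ascending induction on $i$ at the end of the proof of (\ref{prop:iga}).
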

\begin{proof} 
The proof is the same as that of (\ref{prop:iga}). 
\end{proof}

\begin{prop}\label{prop:aaci}
Let ${\cal F}$ be as in {\rm \S\ref{sec:hpc}}. 
The isomorphism 
$C^{-1}\col {\cal F}\otimes_{{\cal O}_{X'}}\Om^i_{X'/S^{[p]}}\os{\sim}{\lo}
F_*{\cal H}^i({\cal E}\otimes_{{\cal O}_X}\Om^{\bul}_{X/S})$
induce the following isomorphism 
\begin{align*} 
C^{-1}\col {\cal F}\otimes_{{\cal O}_{X'}}P_k\Om^i_{X'/S}
\os{\sim}{\lo}
F_*{\cal H}^i({\cal E}\otimes_{{\cal O}_X}P_k\Om^{\bul}_{X/S})\quad (i,k\in {\mab N}). 
\tag{7.7.1}\label{ali:afhnox}
\end{align*}
\end{prop}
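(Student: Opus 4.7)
The plan is to prove $(\ref{prop:aaci})$ by descending induction on $k$, following the strategy of $(\ref{prop:ci})$ (3) but adapted to the relative setting $X/S$ and to the presence of the coefficients $({\cal E},{\cal F})$. For the base case, I would observe that once $k\geq i$ one has ${\cal F}\otimes_{{\cal O}_{X'}}P_k\Om^i_{X'/S}={\cal F}\otimes_{{\cal O}_{X'}}\Om^i_{X'/S}$, and $P_k({\cal E}\otimes_{{\cal O}_X}\Om^{\bul}_{X/S})$ coincides with ${\cal E}\otimes_{{\cal O}_X}\Om^{\bul}_{X/S}$ in all degrees $\leq i$ by the definition recalled at the start of $\S\ref{sec:gir}$. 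Hence the assertion at this stage reduces to the unfiltered relative Cartier isomorphism already in play, which is a coefficient analogue of $(\ref{ali:fhe})$.

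For the inductive step from $k$ to $k-1$, I would apply the five lemma to the commutative diagram
\begin{equation*}
\begin{CD}
0 @>>> {\cal F}\otimes P_{k-1}\Om^i_{X'/S} @>>> {\cal F}\otimes P_k\Om^i_{X'/S} @>>> {\cal F}\otimes {\rm gr}_k^P\Om^i_{X'/S} @>>> 0 \\
@. @V{C^{-1}}VV @V{C^{-1}}VV @V{C^{-1}}VV \\
0 @>>> F_*{\cal H}^i(P_{k-1}(\cdots)) @>>> F_*{\cal H}^i(P_k(\cdots)) @>>> F_*{\cal H}^i({\rm gr}_k^P(\cdots)) @>>> 0
\end{CD}
\end{equation*}
where the bottom row refers to ${\cal E}\otimes_{{\cal O}_X}\Om^{\bul}_{X/S}$ and is exact by $(\ref{prop:aris})$; the top row is exact by flatness of ${\cal F}$. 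The fact that $C^{-1}$ really restricts to each $P_k$ is inherited from the absolute compatibility $(\ref{ali:pecpn})$ by functoriality along the projection $\Om^{\bul}_{X/\os{\circ}{S}}\to\Om^{\bul}_{X/S}$, combined with the strictness statements $(\ref{prop:ki})$ and $(\ref{coro:appstc})$.

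It then remains to check that the rightmost vertical arrow is an isomorphism on ${\rm gr}_k^P$. I would apply the relative Poincaré residue isomorphism $(\ref{ali:cpsips})$ — once for ${\cal E}$ and once for ${\cal F}$ — to rewrite both graded pieces as kernel complexes of $K^{\bul-k}_k$-type on the $(k-1)$-fold intersection strata. The stratum-wise Cartier isomorphism $C^{-1}_k$ from condition $({\rm I})$, together with the commutative squares $(\ref{cd:kffk})$, then matches the two sides coherently: since $K^{\bul}_k$ is by construction the kernel of a map between $\Om^{\bul}_k$'s and these Cartier isomorphisms commute with the transition maps $\iota^{(k)*}$, they descend to an isomorphism of kernel complexes, yielding the required isomorphism on ${\rm gr}_k^P$ and closing the induction.

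The main obstacle, and the genuine technical content, will be to establish a coefficient-laden, relative analogue of condition $({\rm IV})$, namely the commutativity of $C^{-1}$ with the relative Poincaré residue $(\ref{ali:cpsips})$; this extends the compatibility diagram $(\ref{cd:pzccin})$, stated only over $\os{\circ}{S}$ with trivial coefficients, to the present setting. I expect this to follow from the naturality of the inverse Cartier isomorphism, the identity $C^{-1}(\theta)=\theta$, and the local splitting $\Om^i_{X/\os{\circ}{S}}\cong \iota(\Om^i_{X/S})\oplus \theta\wedge\iota(\Om^{i-1}_{X/S})$ exploited in the first proof of $(\ref{lemm:pte})$, but the careful bookkeeping at the level of the $P$-filtration and the identification of the residue map on each stratum is where the detailed verification will lie.
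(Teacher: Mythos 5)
Your outline is viable in principle but it is not the paper's argument, and as written it has a genuine gap at precisely the step you flag as ``the main obstacle.'' Your inductive step along the filtration requires knowing that the map induced by $C^{-1}$ on graded pieces,
\begin{equation*}
{\cal F}\otimes_{{\cal O}_{X'}}{\rm gr}^P_k\Om^i_{X'/S}\lo
F_*{\cal H}^i({\rm gr}^P_k({\cal E}\otimes_{{\cal O}_X}\Om^{\bul}_{X/S})),
\end{equation*}
is an isomorphism, and your route to this is to transport the stratumwise isomorphisms $C^{-1}_k$ of condition (I) through the relative Poincar\'e residue isomorphism (\ref{ali:cpsips}) on both sides. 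For that you must know that $C^{-1}$ commutes with the relative residue map, i.e.\ a relative, coefficient-laden analogue of condition (IV). This is not among the hypotheses (I) $\sim$ (IV), is not established in \S\ref{sec:gir}, and you do not prove it: knowing that source and target are each abstractly isomorphic to $K^{i-k}_k({\cal F})$ resp.\ $F_*{\cal H}^{i-k}(K^{\bul}_k({\cal E}))$ does not show that the particular arrow induced by $C^{-1}$ is an isomorphism. Until that square is verified, the five-lemma step does not close. (A smaller point: for the base case you need $P_k$ to exhaust the complex through degree $i+1$, since ${\cal H}^i$ sees degree $i+1$; take $k\geq i+1$ rather than $k\geq i$.)

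The paper's proof avoids this issue entirely by never touching the graded pieces of the relative filtration. It reduces to the absolute case: by (\ref{ali:cpfps}) there is an exact sequence
\begin{equation*}
0\lo P_{k-1}({\cal E}\otimes_{{\cal O}_X}\Om^{\bul}_{X/S})[-1]
\os{\theta\wedge}{\lo}
P_k({\cal E}\otimes_{{\cal O}_X}\Om^{\bul}_{X/\os{\circ}{S}})\lo
P_k({\cal E}\otimes_{{\cal O}_X}\Om^{\bul}_{X/S})\lo 0,
\end{equation*}
which stays exact after applying ${\cal H}^i$ by (\ref{prop:ess}); comparing it with the corresponding short exact sequence of the ${\cal F}\otimes P_k\Om^i_{X'/\os{\circ}{S}}$'s, the middle vertical arrow is the absolute filtered isomorphism (\ref{ali:pecpn}), already known to be an isomorphism from (I) $\sim$ (IV), and the left-hand term sits in degree $i-1$ and filtration level $k-1$, so induction finishes the argument. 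If you want to salvage your route, the missing residue compatibility should indeed follow from $C^{-1}(\theta)=\theta$ and the local splitting used in the first proof of (\ref{lemm:pte}), but the paper's reduction is shorter and uses only lemmas already available.
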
 
\begin{proof} 
By (\ref{ali:cpfs}) and induction on $k$, we obtain (\ref{prop:aaci}). 
\end{proof} 

\begin{prop}[{\rm {\bf cf.~\cite[Appendice (2.6)]{io}}}]\label{prop:aahc}
Consider the following conditions: 
\par 
{\rm (a)} $R^jf_*(B^iK^{\bul}_k({\cal E}))=0$ for $\forall i, \forall j, \forall n$. 
\par 
{\rm (b)} $R^jf_*(B^i\Om^{\bul}_k({\cal E}))=0$ for $\forall i, \forall j, \forall n$. 
\par 
{\rm (c)} $R^jf_*(B^i{\rm gr}^P_k({\cal E}\otimes_{{\cal O}_X}\Om^{\bul}_{X/S}))=0$ for $\forall i, \forall j, \forall k$. 
\par 
{\rm (d)} $R^jf_*(B^i(P_k({\cal E}\otimes_{{\cal O}_X}\Om^{\bul}_{X/S})))=0$ 
and $R^jf_*(B^i({\cal E}\otimes_{{\cal O}_X}\Om^{\bul}_{X/S}))=0$ 
for $\forall i, \forall j, \forall k$.
\par 
{\rm (e)} $R^jf_*(B^i(P_k({\cal E}\otimes_{{\cal O}_X}\Om^{\bul}_{X/\os{\circ}{S}})))=0$ 
and $R^jf_*(B^i({\cal E}\otimes_{{\cal O}_X}\Om^{\bul}_{X/\os{\circ}{S}}))=0$ 
for $\forall i, \forall j, \forall k$.
\parno 
Then the following hold$:$
\par 
$(1)$ {\rm (a)},  {\rm (b)} and {\rm (c)} are equivalent. 
\par 
$(2)$ {\rm (c)},  {\rm (d)} and {\rm (e)} are equivalent.  
\par 
$($Consequently {\rm (a)},  {\rm (b)}, {\rm (c)},  
{\rm (d)} and {\rm (e)} are equivalent.$)$  
\end{prop}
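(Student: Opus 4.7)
The plan is to deduce all the equivalences by applying $B^i$ to appropriate short exact sequences of complexes and passing to the long exact sequence of $R^jf_*$. The key exactness inputs are the relative Poincar\'{e} residue isomorphism (\ref{ali:cpsips}), together with Propositions (\ref{prop:iga}), (\ref{prop:aris}), and the already-established Proposition (\ref{prop:ahc}) from \S\ref{sec:hpc}.

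For part $(1)$, I would first establish (a) $\Leftrightarrow$ (c) by noting that the relative Poincar\'{e} residue isomorphism (\ref{ali:cpsips}) yields a natural identification
\begin{equation*}
B^i({\rm gr}^P_k({\cal E}\otimes_{{\cal O}_X}\Om^{\bul}_{X/S})) \simeq B^{i-k}(K^{\bul}_k({\cal E})),
\end{equation*}
of sheaves on $\os{\circ}{X}$, which matches the two vanishing conditions after reindexing $i \mapsto i-k$ (with the understanding that $B^{i-k}K^{\bul}_k({\cal E})$ is automatically zero for $i < k$). For (a) $\Leftrightarrow$ (b), I extract from (\ref{eqn:cptle}) the short exact sequences
\begin{equation*}
0 \lo K^{\bul}_k({\cal E}) \lo \Om^{\bul}_k({\cal E}) \lo K^{\bul}_{k+1}({\cal E}) \lo 0
\end{equation*}
of complexes, which by (\ref{prop:iga}) remain exact after applying $B^i$. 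The long exact sequence of $R^jf_*$ together with descending induction on $k$ (starting from $K^{\bul}_k({\cal E})=0$ for $k > \dim\os{\circ}{X}$) transports vanishing in either direction: (a) $\Rightarrow$ (b) is immediate from the long exact sequence, while (b) $\Rightarrow$ (a) requires squeezing $R^jf_*(B^iK^{\bul}_k({\cal E}))$ between $R^{j-1}f_*(B^iK^{\bul}_{k+1}({\cal E}))$ (zero by induction) and $R^jf_*(B^i\Om^{\bul}_k({\cal E}))$ (zero by (b)).

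For part $(2)$, the equivalence (b) $\Leftrightarrow$ (e) is precisely part $(1)$ of Proposition (\ref{prop:ahc}), since condition (b) of the current proposition coincides with condition (a) of (\ref{prop:ahc}), and condition (e) coincides with condition (b) there. Combined with part $(1)$ of the present proposition this already gives (a) $\Leftrightarrow$ (b) $\Leftrightarrow$ (c) $\Leftrightarrow$ (e). It remains to prove (c) $\Leftrightarrow$ (d). I would apply $B^i$ to the short exact sequence
\begin{equation*}
0 \lo P_{k-1}({\cal E}\otimes_{{\cal O}_X}\Om^{\bul}_{X/S}) \lo P_k({\cal E}\otimes_{{\cal O}_X}\Om^{\bul}_{X/S}) \lo {\rm gr}^P_k({\cal E}\otimes_{{\cal O}_X}\Om^{\bul}_{X/S}) \lo 0,
\end{equation*}
which remains exact by (\ref{prop:aris}). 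Ascending induction on $k$, with base case $P_0 = {\rm gr}^P_0$ and the stabilization $P_k({\cal E}\otimes_{{\cal O}_X}\Om^{\bul}_{X/S}) = {\cal E}\otimes_{{\cal O}_X}\Om^{\bul}_{X/S}$ for $k$ exceeding the local embedding codimension, then yields both vanishing statements of (d) from (c) simultaneously. The reverse direction (d) $\Rightarrow$ (c) is immediate from the same long exact sequence.

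The main obstacle I anticipate is not conceptual but organizational: the different conditions live on different complexes with different grading conventions, and the shift from $\Om^{\bul}_{X/S}$ to the $k$-shifted complex $K^{\bul-k}_k({\cal E})$ appearing in (\ref{ali:cpsips}) must be tracked carefully when translating (a) into (c). Once the bookkeeping is settled, the entire statement reduces to routine long exact sequence chases together with Proposition (\ref{prop:ahc}), and no new geometric input is needed beyond the exactness propositions already proved in \S\ref{sec:hpc} and \S\ref{sec:gir}.
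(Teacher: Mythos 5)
Your proposal is correct and follows essentially the same route as the paper: part $(1)$ is the combination of the relative Poincar\'{e} residue isomorphism (\ref{ali:cpsips}) with the exactness statement (\ref{prop:iga}), and part $(2)$ rests on (\ref{prop:aris}) together with the localization that makes the filtration $P$ finite. The only cosmetic difference is that you close the chain at condition (e) by citing (\ref{prop:ahc}) $(1)$ for the equivalence of (b) and (e), whereas the paper deduces the equivalence of (d) and (e) directly from (\ref{prop:ess}); both reduce to the same exact sequences.
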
 
\begin{proof} 
(1): (1) follows from (\ref{prop:iga}), (\ref{prop:ess}) and (\ref{prop:ngr}).  
\par 
(2):  Set $T:=S$ or $\os{\circ}{S}$. 
Note that the sheaf is the associated sheaf to the following presheaf 
for open sub-log scheme $U$ of $B$: 
$U \lom R^jf_{U*}(B\Om^i_{X\times_TU/U})$, 
where $f_U\col X\times_BU\lo U$ is the structural morphism. 
Hence, to prove that $R^jf_*(B\Om^i_{X/T})=0$, it suffices to 
assume that  $B\Om^i_{X/T}=P_kB\Om^i_{X/T}$ for some $k\in {\mab N}$. 
Now (2) follows from (\ref{prop:aris}) and (\ref{prop:ess}).  
\end{proof}


\section{An analogue of Hyodo's criterion for quasi-F-split schemes}\label{sec:hfc}
In this section we give an analogue of 
Hyodo's criterion for quasi-F-split schemes. 
This answers Illusie's question stated in the Introduction.

\par 
Let $Y$ be a closed subvariety of an $F$-split variety $X$. 
Let ${\cal I}$ be the ideal sheaf of $Y$ in $X$. 
Recall that $Y$ is compatibly split if 
there exists a splitting 
$\rho \col F_*({\cal O}_X)\lo {\cal O}_X$ 
of the operator $F\col {\cal O}_X\lo F_*({\cal O}_X)$ 
such that $\rho(F_*({\cal I}))\subset {\cal I}$.  
It is clear that 
this is equivalent to the existence of 
a splitting $\rho \col F_*({\cal O}_X)\lo {\cal O}_X$ 
of the operator $F\col {\cal O}_X\lo F_*({\cal O}_X)$ 
which induces a splitting 
$\rho \col F_*({\cal O}_Y)\lo {\cal O}_Y$ 
of the operator $F\col {\cal O}_Y\lo F_*({\cal O}_Y)$.  

\begin{defi}\label{defi:esp}
Let $Y$ be an SNC scheme. 
We say that $Y$ is {\it compatibly quasi-}$F$-{\it split} if 
there exists a splitting $\rho \col 
F_*({\cal W}_n({\cal O}_{Y^{(0)}}))\lo {\cal W}_n({\cal O}_{Y^{(0)}})$ 
of the operator $F\col {\cal W}_n({\cal O}_{Y^{(0)}})\lo 
F_*({\cal W}_n({\cal O}_{Y^{(0)}}))$ for some $n\in {\mab Z}_{\geq 1}$ 
which induces a splitting 
$\rho \col F_*({\cal W}_n({\cal O}_{Y^{(1)}}))\lo {\cal W}_n({\cal O}_{Y^{(1)}})$ 
of the operator $F\col {\cal W}_n({\cal O}_{Y^{(1)}})\lo 
F_*({\cal W}_n({\cal O}_{Y^{(1)}}))$. 
\end{defi}

\begin{prop}\label{prop:cqs}
Let $Y$ be an SNC scheme. 
Then $Y$ is quasi-$F$-split if and only if 
$Y$ is compatibly quasi-$F$-split. 
\end{prop}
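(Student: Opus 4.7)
The proof rests on the Witt-vector analogue of the initial segment of the ``normalization'' exact sequence appearing in (\ref{eqn:cptle}) for $i=0$, namely
\begin{equation*}
0 \lo {\cal W}_n({\cal O}_Y) \lo a^{(0)}_*{\cal W}_n({\cal O}_{Y^{(0)}}) \os{\delta}{\lo} a^{(1)}_*{\cal W}_n({\cal O}_{Y^{(1)}}),
\end{equation*}
together with the fact that the Frobenius endomorphism $F$ acts termwise on this sequence and commutes with the restriction map $\delta$.

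The implication ``compatibly quasi-$F$-split $\Lo$ quasi-$F$-split'' is then a diagram chase. By hypothesis there is a splitting $\rho^{(0)}$ of $F$ on ${\cal W}_n({\cal O}_{Y^{(0)}})$ which induces a splitting $\rho^{(1)}$ on ${\cal W}_n({\cal O}_{Y^{(1)}})$; that is, the square
\begin{equation*}
\begin{CD}
F_*a^{(0)}_*{\cal W}_n({\cal O}_{Y^{(0)}}) @>{\rho^{(0)}}>> a^{(0)}_*{\cal W}_n({\cal O}_{Y^{(0)}})\\
@V{F_*\delta}VV @VV{\delta}V\\
F_*a^{(1)}_*{\cal W}_n({\cal O}_{Y^{(1)}}) @>{\rho^{(1)}}>> a^{(1)}_*{\cal W}_n({\cal O}_{Y^{(1)}})
\end{CD}
\end{equation*}
commutes. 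Hence $\rho^{(0)}$ carries the kernel $F_*{\cal W}_n({\cal O}_Y)=\ker(F_*\delta)$ into the kernel ${\cal W}_n({\cal O}_Y)=\ker(\delta)$, and the induced map on kernels is the required splitting of $F$ on ${\cal W}_n({\cal O}_Y)$, so $Y$ is quasi-$F$-split.

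For the converse, given a splitting $\rho \col F_*{\cal W}_n({\cal O}_Y) \lo {\cal W}_n({\cal O}_Y)$, the plan is to construct $\rho^{(0)}$ locally and verify that it induces the required $\rho^{(1)}$. Zariski locally $\os{\circ}{Y}$ has the form ${\rm Spec}(\kap[x_1,\ldots,x_d]/(x_1\cdots x_r))$, so that ${\cal W}_n({\cal O}_{Y^{(0)}})$ becomes a finite product $\prod_{i=1}^r {\cal W}_n(\kap[x_1,\ldots,\hat{x}_i,\ldots,x_d])$ and ${\cal W}_n({\cal O}_Y)$ is the equalizer inside this product with respect to restriction to the double intersections. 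On such a chart one writes down a concrete splitting $\rho^{(0)}$ which extends $\rho$ via the canonical splittings on the components away from the singular locus. The main obstacle will be twofold: first, arranging the local constructions so that they glue into a genuine morphism of sheaves on $Y^{(0)}$; second, and more subtly, verifying that $\rho^{(0)}$ is compatible with the \v{C}ech differential $\delta$, so that it descends to a well-defined splitting $\rho^{(1)}$ of $F$ on ${\cal W}_n({\cal O}_{Y^{(1)}})$. Once both gluing and descent are established, the constructed pair $(\rho^{(0)},\rho^{(1)})$ exhibits $Y$ as compatibly quasi-$F$-split.
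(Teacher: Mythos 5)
Your first implication (compatibly quasi-$F$-split $\Rightarrow$ quasi-$F$-split) is correct and is essentially all that the paper's one-line proof makes transparent: the sequence
$0\lo {\cal W}_n({\cal O}_Y)\lo a^{(0)}_*{\cal W}_n({\cal O}_{Y^{(0)}})\os{\delta}{\lo} a^{(1)}_*{\cal W}_n({\cal O}_{Y^{(1)}})$
is exact (this is the initial segment of (\ref{ali:csfp}), which the paper obtains from \cite[Theorem 1]{rs} rather than from (\ref{eqn:cptle})), it is $F$-equivariant, and a splitting $\rho^{(0)}$ inducing $\rho^{(1)}$ carries $\ker(F_*\delta)=F_*{\cal W}_n({\cal O}_Y)$ into $\ker(\delta)={\cal W}_n({\cal O}_Y)$. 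That diagram chase is fine.

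The converse is where the actual content of (\ref{prop:cqs}) lies, and what you have written for it is a plan, not a proof: you yourself flag that both the gluing of the locally constructed $\rho^{(0)}$'s and their compatibility with $\delta$ are unresolved, and those two points are the whole statement. A splitting $\rho\col F_*({\cal W}_n({\cal O}_Y))\lo {\cal W}_n({\cal O}_Y)$ is an arbitrary given map; there is no ``canonical splitting on the components away from the singular locus'' to extend by, and two local extensions of $\rho$ from the equalizer to the product $\prod_i{\cal W}_n({\cal O}_{Y_i})$ need not agree on overlaps, so the gluing problem is genuine and not addressed. The route that actually closes the gap is descent rather than extension: each ${\cal W}_n({\cal O}_{Y_\lam})$ is the quotient of ${\cal W}_n({\cal O}_Y)$ by ${\cal W}_n(I_\lam)$, where $I_\lam$ is the ideal of the irreducible component $Y_\lam$, so it suffices to prove the Witt-vector analogue of the classical fact that a Frobenius splitting compatibly splits the minimal primes of the zero ideal, i.e.\ $\rho(F_*({\cal W}_n(I_\lam)))\subset {\cal W}_n(I_\lam)$ for every $\lam$. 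Granting this, $\rho$ descends to every ${\cal W}_n({\cal O}_{Y_\lam})$ and every ${\cal W}_n({\cal O}_{Y_\lam\cap Y_\mu})$, and the resulting $\rho^{(0)}$ and $\rho^{(1)}$ are automatically compatible with $\delta$ because they are all induced from the single map $\rho$ by passing to quotients; no gluing issue arises. Until you either establish this preservation of the ${\cal W}_n(I_\lam)$ or complete your local construction together with its gluing and $\delta$-compatibility, the ``only if'' half remains unproven.
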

\begin{proof} 
By \cite[Theorem 1]{rs} the following sequence 
\begin{align*} 
0\lo {\cal W}_n({\cal O}_X)\lo {\cal W}_n({\cal O}_{\os{\circ}{X}{}^{(0)}})\lo 
{\cal W}_n({\cal O}_{\os{\circ}{X}{}^{(1)}})\lo \cdots \quad (n\in {\mab Z}_{\geq 1}) 
\tag{8.2.1}\label{ali:csfp}
\end{align*}
is exact. Hence (\ref{prop:cqs}) follows. 
\end{proof} 


\section{Ordinarity  at 0 of proper SNC schemes}\label{sec:em}
Let $S$ be a scheme of characteristic $p>0$. 
In this section let $X$ be a 
proper SNC(=simple normal crossing) scheme over $S$. 
Let $f\col X\lo S$ be the structural morphism. 
In this section we consider only trivial log structures. 
Let $F \col X\lo X$ be the Frobenius endomorphism. 
We say that $X/S$ is {\it ordinary at $0$} if 
$F^*\col R^hf_*({\cal O}_X)\lo R^hf_*({\cal O}_X)$ is bijective 
for any $h\in {\mab N}$. 
When $S$ is the underlying scheme of a family of log points and 
when $X/S$ is the underlying morphism of a morphism of SNCL schemes $Y/T$ 
with structural morphism $g\col Y\lo T$, 
this definition is equivalent to the variation of the ordinarity at $0$ defined in 
\cite[(6.1)]{nlfc}: $R^hg_*(B\Om^1_{Y/T})=0$. 
This follows from the following tautological exact sequence 
\begin{align*} 
0\lo {\cal O}_Y\lo F_*({\cal O}_Y)\os{d}{\lo} B_1\Om^1_{Y/T}\lo 0. 
\end{align*} 
The following proposition may be of independent interest (cf.~\cite[Conjecture 
${\textrm N_n}$]{st}): 

\begin{prop}\label{prop:ii} 
If $X^{(i)}/S$ is ordinary at $0$ for all $i$, 
then $X/S$ is ordinary at $0$. 
\end{prop}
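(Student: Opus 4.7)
The plan is to compute $R^hf_*({\cal O}_X)$ through a Mayer-Vietoris spectral sequence built from the components of the SNC stratification, and to exploit the evident compatibility of this spectral sequence with $F^*$.

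First I would invoke the level $n=1$ case of the exact sequence (\ref{ali:csfp}), that is, the exact sequence of sheaves on $\os{\circ}{X}$:
\begin{align*}
0 \lo {\cal O}_X \lo a^{(0)}_*({\cal O}_{X^{(0)}}) \lo a^{(1)}_*({\cal O}_{X^{(1)}}) \lo \cdots
\end{align*}
This is the classical Mayer--Vietoris resolution for an SNC scheme, and is a special case of (\ref{ali:csfp}) for $n=1$ (proved in \cite{rs}). Because each $a^{(k)}$ is a finite closed immersion, $Ra^{(k)}_*=a^{(k)}_*$, and applying $Rf_*$ to this resolution yields a spectral sequence
\begin{align*}
E_1^{k,h-k}=R^{h-k}f^{(k)}_*({\cal O}_{X^{(k)}}) \Lo R^hf_*({\cal O}_X),
\end{align*}
where $f^{(k)} \col X^{(k)}\lo S$ is the structural morphism.

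The key point is that this spectral sequence is compatible with the Frobenius endomorphism. Indeed, the transition maps in the resolution are restrictions (i.e.~pullbacks along the closed immersions indexing successive intersections), and the Frobenius of $X$ restricts to the Frobenius of each $X^{(k)}$, so $F^*$ acts on the resolution termwise and commutes with the differentials. By hypothesis, $F^*$ is bijective on every $R^jf^{(k)}_*({\cal O}_{X^{(k)}})$, hence on every $E_1$-term. A routine induction on the pages (using the five-lemma applied to the short exact sequences $0\to Z_r^{p,q}\to E_r^{p,q}\to B_{r+1}^{p+r,q-r+1}\to 0$ defining $E_{r+1}$) shows that $F^*$ is bijective on $E_\infty^{p,q}$ for every $p,q$. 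A final filtration argument on the abutment concludes that $F^*$ is bijective on $R^hf_*({\cal O}_X)$ for every $h$, i.e.~$X/S$ is ordinary at $0$.

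There is no serious obstacle: the whole argument is formal once the Mayer--Vietoris resolution is in hand, and the only point that might be worth spelling out in the written proof is the $F^*$-equivariance of the resolution, which is immediate from the functoriality of the closed immersions $a^{(k)}$ with respect to the absolute Frobenius.
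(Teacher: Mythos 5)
Your proposal is correct and follows essentially the same route as the paper: the paper's proof likewise applies $Rf_*$ to the Mayer--Vietoris resolution $0\to {\cal O}_X\to {\cal O}_{X^{(0)}}\to {\cal O}_{X^{(1)}}\to \cdots$, observes its compatibility with $F$, and deduces bijectivity of $F$ on $R^hf_*({\cal O}_X)$ from bijectivity on the $E_1$-terms. Your write-up merely makes explicit the routine page-by-page and filtration arguments that the paper leaves implicit.
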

\begin{proof} 
Because the following sequence 
\begin{align*} 
0\lo {\cal O}_X\lo {\cal O}_{X^{(0)}}\lo {\cal O}_{X^{(1)}}\lo \cdots 
\end{align*} 
is exact and this exact sequence is compatible with the operator $F$, 
we have the following sequence 
\begin{align*} 
E_1^{-k, h+k}=R^{h+k}f_*({\cal O}_{X^{(-k+1)}})
\Lo
R^hf_*({\cal O}_X) \quad (-k,q\in {\mab Z}_{\geq 0}),  
\tag{9.1.1}\label{ali:wrrzn}
\end{align*} 
By the assumption $F\col E_1^{-k, h+k}\lo E_1^{-k,h+k}$ is bijective. 
Hence $F\col R^hf_*({\cal O}_X)\lo R^hf_*({\cal O}_X)$ is bijective. 
\end{proof}

\bigskip
\bigskip
\parno
Yukiyoshi Nakkajima 
\parno
Department of Mathematics,
Tokyo Denki University,
5 Asahi-cho Senju Adachi-ku,
Tokyo 120--8551, Japan. 
\parno
{\it E-mail address\/}: 
nakayuki@cck.dendai.ac.jp

\end{document}